\newtheorem{thm}{Theorem}[section]
\newtheorem{cor}[thm]{Corollary}
\newtheorem{lem}[thm]{Lemma}
\newtheorem{prop}[thm]{Proposition}
\newtheorem{defn}[thm]{Definition}
\newtheorem{rem}[thm]{Remark}
\def\fA{\mathfrak{A}}
\def\rA{\mathrm{A}}
\def\bC{\mathbb{C}}
\def\nC{\mathnormal{C}}
\def\cD{\mathcal{D}}
\def\det{\mathrm{det}}
\def\rF{\mathrm{F}}
\def\rG{\mathrm{G}}
\def\bG{\textbf{G}}
\def\GL{\mathrm{GL}}
\def\Hom{\mathrm{Hom}}
\def\rH{\mathrm{H}}
\def\Ind{\mathrm{Ind}}
\def\ind{\mathrm{ind}}
\def\cK{\mathcal{K}}
\def\rK{\mathrm{K}}
\def\Ker{\mathrm{Ker}}
\def\rL{\mathrm{L}}
\def\bL{\textbf{L}}
\def\rM{\mathrm{M}}
\def\rN{\mathrm{N}}
\def\fo{\mathfrak{o}}
\def\rO{\mathrm{O}}
\def\rP{\mathrm{P}}
\def\Q{\mathbb{Q}}
\def\rQ{\mathrm{Q}}
\def\obQ{\overline{\mathbb{Q}}}
\def\res{\mathrm{res}}
\def\SL{\mathrm{SL}}
\def\rT{\mathrm{T}}
\def\bT{\mathbf{T}}
\def\rU{\mathrm{U}}
\def\rV{\mathrm{V}}
\def\rX{\mathrm{X}}
\def\rY{\mathrm{Y}}
\def\rZ{\mathrm{Z}}
\begin{document}





\hypersetup{							
pdfauthor = {Peiyi Cui},			
pdftitle = {representation mod l of SL_n},			
pdfkeywords = {Tag1, Tag2, Tag3, ...},	
}					
\title{Modulo $\ell$-representations of $p$-adic groups $\mathrm{SL_n}(F)$}
\author{Peiyi Cui \footnote{peiyi.cuimath@gmail.com, Institut de recherche math\'ematique de Rennes -- I.R.M.A.R.-U.M.R. 6625 du C.N.R.S., Universit\'e de Rennes 1 Beaulieu, 35042 Rennes CEDEX, France}}

\date{\vspace{-2ex}}
\maketitle

        \begin{abstract}
        Let $k$ be an algebraically closed field of characteristic $l\neq p$. We construct maximal simple cuspidal $k$-types of Levi subgroups $\rM'$ of $\mathrm{SL}_n(F)$, when $F$ is a non-archimedean locally compact field of residual characteristic $p$. And we show that the supercuspidal support of irreducible smooth $k$-representations of Levi subgroups $\rM'$ of $\mathrm{SL}_n(F)$ is unique up to $\rM'$-conjugation, when $F$ is either a finite field of characteristic $p$ or a non-archimedean locally compact field of residual characteristic $p$.
\end{abstract}

\tableofcontents

\section{Introduction}

\subsection{Introduction}
Let $F$ be a finite field of characteristic $p$, or a non-archimedean locally compact field whose residue field is of characteristic $p$, and $\textbf{G}$ a reductive connected algebraic group defined over $F$. We denote by $\rG$ the group $\textbf{G}(F)$ of the $F$-points of $\textbf{G}$, and endow it with the locally pro-finite topology through $F$. Let $k$ be an algebraically closed field of characteristic $\ell$ ($\neq p$), and $W(k)$ its ring of Witt vectors. We use $\cK=\mathrm{Frac}(W(k))$ to denote the fraction field of $W(k)$. We denote by $\mathrm{Rep}_{k}(\rG)$ the category of smooth $k$-representations of $\rG$, where a $k$-representation $(\pi,V)$ (here $V$ is the $k$-space of representation $\pi$) of $\rG$ is smooth if any element $v\in V$ is stabilised by an open subgroup of $\rG$. In the thesis, when we say a $k$-representation, we always assume it is smooth. 

When $\ell=0$ and $F$ is a non-archimedean locally compact field, the existence of Bernstein decomposition of the category $\mathrm{Rep}_k(\rG)$ has been proved by Bernstein: The category $\mathrm{Rep}_k(\rG)$ is equivalent to the direct product of some full-subcategories, which are indecomposable and called blocks. This means that each $k$-representation is isomorphic to a direct sum of sub representations belonging to different blocks, and each morphism of $k$-representations is isomorphic to a product of morphisms belonging to different blocks. We say that a full-subcategory of $\mathrm{Rep}_k(\rG)$ is indecomposable (or a block) if it is not equivalent to a product of any two non-trivial full-subcategories.


This decomposition has a counterpart in the ``Galois side" through local Langlands correspondence. Let $r$ be a prime number such that $r\ne p$. When $\ell>0$, we will take $r=\ell$.  Let ${}^{L}\textbf{G}$ denote the $L$-group of $\textbf{G}$ and  $W_F$ the Weil group of $F$. 
In the case when $\textbf{G}$ equals $\mathrm{GL}_n$, the local Langlands correspondence (LLC) was proved when $F$ has characteristic $p$ by Laumon, Rapoport and Sthuler \cite{LRS}, and, when $F$ has characteristic $0$, independently by Harris and Taylor \cite{Ha}, by Henniart \cite{Hen}, and by Scholze \cite{Sch}. It provides a canonical bijection between the set of isomorphism classes of $r$-adic irreducible representations of $\GL_n(F)$ and the set of isomorphism classes of $r$-adic $n$-dimensional $W_F$ -semisimple Deligne representations, generalizing the Artin reciprocity map of local class field theory.  A nice property of LLC is that the Rankin-Selberg local factors of a pair of irreducible $\overline{\Q}_r$-representations of $\GL_n(F)$ and $\GL_m(F)$, and the Artin-Deligne local factors of the corresponding tensor product of representations of $W_F$ are equal, and moreover this condition characterizes LLC completely.
Under the local Langlands correspondence, two irreducible $k$-representations $\pi$ and $\pi'$ belong to the same block if and only if their Langlands parameters are isomorphic when restricting to the inertial subgroup $I_F$ of $W_F$.  For $\textbf{G}$ an arbitrary connected reductive group defined over $F$, an analog of the Bernstein decomposition for (enhanced) Langlands parameters is constructed in \cite{AMS}.

When $\ell$ is positive and $F$ is a non-archimedean locally compact field. Helm gives a proof of the Bernstein decomposition of $\mathrm{Rep}_{\overline{W(\mathbb{F}}_{\ell})}(\rG)$ in \cite{Helm}, where $W(\mathbb{F}_{\ell})$ denotes the ring of Witt vectors of $\mathbb{F}_{\ell}$, and this deduces the Bernstein decomposition of $\mathrm{Rep}_{\mathbb{F}_{\ell}}(\rG)$. S\'echerre and Stevens gave a proof of the Bernstein decomposition of the category of smooth $k$-representations of $\mathrm{GL}_n(F)$ and its inner forms in \cite{SeSt}. The Bernstein decomposition of $\mathrm{Rep}_k(\rG)$ is unknown for general reductive groups $\textbf{G}$ defined over $F$. In the case where $\textbf{G}$ equal $\mathrm{GL}_n$, Vign\'eras constructed in \cite{V4} a bijection between the set of isomorphism classes of $\ell$-modular irreducible representations of $\GL_n(F)$ and the set of isomorphism classes of $\ell$-modular $n$-dimensional $W_F$- semisimple Deligne representations with nilpotent Deligne operator. Combining with the Bernstein decomposition, it implies that two irreducible $\overline{\mathbb{F}}_{\ell}$-representations $\pi,\pi'$ of $\rG$ belong to the same block if and only if their Langlands parameters are isomorphic when restricting to $I_F^{\ell}$, which is the kernel of the canonical map $I_F\rightarrow \mathbb{Z}_{\ell}$, as observed by Dat in \cite{DaII} \S $1.2.1$. 

The theory of Rankin-Selberg local factors of Jacquet, Shalika and Piatetski-Shapiro has a natural extension at least to  generic $k$-representations of $\GL_n(F)$. However, via the $\ell$-modular local Langlands correspondence these factors do not agree with the factors of Artin-Deligne.
In \cite{KuMa},  Kurinczuk and Matringe classified the indecomposable $\ell$-modular $W_F$-semisimple Deligne representations, extended the definitions of Artin-Deligne factors to this setting, and define an $\ell$-modular local Langlands correspondence where in the generic case, the Rankin-Selberg factors of representations on one side equal the Artin-Deligne factors of the corresponding representations on the other.


In this thesis, we study the category $\mathrm{Rep}_k(\mathrm{SL}_n(F))$. In particular, we study cuspidal and supercuspidal $k$-representations of Levi subgroups of $\mathrm{SL}_n(F)$. 

\begin{defn}
\label{cusp 1}
Let $\rG$ be the group of $F$-points of a connected reductive group defined over $F$, where $F$ is non-archimedean with residual characteristic $p$ or finite with characteristic $p$. Let $\pi$ be an irreducible $k$-representation of $\rG$. 
\begin{itemize}
\item We say that $\pi$ is cuspidal, if for any proper Levi subgroup $\rM$ of $\rG$ and any irreducible $k$-representation $\rho$ of $\rM$, $\pi$ does not appear as a subrepresentation or a quotient representation of $i_{\rM}^{\rG}\rho$;
\item We say that $\pi$ is supercuspidal, if for any proper Levi subgroup $\rM$ of $\rG$ and any irreducible $k$-representation $\rho$ of $\rM$, $\pi$ does not appear as a subquotient representation of $i_{\rM}^{\rG}\rho$.
\end{itemize}
\end{defn}

\begin{defn}
\label{cusp 2}
Let $\pi$ be an irreducible $k$-representation of $\rG$, where $\rG$ is assumed as in definition \ref{cusp 1}.
\begin{itemize}
\item Let $(\rM,\rho)$ be a cuspidal pair of $\rG$, which means $\rM$ is a Levi subgroup of $\rG$, and $\rho$ is an irreducible cuspidal $k$-representation of $\rM$. We say that $(\rM,\rho)$ belongs to the cuspidal support of $\pi$, if $\pi$ is a subrepresentation or a quotient representation of $i_{\rM}^{\rG}\rho$;
\item Let $(\rM,\rho)$ be a supercuspidal pair of $\rG$, which means $\rM$ is a Levi subgroup of $\rG$, and $\rho$ is an irreducible supercuspidal $k$-representation of $\rM$. We say that $(\rM,\rho)$ belongs to the supercuspidal support of $\pi$, if $\pi$ is a subquotient of $i_{\rM}^{\rG}\rho$.
\end{itemize}
\end{defn}

The proofs of Helm, S\'echerre and Stevens in \cite{Helm} and \cite{SeSt} of the Bernstein decompositions are based on the fact  that the supercuspidal support (Definition \ref{cusp 2}) of any irreducible $k$-representation of $\mathrm{GL}_n(F)$ is unique, which has been proved by Vign\'eras in \cite{V2}. As the main results of this thesis, we prove the uniqueness of supercuspidal support for $\mathrm{SL}_n(F)$ in both cases that $F$ is finite (Theorem \ref{theorem 1}) and $F$ is non-archimedean (Theorem \ref{Uthm 3}):


\begin{thm}
Let $\rM'$ be a Levi subgroup of $\mathrm{SL}_n(F)$, and $\rho$ an irreducible $k$-representation of $\rM'$. The supercuspidal support of $\rho$ is a $\rM'$-conjugacy class of a pair $(\rL',\tau')$, where $\rL'$ is a Levi subgroup of $\rM'$, and $\tau'$ is an irreducible supercuspidal $k$-representation of $\rL'$.
\end{thm}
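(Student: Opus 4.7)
\medskip
\noindent\textbf{Sketch of approach.} The plan is to bootstrap from Vign\'eras's uniqueness of supercuspidal support for $\mathrm{GL}_n(F)$ in \cite{V2}, transferred via Clifford-theoretic restriction from $\mathrm{GL}_n$ to $\mathrm{SL}_n$. The two cases (finite and non-archimedean) follow the same skeleton; the non-archimedean case additionally leans on the maximal simple cuspidal $k$-types for Levi subgroups of $\mathrm{SL}_n(F)$ constructed earlier in the thesis.

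First, I would write $\rM' = \rM \cap \mathrm{SL}_n(F)$ for a Levi subgroup $\rM$ of $\mathrm{GL}_n(F)$ and choose an irreducible $k$-representation $\widetilde{\rho}$ of $\rM$ whose restriction to $\rM'$ contains $\rho$; such a lift exists by standard Clifford theory applied to the (finite modulo center) abelian quotient $\rM/\rM'\Cent(\rM)$. Applying Vign\'eras's theorem to $\widetilde{\rho}$ produces a supercuspidal pair $(\rL,\widetilde{\tau})$ of $\rM$, unique up to $\rM$-conjugation, in the supercuspidal support of $\widetilde{\rho}$. Setting $\rL' = \rL \cap \mathrm{SL}_n(F)$, I would then show that $\widetilde{\tau}|_{\rL'}$ is a finite direct sum of irreducible supercuspidal $k$-representations of $\rL'$, and that any summand $\tau'$ yields a pair $(\rL',\tau')$ belonging to the supercuspidal support of $\rho$; the latter follows from a Mackey-type comparison between parabolic induction from $\rL$ to $\rM$ and from $\rL'$ to $\rM'$, combined with the fact that $\rho$ embeds in $\widetilde{\rho}|_{\rM'}$.

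For uniqueness, suppose $(\rL'_1,\tau'_1)$ and $(\rL'_2,\tau'_2)$ both lie in the supercuspidal support of $\rho$. I would lift each $\tau'_i$ to an irreducible supercuspidal $\widetilde{\tau}_i$ of a Levi $\rL_i$ of $\mathrm{GL}_n(F)$ with $\rL_i \cap \mathrm{SL}_n(F) = \rL'_i$, arranging that $(\rL_i,\widetilde{\tau}_i)$ lies in the supercuspidal support of a suitable twist of $\widetilde{\rho}$ by a character of $\mathrm{GL}_n(F)$ trivial on $\mathrm{SL}_n(F)$. Vign\'eras's theorem forces these lifts to be $\mathrm{GL}_n(F)$-conjugate, and tracing the conjugating element through the restriction and the action of the finite group $\rM/\rM'\Cent(\rM)$ on the summands of $\widetilde{\tau}_1|_{\rL'_1}$ yields the desired $\rM'$-conjugacy of the original pairs.

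The principal obstacle I anticipate is verifying that the irreducible components of $\widetilde{\tau}|_{\rL'}$ are genuinely supercuspidal and not merely cuspidal: in positive characteristic $\ell$ the two notions diverge, so Jacquet-functor vanishing alone does not settle the matter. In the non-archimedean case, this step relies crucially on the explicit maximal simple cuspidal $k$-types for Levi subgroups of $\mathrm{SL}_n(F)$ built earlier in the thesis, which detect supercuspidality through Hecke algebra properties compatible with restriction from $\mathrm{GL}_n$; in the finite case, Harish-Chandra-theoretic arguments at the level of finite reductive groups suffice.
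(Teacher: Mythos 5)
Your reduction to $\mathrm{GL}_n$ via Clifford theory reproduces the first half of the paper's argument (lift $\rho$ to an irreducible cuspidal $\widetilde\rho$ of $\rM$, invoke Vign\'eras's uniqueness for $\rM$, and pass to the components of $\widetilde\tau|_{\rL'}$), but the proposal has a genuine gap exactly where the theorem becomes hard: the passage from $\rM$-conjugacy to $\rM'$-conjugacy. What your argument actually yields is that every element of the supercuspidal support of $\rho$ lies in the single \emph{$\rM$-conjugacy} class of pairs $(\rL',\tau')$ with $\tau'$ a component of $\widetilde\tau|_{\rL'}$ (this is the paper's ``first description''). The distinct components of $\widetilde\tau|_{\rL'}$ are $\rL$-conjugate but in general \emph{not} $\rM'$-conjugate (this is the L-packet phenomenon for $\mathrm{SL}_n$), so your claim that \emph{any} summand $\tau'$ gives a pair in the supercuspidal support of $\rho$ cannot be right as stated: if it were, the support would not be a single $\rM'$-class and the theorem would be false. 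In truth $\rho$ is a subquotient of $i_{\rL'}^{\rM'}\tau'_i$ only for certain components, and the whole content of the theorem is to identify which ones and to show they form one $\rM'$-class. Your uniqueness paragraph (``tracing the conjugating element through the action of $\rM/\rM'\Cent(\rM)$ on the summands'') does not supply an argument for this; the conjugating element produced by Vign\'eras lives in $\rM$, and nothing in the bookkeeping prevents two non-$\rM'$-conjugate components from both occurring.

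The paper closes this gap with an ingredient absent from your proposal: Whittaker models and Bernstein--Zelevinsky derivatives, generalized to Levi subgroups of $\mathrm{SL}_n(F)$ (the functors $\Psi^{\pm}$, $\Phi_{\theta}^{\pm}$ of Definition \ref{ddef 2}, the compatibility of top derivatives with parabolic induction in Corollary \ref{dcor 6}, and the extension of Theorem 5.2 of \cite{BeZe} to $k$-representations in the appendix). Since $\res_{\rG'}^{\rG}$ of a cuspidal representation of $\mathrm{GL}$ is multiplicity free (Proposition \ref{prop 23}), exactly one component of $\widetilde\tau|_{\rL'}$ has nonvanishing top derivative with respect to the non-degenerate character $\theta$ singled out by $\rho$, and the derivative formula shows that only that component can have $\rho$ as a subquotient of its parabolic induction; this is what pins the support down to one $\rM'$-class. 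Any correct proof needs some substitute for this step (in the finite-field case the paper uses Gelfand--Graev lattices and projective covers following Hiss, again a genuinely different mechanism from the ``Harish-Chandra-theoretic arguments'' you gesture at). As written, your proposal proves only $\rM$-conjugacy of the supercuspidal support.
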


However, the uniqueness of supercuspidal support of irreducible $k$-representations is not always true for general reductive groups when $\ell$ is positive. A counter-example has been found in \cite{Da} by Dat and Dudas for $\mathrm{Sp}_8(F)$.

From now on, we use $\rG$ to denote $\mathrm{GL}_n(F)$, and $\rG'$ to denote $\mathrm{SL}_n(F)$ unless otherwise specified. This manuscript has two parts: in section \ref{chapter 3}, we study the $k$-representations of finite groups; next we consider the case that $F$ is non-archimedean locally compact from section \ref{chapter 4}. There is a fact that for any irreducible $k$-representation $\pi'$ of $\rM'$, a Levi subgroup of $\rG'$, its cuspidal support (see \ref{cusp 2} for the definition) is unique. Hence we could reduce our problem to the uniqueness of supercuspidal support for irreducible cuspidal $k$-representations of $\rM'$, where $\rM'$ denote any Levi subgroup of $\rG'$. In both parts, for any irreducible cuspidal $k$-representation $\pi'$ of $\rM'$, there exists an irreducible cuspidal $k$-representation $\pi$ of $\rM$ such that $\pi'$ is a component of the semisimple $k$-representation $\res_{\rM'}^{\rM}\pi$ which has finite length  (see \cite{TA} when $\ell=0$, and Proposition \ref{prop 6} when $\ell$ is positive). Our strategy is to study $\pi'$ by considering $\pi$, in other words, to reduce the problem of $\pi'$ to the one of $\pi$.

In the first part, we describe the supercuspidal support of an irreducible cuspidal $k$-representation $\pi'$ of $\rM'$ in terms of its projective cover (see the paragraph after Theorem \ref{theorem 1}), which has been considered by Hiss \cite{Hiss}. Using Deligne-Lusztig theory, we construct the projective cover $\mathrm{P}_{\pi'}$ of $\pi'$, which is one of the indecomposable components of the restriction of the projective cover $\mathrm{P}_{\pi}$ of $\pi$ to $\rM'$. The construction is based on the Gelfand-Graev lattice. We deduce the uniqueness of supercuspidal support of $\pi'$ by considering the parabolic restrictions of $\mathrm{P}_{\pi'}$ to any Levi subgroup of $\rM'$. 

The projective covers $\mathrm{P}_{\pi'}$ constructed in this part are interesting in their own right. Let $\bar{\cK}$ denote an algebraic closure of $\cK$.
In the article \cite{Helm}, Helm gave the relation between Bernstein decompositions of $\mathrm{Rep}_{\bar{\cK}}(\mathrm{GL}_n(F))$ and of $\mathrm{Rep}_{k}(\mathrm{GL}_n(F))$. One of the key objects of his article is a family of projective objects associated to irreducible cuspidal $k$-representations. These projective objects are constructed by projective covers of irreducible cuspidal $k$-representations of finite groups of $\mathrm{GL}_m$ type, where $m$ divides $n$. 

In the second part, $\rG$ and $\rG'$ are defined over a non-archimedean local field. We prove the uniqueness of the supercuspidal support (Theorem \ref{Uthm 3}) in two steps. In section \ref{section 06}, we construct maximal simple cuspidal $k$-types of $\rM'$ (Theorem \ref{Ltheorem 5}), where $\rM'$ denote any Levi subgroup of $\rG'$. This gives a first description of the supercuspidal support for any irreducible cuspidal $k$-representation $\pi'$ of $\rM'$. In section \ref{chapter 5}, we describe precisely the supercuspidal support of $\pi'$ by considering the derivatives of the elements in the supercuspidal support, and deduce that it is unique.

\begin{defn}
An maximal simple cuspidal $k$-type of $\rM'$ for an irreducible cuspidal $k$-representation $\pi'$ of $\rM'$ is a pair $(K', \tau_{\rM'})$ consisting of an open and compact modulo center subgroup $\rK'$ of $\rM'$, and an irreducible $k$-representation $\tau_{\rM'}$ of $\rK'$, such that:
\begin{equation}
\label{equa 0}
\ind_{\rK'}^{\rM'}\tau_{\rM'}\cong\pi'.
\end{equation}
\end{defn}

Inspired by \cite{BuKuI}, \cite{BuKuII}, \cite{GoRo} and \cite{TA}, we construct the maximal simple cuspidal $k$-types of $\rM'$  from those of $\rM$, where $\rM$ is a Levi subgroup of $\rG$ such that $\rM\cap\rG'=\rM'$. More precisely, let $\pi$ be an irreducible cuspidal $k$-representation of $\rM$, such that $\pi'$ is an component of $\res_{\rM'}^{\rM}\pi$. Let $(J_{\rM},\lambda_{\rM})$ be a maximal simple cuspidal $k$-type of $\rM$ of $\pi$ (we inherit the notations from those of \cite{BuKu}), which means the compact induction $\ind_{\rK}^{\rM}\Lambda_{\rM}$ is isomorphic to $\pi$, where $\rK$ is an open subgroup of $\rM$ compact modulo center, which contains $J_{\rM}$ as the unique maximal compact subgroup, and $\Lambda_{\rM}$ is an extension of $\lambda_{\rM}$ to $\rK$. In the equation (\ref{equa 0}), the group $\rK'$ is also compact modulo centre. Furthermore, the group $\rK'$ contains $\tilde{J}_{\rM}\cap\rM'$ as the unique maximal open compact subgroup, and $\rK'$ is a normal subgroup of $(\rK\cap\rM')(\tilde{J}_{\rM}\cap\rM')$ with finite index, where $\tilde{J}_{\rM}$ is an open compact subgroup of $\rM$ containing $J_{\rM}$. The irreducible $k$-representation $\tau_{\rM'}$ of $\rK'$ contains some irreducible component of the semisimple representation $\res_{\tilde{J}_{\rM}\cap\rM'}^{\tilde{J}_{\rM}}\ind_{J_{\rM}}^{\tilde{J}_{\rM}}\lambda_{\rM}$. When $\rM'=\rG'$, the group $\rK'$ equals $\tilde{J}_{\rM}\cap\rM'$, and this simple case is considered in section \ref{section 08}, based on which the case for proper Levi is dealt.

In the construction, the technical difficulty is to prove that the compact induction $\ind_{\rK'}^{\rM'}\tau_{\rM'}\cong\pi'$ is irreducible. When the characteristic of $k$ is $0$, it is sufficient to prove that the intertwining group of $\tau_{\rM'}$ equals to $\rK'$. In our case, besides of this condition about intertwining group, we need to verify the second condition explained in section \ref{section 07}, which is given by Vign\'eras in \cite{V3}. After this construction, we give the first description of thesupercuspidal support of any irreducible cuspidal $k$-representation $\pi'$ in Proposition \ref{Lproposition 10}:

\begin{prop}
Let $\pi'$ be an irreducible cuspidal $k$-representation of $\rM'$, and $\pi$ an irreducible cuspidal $k$-representation of $\rM$ such that $\pi$ contains $\pi'$. Let $[\rL,\tau]$ be the supercuspidal support of $\pi$, where $\rL$ is a Levi subgroup of $\rM$ and $\tau$ an irreducible supercuspidal $k$-representation of $\rL$. Let $\tau'$ be a direct component of $\res_{\rL'}^{\rL}\tau$. The supercuspidal support of $\pi'$ is contained in the $\rM$-conjugacy class of $(\rL',\tau')$.
\end{prop}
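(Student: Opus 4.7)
The plan is to start from an arbitrary element of the supercuspidal support of $\pi'$, lift the picture back to $\rM$ via Frobenius reciprocity and a Mackey identity, invoke the known uniqueness of the supercuspidal support for $\pi$ (due to Vign\'eras for $\GL$-type groups), and then descend again to $\rM'$. Let $(\rL_1, \tau_1)$ be any element in the supercuspidal support of $\pi'$, so $\pi'$ is a subquotient of $i_{\rL_1}^{\rM'} \tau_1$. In the $(\GL_n, \SL_n)$-setting the Levi $\rL_1$ of $\rM'$ is of the form $\rL_1 = \widetilde{\rL}_1 \cap \rM'$ for a Levi $\widetilde{\rL}_1$ of $\rM$.

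Since $\pi'$ is a direct summand of $\res_{\rM'}^\rM \pi$, Frobenius reciprocity gives an embedding $\pi \hookrightarrow \Ind_{\rM'}^\rM \pi'$, and $\pi$ is in particular a subquotient of $\Ind_{\rM'}^\rM i_{\rL_1}^{\rM'} \tau_1$. A Mackey identity, whose single-double-coset collapse follows from $\rM = \widetilde{\rL}_1 \cdot \rM'$ (itself a consequence of $\rM = \rZ(\rM)\rM'$ and $\rZ(\rM) \subset \widetilde{\rL}_1$), yields
\[
\Ind_{\rM'}^\rM i_{\rL_1}^{\rM'} \tau_1 \;\cong\; i_{\widetilde{\rL}_1}^\rM \, \Ind_{\rL_1}^{\widetilde{\rL}_1} \tau_1.
\]
I would decompose $\Ind_{\rL_1}^{\widetilde{\rL}_1}\tau_1 = \bigoplus_m \widetilde{\tau}_m$ into irreducible $k$-representations of $\widetilde{\rL}_1$; each $\widetilde{\tau}_m$ is supercuspidal on $\widetilde{\rL}_1$ because, by a symmetric Mackey argument exchanging parabolic induction/restriction with $\res_{\rL_1}^{\widetilde{\rL}_1}$, supercuspidality is inherited in both directions between a Levi of $\rM$ and its intersection with $\rM'$.

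By irreducibility, $\pi$ is then a subquotient of some single summand $i_{\widetilde{\rL}_1}^\rM \widetilde{\tau}_{m_0}$, so $(\widetilde{\rL}_1, \widetilde{\tau}_{m_0})$ lies in the supercuspidal support of $\pi$. The known uniqueness of supercuspidal support for representations of the $\GL$-type group $\rM$ (Vign\'eras) then provides $m \in \rM$ with $\widetilde{\rL}_1 = m \rL m^{-1}$ and $\widetilde{\tau}_{m_0} \cong m \cdot \tau$. Since $\rM'$ is normal in $\rM$, we get $\rL_1 = m \rL m^{-1} \cap \rM' = m \rL' m^{-1}$; and $\tau_1$, being a component of $\res_{\rL_1}^{\widetilde{\rL}_1} \widetilde{\tau}_{m_0} = m \cdot \res_{\rL'}^\rL \tau$, is $\rL$-conjugate via some $l \in \rL$ to $m \cdot \tau'$. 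Because $l$ normalises $\rL'$, the element $ml \in \rM$ conjugates $(\rL', \tau')$ to $(\rL_1, \tau_1)$, which proves the claim.

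The principal obstacles I foresee are the two Mackey-type identities (for restriction and for induction between $\rM$ and $\rM'$, and similarly between $\widetilde{\rL}_1$ and $\rL_1$) and establishing the equivalence between supercuspidality of an irreducible representation of a Levi $\widetilde{\rL}_1$ of $\rM$ and supercuspidality of each component of its restriction to $\rL_1$. All of these rest on the specific structure of the pair $(\GL_n, \SL_n)$, in particular on $\rM = \rZ(\rM)\rM'$ and on the normality of $\rL_1$ in $\widetilde{\rL}_1$; the reduction to the already known case of $\rM$ moreover depends essentially on Vign\'eras's uniqueness theorem.
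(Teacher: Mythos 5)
Your overall strategy (lift to $\rM$, invoke Vign\'eras's uniqueness there, descend) is the right one, but the step it hinges on is false: you cannot write $\Ind_{\rL_1}^{\widetilde{\rL}_1}\tau_1$ as a direct sum $\bigoplus_m\widetilde{\tau}_m$ of irreducible representations. The quotient $\widetilde{\rL}_1/\rL_1\cong F^{\times}$ (via $\det$) is \emph{not} compact, and smooth induction along it is neither admissible, nor of finite length, nor semisimple. Already in the simplest case $\widetilde{\rL}_1=\GL_1\times\GL_1$, $\rL_1=\{(a,a^{-1})\}$, one has $\Ind_{\rL_1}^{\widetilde{\rL}_1}\tau_1\cong\tau_1\otimes C^{\infty}(F^{\times})$ with $F^{\times}$ acting by translation; its subrepresentation of compactly supported functions contains no $F^{\times}$-eigenvector, so the whole space cannot be a sum of irreducibles. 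In general $\Ind_{\rL_1}^{\widetilde{\rL}_1}\tau_1$ contains the compact induction $\ind_{\rL_1}^{\widetilde{\rL}_1}\tau_1$, which has no irreducible subrepresentation (a central element whose determinant is a positive power of a uniformizer can have no eigenfunction with support compact modulo $\rL_1$), and a nonzero submodule of a semisimple module must contain a simple one. This is precisely the asymmetry between the two directions: $\res_{\rM'}^{\rM}$ of an irreducible is semisimple of finite length (Proposition \ref{prop 6}), while $\Ind_{\rM'}^{\rM}$ and $\Ind_{\rL_1}^{\widetilde{\rL}_1}$ are not. Once the decomposition is gone, your sentence ``by irreducibility, $\pi$ is a subquotient of a single $i_{\widetilde{\rL}_1}^{\rM}\widetilde{\tau}_{m_0}$'' has no support, and the reduction to Vign\'eras's theorem collapses. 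A secondary slip: $\rM=\rZ(\rM)\rM'$ is false in general (already for $\rM=\GL_2$, where $\det(\rZ(\rM)\rM')=(F^{\times})^{2}$); the identity you actually need for the single-orbit Mackey collapse, $\rM=\widetilde{\rL}_1\cdot\rM'$, does hold, but because $\det$ is surjective on every Levi subgroup of $\rM$, not for the reason you give.

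The paper circumvents exactly this obstacle by never inducing along the noncompact quotient at this stage. It lifts your $(\rL_1,\tau_1)$ to an irreducible representation $\tau_0$ of the corresponding Levi of $\rM$ containing it (Proposition \ref{prop 6}), which is supercuspidal by Lemma \ref{Llemma 11}; then the restriction identity $\res_{\rM'}^{\rM}\circ i_{\rL_0}^{\rM}\cong i_{\rL_0'}^{\rM'}\circ\res_{\rL_0'}^{\rL_0}$ (Theorem \ref{thm 5.2}) together with the finite length of $i_{\rL_0}^{\rM}\tau_0$ produces an irreducible \emph{cuspidal} representation $\pi_0$ of $\rM$ whose restriction contains $\pi'$. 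The point is that $\pi_0$ need not be $\pi$, and the missing comparison is supplied by Proposition \ref{Lproposition 9}: $\pi_0\cong\pi\otimes\chi\circ\det$ for some quasicharacter $\chi$; only then does Vign\'eras's uniqueness for $\rM$ give $(\rL_0,\tau_0)\sim(\rL,\tau\otimes\chi\circ\det)$ and hence the claim, using that the components of $\res_{\rL'}^{\rL}\tau$ are $\rL$-conjugate. Any repair of your argument along its own lines (for instance inducing in stages through $\rZ(\rM)\rM'$ or $S\rM'$ with compact quotient, as in the second half of the proof of Proposition \ref{prop 6}) again only yields some cuspidal $\pi_0$ of $\rM$ containing $\pi'$, not $\pi$ itself, so the twist-comparison of Proposition \ref{Lproposition 9} — which your proposal omits entirely — cannot be avoided.
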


We finish the proof of the uniquess of the supercuspidal support of $\pi'$ by proving that there is only one irreducible component $\tau_0'$ of $\res_{\rL'}^{\rL}\tau$, such that $(\rL',\tau_0')$ belongs to the supercuspidal support of $\pi'$. The idea is to study the Whittaker model of $\tau'$ and apply the derivative formula given by Bernstein and Zelevinsky in \cite{BeZe}. For this, we need to generalise their formula to the case of $k$-representations of $\rM'$. In fact, let $\mathrm{T}_{\rM}$ be a fixed maximal split torus of $\rM$ defined over $F$ and $\mathrm{T}_{\rM'}=\mathrm{T}_{\rM}\cap\rM'$. Fix $\mathrm{B}_{\rM}=\mathrm{T}_{\rM}\rU_{\rM}$ a Borel subgroup of $\rM$, and $\mathrm{B}_{\rM'}=\mathrm{T}_{\rM'}\rU_{\rM}$ a Borel subgroup of $\rM'$. There is a non-degenerate character $\dot{\theta}$ of $\rU_{\rM}$ such that the highest derivative of $\pi'$ according to $\dot{\theta}$ is non-zero. On the other hand, assume that $\rL$ is a standard Levi subgroup of $\rM$, and $\theta$ denotes $\dot{\theta}\vert_{\rU_{\rL}}$, which is also a non-degenerate character of $\rU_{\rL}$. There is only one irreducible component $\tau_0'$ of $\res_{\rL'}^{\rL}\tau$, such that the highest derivative of $\tau_0'$ according to $\theta$ is non-zero. If $\pi'$ is a subquotient of $i_{\rL'}^{\rM'}\tau'$ for some irreducible component $\tau'$ of $\res_{\rL'}^{\rL}\tau$, then the highest derivative of $i_{\rL'}^{\rM'}\tau'$ according to $\dot{\theta}$ is also non-zero (Proposition \ref{Uprop 2}). Applying the generalised formula of derivative in Corollary \ref{dcor 6} $(4)$, we obtain that the highest derivative of $i_{\rL'}^{\rM'}\tau'$ according to $\dot{\theta}$ is isomorphic to the highest derivative of $\tau'$ according to $\theta$. Hence $\tau'\cong\tau_0'$. This ends this thesis.

\section*{Acknowledgement}
The article is based on my PhD thesis. I would like to thank my advisors Anna-Maire Aubert and Michel Gros, for the suggestions of the project and support during the writing of my thesis. I thank Vincent S\'echerre for his helpful suggestions of improvements.

\section{$k$-representations of finite groups $\mathrm{SL}_n(F)$}
\label{chapter 3}
\subsection{Representation theory of finite groups}
Let $\textbf{G}'$ and $\bG$ be the connected reductive group defined over $\mathrm{F}_q$ with type $\mathrm{SL}_n$ and type $\mathrm{GL}_n$ respectively, where $q$ is a power of a prime number $p$. Note $\rG'=\textbf{G}'(\mathrm{F}_q)$ and $\rG=\bG(\mathrm{F}_q)$. We have two main purposes in this section:\\
- Prove Theorem \ref{theorem 1}.\\
- For any irreducible cuspidal $k$-representation of $\rG'$, construct its $W(k)$-projective cover.\\
Notice that the center of $\bG'$ is disconnected but the center of $\bG$ is connected, so we want to follow the method of \cite{DeLu} (page $132$), which is also applied in \cite{Bonn}: consider the regular inclusion $i: \bG' \rightarrow \bG$, then we want to use functor $\mathrm{Res}_{\rG'}^{\rG}$ to induce properties from $\rG$-representations to $\rG'$-representations. 

\subsubsection{Preliminary}\
\paragraph{Regular inclusion $i$}\
We summarize the context we will need in section 2 of [Bon]:

The canonical inclusion $i$ commutes with $F$ and maps $F$-stable maximal torus to $F$-stable maximal torus. If we fix one $F$-stable maximal torus $\bT$ of $\rG$ and note $\bT'= i^{-1}(\bT)$, then $i$ induces a bijection between the root systems of $\bG$ and $\bG'$ relative to $\bT$ and $\bT'$. Furthermore, $i$ gives a bijection between standard $F$-stable parabolic subgroups of $\bG$ and $\bG'$ with inverse $\_ \cap \bG'$, which respects subsets of simple roots contained by parabolic subgroups. Besides, restrict $i$ to any $F$-stable Levi subgroup $\bL$ of any $F$-stable parabolic subgroup of $\bG$, it is the canonical inclusion from $\bL'$ to $\bL$. 

From now on, we fix a $F$-stable maximal torus $\bT_0$ of $\bG$, whence fix one of $\bG'$ as well, noted as $\bT'_0$. For any $F$-stable standard Levi subgroup $\bL$, we always use $\bL'$ to denote the $F$-stable Levi subgroup of $\bG'$ under $i$, and use $\rL$ and $\rL'$ to denote the corresponding split Levi subgroups $\bL^{F}$ and $\bL'^{F}$ respectively.

Now we consider the dual groups. Let $(\bG^{\ast}, \bT_0^{\ast}, F^{\ast} )$ and  $(\bG'^{\ast}, \bT'^{\ast}, F^{\ast})$ be triples dual to $(\bG, \bT_0, F)$ and $(\bG', \bT', F)$ respectively. We can induce from $i$ a surjective morphism $i^{\ast}: \bG^{\ast} \rightarrow \bG'^{\ast}$, which commutes with $F^{\ast}$ and maps $\bT_0^{\ast}$ to $\bT_0^{'\ast}$. For any $F$-stable standard parabolic subgroup $\textbf{P}$ and its $F$-stable Levi soubgroup $\bL$, let us use $\textbf{P}'$ and $\bL'$ to denote the $F$-stable standard parabolic subgroups $\textbf{P}\cap\bG'$ and Levi subgroups $\bL\cap\bG'$, then we have:
$$i^{\ast} (\bL^{\ast}) = \bL'^{\ast},$$
whence, if we note $\bL'^{\ast^{F^{\ast}}}= \rL'^{\ast}$ and $\bL^{\ast^{F^{\ast}}}= \rL^{\ast}$, then:
$$i^{\ast} (\rL^{\ast}) = \rL'^{\ast}.$$

\paragraph{Lusztig series and $\ell$-blocks}\
From now on, if we consider a semisimple element $\tilde{s} \in \rL^{\ast}$ for any split Levi subgroup $\rL^{\ast}$ of $\rG^{\ast}$, we always use $s$ to denote $i^{\ast}(\tilde{s})$ and $[\tilde{s}]$ (resp. $[s]$) to denote its $\rL^{\ast}$(resp. $\rL'^{\ast}$)-conjugacy class. Notice that the order of $\tilde{s}$ is divisible by the order of $s$, whence $s$ is $\ell$-regular when $\tilde{s}$ is $\ell$-regular, where $\ell$ denotes a prime number different from $p$.

Let $\textbf{G} (\mathrm{F}_q)$ be any finite group of Lie type, where $\textbf{G}$ is a connected reductive group defined over $\mathrm{F}_q$. For any irreducible representation $\chi$ of $\textbf{G} (\mathrm{F}_q)$, let $e_{\chi}$ denote the central idempotent of $\bar{\cK}(\textbf{G}(\mathrm{F}_q))$ associated to $\chi$ (see definition in the beginning of section 2 of \cite{BrMi}). Fixing some semisimple element $s \in \textbf{G}^{\ast}(\mathrm{F}_q)$, where $\textbf{G}^{\ast}$ denotes the dual group of $\textbf{G}$, then $\mathcal{E}(\textbf{G} (\mathrm{F}_q),(s))$ denotes the Lusztig serie of $\textbf{G} (\mathrm{F}_q)$ corresponding to the $\textbf{G}^{\ast}(\mathrm{F}_q)$-conjugacy class $[s]$ of $s$. If $s$ is $\ell$-regular (i.e. its order is prime to $p$), define
$$\mathcal{E}_\ell (\textbf{G} (\mathrm{F}_q),s) := \bigcup_{t \in (C_{\textbf{G}^{\ast}}(s)^{F^{\ast}} )_\ell} \mathcal{E}(\textbf{G}(\mathrm{F}_q), (ts)).$$
Here $(C_{\textbf{G}^{\ast}}(s)^{F^{\ast}} )_\ell$ denotes the group consisting with all $\ell$-elements of $C_{\textbf{G}^{\ast}}(s)^{F^{\ast}}$, so $ts$ is still semisimple. Now define:
$$b_s= \sum_{\chi \in \mathcal{E}_\ell(\textbf{G}(\mathrm{F}_q),s)} e_{\chi},$$ 
which obviously belongs to $\bar{\cK}(\textbf{G}(\mathrm{F}_q))$. 

\begin{thm}[Brou$\mathrm{\acute{e}}$, Michel]
\label{theorem Mi,Br}
Let $s \in \textbf{G}^{\ast}(\mathrm{F}_q)$ be any semisimple $\ell$-regular element, and $\mathcal{L}'$ be the set of prime numbers without $\ell$. Define $\bar{\mathbb{Z}}_\ell = \bar{\mathbb{Z}}[1/r]_{r\in \mathcal{L}'}$, where $\bar{\mathbb{Z}}$ denotes the ring of algebraic integers, then $b_s \in \bar{\mathbb{Z}}_\ell (\textbf{G}(\mathrm{F}_q))$.  
\end{thm}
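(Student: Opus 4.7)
The plan is to establish that the central idempotent $b_s\in\bar{\cK}(\rG)$ has coefficients of non-negative $\ell$-adic valuation. Using the explicit formula $e_{\chi}=\frac{\chi(1)}{|\rG|}\sum_{g\in\rG}\chi(g^{-1})g$, summing gives
\[
b_s = \frac{1}{|\rG|}\sum_{g\in\rG}\Bigl(\sum_{\chi\in\mathcal{E}_\ell(\rG,s)}\chi(1)\chi(g^{-1})\Bigr)\,g,
\]
so the task reduces to showing that each inner sum has $\ell$-adic valuation at least $v_\ell(|\rG|)$. Since $\ell\neq p$, the $p$-part of $|\rG|$ is harmless (it is cancelled by the factor $|\rG|_{p}$ dividing $\chi(1)$ for every irreducible $\chi$); the real obstruction is the $\ell$-part of $|\rG|$.

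Next I would rewrite the character sum using Deligne--Lusztig theory. By Lusztig's Jordan decomposition, each $\chi\in\mathcal{E}(\rG,(ts))$ is in bijection with a unipotent character $\psi_{\chi}$ of the centraliser $C_{\bG^{\ast}}(ts)^{F^{\ast}}$, and the character values of $\chi$ on $\rG$ are computable from those of $\psi_{\chi}$ by the Deligne--Lusztig character formula together with the second adjoint formula for $R_{\bT}^{\bG}$. Since every $R_{\bT}^{\bG}(\theta)$ takes values in $\bar{\mathbb{Z}}$, this step replaces a global sum over irreducible characters by a sum of values of genuinely $\bar{\mathbb{Z}}$-valued virtual characters, shifting all integrality questions to the centralisers $C_{\bG^{\ast}}(s)^{F^{\ast}}$.

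The decisive step is the character-sum identity: for any $g=g_{\ell'}g_{\ell}\in\rG$ with its Jordan decomposition, evaluating the previous expression and summing over $t\in(C_{\bG^{\ast}}(s)^{F^{\ast}})_{\ell}$ transforms, through the orthogonality relations of unipotent characters of $C_{\bG^{\ast}}(s)^{F^{\ast}}$, into a sum which is supported only on elements whose $\ell$-part corresponds to a conjugate of $1$. The union $\mathcal{E}_\ell(\rG,s)=\bigcup_{t}\mathcal{E}(\rG,(ts))$ is precisely engineered so that summing $t$ over all $\ell$-elements recovers a characteristic-function-like expression, and Steinberg's identity $\sum_{\psi\text{ unip.}}\psi(1)^{2}=|C_{\bG^{\ast}}(s)^{F^{\ast}}|_{p'}$ in the centraliser produces exactly the factor needed to cancel $|\rG|_{\ell}$ in the denominator.

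The hard part will be this last character-sum identity; everything else is bookkeeping with Deligne--Lusztig characters. It requires combining Lusztig's Jordan decomposition of characters, the Deligne--Lusztig character formula for values at mixed elements $g_{\ell'}g_{\ell}$, and the orthogonality of Deligne--Lusztig virtual characters of the centralisers, in such a way that all $\ell$-denominators visibly cancel. Once this integrality is established, the fact that $b_s$ is a sum of distinct central primitive idempotents of $\bar{\cK}\rG$ makes $b_s$ automatically central and idempotent in $\bar{\mathbb{Z}}_{\ell}(\rG)$, proving the statement.
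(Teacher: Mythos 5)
The paper does not actually prove this statement: it is quoted with attribution to Brou\'e--Michel and used as a black box (the bibliography entry \cite{BrMi} is the intended ``proof''). Your proposal instead sketches a from-scratch argument, and as it stands it has a genuine gap: the decisive step --- showing that $\sum_{\chi\in\mathcal{E}_\ell(\rG,s)}\chi(1)\chi(g^{-1})$ has $\ell$-adic valuation at least $v_\ell(|\rG|)$ for every $g$ --- is precisely the content of the Brou\'e--Michel theorem, and you defer it explicitly (``the hard part will be this last character-sum identity'') rather than establish it. Moreover, the ingredients you name for that step are not sound as stated: the ``Steinberg identity'' $\sum_{\psi\ \mathrm{unip.}}\psi(1)^2=|C_{\bG^{\ast}}(s)^{F^{\ast}}|_{p'}$ is false (already for $\GL_2(\mathrm{F}_q)$ with $s=1$ the unipotent degrees are $1$ and $q$, and $1+q^2\neq (q-1)^2(q+1)$), and the claim that $|\rG|_{p}$ divides $\chi(1)$ for every irreducible $\chi$ is also false (take $\chi$ trivial). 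The $p$-part of $|\rG|$ is harmless only because every prime $r\neq\ell$ is inverted in $\bar{\mathbb{Z}}_\ell$, not because of any degree divisibility; the whole difficulty is the $\ell$-part, which your outline does not actually cancel.

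A complete argument along your lines would have to reproduce the actual Brou\'e--Michel computation: one needs that $\sum_{\chi\in\mathcal{E}(\rG,(s))}\chi(1)\chi$ is a uniform function, i.e.\ an explicit combination of Deligne--Lusztig virtual characters $R_{\bT}^{\bG}(\theta)$ with controlled denominators, then the character formula and the integrality of Green functions to evaluate at $g=g_{\ell}g_{\ell'}$, and finally the summation over $t\in (C_{\bG^{\ast}}(s)^{F^{\ast}})_\ell$, which via orthogonality for the $\ell$-parts of the characters $\theta$ of the tori produces exactly the $\ell$-power factors that clear the denominators. None of this is carried out in the proposal, so it does not yet prove the integrality claim; in the context of this paper the correct and economical solution is simply the citation, as the author does.
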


\begin{rem}
The theorem above tells us that $\mathcal{E}_\ell(\textbf{G}(\mathrm{F}_q),s)$ is an union of $\ell$-blocks.
\end{rem}

Let $K$ be a finite field extension of $\cK$ which is sufficiently large for $\rG$, with valuation ring $\rO$, which contains $W(k)$ as a subring. We have $\cK$ is complete, implying $K$ is complete. Notice that $K$ is also sufficiently large for any split Levi subgroup $\rL$ of $\rG$, which means all the irreducible $\bar{\cK}$-representations of $\rL$ is defined over $K$, whence there is a natural bijection:
$$\mathrm{Irr}_{\bar{\cK}}(\rL) \longleftrightarrow \mathrm{Irr}_K(\rL),$$
so we define Lusztig series for $\mathrm{Irr}_K(\rL)$ through this bijection. Since $K$ is also sufficiently large for any split Levi $\rL'$ of $\rG'$, we have the same bijection for $\mathrm{Irr}_K(\rL')$ and $b_s \in \rO[\rL']$.

\begin{prop}
\label{proposition 2.3}
For any split Levi subgroup $\rL$ (resp. $\rL'$) and any semisimple $\ell$-regular element $\tilde{s} \in \rL^{\ast}$ (resp. $s \in \rL'^{\ast}$), we have: $b_{\tilde{s}} \in \rO[\rL]$ (resp. $b_s \in \rO[\rL']$).
\end{prop}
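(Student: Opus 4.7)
The plan is to reduce the statement to the Brou\'e--Michel theorem (Theorem~\ref{theorem Mi,Br}) applied directly to the Levi subgroup itself, and then upgrade from $\bar{\bZ}_\ell$-coefficients to $\rO$-coefficients using that $K$ is sufficiently large. The key conceptual point is that the $F$-stable standard Levi subgroups $\bL$ and $\bL'$ are themselves connected reductive groups defined over $\F_q$, so that $\rL = \bL^{F}$ and $\rL' = \bL'^{F}$ are finite groups of Lie type in their own right, and Theorem~\ref{theorem Mi,Br} applies to them verbatim.

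Concretely, I would proceed in three steps. First, applying Brou\'e--Michel to $\bL$ with the semisimple $\ell$-regular element $\tilde{s} \in \rL^{\ast}$ yields $b_{\tilde{s}} \in \bar{\bZ}_\ell[\rL]$, and the same argument with $\bL'$ and $s \in \rL'^{\ast}$ yields $b_s \in \bar{\bZ}_\ell[\rL']$. Second, since $K$ is sufficiently large for $\rL$ (respectively $\rL'$), each central idempotent $e_\chi$ for $\chi \in \mathrm{Irr}_K(\rL)$ already lies in $K[\rL]$, hence so does the finite sum
\[
b_{\tilde{s}} \;=\; \sum_{\chi \in \mathcal{E}_\ell(\rL,\tilde{s})} e_\chi \;\in\; K[\rL],
\]
and similarly $b_s \in K[\rL']$. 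Third, I would check the set-theoretic identity $\bar{\bZ}_\ell \cap K \subseteq \rO$: any $\alpha \in \bar{\bZ}_\ell$ can be written $\alpha = \beta/m$ with $\beta$ an algebraic integer and $m$ a positive integer prime to $\ell$. Viewed inside the finite extension $K$ of $\cK = \mathrm{Frac}(W(k))$, the denominator $m$ is a unit of $\rO$ (since $\rO$ has residue characteristic $\ell$) and $\beta$ is integral over $\bZ$, hence has non-negative $\ell$-adic valuation, so $\alpha \in \rO$. Combining, $b_{\tilde{s}} \in \bar{\bZ}_\ell[\rL] \cap K[\rL] \subseteq \rO[\rL]$, and analogously for $b_s$.

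I do not expect a genuine obstacle here: the argument is essentially a bookkeeping reduction to the known Brou\'e--Michel result applied at the level of the Levi, plus a routine identification of their global notion of $\ell$-integrality (algebraic integers with primes different from $\ell$ inverted) with the local notion supplied by the valuation ring $\rO$ of $K$. The only step that merits care in writing up is this last matching of integrality notions, which must be done once and applied on both the $\rL$- and $\rL'$-sides.
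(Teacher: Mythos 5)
Your proposal is correct and follows essentially the same route as the paper: apply the Brou\'e--Michel theorem to the Levi $\rL$ (resp.\ $\rL'$) itself, note that sufficiency of $K$ places each idempotent $e_\chi$, hence $b_{\tilde{s}}$ and $b_s$, in $K[\rL]$ (resp.\ $K[\rL']$), and intersect to land in $\rO$. The only difference is that you make explicit the comparison of integrality notions, $\bar{\bZ}_\ell \cap K \subseteq \rO$, which the paper's two-line proof leaves implicit.
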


\begin{proof}
We deduce from the analysis above and the definition that $e_{\chi} \in K(\rL)$. Combining this with theorem \ref{theorem Mi,Br}, we conclude that $b_{\tilde{s}} \in \rO[\rL]$. The same for $b_s$'s. 
\end{proof}

\paragraph{Gelfand-Graev lattices and its projective direct summands}\
For any split Levi subgroup $\rL'$ of $\rG'$, fix one rational maximal torus $\rT'$ and let $\mathrm{B}_{\rL'}'$ be a standard split Borel subgroup with unipotent radical $\rU_{\rL'}'$, then $\mathrm{O_{U}}(\rL')$ denotes the set of non-degenerated characters of $\rU_{\rL'}'$. Consider any $\mu \in \rO_{\rU} (\rL')$, of which the representation space is $1$-dimensional, so it obviously has an $\rO [\rU_{\rL'}']$-lattice , noted as $\rO_{\mu}$. Define $\rY_{\rL', \mu} = \Ind_{\rU_{\rL'}'}^{\rL'} \rO_{\mu}$, the \textbf{Gelfand-Graev lattice} associated to $\mu$. In fact, we have that $\rY_{\rL', \mu}$ is defined up to the $\rT'$-conjugacy class of $\mu$. Take any $\ell$-regular semisimple element $s \in \rL'^{\ast}$, define:
$$\rY_{\rL', \mu, s} = b_s \cdot \rY_{\rL', \mu}.$$
Meanwhile, from definition we have directly that $\sum_{[s]}b_s=1$, where the sum runs over all the $\ell$-regular semisimple $\rL'^{\ast}$-conjugacy class $[s]$. So:
$$\rY_{\rL', \mu} = \bigoplus_{[s]} \rY_{\rL', \mu, s}.$$
Since $\rO_{\mu}$ is projective (free and rank 1) and induction respect projectivity, we see that $\rY_{\rL', \mu}$ is a projective $\rO[\rL']$-module. Proposition \ref{proposition 2.3} implies that $\rY_{\rL', \mu, s}$ are  $\rO[\rL']$-modules and direct components of projective $\rO[\rL']$-module $\rY_{\rL', \mu}$, so we conclude that $\rY_{\rL', \mu, s}$ are projective $\rO[\rL']$-modules.

Let $\rG$ be the group of $\mathrm{F}_q$ points of an algebraic group defined over $\mathrm{F}_q$, and $(\mathcal{K},\mathcal{O},k)$ be a splitting $\ell$-modular system. We define 
$$\mathcal{E}_{\ell'}(\rG):=\bigcup_{z \text{ semi-simple, }\ell-\text{regular}}\mathcal{E}(\rG,z)$$

\begin{defn}[Gruber, Hiss]
\label{defn fin aa}
Let $\rG$ be the group of $\mathrm{F}_q$ points of an algebraic group defined over $\mathrm{F}_q$, and $(\mathcal{K},\mathcal{O},k)$ be a splitting $\ell$-modular system. Let $Y$ be an $\mathcal{O}[\rG]$-lattice with ordinary character $\psi$. Write $\psi=\psi_{\ell'}+\psi_{\ell}$, such that all constituents of $\psi_{\ell'}$ and non of $\psi_{\ell}$ belong to $\mathcal{E}_{\ell'}(\rG)$. Then there exists a unique pure sublattice $V\leq Y$, such that $Y\slash V$ is an $\mathcal{O}[\rG]$-lattice whose character is equal to $\psi_{\ell'}$. The quotient $Y\slash V$ is called the $\ell$-regular quotient of $Y$ and noted by $\pi_{\ell'}(Y)$.
\end{defn}

\begin{cor}
Let $\rL'$ be a split Levi subgroup of $\rG'$, and $s$ be an $\ell$-regular semisimple element in $\rL'^{\ast}$. For any $\mu \in \rO_{\mathrm{U}}(\rL')$, the module $\rY_{\rL', \mu, s}$ is indecomposable.
\end{cor}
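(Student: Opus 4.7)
My plan is to reduce the statement to the known fact that the analogous module for the ambient $\mathrm{GL}_n$-type group is indecomposable, using the regular embedding $i:\bG'\hookrightarrow\bG$. The key preliminary input is: for $\rL$ of $\mathrm{GL}_n$-type (so $\Cent(\bL)$ is connected), the $\rO[\rL]$-module $\rY_{\rL,\tilde{\mu},\tilde{s}}$ is indecomposable for every non-degenerate $\tilde{\mu}$ and every $\ell$-regular semisimple $\tilde{s}\in\rL^{\ast}$. This is a theorem of Hiss and follows from the fact that the $\cK$-Gelfand--Graev representation is multiplicity-free when the center is connected, combined with the existence of a unique generic simple $k$-constituent of $\pi_{\ell'}(\rY_{\rL,\tilde{\mu},\tilde{s}})$ in each $b_{\tilde{s}}$-block.

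First I would lift the data $(s,\mu)$ to the $\rL$-side. Since $i^{\ast}:\rL^{\ast}\twoheadrightarrow\rL'^{\ast}$ is surjective, choose an $\ell$-regular semisimple $\tilde{s}\in\rL^{\ast}$ with $i^{\ast}(\tilde{s})=s$. Because every unipotent element has determinant $1$, the unipotent radicals satisfy $\rU_{\rL}=\rU_{\rL'}'$, and $\mu$ is literally a non-degenerate character $\tilde{\mu}$ of $\rU_{\rL}$. Form $\rY_{\rL,\tilde{\mu},\tilde{s}}$, which is an indecomposable projective $\rO[\rL]$-module by the input above.

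Next I would compute $\Res_{\rL'}^{\rL}\rY_{\rL,\tilde{\mu},\tilde{s}}$. Mackey's formula, combined with $\rU_{\rL}\subset\rL'$ and the fact that any $\rL$-conjugate of $\rU_{\rL}$ remains unipotent and hence in $\rL'$, collapses the double-coset space to $\rL'\backslash\rL$ and yields
$$\Res_{\rL'}^{\rL}\rY_{\rL,\tilde{\mu}}\;=\;\bigoplus_{x\in\rL'\backslash\rL}\rY_{\rL',x\cdot\mu},$$
where $x\cdot\mu$ denotes the conjugate character. Bonnaf\'e's theorem on Lusztig series under regular embeddings, combined with its extension to the $\ell$-regular enlargements $\mathcal{E}_\ell(\rL,\tilde{s})$ and $\mathcal{E}_\ell(\rL',s)$, then allows one to identify the $b_{\tilde{s}}$-cut on the left with the collection of $b_s$-cuts on the right, giving
$$\Res_{\rL'}^{\rL}\rY_{\rL,\tilde{\mu},\tilde{s}}\;=\;\bigoplus_{x\in\rL'\backslash\rL}\rY_{\rL',x\cdot\mu,s}.$$

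Finally, Clifford theory for projective modules applies: since $\rL'$ is normal in $\rL$ with abelian quotient $\rL/\rL'\hookrightarrow\F_q^{\times}$, the restriction to $\rL'$ of an indecomposable projective $\rO[\rL]$-module is a direct sum of $\rL/\rL'$-conjugates of a single indecomposable projective $\rO[\rL']$-module $Q$. By Krull--Schmidt, each summand $\rY_{\rL',x\cdot\mu,s}$ of the Mackey decomposition must be isomorphic to some conjugate $Q^{\bar{g}}$ and is in particular indecomposable; specializing to $x=1$ gives the result.

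The main obstacle is the third step: showing that the restriction of $b_{\tilde{s}}\cdot\rY_{\rL,\tilde{\mu}}$ to $\rL'$ decomposes exactly as the $b_s$-cuts of the $\rL'$-Gelfand--Graev summands. This requires Bonnaf\'e's compatibility of $\mathcal{E}(\rL,\tilde{s})$ with $\mathcal{E}(\rL',s)$ under the regular embedding, upgraded to the $\ell$-regular enlargements, together with careful bookkeeping of how central idempotents of $\rO[\rL]$ act on the restricted module. Once this character-theoretic identification is in place, the Clifford--Krull--Schmidt argument that finishes the proof is essentially formal.
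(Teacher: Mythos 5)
Your reduction to the $\GL$-side breaks precisely at the step you flag as the main obstacle, and the breakage is not mere bookkeeping. The identity
$$\Res_{\rL'}^{\rL}\rY_{\rL,\tilde{\mu},\tilde{s}}\;=\;\bigoplus_{x\in\rL'\backslash\rL}\rY_{\rL',\,x\cdot\mu,\,s}$$
is false whenever more than one $\ell$-regular semisimple class of $\rL^{\ast}$ lies above $[s]$, which is the generic situation: the fibre $\mathcal{S}_{[s]}$ of $i^{\ast}$ consists of classes differing by central $\ell$-regular elements of $\rL^{\ast}$, and each of them restricts nontrivially into the $b_s$-cut of $\Res_{\rL'}^{\rL}\rY_{\rL}$ (Corollary \ref{corollary 3.1}). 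The correct statement is Lemma \ref{lemma a.1}: only the sum $\bigoplus_{[\tilde{s}]\in\mathcal{S}_{[s]}}\Res_{\rL'}^{\rL}\rY_{\rL,\tilde{s}}$ equals $\vert\rZ(\rL):\rZ(\rL')\vert\bigoplus_{\mu}\rY_{\rL',\mu,s}$, so a single $\Res_{\rL'}^{\rL}\rY_{\rL,\tilde{s}}$ is only an unidentified direct summand of your right-hand side. Already for $\rL=\GL_2(\F_q)$ and $s=1$, every $\ell$-regular central element $\tilde{z}$ of $\rL^{\ast}$ lies over $[s]$, and the two sides of your identity have different $\rO$-ranks.

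Even with the corrected identity, the closing Clifford/Krull--Schmidt step does not deliver the conclusion. Clifford theory gives that the restriction of an indecomposable projective $\rO[\rL]$-module is a direct sum of conjugates of a single indecomposable projective $Q$; comparing this with the Gelfand--Graev/Mackey decomposition only shows that each $\rY_{\rL',x\cdot\mu,s}$ is a direct sum of \emph{some} conjugates of $Q$ -- nothing forces it to be exactly one. To exclude further splitting you must count indecomposable summands, i.e.\ you need to know that the Gelfand--Graev representation of $\rL'$ attached to $\mu$ has a \emph{unique} irreducible constituent lying in $\mathcal{E}(\rL',(s))$ after extending scalars. But that multiplicity-one fact is the whole content of the corollary, and it is what the paper's proof uses directly and without any detour through $\rL$: a projective $\rO[\rL']$-module is indecomposable if and only if its $\ell$-regular quotient $\pi_{\ell'}(\rY_{\rL',\mu,s})$ is (Gruber--Hiss; Geck, Lemma 5.11), the extension $K\otimes\pi_{\ell'}(\rY_{\rL',\mu,s})$ is the unique irreducible subrepresentation of $K\otimes\rY_{\rL',\mu}$ in the series $\mathcal{E}(\rL',(s))$ (Bonnaf\'e), and torsion-freeness of the lattice then gives indecomposability. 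So your route either imports exactly this input -- making the reduction to $\rL$ superfluous -- or leaves the decisive point unproved.
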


\begin{proof}
Since $\rY_{\rL', \mu, s}$ is a projective $\rO [\rL']$-module, the section \S $4.1$ of \cite{GrHi} or Lemma 5.11(Hiss) in \cite{Geck} tells us that it is indecomposable if and only if its $\ell$-regular quotient $\pi_{l'} (\rY_{\rL', \mu, s})$ (see \S 3.3 in \cite{GrHi}) is indecomposable. Inspired by section 5.13. of \cite{Geck}, we consider $K \otimes \pi_{l'}(\rY_{\rL', \mu, s})$, which is the irreducible sub-representation of $K \otimes \rY_{\rL', \mu}$ lying in Lusztig serie $\mathcal{E}(\rL', (s))$. The module $\pi_{\ell'}(\rY_{\rL',\mu,s})$ is torsion-free, so we deduce that $\pi_{l'}(\rY_{\rL', \mu, s})$ is indecomposable.
\end{proof}

\begin{prop}
\label{proposition 2.5}
Let $\rL'$ be split Levi subgroup of $\rG'$, and $\mu \in \rO_{\rU}(\rL')$. All the projective indecomposable direct summands $\rY_{\rL', \mu ,s }$ of Gelfand-Graev lattice $\rY_{\rL',\mu}$ are defined over $W(k)$, which means there exist projective $W(k)[\rL']$-modules $\mathcal{Y}_{\rL',\mu,s}$ such that $\mathcal{Y}_{\rL',\mu,s} \otimes \rO= \rY_{\rL', \mu, s}$. In particular, $\mathcal{Y}_{\rL',\mu,s}$ are indecomposable.
\end{prop}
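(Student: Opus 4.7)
The plan is to descend both factors of the idempotent product $\rY_{\rL',\mu,s} = b_s \cdot \rY_{\rL',\mu}$ from $\mathcal{O}$ down to $W(k)$, and then check that the resulting $W(k)$-module has all the required properties.

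First, I would observe that the Gelfand-Graev lattice itself is already defined over $W(k)$. Indeed, $\mu$ is a non-degenerate character of $\rU'_{\rL'}$, which is a finite $p$-group, so $\mu$ takes values in $p^i$-th roots of unity for some $i \geq 0$. Since $\ell \neq p$ and $k$ is algebraically closed of characteristic $\ell$, all such roots of unity lie in $k^{\times}$ and Teichm\"uller lifting provides unique lifts in $W(k)^{\times}$. This yields a rank-one $W(k)[\rU'_{\rL'}]$-module $W(k)_\mu$, whose induction $\mathcal{Y}_{\rL',\mu} := \Ind_{\rU'_{\rL'}}^{\rL'} W(k)_\mu$ is a projective $W(k)[\rL']$-module satisfying $\mathcal{Y}_{\rL',\mu} \otimes_{W(k)} \mathcal{O} \cong \rY_{\rL',\mu}$.

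Next I would descend $b_s$ along $W(k) \hookrightarrow \mathcal{O}$. By Proposition \ref{proposition 2.3}, $b_s \in \mathcal{O}[\rL']$ is a central idempotent; reducing modulo the maximal ideal of $\mathcal{O}$ gives a central idempotent $\bar{b}_s \in k[\rL']$. Since $W(k)$ is complete local with residue field $k$, the lifting of idempotents theorem produces a unique central idempotent $\tilde{b}_s \in W(k)[\rL']$ above $\bar{b}_s$. Its image in $\mathcal{O}[\rL']$ is then a central idempotent reducing to $\bar{b}_s$; by the same uniqueness applied over $\mathcal{O}$, this image must equal $b_s$. Defining $\mathcal{Y}_{\rL',\mu,s} := \tilde{b}_s \cdot \mathcal{Y}_{\rL',\mu}$ gives a direct summand of a projective $W(k)[\rL']$-module, hence a projective $W(k)[\rL']$-module, and the compatibility $\tilde{b}_s \otimes 1 = b_s$ yields $\mathcal{Y}_{\rL',\mu,s} \otimes_{W(k)} \mathcal{O} = \rY_{\rL',\mu,s}$.

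For indecomposability, the cleanest argument is that $W(k)$ and $\mathcal{O}$ share the same residue field $k$, so $k \otimes_{W(k)} \mathcal{Y}_{\rL',\mu,s} \cong k \otimes_{\mathcal{O}} \rY_{\rL',\mu,s}$; for projective modules over complete Noetherian local rings, indecomposability is equivalent to indecomposability of the reduction to the residue field (via lifting of idempotents in the endomorphism ring). The previous corollary gives indecomposability of $\rY_{\rL',\mu,s}$, which thus transfers to $\mathcal{Y}_{\rL',\mu,s}$. The only mildly delicate point in the whole argument is the uniqueness of central idempotent lifts, which is standard but must be invoked twice (once for $W(k)$ and once for $\mathcal{O}$) to identify $\tilde{b}_s \otimes_{W(k)} \mathcal{O}$ with $b_s$; the rest is formal manipulation with induction and base change.
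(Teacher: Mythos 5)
Your argument is correct, and it agrees with the paper in its first half: using that $\rU'_{\rL'}$ is a $p$-group to lift $\mu$ to a rank-one $W(k)[\rU'_{\rL'}]$-lattice and setting $\mathcal{Y}_{\rL',\mu}=\Ind_{\rU'_{\rL'}}^{\rL'}(W(k)_{\mu})$ is exactly the paper's first step. Where you diverge is in producing the summands. The paper never lifts $b_s$ to $W(k)[\rL']$; instead it observes that, since $W(k)$ and $\rO$ have the same residue field $k$, the reductions modulo $\ell$ of $\mathcal{Y}_{\rL',\mu}$ and $\rY_{\rL',\mu}$ coincide, pushes the decomposition $\rY_{\rL',\mu}=\oplus_{[s]}\rY_{\rL',\mu,s}$ down to this reduction, and then lifts it back to a decomposition of $\mathcal{Y}_{\rL',\mu}$ into indecomposable projective $W(k)[\rL']$-modules via the standard correspondence between projective modules over a complete local ring and over its residue field (Serre, Prop.\ 42(b) and Lemma 21(b)); the identity $\mathcal{Y}_{\rL',\mu,s}\otimes\rO=\rY_{\rL',\mu,s}$ is then also read off from that correspondence. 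You instead descend the block idempotent itself: lift $\bar b_s$ to a central idempotent $\tilde b_s\in W(k)[\rL']$ and use uniqueness of central idempotent lifts over both $W(k)$ and $\rO$ to identify the image of $\tilde b_s$ with $b_s$, so that $\mathcal{Y}_{\rL',\mu,s}=\tilde b_s\cdot\mathcal{Y}_{\rL',\mu}$ makes the base-change identity immediate; indecomposability then comes from the reduction via idempotent lifting in endomorphism rings, which is essentially the same fact of Serre that the paper quotes. Both routes rest on the same two inputs (completeness plus the common residue field $k$), yours giving a more canonical description of the summand as a block truncation, the paper's avoiding any discussion of centrality by working directly with the Krull--Schmidt decomposition of projectives. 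One point to make explicit in your write-up: the uniqueness you invoke holds for \emph{central} idempotents (two idempotents of the commutative ring $Z(\rO[\rL'])$ congruent modulo the radical are equal); for arbitrary idempotents uniqueness fails, so the centrality of $b_s$, which follows from its definition as a sum of the $e_{\chi}$, is genuinely needed at that step.
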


\begin{proof}
Notice that $\rU_{\rL'}'$ are $p$-groups, whence $\mu$ is defined over $\mathcal{K}$, and there is a $W(k)[\rU_{\rL'}']$-module $\mathcal{O}_{\mu}$ such that $\rO_{\mu}= \mathcal{O}_{\mu} \otimes_{W(k)[\rU'_{\rL'}]} \rO$. Define a projective $W(k)[\rL']$-module $\mathcal{Y}_{\rL',\mu}= \mathrm{Ind}_{\rU_{\rL'}'}^{\rL'}(\mathcal{O}_{\mu})$. Since $k$ is algebraically closed, then $\bar{\rY}_{\rL',\mu}$, the reduction modulo $\ell$ of $\rY_{\rL',\mu}$, coincides with $\bar{\mathcal{Y}}_{\rL',\mu}$, the reduction modulo $\ell$ of $\mathcal{Y}_{\rL',\mu}$. Proposition 42 (b) and Lemma 21 (b) in \cite{Serre} imply that the decomposition of $\rY_{\rL',\mu}$ gives an decomposition:
$$\bar{\rY}_{\rL',\mu}= \sum_{[s]} \bar{\rY}_{\rL',\mu,s}.$$
By the same reason, this gives an decomposition by indecomposable projective modules:
$$\mathcal{Y}_{\rL',\mu} = \mathcal{Y}_{\rL',\mu,s}$$
such that the reduction modulo $\ell$ of $\mathcal{Y}_{\rL',\mu,s}$ equals to $\bar{\rY}_{\rL',\mu,s}$. In particular, we can check directly through Proposition 42 (b) of \cite{Serre} that $\mathcal{Y}_{\rL',\mu,s} \otimes \rO= \rY_{\rL', \mu, s}$.
\end{proof}

\begin{rem}
Since $\rU_{\rL'}'$ is also the unipotent radical of $\mathrm{B}_{\rL}$, where $\mathrm{B}_{\rL'}'$ is the inverse image of the regular inclusion $i$ of $\mathrm{B}_{\rL}$. We can repeat the proof for $\rY_{\rL,\tilde{s}}$ and see that they are also defined over $W(k)$ in the same manner.
\end{rem}

For split Levi subgroup $\rL$ of $\rG$, we know from \cite{DiMi} that if we fix one rational maximal torus and define $\mathrm{O_{U}}(\rL)$, this set of non-degenerate characters consists with only one orbit under conjugation of the fixed torus. So the Gelfand-Graev lattice is unique, and we note it as $\rY_{\rL}$. All the analysis above still work for $\rY_{\rL}$, and take some $\ell$-regular semisimple element $\tilde{s} \in \rL^{\ast}$. In particular, we use $\rY_{\rL, \tilde{s}}$ to denote the indecomposable projective direct summand $b_{\tilde{s}} \cdot \rY_{\rL}$. 


\begin{cor}
\label{corollary 3.1}
Let $\tilde{s} \in \rL^{\ast}$ be a semisimple $\ell$-regular element, then:
$$\mathrm{Res}_{\rL'}^{\rL} (b_{\tilde{s}} \cdot \rY_{\rL}) \hookrightarrow b_{s} \cdot \mathrm{Res}_{\rL'}^{\rL} \rY_{\rL}.$$
\end{cor}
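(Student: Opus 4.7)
The plan is to reduce the claim to the statement that the idempotent $b_s\in\rO[\rL']$ acts as the identity on $\mathrm{Res}_{\rL'}^{\rL}(b_{\tilde{s}}\cdot\rY_{\rL})$. Once this is granted, one immediately has
$$\mathrm{Res}_{\rL'}^{\rL}(b_{\tilde{s}}\cdot\rY_{\rL})=b_s\cdot\mathrm{Res}_{\rL'}^{\rL}(b_{\tilde{s}}\cdot\rY_{\rL})\subset b_s\cdot\mathrm{Res}_{\rL'}^{\rL}\rY_{\rL},$$
which is the desired inclusion. Since $b_s$ acts $\rO$-linearly and the lattice $\mathrm{Res}_{\rL'}^{\rL}(b_{\tilde{s}}\cdot\rY_{\rL})$ is $\rO$-torsion-free (being a submodule of the projective module $\rY_{\rL}$), it suffices to verify $b_s\cdot x=x$ after tensoring with $K$.

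Over $K$, the identity $b_{\tilde{s}}=\sum_{\chi\in\mathcal{E}_{\ell}(\rL,\tilde{s})}e_{\chi}$ together with the orthogonality of the central idempotents gives
$$K\otimes_{\rO}\mathrm{Res}_{\rL'}^{\rL}(b_{\tilde{s}}\cdot\rY_{\rL})=\bigoplus_{\chi\in\mathcal{E}_{\ell}(\rL,\tilde{s})}\mathrm{Res}_{\rL'}^{\rL}\bigl(e_{\chi}(K\otimes_{\rO}\rY_{\rL})\bigr).$$
It therefore suffices to show that for every $\chi\in\mathcal{E}(\rL,(t\tilde{s}))$ with $t\in(C_{\rG^{\ast}}(\tilde{s})^{F^{\ast}})_{\ell}$, every irreducible $K$-constituent of $\mathrm{Res}_{\rL'}^{\rL}\chi$ belongs to $\mathcal{E}_{\ell}(\rL',s)$.

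At this step I would invoke the compatibility of restriction along the regular embedding $i$ with Lusztig series, due to Bonnaf\'e \cite{Bonn} and recalled at the beginning of this subsection: the irreducible components of $\mathrm{Res}_{\rL'}^{\rL}\chi$ for $\chi\in\mathcal{E}(\rL,(t\tilde{s}))$ all lie in $\mathcal{E}(\rL',(i^{\ast}(t\tilde{s})))=\mathcal{E}(\rL',(i^{\ast}(t)s))$. It then remains to verify $i^{\ast}(t)\in(C_{\rG'^{\ast}}(s)^{F^{\ast}})_{\ell}$: commutation with $s$ follows by applying $i^{\ast}$ to $t\tilde{s}=\tilde{s}t$; $F^{\ast}$-stability from $F^{\ast}\circ i^{\ast}=i^{\ast}\circ F^{\ast}$; and $\ell$-regularity from the fact, already noted in the preliminary paragraph, that the order of $i^{\ast}(t)$ divides that of $t$. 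Hence $i^{\ast}(t)s\in\mathcal{E}_{\ell}(\rL',s)$, so $b_s$ acts as the identity on each summand of the above direct sum, and the corollary follows.

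The only nontrivial technical ingredient is Bonnaf\'e's compatibility between Lusztig series under the regular embedding; this is precisely the content of the formalism recalled in this subsection and is cited directly. The rest of the argument is purely formal: manipulation of the central idempotents $e_{\chi}$ and a descent from $K$ back to $\rO$ using the $\rO$-torsion-freeness of the lattice.
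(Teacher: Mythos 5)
Your argument is correct and follows essentially the same route as the paper: both reduce to the generic fibre via $\rO$-torsion-freeness of the lattice and then invoke Bonnaf\'e's compatibility of Lusztig series with $\mathrm{Res}_{\rL'}^{\rL}$ along the regular embedding (Proposition 11.7 of \cite{Bonn}), the paper phrasing this as the vanishing of $b_{s'}\cdot\mathrm{Res}_{\rL'}^{\rL}(b_{\tilde{s}}\cdot\rY_{\rL})$ for all $[s']\neq[s]$ together with $\sum_{[s']}b_{s'}=1$, while you equivalently show that $b_s$ acts as the identity. The only cosmetic point is that in your verification of the $\ell$-part the centralizers should be taken in $\rL^{\ast}$ and $\rL'^{\ast}$ rather than $\rG^{\ast}$ and $\rG'^{\ast}$.
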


\begin{proof}
We know directly from definition that, for any semisimple $\ell$-regular $s' \in \rG'^{\ast}$:
$$b_{s'} \cdot \mathrm{Res}_{\rL'}^{\rL} (b_{\tilde{s}} \cdot \rY_{\rL}) \hookrightarrow b_{s'} \cdot \mathrm{Res}_{\rL'}^{\rL} \rY_{\rL},$$
Meanwhile $b_{s'} \cdot \mathrm{Res}_{\rL'}^{\rL} (b_{\tilde{s}} \cdot \rY_{\rL})$ is a projective $\rO [\rG']$-module, so it is free when considered as $\rO$-module. And Proposition 11.7 in \cite{Bonn} told us that $b_{s'} \cdot \mathrm{Res}_{\rL'}^{\rL} (b_{\tilde{s}} \cdot \rY_{\rL}) \otimes \bar{\rK} = 0$ if $[s'] \neq [s]$ with $s= i^{\ast}(\tilde{s})$, which means $b_{s'} \cdot \mathrm{Res}_{\rL'}^{\rL} (b_{\tilde{s}} \cdot \rY_{\rL})  = 0$.
Combine this with
$$\bigoplus_{[s']} b_{s'} \cdot \mathrm{Res}_{\rL'}^{\rL} (b_{\tilde{s}} \cdot \rY_{\rL})= \mathrm{Res}_{\rL'}^{\rL} (b_{\tilde{s}} \cdot \rY_{\rL}),$$
we obtain the result.
\end{proof}

\begin{prop}
\label{proposition 3}
For any split Levi subgroup $\rL$ of $\rG$, let $\rL'$ denote the split Levi subgroup $\rL\cap \rG'$ of $\rG'$, and $\rZ(\rL)$, $\rZ(\rL')$ denote the center of $\rL$ and $\rL'$ respectively, then we have the equation:
$$\mathrm{Res}_{\rL'}^{\rL} \rY_{\rL} = \vert \rZ(\rL) : \rZ(\rL') \vert \bigoplus_{[\mu] \in \rO_{\mathrm{U}}(\rL')} \rY_{\rL',\mu},$$
where $[\mu]$ denote the $\rT'$-orbit of $\mu$.
\end{prop}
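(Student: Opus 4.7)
The plan is to prove the formula by applying Mackey's formula to $\Res^{\rL}_{\rL'}\rY_{\rL}=\Res^{\rL}_{\rL'}\Ind_{\rU_{\rL}}^{\rL}\rO_{\mu}$ for a fixed non-degenerate character $\mu$ of $\rU_{\rL}$, and then grouping the summands by $\rT'$-conjugacy class. First I would observe that the unipotent radical $\rU_{\rL}$ coincides with $\rU_{\rL'}$, and that $\rU_{\rL}\subseteq\rL'$ since unipotent matrices have determinant $1$. Combined with $\rL'\trianglelefteq\rL$ (kernel of determinant), this forces every double coset $\rL'g\rU_{\rL}$ to equal the single coset $\rL'g$, so $\rL'\backslash\rL/\rU_{\rL}=\rL'\backslash\rL$. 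Because the determinant is surjective on the maximal torus $\rT$, we have $\rL=\rT\rL'$, so one can take coset representatives for $\rL'\backslash\rL$ inside $\rT$, yielding a bijection with $\rT/\rT'$.

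For such representatives $t\in\rT\subseteq\rN_{\rL}(\rU_{\rL})$, one has $t\rU_{\rL}t^{-1}=\rU_{\rL}$ and $^{t}\rO_{\mu}=\rO_{t\mu}$, where $t\mu$ is again a non-degenerate character. Mackey then gives
$$\Res^{\rL}_{\rL'}\rY_{\rL}=\bigoplus_{t\in\rT/\rT'}\rY_{\rL',t\mu}.$$
Since $\rY_{\rL',\mu_{1}}\cong\rY_{\rL',\mu_{2}}$ if and only if $\mu_{1}$ and $\mu_{2}$ are $\rT'$-conjugate (the point recalled just before the statement), it remains to count how many times each $\rT'$-orbit in $\rO_{\rU}(\rL')$ is hit as $t$ ranges over $\rT/\rT'$.

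The key computation is the identification of stabilizers: $\Stab_{\rT}(\mu)=\rZ(\rL)$, because $t\in\rT$ fixes a non-degenerate character if and only if every simple root vanishes on $t$, and for a Levi of $\GL$-type this is exactly the condition of being central. Similarly $\Stab_{\rT'}(\mu)=\Stab_{\rT}(\mu)\cap\rT'=\rZ(\rL)\cap\rL'=\rZ(\rL')$. For $\GL$-type $\rL$ the $\rT$-action is transitive on non-degenerate characters, so $\rT\mu=\rO_{\rU}(\rL')$; by orbit–stabilizer the map $\rT/\rT'\to\{\rT'\text{-orbits in }\rO_{\rU}(\rL')\}$, $t\mapsto\rT'(t\mu)$, is surjective with every fibre of size $|\Stab_{\rT}(\mu)/\Stab_{\rT'}(\mu)|=|\rZ(\rL):\rZ(\rL')|$. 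Regrouping the direct sum according to this partition of $\rT/\rT'$ gives the announced formula.

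The only step that requires genuine care (and which I would handle as the main technical point) is the stabilizer computation and the verification that $\rZ(\rL')=\rZ(\rL)\cap\rL'$, since for a non-direct-product Levi like $\rL'=\{(g_{i})\in\prod\GL_{n_{i}}:\prod\det g_{i}=1\}$ this identity is not entirely automatic; everything else is a bookkeeping exercise using Mackey and orbit–stabilizer.
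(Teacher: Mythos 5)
Your proposal is correct and follows essentially the same route as the paper: both apply Mackey to get $\mathrm{Res}_{\rL'}^{\rL}\rY_{\rL}\cong\bigoplus_{t\in\rT/\rT'}\rY_{\rL',t\mu}$ (the paper first writes $\rY_{\rL}=\mathrm{Ind}_{\rL'}^{\rL}\rY_{\rL',\mu}$ by transitivity, which amounts to the same decomposition) and then count how often each $\rT'$-orbit of non-degenerate characters occurs. The only difference is that where the paper cites Dipper--Fleischmann, Lemma 2.3 a) for the stabilizer identification, you carry out the stabilizer computation $\mathrm{Stab}_{\rT}(\mu)=\rZ(\rL)$, $\rZ(\rL)\cap\rL'=\rZ(\rL')$ directly via simple roots and orbit--stabilizer, which is a harmless (indeed more self-contained) substitute.
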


\begin{proof}
Let $\mathrm{B}$ be a split Borel subgroup of $\rL$ and $\mathrm{B}'= \mathrm{B} \cap \rL'$ the corresponding split Borel of $\rL'$, and $\rU'$ denotes the unipotent radical of $\mathrm{B}'$, observing that $\rU'$ is also the unipotent radical of $\mathrm{B}$. Fixing one non-degenerate character $\mu$ of $\rU'$, let $\rO_{\mu}$ be its $\rO[\rU']$-lattice. By the transitivity of induction, we have:
$$\rY_{\rL}= \mathrm{Ind}_{\rL'}^{\rL} \circ \mathrm{Ind}_{\rU'}^{\rL'} \rO_{\mu} = \mathrm{Ind}_{\rL'}^{\rL} \rY_{\rL', \mu}.$$
Since $[\rT:\rT']=[\rL: \rL']$, by using Mackey formula we have:
$$\mathrm{Res}_{\rL'}^{\rL} \rY_{\rL} = \bigoplus_{\alpha_i \in [\rT : \rT']} ad(\alpha_i) ( \rY_{\rL', \mu}).$$
Furthermore, $ad(\alpha_i) ( \mathrm{Ind}_{\rU'}^{\rL'} \rO_{\mu} ) = \mathrm{Ind}_{\rU'}^{\rL'} ( ad(\alpha_i) ( \rO_{\mu}) )$. 
Notice that after fixing one character of $\rU'$, all its $\rO[\rU']$-lattices are equivalent, so $ad(\alpha_i) ( \rY_{\rL', \mu} )= \rY_{\rL', ad(\alpha_i) (\mu)}$. Whence, let $[\mu]$ denote the $\rT'$-orbit of $\mu$ in $\rO_{\mathrm{U}}(\rL')$, we have
$$\mathrm{Stab_{\rT}([\mu])} \subset \mathrm{Stab}_{\rT}(\rY_{\rL',\mu}) \subset \mathrm{Stab}_{\rT}(\rY_{\rL', \mu} \otimes \bar{\rK}).$$
On the other hand, the proof of lemma 2.3 a) in \cite{DiFl} tells that 
$$\mathrm{Stab}_{\rT}(\rY_{\rL', \mu} \otimes \bar{\rK}) \subset \mathrm{Stab_{\rT}([\mu])}.$$
So the inclusion above is in fact a bijection.
Combine this with the statement of lemma 2.3 a) in \cite{DiFl}, we finish our proof.
\end{proof}

\begin{lem}
\label{lemma a.1}
Fix a semisimple $\ell$-regular $s \in \rG'^{\ast}$, define $\mathcal{S}_{[s]}$ to be the set of semisimple $\ell$-regular $\widetilde{\rG}^{\ast}$-conjugacy classes $[\tilde{s}] \subset \widetilde{\rG}^{\ast}$ such that $i^{\ast}[\tilde{s}]=[s]$. Then
$$\bigoplus_{[\tilde{s}] \in \mathcal{S}_{[s]}} \mathrm{Res}_{\rL'}^{\rL} \rY_{\rL,\tilde{s}}= \vert \rZ(\rL) : \rZ(\rL') \vert \bigoplus_{\mu \in \rO_{\rU'}(\rL')} \rY_{\rL',\mu,s} $$
\end{lem}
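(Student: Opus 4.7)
The plan is to combine Proposition \ref{proposition 3} with Corollary \ref{corollary 3.1} and then project onto the block determined by $s$. Concretely, I would start from the global decomposition
\[
\mathrm{Res}_{\rL'}^{\rL}\rY_{\rL} \;=\; \vert \rZ(\rL) : \rZ(\rL') \vert \bigoplus_{[\mu]\in\rO_{\rU}(\rL')/\rT'} \rY_{\rL',\mu}
\]
given by Proposition \ref{proposition 3}, and apply the central idempotent $b_s \in \rO[\rL']$ to both sides. On the right-hand side, using $b_s\cdot \rY_{\rL',\mu} = \rY_{\rL',\mu,s}$ by definition, one directly gets $\vert \rZ(\rL) : \rZ(\rL') \vert \bigoplus_{[\mu]} \rY_{\rL',\mu,s}$, which is the target sum (up to the usual interpretation of the indexing set as orbits versus representatives).

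The substance is to evaluate the left-hand side $b_s \cdot \mathrm{Res}_{\rL'}^{\rL}\rY_{\rL}$ in terms of the Gelfand–Graev summands on the $\rL$-side. Using the decomposition $\rY_{\rL}=\bigoplus_{[\tilde s]} \rY_{\rL,\tilde s}$ over all $\ell$-regular semisimple $\rL^{\ast}$-conjugacy classes and the $\rO$-linearity of restriction, we have
\[
b_s\cdot \mathrm{Res}_{\rL'}^{\rL}\rY_{\rL} \;=\; \bigoplus_{[\tilde s]} b_s \cdot \mathrm{Res}_{\rL'}^{\rL}\rY_{\rL,\tilde s}.
\]
Now Corollary \ref{corollary 3.1} tells us that $\mathrm{Res}_{\rL'}^{\rL}\rY_{\rL,\tilde s}$ is annihilated by every $b_{s'}$ with $[s']\ne [i^{\ast}(\tilde s)]$, hence it lies in the block $b_{i^{\ast}(\tilde s)}\cdot \rO[\rL']$-module. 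Therefore $b_s\cdot \mathrm{Res}_{\rL'}^{\rL}\rY_{\rL,\tilde s}$ equals $\mathrm{Res}_{\rL'}^{\rL}\rY_{\rL,\tilde s}$ when $i^{\ast}[\tilde s]=[s]$, i.e.\ when $[\tilde s]\in \mathcal S_{[s]}$, and vanishes otherwise. This collapses the sum to $\bigoplus_{[\tilde s]\in\mathcal S_{[s]}} \mathrm{Res}_{\rL'}^{\rL}\rY_{\rL,\tilde s}$.

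Equating the two computations of $b_s\cdot \mathrm{Res}_{\rL'}^{\rL}\rY_{\rL}$ yields the claimed identity. The only delicate point—which I would verify carefully—is that Corollary \ref{corollary 3.1} genuinely implies the vanishing $b_{s'}\cdot \mathrm{Res}_{\rL'}^{\rL}\rY_{\rL,\tilde s}=0$ for $[s']\ne [i^{\ast}(\tilde s)]$ rather than mere containment in the block; this is exactly the content of the appeal to Proposition $11.7$ of \cite{Bonn} made inside the proof of Corollary \ref{corollary 3.1}, so it can be quoted directly. A minor bookkeeping issue is that Proposition \ref{proposition 3} is stated with the sum indexed by $\rT'$-orbits $[\mu]$ while the statement of the lemma indexes by $\mu\in \rO_{\rU'}(\rL')$; this is just a matter of choosing orbit representatives, and the multiplicity $\vert \rZ(\rL):\rZ(\rL')\vert$ on both sides is the same.
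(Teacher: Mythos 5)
Your proposal is correct and follows essentially the same route as the paper: both apply the central idempotent $b_s$ to the decomposition of Proposition \ref{proposition 3} and use Corollary \ref{corollary 3.1} (whose proof, via Proposition 11.7 of \cite{Bonn}, gives the vanishing $b_{s'}\cdot\mathrm{Res}_{\rL'}^{\rL}\rY_{\rL,\tilde s}=0$ for $[s']\neq[i^{\ast}(\tilde s)]$) to sort the restricted Gelfand--Graev summands into blocks. The only difference is cosmetic: you extract the equality directly from this block decomposition, whereas the paper first records a direct-summand containment and then sums over all classes $[s]$ to upgrade it to an equality.
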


\begin{proof}
By definition, $\rY_{\rL, \tilde{s}}= b_{\tilde{s}} \cdot \rY_{\rL}$. Multiplying $b_s$ on both sides of the equation in Proposition \ref{proposition 3}  and considering Corollary \ref{corollary 3.1}, we conclude that for any $\ell$-regular semisimple $\rG'^{\ast}$-conjugacy class $[s]$, $\bigoplus_{[\tilde{s}] \in \mathcal{S}_{[s]}} \mathrm{Res}_{\rL'}^{\rL} \rY_{\rL,\tilde{s}}$ is a projective direct summand of $\vert \rZ(\rL) : \rZ(\rL') \vert \bigoplus_{\mu \in \rO_{\rU}(\rL')} \rY_{\rL',\mu,s}$. Meanwhile, let $\mathcal{S}=\{ \mathcal{S}_{[s]}\vert\text{ } s\in\rG^{'\ast}, s\text{ semisimlpe }\ell\text{-regular} \}$, then Proposition \ref{proposition 3} can be written as:
$$\bigoplus_{\mathcal{S}}\bigoplus_{[\tilde{s}] \in \mathcal{S}_{[s]}} \mathrm{Res}_{\rL'}^{\rL} \rY_{\rL,\tilde{s}}= \vert \rZ(\rL) : \rZ(\rL') \vert \bigoplus_{[s]} \bigoplus_{\mu \in \rO_{\rU}(\rL')} \rY_{\rL',\mu,s} $$
So they equal to each other.
\end{proof}

\subsubsection{Uniqueness of supercuspidal support}
In this part, we will proof the main theorem \ref{theorem 1} for this section. First we recall the notions of cuspidal, supercuspidal and supercuspidal support.

We always use $i$ and $r$ to denote the functors of parabolic induction and parabolic restriction. Let $\pi$ be an irreducible $k$-representation of a finite group of Lie type or a $p$-adic group $\rG$. We say $\pi$ is cuspidal, if for any proper Levi subgroup $\rL$ of $\rG$, the representation $r_{\rL}^{\rG}\pi$ is $0$. Let $\tau$ be any irreducible $k$-representation of $\rL$, if $\pi$ is not isomorphic to any irreducible subquotient of $i_{\rL}^{\rG}\tau$ for any pair $(\rL,\tau)$, we say $\pi$ is supercuspidal. It is clear that a supercuspidal representation is cuspidal. We say $(\rL,\tau)$ is a (super)cuspidal pair, if $\tau$ is a (super)cuspidal $k$-representation of $\rL$

The cuspidal (resp. supercuspidal) support of $\pi$ consists of the cuspidal (resp. supercuspidal) pairs $(\rL,\tau)$, such that $\pi$ is an irreducible subrepresentation (resp. subquotient) of $i_{\rL}^{\rG}\tau$.

\begin{thm} \label{theorem 1}
Let $\rL'$ be any standard split Levi subgroup of $\rG'$ and $\nu$ be any cuspidal $k$-representation of $\rL'$.Then the supercuspidal support of $\nu$ is unique up to $\rL'$-conjugation.
\end{thm}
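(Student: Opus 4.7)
The strategy is to reduce uniqueness on $\rL'$ to the classical uniqueness of supercuspidal support for products of $\mathrm{GL}_{n_i}(\F_q)$ on the $\rL$ side, and then to cut out the correct $\rL'$-conjugacy class by analysing the parabolic restrictions of the $W(k)$-projective cover of $\nu$, whose structure is controlled by the Gelfand-Graev machinery developed in the preceding subsection. As a first reduction, since the cuspidal support of any irreducible $k$-representation of a finite group of Lie type is unique up to conjugation (a standard consequence of Harish-Chandra theory, valid in any characteristic $\neq p$), I may assume that $\nu$ is itself cuspidal. By Clifford theory for the normal inclusion $\rL'\trianglelefteq\rL$, I choose an irreducible cuspidal $k$-representation $\pi$ of $\rL$ containing $\nu$ in $\Res_{\rL'}^{\rL}\pi$. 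Since $\rL$ is a product of $\mathrm{GL}_{n_i}(\F_q)$'s, the supercuspidal support of $\pi$ is unique up to $\rL$-conjugation, and I fix a representative $(\rM,\tau)$ of it, setting $\rM'=\rM\cap\rG'$.

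Next, let $(\rM_0',\tau_0')$ be any supercuspidal pair in the supercuspidal support of $\nu$, and let $\rM_0$ denote the Levi of $\rL$ with $\rM_0\cap\rG'=\rM_0'$. The compatibility
$$\Ind_{\rL'}^{\rL}\circ i_{\rM_0'}^{\rL'}\ \cong\ i_{\rM_0}^{\rL}\circ\Ind_{\rM_0'}^{\rM_0},$$
obtained by writing both sides as induction from $\rP_{\rM_0}\cap\rL'$ to $\rL$, together with Frobenius reciprocity applied to $\nu\hookrightarrow\Res_{\rL'}^{\rL}\pi$, shows that $\pi$ is a subquotient of $i_{\rM_0}^{\rL}\Ind_{\rM_0'}^{\rM_0}\tau_0'$. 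The quotient $\rM_0/\rM_0'$ is an abelian subgroup of $\F_q^{\times}$ (via determinants), so every irreducible constituent of $\Ind_{\rM_0'}^{\rM_0}\tau_0'$ remains supercuspidal. Uniqueness of the supercuspidal support of $\pi$ then forces $\rM_0$ to be $\rL$-conjugate to $\rM$ and $\tau_0'$ to be a component of $\Res_{\rM'}^{\rM}\tau^g$ for some $g\in\rL$; using that $\rL=\rL'\cdot\rZ(\rL)$ and that $\rZ(\rL)$ centralises $\rM$, any such $\rL$-conjugation of the Levi can be realised inside $\rL'$. After this $\rL'$-conjugation I may assume $\rM_0'=\rM'$, and the problem reduces to showing that the components of $\Res_{\rM'}^{\rM}\tau$ which actually occur in the supercuspidal support of $\nu$ form a single $\rN_{\rL'}(\rM')$-orbit.

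This final step is where the $W(k)$-projective cover $\mathrm{P}_\nu$ of $\nu$ enters. The classical Geck-Hiss identification of the projective cover of a cuspidal $k$-representation of a product of $\mathrm{GL}_{n_i}(\F_q)$'s with its Gelfand-Graev lattice, combined with Proposition \ref{proposition 2.5}, yields $\mathrm{P}_\pi\cong\mathcal{Y}_{\rL,\tilde{s}}$, where $\tilde{s}$ is the rational semisimple class attached to $\pi$; consequently $\mathrm{P}_\nu$ is an indecomposable direct summand of $\Res_{\rL'}^{\rL}\mathcal{Y}_{\rL,\tilde{s}}$, whose decomposition into Gelfand-Graev summands $\mathcal{Y}_{\rL',\mu,s}$ of $\rL'$ is given by Lemma \ref{lemma a.1}. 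Applying parabolic restriction $r_{\rM'}^{\rL'}$ and re-running the analysis of Proposition \ref{proposition 3} and Lemma \ref{lemma a.1} in the pair $(\rM',\rL')$, $r_{\rM'}^{\rL'}\mathrm{P}_\nu$ decomposes into Gelfand-Graev summands of $\rM'$ whose Lusztig-series data pick out a single $\rN_{\rL'}(\rM')$-orbit among the components of $\Res_{\rM'}^{\rM}\tau$. Since $\Hom(\mathrm{P}_\nu,i_{\rM'}^{\rL'}\tau_0')\cong\Hom(r_{\rM'}^{\rL'}\mathrm{P}_\nu,\tau_0')$ by adjunction, this single orbit is exactly the supercuspidal support of $\nu$, which is therefore unique up to $\rL'$-conjugation.

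The hard part will be establishing the compatibility of Harish-Chandra restriction with the Gelfand-Graev decomposition coming out of Lemma \ref{lemma a.1}, namely showing that $r_{\rM'}^{\rL'}\mathcal{Y}_{\rL',\mu,s}$ splits into Gelfand-Graev summands $\mathcal{Y}_{\rM',\mu',s_{\rM'}}$ of $\rM'$ whose Lusztig data $s_{\rM'}$ precisely match the components of $\Res_{\rM'}^{\rM}\tau$, so that the resulting $\rN_{\rL'}(\rM')$-orbit is genuinely a single orbit rather than a union. This is not formal and requires carefully re-running the Mackey/transitivity argument used to prove Proposition \ref{proposition 3} and Lemma \ref{lemma a.1} with the pair $(\rM',\rL')$ in place of $(\rL',\rL)$, together with the compatibility between the central idempotents $b_s$ and parabolic restriction along $\rL'\supset\rM'$.
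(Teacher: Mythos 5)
Your overall architecture coincides with the paper's: identify the projective cover $\mathrm{P}_\nu$ with a Gelfand--Graev summand $\rY_{\rL',\mu,s}$ (Propositions \ref{prop a} and \ref{proposition 2.5}), use Hiss's criterion that $(\rM',\theta)$ lies in the supercuspidal support iff $\Hom(r_{\rM'}^{\rL'}\mathrm{P}_\nu,\theta)\neq 0$, and then control $r_{\rM'}^{\rL'}\mathrm{P}_\nu$. Your front-end reduction through the uniqueness of the supercuspidal support of $\pi$ on the $\rL$-side is a reasonable variant of what the paper does via Remark \ref{remark 2.11}, but it rests on a false identity: $\rL=\rL'\cdot\rZ(\rL)$ fails whenever $\gcd(n,q-1)>1$ (already for $\GL_2(\F_3)$, where $\det(\rZ(\rL))$ consists only of squares). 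The statement you actually need is salvageable differently: $\rL=\rM\cdot\rL'$ because $\det$ restricted to the Levi $\rM$ is surjective onto $\F_q^{\times}$, so an $\rL$-conjugation carrying $\rM_0$ to $\rM$ can be replaced by an $\rL'$-conjugation at the cost of twisting $\tau$ by an element of $\rM$, which is harmless since $\tau^m\cong\tau$ for $m\in\rM$.

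The genuine gap is precisely the step you defer as ``the hard part''. Re-running the Mackey/transitivity arguments of Proposition \ref{proposition 3}, Corollary \ref{corollary 3.1} and Lemma \ref{lemma a.1} for the pair $(\rM',\rL')$ only shows that $r_{\rM'}^{\rL'}\rY_{\rL',\mu,s}$ is a projective direct summand of $\Res_{\rM'}^{\rM}\rY_{\rM,\tilde{s}}$, hence a direct sum of lattices $\rY_{\rM',\mu',s'}$ ranging a priori over several orbits $[\mu']$ and with multiplicity bounded by $\vert\rZ(\rM):\rZ(\rM')\vert$; nothing in that formalism singles out one summand, so the claim that the Lusztig data ``pick out a single orbit'' is unsubstantiated. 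The paper's Proposition \ref{prop b} closes exactly this hole using three non-formal inputs: Helm's identification $r_{\rM}^{\rL}\rY_{\rL,\tilde{s}}\cong\rY_{\rM,\tilde{s}}$, Bonnaf\'e's Corollaries 15.11 and 15.15, which compute $r_{\rM'}^{\rL'}\rY_{\rL',\mu,s}\otimes\bar{\rK}$ and give multiplicity-freeness of the relevant Lusztig part of $\Res_{\rM'}^{\rM}\rY_{\rM,\tilde{s}}\otimes\bar{\rK}$, and the Geck--Hiss criterion that a projective $\rO$-lattice is indecomposable iff its $\ell$-regular quotient is. The resulting indecomposability of $r_{\rM'}^{\rL'}\mathrm{P}_\nu$ is what actually delivers uniqueness of the supercuspidal representation: an indecomposable projective has a unique simple quotient, which is a finer conclusion than your ``single $\rN_{\rL'}(\rM')$-orbit'' and is needed to pin down the pair itself. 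Without this character-level and lattice-theoretic input, your argument establishes only the first description of the support (the analogue of Proposition \ref{Lproposition 10}), not its uniqueness.
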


Let $\mathrm{P}_{\nu}$ denote the $\rO[\rL']$-projective cover of $\nu$. To prove the theorem above, we will follow the strategy below:

\begin{enumerate}
\item For any standard Levi subgroup $\rM'$ of $\rL'$, prove that $r_{\rM'}^{\rL'} \mathrm{P}_{\nu}$ is either equal to $0$ or indecomposable.
\item Prove that there is only one unique standard split Levi subgroup $\rM'$ of $\rL'$, such that $r_{\rM'}^{\rL'} \mathrm{P}_{\nu}$ is cuspidal.
\end{enumerate}
Let $(\rM', \theta)$ be a supercuspidal $k$-pair of $\rL'$. From the proof of Proposition 3.2 of \cite{Hiss}, we know that $(\rM', \theta)$ belongs to the supercuspidal support of $(\rL', \nu)$, if and only if $\mathrm{Hom}(r_{\rM'}^{\rL'} \mathrm{P}_{\nu}, \theta) \neq 0$. Combining this fact with $(1)$, we find that $r_{\rM'}^{\rL'}\mathrm{P}_{\nu}$ is the projective cover of $\theta$. Proposition $2.3$ of \cite{Hiss} states that an irreducible $k$-representation of $\rM'$ is supercuspidal if and only if its projective cover is cuspidal, whence Theorem \ref{theorem 1} is equivalent to $(2)$.

\begin{rem}
\label{remark 2.11}
- If we consider standard Levi subgroups $\rL$ of $\rG$, the analysis above is true as well.

- Proposition 3.2 of \cite{Hiss} concerns $k[\rL']$-projective cover, but from Proposition 42 of \cite{Serre} we know that there is a surjective morphism of $k[\rL']$-modules from the $W(k)[\rL']$-projective cover to the $k[\rL']$-projective cover, and hence obtain the same result for $W(k)[\rL']$-projective cover.
\end{rem}

\begin{prop}
\label{prop a}
Let $\nu$ be an irreducible cuspidal $k$-representation of $\rL'$. There exists a simple $k\rL$-module $\widetilde{\nu}$, and a surjective morphism $\mathrm{Res}_{\rL'}^{\rL} \widetilde{\nu} \twoheadrightarrow \nu $. Furthermore, let $\rY_{\rL, \tilde{s}}$ be the projective cover of $\widetilde{\nu}$, where $\tilde{s} \in
\rG^{\ast}$ is an $\ell$-regular semisimple element, then there exists $\mu \in \rO_{\rU'}(\rL')$ such that the composed morphism:
\begin{equation}
\label{equa 1}
\rY_{\rL', \mu, s}\hookrightarrow \mathrm{Res}_{\rL'}^{\rL} \rY_{\rL,\tilde{s}} \twoheadrightarrow 
\mathrm{Res}_{\rL'}^{\rL} \widetilde{\nu} \twoheadrightarrow \nu
\end{equation}
is surjective, which means $\rY_{\rL',\mu,s}$ is the $\rO[\rL']$-projective cover of $\nu$.
\end{prop}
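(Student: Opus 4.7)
The plan is to exploit the normal inclusion $\rL'\triangleleft\rL$ (with abelian quotient) to lift $\nu$ to a suitable cuspidal $\widetilde{\nu}$ of $\rL$, and then transfer Gelfand--Graev projectivity along the restriction $\mathrm{Res}_{\rL'}^{\rL}$. First, take $\widetilde{\nu}$ to be any irreducible $k[\rL]$-subrepresentation of $\mathrm{Ind}_{\rL'}^{\rL}\nu$, which is non-zero and finite-dimensional. By Frobenius reciprocity and simplicity of $\nu$, one obtains a surjection $\mathrm{Res}_{\rL'}^{\rL}\widetilde{\nu}\twoheadrightarrow\nu$. To prove $\widetilde{\nu}$ is cuspidal, I would apply Mackey's formula to write $\mathrm{Res}_{\rL'}^{\rL}\mathrm{Ind}_{\rL'}^{\rL}\nu\cong\bigoplus_{x\in\rL/\rL'}{}^{x}\nu$; each summand is cuspidal since conjugation preserves cuspidality. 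The regular embedding identifies the unipotent radicals of corresponding standard parabolics in $\rL$ and $\rL'$, giving the commutation $r_{\rM'}^{\rL'}\circ\mathrm{Res}_{\rL'}^{\rL}=\mathrm{Res}_{\rM'}^{\rM}\circ r_{\rM}^{\rL}$. Combining this with faithfulness of $\mathrm{Res}_{\rM'}^{\rM}$ and exactness of the Jacquet functor (since $\ell\neq p$ makes invariants under a $p$-group exact), one forces $r_{\rM}^{\rL}\mathrm{Ind}_{\rL'}^{\rL}\nu=0$ for every proper standard Levi $\rM$, hence also $r_{\rM}^{\rL}\widetilde{\nu}=0$.

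Next I would identify the projective cover of $\widetilde{\nu}$. Because $\rL$ is a product of finite general linear groups, every cuspidal $k$-representation of $\rL$ is generic, i.e.\ admits a non-zero Whittaker functional against some non-degenerate character of $\rU_{\rL}$. Frobenius reciprocity applied to the Gelfand--Graev lattice then shows $\widetilde{\nu}$ is a quotient of $\rY_{\rL}\otimes_{\rO}k$, and hence of $\rY_{\rL}$ itself. Using the block decomposition $\rY_{\rL}=\bigoplus_{[\tilde{s}]}\rY_{\rL,\tilde{s}}$ into indecomposable projective summands established in the preceding subsection, I would extract the unique $\ell$-regular semisimple class $[\tilde{s}]$ such that $\rY_{\rL,\tilde{s}}\twoheadrightarrow\widetilde{\nu}$, and such that $\rY_{\rL,\tilde{s}}$ is the $\rO[\rL]$-projective cover of $\widetilde{\nu}$.

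Composing yields a surjection $\mathrm{Res}_{\rL'}^{\rL}\rY_{\rL,\tilde{s}}\twoheadrightarrow\mathrm{Res}_{\rL'}^{\rL}\widetilde{\nu}\twoheadrightarrow\nu$. By Lemma \ref{lemma a.1}, the left-hand side is a direct summand of $|\rZ(\rL):\rZ(\rL')|\bigoplus_{\mu\in\rO_{\rU'}(\rL')}\rY_{\rL',\mu,s}$, where $s=i^{\ast}(\tilde{s})$, and therefore decomposes as a direct sum of copies of modules of the form $\rY_{\rL',\mu,s}$. The composite surjection onto the simple module $\nu$ must be non-zero on at least one summand, producing some $\mu\in\rO_{\rU'}(\rL')$ for which $\rY_{\rL',\mu,s}\hookrightarrow\mathrm{Res}_{\rL'}^{\rL}\rY_{\rL,\tilde{s}}\twoheadrightarrow\nu$ is surjective. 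Since $\rY_{\rL',\mu,s}$ is indecomposable and projective over $\rO[\rL']$ (by the corollary preceding Proposition \ref{proposition 2.5}), and since it surjects onto the simple module $\nu$, it is the $\rO[\rL']$-projective cover of $\nu$.

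The main obstacle lies in the second step: identifying the projective cover of a modular cuspidal of a finite general linear group with an explicit Gelfand--Graev summand $\rY_{\rL,\tilde{s}}$. This depends on the type-$A$ fact that modular cuspidals remain generic and appear with multiplicity one in the Gelfand--Graev representation; this feature is special to $\mathrm{GL}$ and would fail for general reductive groups, which foreshadows the Dat--Dudas counter-example for $\mathrm{Sp}_{8}$ mentioned in the introduction.
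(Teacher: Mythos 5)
Your proposal is correct and follows essentially the same route as the paper: construct $\widetilde{\nu}$ from $\mathrm{Ind}_{\rL'}^{\rL}\nu$ via Mackey/Frobenius reciprocity, then use Lemma \ref{lemma a.1} together with Krull--Schmidt to see that every indecomposable projective direct summand of $\mathrm{Res}_{\rL'}^{\rL}\rY_{\rL,\tilde{s}}$ is of the form $\rY_{\rL',\mu,s}$, and select one on which the composite map to the simple module $\nu$ is non-zero, hence surjective, so that it is the projective cover. The additional details you supply (cuspidality of $\widetilde{\nu}$ and the identification of its projective cover with a Gelfand--Graev summand via genericity of modular cuspidals of $\mathrm{GL}$-type groups) are points the paper either dispatches with a one-line appeal to Mackey or absorbs into the statement, and your arguments for them are sound.
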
 

\begin{proof}
By the property of Mackey formula, we can find such $\widetilde{\nu}$.

For the second part of this proposition, since $\mathrm{Res}$ respect projectivity, we know the fact that $\mathrm{Res}_{\rL'}^{\rL} \rY_{\rL, \tilde{s}}$ is a projective direct summand of $\mathrm{Res}_{\rL'}^{\rL} \rY_{\rL}$, and contained in $\vert \rZ(\rL) : \rZ(\rL') \vert \bigoplus_{\mu \in \rO_{\rU'}(\rL')} \rY_{\rL',\mu,s}$ as lemma \ref{lemma a.1} proved, whence all the projective indecomposable direct summands belong to $\{ \rY_{\rL', \mu, s} \}_{\rO_{\rU'}(\rL')}$. Then there must exists $\mu \in \rO_{\rU'}(\rL')$ such that the composed morphism: $\rY_{\rL', \mu, s} \rightarrow \nu$ is non trivial hence a surjection.
\end{proof}

\begin{rem}
We induce from Proposition \ref{proposition 2.5} that $\mathcal{Y}_{\rL',\mu,s}$ is the $W(k)[\rL']$-projective cover of $\nu$, noted as $\mathcal{P}_{\nu}$.
\end{rem}

Let $\rM'$ be any standard split Levi subgroup of $\rL'$. It is clear that $\mu \downarrow_{\rM'}$ belongs to $\rO_{\rU'}(\rM')$. Now consider the intersection $[s] \cap \rM'^{\ast}$. As in the paragraph above Proposition 5.10 of \cite{Helm}, $[\tilde{s}] \cap \rM^{\ast}$ consists of one $\rM^{\ast}$-conjugacy class or is empty, so does $[s] \cap \rM'^{\ast}$. For the first case, notation $\rY_{\rM', \mu \downarrow_{\rM'}, [s] \cap \rM'^{\ast}}$ is well defined, and for the second case, we define it to be 0. From now on, we will always use $\rY_{\rM', \mu, s}$ to simplify $\rY_{\rM', \mu \downarrow_{\rM'}, [s] \cap \rM'^{\ast}}$. We use the same manner to define $\rY_{\rM, \tilde{s}}$.

\begin{prop}
\label{prop b}
Let $\nu$ be an irreducible cuspidal $k\rL'$-representation, and $\widetilde{\nu}$, $\rY_{\rL', \mu, s}$, $\rY_{\rL, \tilde{s}}$ be as in Proposition $\ref{prop a}$. Then $r_{\rM'}^{\rL} \rY_{\rL', \mu, s}$ is equal to $0$ or indecomposable and isomorphic to $\rY_{\rM', \mu, s}$ as $\rO[\rM']$-module.
\end{prop}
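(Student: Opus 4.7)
The plan is to bootstrap from the analogous $\rG$-side result and transfer it to $\rG'$ via the regular inclusion $i$. First, I would establish that $r_{\rM}^{\rL}\rY_{\rL,\tilde s}\cong \rY_{\rM,\tilde s}$, or zero when $[\tilde s]\cap\rM^{\ast}=\emptyset$. The underlying identity $r_{\rM}^{\rL}\rY_{\rL}\cong \rY_{\rM}$ is a standard Mackey computation: writing the full unipotent as $\rU_{\rL}=\rU_{\rP}\rtimes \rU_{\rM}$, where $\rU_{\rP}$ is the unipotent radical of the standard parabolic of $\rL$ with Levi $\rM$ and $\rU_{\rM}$ is the Borel unipotent of $\rM$, the $\rU_{\rP}$-invariants of $\mathrm{Ind}_{\rU_{\rL}}^{\rL}\rO_{\mu_0}$ (equivalently the coinvariants, since $\rU_{\rP}$ is a $p$-group and $\ell\ne p$) reduce by non-degeneracy of $\mu_0$ to the trivial-Bruhat-cell contribution, yielding $\mathrm{Ind}_{\rU_{\rM}}^{\rM}\rO_{\mu_0|_{\rU_{\rM}}}=\rY_{\rM}$. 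Multiplying by the Brou\'e--Michel idempotent $b_{\tilde s}$ from Theorem \ref{theorem Mi,Br} commutes with $r_{\rM}^{\rL}$ because parabolic restriction sends Lusztig series to Lusztig series (and kills $\mathcal{E}_{\ell}(\rL,\tilde s)$ when no representative of $[\tilde s]$ lies in $\rM^{\ast}$), giving the graded version.

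Next I would record the functorial commutation
$$r_{\rM'}^{\rL'}\circ \mathrm{Res}_{\rL'}^{\rL} \;=\; \mathrm{Res}_{\rM'}^{\rM}\circ r_{\rM}^{\rL}.$$
Under $i$, the unipotent radical of the standard parabolic of $\rL$ with Levi $\rM$ already lies inside $\rL'$ and coincides with that of the corresponding parabolic of $\rL'$ with Levi $\rM'$; both compositions compute the same space of $\rU_{\rP}$-invariants, differing only in the recorded action. Applying this to the identity of Lemma \ref{lemma a.1} and inserting Step 1 then yields
$$|\rZ(\rM):\rZ(\rM')|\bigoplus_{\nu\in\rO_{\rU'}(\rM')}\rY_{\rM',\nu,s} \;=\; |\rZ(\rL):\rZ(\rL')|\bigoplus_{\mu\in\rO_{\rU'}(\rL')}r_{\rM'}^{\rL'}\rY_{\rL',\mu,s}.$$
Each summand $r_{\rM'}^{\rL'}\rY_{\rL',\mu,s}$ is a projective $\rO[\rM']$-module (since $r_{\rM'}^{\rL'}$ has exact left adjoint $i_{\rM'}^{\rL'}$) lying in the block associated to $[s]\cap\rM'^{\ast}$, hence by the corollary following Proposition \ref{proposition 2.5} it decomposes as a direct sum of indecomposables of the form $\rY_{\rM',\nu,s}$.

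To pin down the single $\nu$ attached to a given $\mu$, I would repeat the Mackey calculation of the first step one level up, applied directly to $r_{\rM'}^{\rL'}\mathrm{Ind}_{\rU_{\rL'}}^{\rL'}\rO_{\mu}$: only the double coset of the trivial Weyl element survives, yielding $\mathrm{Ind}_{\rU_{\rM'}}^{\rM'}\rO_{\mu|_{\rU_{\rM'}}}=\rY_{\rM',\mu|_{\rU_{\rM'}}}$; multiplication by $b_s$ (which acts on $\rM'$-modules as the idempotent associated to $[s]\cap\rM'^{\ast}$) then furnishes the claimed isomorphism $r_{\rM'}^{\rL'}\rY_{\rL',\mu,s}\cong \rY_{\rM',\mu,s}$, and in the non-zero case indecomposability is already guaranteed by the corollary. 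The main technical obstacle will be this last Mackey computation: verifying that non-degeneracy of $\mu$ truly kills every nontrivial Bruhat double coset. I expect this to follow from the same $\rT'$-orbit analysis appearing in the proof of Proposition \ref{proposition 3}, together with the observation that non-degeneracy only strengthens when passing to the sub-unipotent $\rU_{\rM'}$.
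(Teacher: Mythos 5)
Your opening two steps coincide with the paper's: the commutation $r_{\rM'}^{\rL'}\circ\mathrm{Res}_{\rL'}^{\rL}=\mathrm{Res}_{\rM'}^{\rM}\circ r_{\rM}^{\rL}$ is exactly how the proof starts, and the $\rG$-level identity $r_{\rM}^{\rL}\rY_{\rL,\tilde s}\cong\rY_{\rM,\tilde s}$ (which you propose to reprove by Mackey) is simply quoted from Helm's Proposition 5.10. Your intermediate conclusion that every indecomposable summand of $r_{\rM'}^{\rL'}\rY_{\rL',\mu,s}$ is of the form $\rY_{\rM',\nu,s}$ is also recoverable, but only via Krull--Schmidt against your displayed identity (whose indexing over the fibres $\mathcal{S}_{[s]}$ at the two Levi levels you should check); the inference ``projective and lies in the blocks attached to $[s]\cap\rM'^{\ast}$, hence a sum of $\rY_{\rM',\nu,s}$'' is a non sequitur, since those blocks contain many projective indecomposables that are not Gelfand--Graev summands. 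The real content of the proposition, however, is the final identification with the \emph{single} lattice $\rY_{\rM',\mu,s}$ together with indecomposability, and this is where your proposal has genuine gaps.

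Concretely: (a) your Mackey claim is false as stated --- the trivial double coset does \emph{not} survive; non-degeneracy of $\mu$ on the simple root subgroups lying outside $\rM'$ kills precisely that coset, and the surviving coset is represented by a nontrivial Weyl element (already for $\rL'=\mathrm{SL}_2$ and $\rM'=\rT'$ the trivial coset contributes $0$ and the long-element coset gives the regular representation of $\rT'$), so ``non-degeneracy kills every nontrivial Bruhat coset'' is backwards; (b) even granting that exactly one coset survives, for a group with disconnected centre one must identify the $\rT'$-orbit of the resulting non-degenerate character of $\rU'_{\rM'}$, because distinct orbits give non-isomorphic Gelfand--Graev lattices --- this bookkeeping is exactly what Bonnaf\'e's \S 15 supplies and your sketch does not address; (c) commuting $b_s$ with $r_{\rM'}^{\rL'}$ over $\rO$ uses the compatibility of Lusztig series with Harish--Chandra restriction, which you invoke silently. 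The paper sidesteps all of this by working over $\bar{\rK}$: Bonnaf\'e's Corollary 15.15 gives $r_{\rM'}^{\rL'}\rY_{\rL',\mu,s}\otimes\bar{\rK}\cong\rY_{\rM',\mu',s'}\otimes\bar{\rK}$, Geck's Lemma 5.11 (indecomposability of a projective lattice is read off its $\ell$-regular quotient) gives indecomposability, and the multiplicity-one statement of Corollary 15.11 for the $[s']$-part of $\mathrm{Res}_{\rM'}^{\rM}\rY_{\rM,\tilde s}\otimes\bar{\rK}$ forces the two projective direct summands to coincide integrally. To make your route work you would essentially have to reprove Bonnaf\'e's restriction theorem for Gelfand--Graev lattices over $\rO$, including the orbit identification; that missing argument is the heart of the matter.
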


\begin{proof}
In the proof of lemma \ref{lemma a.1} we know that $\rY_{\rL',\mu,s}$ is a direct summand of $\mathrm{Res}_{\rL'}^{\rL}(\rY_{\rL,\tilde{s}})$. Observing that the unipotent radical of $\rM'$ is also the unipotent radical of $\rM$, we deduce directly from the definition that $r_{\rM'}^{\rL'} (\mathrm{Res}_{\rL'}^{\rL}(\rY_{\rL,\tilde{s}})) = \mathrm{Res}_{\rM'}^{\rM} (r_{\rM}^{\rL}(\rY_{\rL,\tilde{s}}))$, and Proposition 5.10 in \cite{Helm} states that $r_{\rM}^{\rL} (\rY_{\rL,\tilde{s}})= \rY_{\rM,\tilde{s}}$. The statements above, combining with the fact that parabolic restriction is exact and respects projectivity, derive that  $r_{\rM'}^{\rL'} \rY_{\rL',\mu,s}$ is a projective direct summand of $\mathrm{Res}_{\rM'}^{\rM}\rY_{\rM, \tilde{s}}$. As what we have mentioned, $[\tilde{s}] \cap \rM^{\ast}$ is empty or consists of one $\rM^{\ast}$-conjugacy class, so does $[s] \cap \rM'^{\ast}$. In the first case $\rY_{\rM, \tilde{s}}= 0$, whence $r_{\rM'}^{\rL} \rY_{\rL',\mu,s}=0$, so the result.

Now considering the second case: let $\tilde{s}' \in \rM^{\ast}$ and $[\tilde{s}']$ denote the $\rM^{\ast}$-conjugacy class equals to $[\tilde{s}] \cap \rM^{\ast}$. Let $\mu'$ denote the character $\mathrm{Res}_{\rU'_{\rL'}}^{\rU'_{\rM'}} \mu$, where $\rU'_{\rL'}$ and of $\rU'_{\rM'}$ denote the unipotent radical of $\rL'$ and $\rM'$ respectively, which is non-degenerate by definition. Corollary 15.15 in [Bon] gives an equation:
$$r_{\rM'}^{\rL'} \rY_{\rL', \mu, s} \otimes \bar{\rK}= \rY_{\rM',\mu',s'} \otimes \bar{\rK}.$$
which means the $\ell$-regular quotient of $r_{\rM'}^{\rL'} \rY_{\rL', \mu, s}$ is indecomposable, and by using the criterion of  \cite[lemma 5.11 ]{Geck} we conclude that $r_{\rM'}^{\rL'} \rY_{\rL', \mu, s}$ is indecomposable. Note that Corollary 15.11 in [Bon] tells that the sub-representation of $\mathrm{Res}_{\rM'}^{\rM}\rY_{\rM, \tilde{s}} \otimes \bar{\rK}$ corresponding to $[s']$ is without multiplicity, and the equation above says that the irreducible sub-representations corresponding to $[s']$ of $r_{\rM'}^{\rL'} \rY_{\rL', \mu, s} \otimes \bar{\rK}$ and $\rY_{\rM',\mu',s'} \otimes \bar{\rK}$ coincide, whence these two projective direct summands of $\mathrm{Res}_{\rM'}^{\rM}\rY_{\rM, \tilde{s}}$ coincide each other.

\end{proof}

We have finished the first step to prove Theorem \ref{theorem 1}. Remark \ref{remark 2.11} tells that the statement of step 2 is true for $\rL$, whence there only left the proposition below to finish our proof:

\begin{prop}
Let $\rY_{\rL', \mu, s}$, $\rY_{\rL,\tilde{s}}$, $\widetilde{\nu}$ be as in Proposition \ref{prop a}, then for any standard split Levi $\rM'$ of $\rL'$, we have $r_{\rM'}^{\rL'} \rY_{\rL',\mu, s}= \rY_{\rM', \mu, s}$ is cuspidal if and only if $r_{\widetilde{M}}^{\rL} \rY_{\rL, \tilde{s}}= \rY_{\rM, \tilde{s}}$ is cuspidal.
\end{prop}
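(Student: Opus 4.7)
The plan is to reduce the equivalence of cuspidality on the two sides to a purely combinatorial statement about Lusztig series intersecting with Levis, using the machinery already developed.

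First, I would unravel the definition of cuspidality. To say that $\rY_{\rM',\mu,s}=r_{\rM'}^{\rL'}\rY_{\rL',\mu,s}$ is cuspidal means that for every proper standard split Levi $\rN'\subsetneq \rM'$, the further restriction $r_{\rN'}^{\rM'}\rY_{\rM',\mu,s}$ vanishes. Applying Proposition \ref{prop b} with the pair $(\rL',\rM')$ replaced by $(\rM',\rN')$, this restriction equals $\rY_{\rN',\mu,s}$, which by our conventions is either $0$ (when $[s]\cap \rN'^{\ast}=\emptyset$) or an indecomposable nonzero module. So cuspidality of $\rY_{\rM',\mu,s}$ is equivalent to the statement that $[s]\cap \rN'^{\ast}=\emptyset$ for every proper standard split Levi $\rN'\subsetneq \rM'$. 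The same argument on the $\rG$-side, using Helm's Proposition 5.10 in place of Proposition \ref{prop b}, shows that cuspidality of $\rY_{\rM,\tilde{s}}$ is equivalent to the statement that $[\tilde{s}]\cap \rN^{\ast}=\emptyset$ for every proper standard Levi $\rN\subsetneq \rM$.

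Next, I would invoke the regular inclusion setup from the preliminaries: the map $\rN\mapsto \rN\cap \rG'=\rN'$ gives a bijection between standard Levi subgroups of $\rM$ and those of $\rM'$, and the dual surjection $i^{\ast}$ satisfies $i^{\ast}(\rN^{\ast})=\rN'^{\ast}$. It therefore suffices to establish the following claim: for a corresponding pair $(\rN,\rN')$, we have $[\tilde{s}]\cap \rN^{\ast}=\emptyset$ if and only if $[s]\cap \rN'^{\ast}=\emptyset$.

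For the forward direction of the claim, if $t'\in [s]\cap \rN'^{\ast}$, lift $t'$ to some $\tilde{t}\in [\tilde{s}]$ with $i^{\ast}(\tilde{t})=t'$, which is possible because $i^{\ast}[\tilde{s}]=[s]$. Since $\ker i^{\ast}$ is central in $\rG^{\ast}$ and hence contained in every Levi, the preimage $(i^{\ast})^{-1}(\rN'^{\ast})$ is exactly $\rN^{\ast}$, so $\tilde{t}\in \rN^{\ast}$ and $[\tilde{s}]\cap \rN^{\ast}\neq\emptyset$. The reverse direction is immediate: if $\tilde{t}\in [\tilde{s}]\cap \rN^{\ast}$, then $i^{\ast}(\tilde{t})\in [s]\cap \rN'^{\ast}$. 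Combining this equivalence with the two reformulations of cuspidality finishes the proof.

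The only non-routine step is identifying the preimage of $\rN'^{\ast}$ under $i^{\ast}$ with $\rN^{\ast}$, which I expect to be the main (though mild) obstacle; it rests on the centrality of $\ker i^{\ast}$ and the bijection between standard parabolics and their Levis induced by $i$ (and dually by $i^{\ast}$), both recorded in the preliminary subsection on the regular inclusion.
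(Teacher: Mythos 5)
Your argument is correct, and it treats the key step by a route that only partially coincides with the paper's. Both proofs first reduce the proposition, via transitivity of parabolic restriction together with Proposition \ref{prop b} and Helm's Proposition 5.10, to the simultaneous-vanishing statement $\rY_{\rN',\mu,s}=0 \iff \rY_{\rN,\tilde{s}}=0$ for corresponding Levi subgroups $\rN\subset\rM$, $\rN'=\rN\cap\rG'$. The paper then proves one implication module-theoretically, from the embedding $r_{\rN'}^{\rL'}\rY_{\rL',\mu,s}\hookrightarrow\mathrm{Res}_{\rN'}^{\rN}\rY_{\rN,\tilde{s}}$ coming from Corollary \ref{corollary 3.1} and the proof of Proposition \ref{prop b}, and the converse by passing to the $\ell$-regular quotient and invoking Bonnaf\'e's Corollary 15.15 to conclude $[s]\cap\rN'^{\ast}=\emptyset$, after which only the easy direction of your duality claim (apply $i^{\ast}$) is used. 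You instead make the whole equivalence combinatorial, proving both directions of $[s]\cap\rN'^{\ast}=\emptyset \iff [\tilde{s}]\cap\rN^{\ast}=\emptyset$ by lifting through $i^{\ast}$; this is sound, since $i^{\ast}(\rL^{\ast})=\rL'^{\ast}$ (recorded in the preliminaries, and resting on the connectedness of the central kernel via Lang's theorem) gives $i^{\ast}([\tilde{s}])=[s]$, and centrality of $\ker i^{\ast}$ gives $(i^{\ast})^{-1}(\rN'^{\ast})=\rN^{\ast}$. Your route buys symmetry and dispenses with the lattice embedding; the paper's route needs only the trivial direction of the dual-Levi claim, at the cost of re-using the $\pi_{\ell'}$ machinery it has already set up. One point you should state explicitly: the nonvanishing of $\rY_{\rN',\mu,s}$ (resp.\ $\rY_{\rN,\tilde{s}}$) when $[s]\cap\rN'^{\ast}$ (resp.\ $[\tilde{s}]\cap\rN^{\ast}$) is nonempty is not part of "our conventions", which only cover the empty case; it is precisely the fact that the Gelfand--Graev lattice has a nonzero $b$-component in every $\ell$-regular series, which the paper extracts from Bonnaf\'e's Corollary 15.15 and which its own converse direction also relies on. Since that input is supplied by the proof of Proposition \ref{prop b}, which you cite, your proof is complete once you point to it.
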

 
\begin{proof}
Since $\rL' \hookrightarrow \rL$ is a bijection preserving partial order between standard Levi subgroups of $\rG$ and $\rG'$, the statement in the proposition is equivalent to say that for any split Levi $\rM'$ of $\rL'$, 
$$r_{\rM'}^{\rL'} \rY_{\rL',\mu, s} =0 \iff r_{\rM}^{\rL} \rY_{\rL, \tilde{s}}=0.$$
The proof of Proposition \ref{prop b} tells us
$$r_{\rM'}^{\rL'} \rY_{\rL',\mu, s} \hookrightarrow \mathrm{Res}_{\rM'}^{\rM} \rY_{\rM,\tilde{s}},$$
whence "$\Rightarrow$" is clear.

Now consider the other direction. Notice that $r_{\rM'}^{\rL'} \rY_{\rL',\mu,s}$ is an $\rO[\rM']$-lattice, and definition 5.9 in \cite{Geck} tells us that $r_{\rM'}^{\rL'} \rY_{\rL',\mu, s}=0$ if and only if its $\ell$-regular quotient $\pi_{l'} (r_{\rM'}^{\rL'}\rY_{\rL', \mu, s})=0.$ 
By definition $(\pi_{l'} (r_{\rM'}^{\rL'} \rY_{\rL', \mu, s})) \otimes \bar{\rK}$ is the sum of all simple $\bar{\rK}[\rM']$-submodules of $r_{\rM'}^{\rL'}( \rY_{\rL', \mu, s} \otimes \bar{\rK})$ which lie in the Lusztig series corresponding to $\ell$-regular semisimple $\rM'^{\ast}$-conjugacy classes. \cite[Corollary 15.15]{Bonn} states that, in fact $\bar{\rK}[\rM']$-module $(\pi_{l'} (r_{\rM'}^{\rL'} \rY_{\rL', \mu, s})) \otimes \bar{\rK}$ is the sum of all irreducible $\bar{\rK}[\rM']$-submodules of Gelfand-Graev representation $\mathrm{Ind}_{\rU'_{\rM'}}^{\rM'} \mu$ lying in the Lusztig series corresponding to $[s] \cap \rM'^{\ast}$, where $[s]$ denotes the $\rL'^{\ast}$-conjugacy class. We have now $(\pi_{l'} (r_{\rM'}^{\rL'} \rY_{\rL', \mu, s})) \otimes \bar{\rK}=0$ implies $[s] \cap \rM'^{\ast}=0$, which means $[\tilde{s}] \cap \rM^{\ast}=0$, whence $\rY_{\rM,{\tilde{s}}}=0$.
 
\end{proof}

\section{Maximal simple cuspidal $k$-types}
\label{chapter 4}

\subsection{Construction of cuspidal $k$-representations of $\rG'$}
\label{section 08}
From this section until the end of this manuscript, we assume that the field $F$ is non-archimedean locally compact, whose residue field is of characteristic $p(\neq l)$. Let $\rG'$ denote $\mathrm{SL}_n(\rF)$ and $\rG$ denote $\mathrm{GL}_n(\rF)$. Let $\mathrm{Rep}_{k}(\rG')$ denote the category of smooth $k$-representations of $\rG'$.

In this section, we want to prove that for any $k$-irreducible cuspidal representation $\pi'$ of $\rG'$, there exists an open compact subgroup $\tilde{J}'$ of $\rG'$ and an irreducible representation $\tilde{\lambda}'$ of $\tilde{J}'$ such that $\pi'$ is isomorphic to $\ind_{\tilde{J}'}^{\rG'}\tilde{\lambda}'$ (\ref{thm 16}, \ref{cor 21} and \ref{defn 22}).

\subsubsection{Types $(J,\lambda\otimes\chi\circ\det)$}
Let $(J,\lambda)$ be a maximal simple cuspidal $k$-type of $\rG$, and we need to check that the type $(J,\lambda\otimes\chi\circ\det)$ is also a maximal simple cuspidal $k$-type of $\rG$, which will be used in the proof of Proposition \ref{prop 1}. This has been proved in appendix of \cite{BuKuII} in the case of characteristic $0$, and by using the following two lemmas, we observe the same results for the case of characteristic $\ell$ by reduction modulo $\ell$.

\begin{defn}
\label{defn 1122}
Let $[\fA,n,0,\beta]$ be a simple stratum, and $\theta$ be a simple $k$-character or a simple $\obQ_{\ell}$ character of $H^{1}$, and $\eta$ the unique irreducible $k$-representation of $J^1$ which contains $\theta$, and $\kappa$ an $\beta$-extension of $\eta$ to $J$. Let $(J,\lambda)$ be a simple $k$-type or a simple $\obQ_{\ell}$-type of $\rG$. And also all the notations used: $\mathfrak{H}(\beta,\fA)$,  $\mathfrak{J}(\beta,\fA)$ are defined in \S 3.1.7 and \S 3.1.8 of \cite{BuKu}.
\end{defn}

\begin{prop} [Vign\'eras, IV.1.5 in \cite{V2}]
\label{prop 2'}
The reduction modulo $\ell$ of any maximal simple cuspidal  $\bar{\Q}_\ell$-type of $\rG$ is a maximal simple cuspidal $k$-type. And conversely, any maximal simple cuspidal $k$-type is the reduction modulo $\ell$ of a maximal simple cuspidal $\bar{\Q}_\ell$-type of $\rG$.
\end{prop}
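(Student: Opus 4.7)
The plan is to reduce the statement to separate reduction-mod-$\ell$ compatibilities, one for each ingredient in the standard decomposition of a maximal simple cuspidal type. Recall that any such type has the form $\lambda \cong \kappa \otimes \sigma$, where $\kappa$ is a $\beta$-extension to $J$ of the unique irreducible representation $\eta$ of $J^{1}$ containing the simple character $\theta$, and $\sigma$ is inflated from an irreducible cuspidal representation of the finite reductive quotient $J/J^{1} \cong \GL_m(\mathbb{F}_{q^f})$. It therefore suffices to check, separately for $\theta$, for $\kappa$ (together with $\eta$), and for $\sigma$, that reduction mod $\ell$ sends isomorphism classes of the relevant $\bar{\Q}_\ell$-objects bijectively to isomorphism classes of the corresponding $k$-objects.

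For the forward direction, I would first observe that $\theta \colon H^{1} \to \bar{\Q}_{\ell}^{\times}$ takes values in roots of unity of order prime to $\ell$, since $H^{1}$ is pro-$p$ with $p \ne \ell$; hence $\theta$ has a canonical reduction $\theta_k$, which one checks directly from the defining conditions in \S$3.2$ of \cite{BuKu} to be a simple $k$-character attached to the same stratum $[\fA,n,0,\beta]$. The Heisenberg representation $\eta$ has $p$-power dimension and factors through a finite quotient of order coprime to $\ell$, so it admits a $\bar{\bZ}_{\ell}$-lattice whose reduction is irreducible and coincides with the $k$-Heisenberg representation of $\theta_k$. A $\beta$-extension $\kappa$ of $\eta$ then reduces, possibly after twisting by a character of $J/J^{1}$ to correct any ambiguity in the choice of $\beta$-extension, to a $\beta$-extension $\kappa_k$ of the reduced Heisenberg representation. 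Finally, the cuspidal $\bar{\Q}_\ell$-representation $\sigma$ of $\GL_m(\mathbb{F}_{q^f})$ admits an integral model whose mod-$\ell$ reduction is again irreducible and cuspidal, by the theory of $\ell$-modular representations of finite general linear groups.

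The backward direction proceeds symmetrically: starting from $(J, \lambda_k)$, I would lift $\theta_k$ to a $\bar{\Q}_\ell$-simple character (trivially, by the pro-$p$ structure of $H^{1}$), then lift the Heisenberg representation and a $\beta$-extension to $\bar{\Q}_\ell$, and finally lift the cuspidal $k$-representation of $J/J^{1}$ to a cuspidal $\bar{\Q}_\ell$-representation. Tensoring the lifted pieces produces a $\bar{\Q}_\ell$-type whose reduction is $(J,\lambda_k)$.

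The main technical obstacle is the compatibility at the level of the finite reductive quotient, namely that cuspidality on $\GL_m(\mathbb{F}_{q^f})$ is preserved under both reduction and lifting. For arbitrary finite groups of Lie type this can fail, but for finite general linear groups it is the content of the Dipper--James theory (re-proved by Vign\'eras in \cite{V2}): the cuspidal $k$-representations are precisely the mod-$\ell$ reductions of cuspidal $\bar{\Q}_\ell$-representations, and reduction stays irreducible on these blocks. Everything else reduces to manipulations with pro-$p$ groups, Heisenberg representations, and their $\beta$-extensions, where the hypothesis $\ell \ne p$ makes reduction mod $\ell$ essentially a formal operation.
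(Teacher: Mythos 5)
Your sketch is essentially correct, but note that the paper does not prove this statement at all: it is imported verbatim as a citation to Vign\'eras (IV.1.5 of \cite{V2}), and the decomposition you describe -- reduce/lift $\theta$, then $\eta$, then a $\beta$-extension $\kappa$, then the cuspidal representation $\sigma$ of $J/J^{1}\cong\GL_m(\mathbb{F}_{q^f})$, and reassemble $\lambda\cong\kappa\otimes\sigma$ -- is exactly the strategy underlying that citation (and its extension to inner forms in \cite{MS}). Two caveats on where your argument genuinely leans on nontrivial theorems rather than on the formal $\ell\neq p$ mechanics: first, the finite-group step (reduction of an integral cuspidal $\bar{\Q}_\ell$-representation of $\GL_m(\mathbb{F}_{q^f})$ is irreducible and cuspidal, and every cuspidal $k$-representation lifts) is precisely the James/Dipper--James input and is special to $\GL_m$, as you correctly flag; second, the assertion that the reduction of a $\beta$-extension is again a $\beta$-extension, and conversely that $\beta$-extensions lift, is itself a statement requiring proof (it is established in the references via the intertwining characterization of $\beta$-extensions, with the ambiguity controlled by characters factoring through the determinant, which lift by a Teichm\"uller argument since their orders are prime to $\ell$); your parenthetical ``possibly after twisting by a character of $J/J^{1}$'' is the right way to absorb this, since such a twist can be moved into $\sigma$ without changing $\lambda$. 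With those inputs granted, both directions of your argument go through, including the preservation of maximality (the order $\fA$ and the group $J$ are untouched by reduction) and the irreducibility of the reduced tensor product.
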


\begin{lem}
\label{lem 3}
Let $K$ be a compact subgroup of a $p$-adic reductive group. Any $\obQ_{\ell}$-character of $K$ is $\ell$-integral, and reduction modulo $\ell$ gives a surjection from the set of $\obQ_{\ell}$-characters of $K$ to the set of $\bar{\mathbb{F}}_\ell$-characters of $K$
\end{lem}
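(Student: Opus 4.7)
The plan is to treat the two claims separately, both reducing to the structure of the unit group $\obZ_\ell^\times$ and its reduction mod $\ell$.

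For $\ell$-integrality, I would first use that $K$ is profinite (being a compact subgroup of a locally profinite group), so that any continuous character $\chi\colon K \to \obQ_\ell^\times$ has compact image. The classical fact that every compact subgroup of $\obQ_\ell^\times$ is contained in $\obZ_\ell^\times$ then applies: one checks this by observing that the $\ell$-adic absolute value is bounded on compacts and takes only the value $1$ on a subgroup, so the image has trivial valuation. This immediately gives $\chi(K)\subset \obZ_\ell^\times$, which is exactly the $\ell$-integrality assertion.

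For surjectivity of reduction, I would start with a smooth character $\bar\chi\colon K \to \bar{\mathbb{F}}_\ell^\times$. By smoothness the kernel is open, so $\bar\chi$ factors through a finite quotient $K/H$, and its image is a finite cyclic group whose order is automatically prime to $\ell$ because $\bar{\mathbb{F}}_\ell^\times$ has no $\ell$-torsion. I would then invoke the Teichm\"uller lift, which furnishes a canonical multiplicative section of the reduction map from the group $\mu_{\ell'}$ of prime-to-$\ell$ roots of unity in $\obZ_\ell^\times$ onto $\bar{\mathbb{F}}_\ell^\times$. Composing $\bar\chi$ with this section produces a smooth character $\chi\colon K \to \obZ_\ell^\times\subset \obQ_\ell^\times$ whose reduction mod $\ell$ recovers $\bar\chi$; smoothness of $\chi$ is automatic from the factorization through $K/H$.

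I do not anticipate any genuine obstacle here: the statement is really a structural consequence of the splitting $\obZ_\ell^\times\cong \mu_{\ell'}\times (1+\ell\obZ_\ell)$, together with compactness of $K$ and smoothness of the characters involved. The only point requiring a moment of care is verifying that the Teichm\"uller section is multiplicative on $\mu_{\ell'}$ — standard, since $\mu_{\ell'}\subset \obZ_\ell^\times$ maps isomorphically onto $\bar{\mathbb{F}}_\ell^\times$ under reduction.
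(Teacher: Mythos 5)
Your proof is correct, but it takes a genuinely different route from the paper's, most visibly in the lifting step. For integrality the paper argues from smoothness that the character is defined over a finite extension $\mathrm{E}/\Q_\ell$ and admits an $\rO_{\mathrm{E}}[K]$-lattice, whereas you observe that a smooth character of a compact group has compact (indeed finite, since its kernel is open of finite index) image in $\obQ_{\ell}^{\times}$, hence lands in the units; both are fine, and in each case the substantive point is that the values are roots of unity. For surjectivity the paper lifts an $\bar{\mathbb{F}}_\ell$-character $\chi_\ell$ by general modular representation theory: since $K/\mathrm{Ker}(\chi_\ell)$ is finite of order prime to $\ell$, the character is projective as a module (Serre, \S 14.4), and projective modules lift to the ring of integers of a suitable finite extension of $\Q_\ell$ (Serre, Proposition 42). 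You instead compose $\bar{\chi}$ with the Teichm\"uller section, i.e.\ the inverse of the reduction isomorphism from the prime-to-$\ell$ roots of unity of $\obQ_{\ell}$ onto $\bar{\mathbb{F}}_\ell^{\times}$, and multiplicativity, smoothness and the compatibility with reduction are immediate. Your argument is more elementary and explicit, and even produces a canonical lift; the paper's projectivity argument is the one that generalizes beyond characters, and the same citations from Serre are reused elsewhere (e.g.\ in Proposition \ref{proposition 2.5}) to lift honestly higher-dimensional projective modules, which is presumably why the author treats the character case in the same language. Both proofs ultimately rest on the same key observation that the image of $\chi_\ell$, lying in $\bar{\mathbb{F}}_\ell^{\times}$, has order prime to $\ell$.
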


\begin{proof}
Fix an isomorphism from $\mathbb{C}$ to $\bar{\mathbb{Q}}_\ell$ of fields and until the end of this proof, we identify the two fields through this isomorphism. Let $\chi_{\bC}$ be any $\bC$-character of $K$. The smoothness implies that there exists a finite field extension $\mathrm{E}\slash \Q_\ell$ such that $\chi_{\bC}$ is defined over $\mathrm{E}$ and we can find an $\rO_{\mathrm{E}}[K]$-lattice of $\chi$. Hence we can define reduction modulo $\ell$ for $\chi_{\bC}$ and denote is as $\bar{\chi}_{\bC}$. On the other hand, let $\chi_\ell$ be any $\bar{\mathbb{F}}_\ell$-character of $K$. It is clear that $\chi_\ell$ is defined over a finite field extension $\bar{\mathrm{E}}/\mathbb{F}_\ell$. Notice that the quotient group $K\slash \mathrm{Ker}(\chi_\ell)$ is a finite abelian group with order prime to $\ell$. Lemma 10 of section 14.4 in \cite{Serre} implies that $\chi_\ell$ is projective as $\bar{\mathrm{E}}[K\slash\mathrm{Ker}(\chi_\ell)]$-module. Then Proposition 42 of \cite{Serre} states that $\chi_\ell$ can be lift to $\rO_{\bar{\mathrm{E}}}$, where the fractional field $\mathrm{frac}(\rO_{\bar{\mathrm{E}}})$ is a finite field extension of $\Q_\ell$ and its residual field is isomorphic to $\bar{\mathrm{E}}$. Hence we finish the proof.
\end{proof}

\begin{cor}
\label{cor 4}
Let $\bar{\chi}$ be any $k^{\times}$-character of $\mathrm{F}^{\times}$, then it can be always lifted to a $\bar{\Q}_\ell$-character $\chi$ of $\mathrm{F}^{\times}$.
\end{cor}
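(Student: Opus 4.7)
The plan is to exploit the decomposition $F^{\times} = \varpi^{\bZ} \times \cO_F^{\times}$ for a chosen uniformiser $\varpi$ of $F$ (with $\cO_F$ the ring of integers of $F$), and thereby reduce the lifting problem to the compact case already handled by Lemma \ref{lem 3}.

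First I would observe that, via this direct product decomposition, any smooth $k^{\times}$-character $\bar{\chi}$ of $F^{\times}$ is determined by the pair $(\bar{\chi}(\varpi),\ \bar{\chi}\vert_{\cO_F^{\times}})$. The restriction $\bar{\chi}\vert_{\cO_F^{\times}}$ is a smooth character of the compact group $\cO_F^{\times}$, so Lemma \ref{lem 3} furnishes a $\bar{\Q}_\ell$-character $\chi_0$ of $\cO_F^{\times}$ whose reduction modulo $\ell$ coincides with $\bar{\chi}\vert_{\cO_F^{\times}}$. For the value on the uniformiser, since the residue field of $\bar{\bZ}_\ell$ is $\bar{\F}_\ell = k$, the reduction map $\bar{\bZ}_\ell^{\times} \to k^{\times}$ is surjective, so I can pick some $c \in \bar{\bZ}_\ell^{\times}$ whose reduction modulo $\ell$ equals $\bar{\chi}(\varpi)$.

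I would then define $\chi \colon F^{\times} \to \bar{\Q}_\ell^{\times}$ by
\[
\chi(\varpi^{n} u) := c^{n}\, \chi_0(u), \qquad n \in \bZ,\ u \in \cO_F^{\times}.
\]
This is a well-defined group homomorphism thanks to the direct-product structure; it is smooth because its kernel contains the open subgroup $\ker(\chi_0)$ of $\cO_F^{\times}$; and its reduction modulo $\ell$ agrees with $\bar{\chi}$ on each of the two factors, hence globally. I do not anticipate any real obstacle here, and in particular no new analytic input is needed: the corollary is essentially a formal packaging of Lemma \ref{lem 3} together with the standard direct-product structure of $F^{\times}$ and the surjectivity of the reduction map on units of $\bar{\bZ}_\ell$.
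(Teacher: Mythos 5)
Your proposal is correct and follows essentially the same route as the paper: both use the decomposition $F^{\times}\cong \mathbb{Z}\times\mathcal{O}_F^{\times}$, apply Lemma \ref{lem 3} to the restriction to $\mathcal{O}_F^{\times}$, and then lift the value at the uniformiser. The only (cosmetic) difference is that the paper lifts $\bar{\chi}(\varpi)$ by noting its image is a finite group of order prime to $\ell$ embedded in the residue field of a finite extension of $\mathbb{Q}_\ell$, whereas you invoke the surjectivity of the reduction map $\bar{\mathbb{Z}}_\ell^{\times}\rightarrow k^{\times}$, which amounts to the same thing.
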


\begin{proof}
We could write $F^{\times}\cong\mathbb{Z}\times\rO_{F}^{\times}$ and $\bar{\chi}$ is uniquely defined by $\bar{\chi}\vert_{\mathbb{Z}}$ and $\bar{\chi}\vert_{\rO_F^{\times}}$. The part $\bar{\chi}\vert_{\rO_F^{\times}}$ can be lift to a $\bar{\Q}_\ell$-character by lemma \ref{lem 3}. It is left to consider the restriction $\bar{\chi}\vert_{\mathbb{Z}}$, of which the image is a finite group of order prime to $\ell$. Thus we could find a finite field extension $K$ of $\Q_\ell$ such that there is an embedding from $\bar{\chi}(\mathbb{Z})$ to the quotient ring $\rO_K/ p_K$, where $p_K$ is the uniformizer of $\rO_K$.
\end{proof}

From now on we fix a continuous additive character $\psi_F$ from $F$ to $\mathbb{C}^{\times}$, which is null on $\mathfrak{p}_{F}$ (the unique maximal ideal of $\mathfrak{o}_F$) but non-null on $\mathfrak{o}_F$ (the ring of integers of $F$). Recall the equivalence (depending on the choice of $\psi_F$)
$$(\mathrm{U}^{[\frac{1}{2}n]+1}(\fA)/ \mathrm{U}^{n+1}(\fA))^{\wedge}\cong \mathfrak{P}^{-n}/\mathfrak{P}^{-([\frac{1}{2}n]+1)},$$
where $(\mathrm{U}^{[\frac{1}{2}n]+1}(\fA)/ \mathrm{U}^{n+1}(\fA))^{\wedge}$ denote the Pontrjagin dual. Let $\beta\in\mathfrak{P}^{-n}/\mathfrak{P}^{-([\frac{1}{2}n]+1)}$, we use $\psi_{\beta}$ to denote the character on $\mathrm{U}^{[\frac{1}{2}n]+1}(\fA)/ \mathrm{U}^{n+1}(\fA)$ induced through the equivalence above (or consult \S 1.6.6 of \cite{BuKu} for explicite definition). Let $\bar{\psi_{\beta}}$ denote the reduction modulo $\ell$ of $\psi_{\beta}$ according to the choice of $\psi_F$.

\begin{lem}
\label{lem 2'}
Let $(J,\lambda)$ be a maximal simple cuspidal $k$-type of $\rG$, if $(J,\lambda)$ is of level zero or $\beta\in F$, then $(J,\lambda\otimes\chi\circ\det)$ is also a maximal simple cuspidal $k$-type of $\rG$, where $\chi$ is any $k$-quasicharacter of $F^{\times}$. In particular, while $\chi$ is not trivial on $\rU^1(\fA)$. Let $n_0\ge1$ is the least integer such that $\chi\circ\det$ is trivial on $\rU^{n_0+1}(\fA)$, and $c\in \mathfrak{P}^{-n_0}$ such that $\chi\circ\det$ coincides with $\bar{\psi_c}$ on $\mathrm{U}^{[\frac{1}{2}n_0]+1}(\fA)$. Then 
$$\mathfrak{H}(\beta+c, \fA)=\mathfrak{H}(\beta,\fA)=\mathfrak{J}(\beta+c,\fA)=\mathfrak{J}(\beta,\fA)=\fA$$
\end{lem}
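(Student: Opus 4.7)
The plan is to reduce everything to the known characteristic zero statement from the appendix of \cite{BuKuII} via Proposition \ref{prop 2'} and Corollary \ref{cor 4}.

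First I would apply Proposition \ref{prop 2'} to lift $(J,\lambda)$ to a maximal simple cuspidal $\bar{\Q}_\ell$-type $(J,\tilde{\lambda})$ whose reduction modulo $\ell$ is $(J,\lambda)$. Next, Corollary \ref{cor 4} provides a $\bar{\Q}_\ell$-quasicharacter $\tilde{\chi}$ of $F^{\times}$ lifting $\chi$. The tensor product $\tilde{\lambda}\otimes\tilde{\chi}\circ\det$ is then an integral representation whose reduction modulo $\ell$ is exactly $\lambda\otimes\chi\circ\det$.

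Then I would invoke the appendix of \cite{BuKuII} in characteristic zero: under the hypothesis that $(J,\tilde{\lambda})$ is of level zero or has $\beta\in F$, the twisted type $(J,\tilde{\lambda}\otimes\tilde{\chi}\circ\det)$ is again a maximal simple cuspidal $\bar{\Q}_\ell$-type of $\rG$. Applying the converse direction of Proposition \ref{prop 2'} to this twisted type, its reduction modulo $\ell$ is a maximal simple cuspidal $k$-type, which is precisely $(J,\lambda\otimes\chi\circ\det)$.

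For the displayed chain of equalities, note that under either hypothesis ($(J,\lambda)$ of level zero, in which case we set $\beta=0$, or $\beta\in F$), the element $\beta$ is a scalar and $\fA$ is the unique maximal hereditary order stabilised by a vertex of the building. By the definitions recalled in \S 3.1.7 and \S 3.1.8 of \cite{BuKu}, for a simple stratum $[\fA,n,0,\beta']$ with $\beta'\in F$ one has $\mathfrak{H}(\beta',\fA)=\mathfrak{J}(\beta',\fA)=\fA$. Since $\chi\circ\det$ factors through $\det$ on $\rU^{[n_0/2]+1}(\fA)$, the element $c$ implementing it via $\bar{\psi_c}$ can be chosen, after possibly modifying by an element of $\mathfrak{P}^{-[n_0/2]}$, to lie in $F\cap\mathfrak{P}^{-n_0}$. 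Consequently $\beta+c\in F$, and the same definitions yield $\mathfrak{H}(\beta+c,\fA)=\mathfrak{J}(\beta+c,\fA)=\fA$.

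The main obstacle I anticipate is the verification that the auxiliary element $c$ may be chosen as a scalar in $F$; this reduces to a duality computation using the surjectivity of $\det$ on successive quotients $\rU^{m}(\fA)/\rU^{m+1}(\fA)$ at the level dictated by $\fA$ and $n_0$. Once this selection is justified, the remaining assertions are formal consequences of the Bushnell--Kutzko definitions for strata with scalar defining element, and the lifting-and-reduction argument above handles the cuspidal type claim itself.
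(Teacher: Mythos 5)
Your argument is correct, but it takes a different route from the paper's. The paper proves this lemma at the level of simple characters: in the level-zero case it suffices to show that $\chi\circ\det$ is a simple $k$-character on $\rU^1(\fA)$, and when $\beta\in F$ that $\bar{\psi_{\beta}}\otimes\chi\circ\det$ is one; this is deduced from the complex-case computations in the appendix of \cite{BuKuII} together with the fact (definition 2.2.2 of \cite{MS}) that simple $k$-characters are exactly the reductions modulo $\ell$ of simple $\bC$-characters, the displayed equalities being part of what the scalar strata give. You instead lift the whole type via Proposition \ref{prop 2'} and the character via Corollary \ref{cor 4}, twist in characteristic zero using the same appendix, and reduce back; this is precisely the mechanism the paper reserves for Corollary \ref{cor 3}. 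Your route gives the type-theoretic assertion more directly (indeed you never use the level-zero/$\beta\in F$ hypothesis, so you are really reproving Corollary \ref{cor 3}), but it does not by itself identify the simple character contained in the twisted type, which is the content the character-level argument produces and which later feeds into Corollary \ref{cor 3} and Proposition \ref{prop 1}; consequently you must handle the displayed chain of equalities separately, and your treatment of it is sound: since $\bar{\psi_c}$ on $\rU^{[\frac{1}{2}n_0]+1}(\fA)$ depends only on $c$ modulo $\mathfrak{P}^{-[\frac{1}{2}n_0]}$ (the duality of \S 1.6.6 of \cite{BuKu}), you may take $c$ to be the scalar in $F\cap\mathfrak{P}^{-n_0}$ attached to $\chi$, so that $\beta+c\in F$ (with $\beta=0$ in the level-zero case), and for strata with scalar defining element the inductive definitions in \S 3.1.7--3.1.8 of \cite{BuKu} give $\mathfrak{B}=\fA$ and hence $\mathfrak{H}(\beta+c,\fA)=\mathfrak{H}(\beta,\fA)=\mathfrak{J}(\beta+c,\fA)=\mathfrak{J}(\beta,\fA)=\fA$, exactly as claimed.
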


\begin{proof}
While $(J,\lambda)$ is of level zero, we only need to prove $\chi\circ\mathrm{det}$ is a simple character on $\rU^1(\mathfrak{A})$. While $\beta\in F$, we only need to prove the character $\theta=\bar{\psi_{\beta}}\otimes\chi\circ\det$ is a simple character on $\rU^1(\mathfrak{A})$. This is directly induced by the results in the appendix in \cite{BuKuII} for the complex case, because the definition of simple stratum in the case of characteristic $\ell$ is the same as the case of characteristic $0$. And the definition 2.2.2 of \cite{MS} implies that simple $k$-characters are reduction modulo $\ell$ of simple $\bC$-characters.
\end{proof}

\begin{lem}
\label{lem 2}
Let $[\mathfrak{A},n,0,\beta]$ be a simple stratum in $\mathrm{A}$ with $\beta\notin \mathrm{F}$, $n\geq1$. Let $c\in\mathrm{F}$, and $n_0=-\mathnormal{v}_{\fA}(c)$, $n_1=-\mathnormal{v}_{\fA}(\beta+c)$, \begin{enumerate}
\item The stratum $[\mathfrak{A},n,0,\beta+c]$ is a simple stratum in $\rA$, and we have $\mathfrak{H}(\beta+c, \fA)=\mathfrak{H}(\beta,\fA)$ and $\mathfrak{J}(\beta+c,\fA)=\mathfrak{J}(\beta,\fA)$.
\item Let $\chi_{\bC}$ be a $\bC$-quasicharacter of $\mathrm{F}^{\times}$ such that $\chi_{\bC}\circ\mathrm{det}$ agrees with $\psi_c$ on $\mathrm{U}^{[\frac{1}{n_0}]+1}(\fA)$. Then we have an equivalence of simple $\bC$-characters:
$$\mathnormal{C}_{\bC}(\fA,0,\beta+c)=\mathnormal{C}_{\bC}(\fA,0,\beta)\otimes\chi_{\bC}\circ\mathrm{det}.$$
\item Let $\chi_\ell$ be any $k$-quasicharacter of $F^{\times}$ such that $\chi_\ell\circ\det$ agrees with $\bar{\psi}_c$ on $\mathrm{U}^{[\frac{1}{n_0}]+1}(\fA)$. Then we have an equivalence of simple $k$-characters:
$$\mathnormal{C}_\ell(\fA,0,\beta+c)=\mathnormal{C}_\ell(\fA,0,\beta)\otimes\chi_\ell\circ\mathrm{det}.$$
\end{enumerate}
\end{lem}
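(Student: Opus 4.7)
The plan is to handle the three parts in turn: (1) is structural and follows because $c\in F$ is central in $\rA$; (2) is the main technical calculation, already worked out in the appendix of \cite{BuKuII}; and (3) is a corollary of (2) obtained by reducing modulo $\ell$. The unifying observation throughout is that $F[\beta+c]=F[\beta]$ since $c\in F$, so every piece of data attached to this field extension (its centraliser, the hereditary order and its normaliser, the tame approximation of the stratum) is invariant under replacing $\beta$ by $\beta+c$.

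For part (1), I would first show that $[\fA,n,0,\beta+c]$ is a simple stratum. Since $c\in F$, $\beta+c\notin F$ (else $\beta\in F$), $F[\beta+c]=F[\beta]$ is a field, and $F[\beta+c]^{\times}$ normalises $\fA$. Because $c$ is a scalar in $\rA$, the adjoint action $x\mapsto(\beta+c)x-x(\beta+c)$ equals $x\mapsto\beta x-x\beta$; hence $k_0(\beta+c,\fA)=k_0(\beta,\fA)<0$, which gives simplicity. The equalities $\mathfrak{H}(\beta+c,\fA)=\mathfrak{H}(\beta,\fA)$ and $\mathfrak{J}(\beta+c,\fA)=\mathfrak{J}(\beta,\fA)$ then follow inductively from the definitions in \S 3.1.7--3.1.8 of \cite{BuKu}, using a tamely ramified defining sequence for $\beta$ translated term-by-term by $c$.

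For part (2), the heart of the matter is that on the filtration quotient $\rU^{[n/2]+1}(\fA)/\rU^{n+1}(\fA)$ one has $\psi_{\beta+c}=\psi_{\beta}\cdot\psi_c$, which under the hypothesis on $\chi_{\bC}$ coincides with $\psi_{\beta}\cdot(\chi_{\bC}\circ\det)$. Thus for any $\theta\in\nC_{\bC}(\fA,0,\beta)$, the twist $\theta\cdot(\chi_{\bC}\circ\det)$ has the correct restriction to this quotient to be a candidate element of $\nC_{\bC}(\fA,0,\beta+c)$. The remaining axioms, namely compatibility with the tame corestriction and with the transfer maps along the defining sequence of the stratum, carry over because the defining sequence itself is unchanged up to translation by $c$, and $c$ is annihilated by every tame corestriction. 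This is exactly the content of the appendix of \cite{BuKuII}, which I would invoke directly rather than rederive.

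For part (3), I would combine (2) with reduction modulo $\ell$. By Corollary \ref{cor 4}, lift $\chi_\ell$ to an $\ell$-integral $\bar{\Q}_\ell$-quasicharacter $\chi_{\bC}$ of $F^{\times}$ whose reduction modulo $\ell$ recovers $\chi_\ell$; then $\chi_{\bC}\circ\det$ still agrees with $\psi_c$ on $\rU^{[n_0/2]+1}(\fA)$ because both sides have the same reduction. By Definition 2.2.2 of \cite{MS} (cited in the proof of Lemma \ref{lem 2'}), simple $k$-characters are exactly the reductions modulo $\ell$ of simple $\bar{\Q}_\ell$-characters, and reduction commutes with the tensor product by an integral character. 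Applying part (2) to $\chi_{\bC}$ and then reducing modulo $\ell$ yields (3). The main obstacle lies in part (2), specifically verifying the transfer-compatibility axiom for simple characters under the twist along the full defining sequence; the plan is to invoke the appendix computations of \cite{BuKuII} rather than reproduce them, and to be careful that each step of the reduction-modulo-$\ell$ argument in (3) preserves the defining conditions of a simple character.
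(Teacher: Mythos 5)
Your proposal follows essentially the same route as the paper: parts (1) and (2) rest on the lemma in the appendix of \cite{BuKuII} (you sketch (1) and cite the appendix for (2), the paper cites it for both), and part (3) is obtained exactly as in the paper by lifting $\chi_\ell$ to a $\bar{\Q}_\ell$-quasicharacter via Corollary \ref{cor 4}, using that reduction modulo $\ell$ is injective on characters of the pro-$p$ groups $\rU^{[\frac{1}{2}n_0]+1}(\fA)$ (resp.\ $\rH^1$) to see that the lift satisfies the hypothesis of (2), and then descending through the bijection of Definition 2.2.2 of \cite{MS} between simple $\bC$-characters and simple $k$-characters, which is compatible with twisting by an integral character. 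No gaps; the argument matches the paper's proof.
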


\begin{proof}
The first two assertions are the lemma in appendix of \cite{BuKuII}, so we only need to proof the last assertion. Recall that we fixed a continuous additive character $\psi_{F}$ from $F$ to $\mathbb{C}^{\times}$. Lemma \ref{lem 3} implies that every simple $\bC$-character in $\mathnormal{C}_{\bC}(\fA,0,\beta+c)$ is $\ell$-integral and has a reduction modulo $\ell$. According to the definition 2.2.2 of \cite{MS}, the reduction modulo $\ell$ gives a bijection between simple $\bC$-characters $\mathnormal{C}_{\bC}(\fA,0,\beta+c)$ to $\mathnormal{C}_\ell(\fA,0,\beta+c)$. Notice that this bijection is dependent with the choice of $\psi_F$. Apparently,
$$\mathnormal{C}_\ell(\fA,0,\beta+c)=\overline{\mathnormal{C}_{\bC}(\fA,0,\beta)}\otimes\bar{\chi}_{\bC}\circ\det,$$
where $\bar{\chi}_{\bC}$ denote the reduction modulo $\ell$ of $\chi_{\bC}$, and $\overline{\mathnormal{C}_{\bC}(\fA,0,\beta)}$ denote the set of $k$-characters, which are reduction modulo $\ell$ of characters in $\mathnormal{C}_{\bC}(\fA,0,\beta)$. By definition $\overline{\mathnormal{C}_{\bC}(\fA,0,\beta)}=\mathnormal{C}_{l}(\fA,0,\beta)$, and hence
$$\mathnormal{C}_\ell(\fA,0,\beta+c)=\mathnormal{C}_\ell(\fA,0,\beta)\otimes\bar{\chi}_{\bC}\circ\mathrm{det}.$$
Applying Corollary \ref{cor 4} to $\chi_\ell$, there exists a $\bar{\Q}_\ell$-quasicharacter $\tau_{\bC}$ of $F^{\times}$, such that $\chi_\ell\circ\det$ is isomorphic to the reduction modulo $\ell$ of $\tau_{\bC}\circ\det$. Notice that simple characters in $\mathnormal{C}_{\bC}(\fA,0,\beta+c)$ are defined on $\mathrm{H}^1=\mathfrak{H}(\beta,\fA)\cap\rU^1(\fA)$, which is a pro-$p$-subgroup of $\rG$. The reduction modulo $\ell$ of $\tau_{\bC}\circ\det$ is isomorphic to $\bar{\psi}_c$ on $\mathrm{H}^1(\beta)\cap\mathrm{U}^{[\frac{1}{2}n_0]+1}(\fA)$, which implies that $\tau_{\bC}\circ\det$ is isomorphic to $\psi_c$ on $\mathrm{H}^1(\beta)\cap\mathrm{U}^{[\frac{1}{2}n_0]+1}(\fA)$. The assertion (1) and (2) tell that
$$\mathnormal{C}_{\bC}(\fA,0,\beta+c)=\mathnormal{C}_{\bC}(\fA,0,\beta+c)\otimes\chi_{\bC}^{-1}\otimes\tau_{\bC}\circ\det.$$
We deduce directly that
$$\mathnormal{C}_{l}(\fA,0,\beta+c)=\overline{\mathnormal{C}_{\bC}(\fA,0,\beta+c)}=\mathnormal{C}_{l}(\fA,0,\beta+c)\otimes\chi_\ell\circ\det,$$
as required.
\end{proof}

\begin{cor}
\label{cor 3}
Let $(J,\lambda)$ be a maximal simple cuspidal $k$-type of $\rG$, and $\chi$ a $k$-quasicharacter of $F^{\times}$. Then the $k$-type $(J,\lambda\otimes\chi\circ\det)$ is also a maximal simple cuspidal $k$-type.
\end{cor}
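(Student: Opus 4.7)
The plan is to bootstrap from the characteristic zero result in the appendix of \cite{BuKuII} by a lifting argument, using the machinery already assembled in Lemmas \ref{lem 2'} and \ref{lem 2}, together with Corollary \ref{cor 4} and Proposition \ref{prop 2'}. The structure of a maximal simple cuspidal $k$-type $(J,\lambda)$ is fixed by a simple stratum $[\fA,n,0,\beta]$, a simple character $\theta\in\nC_\ell(\fA,0,\beta)$, the unique Heisenberg $k$-representation $\eta$ of $J^1$ containing $\theta$, a $\beta$-extension $\kappa$ of $\eta$ to $J$, and an irreducible cuspidal $k$-representation $\sigma$ of the finite reductive quotient $J/J^1$, with $\lambda\cong\kappa\otimes\sigma$.

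First, I would dispose of the easy case where $\chi\circ\det$ is trivial on $\rU^1(\fA)$. Then $\chi\circ\det$ factors through $J/J^1$, so $\lambda\vert_{J^1}$ is unchanged, $\kappa\otimes(\chi\circ\det)$ is again a $\beta$-extension, and the cuspidal factor is merely twisted by the character of $J/J^1$ induced by $\chi\circ\det$, which is still an irreducible cuspidal $k$-representation. The pair $(J,\lambda\otimes\chi\circ\det)$ is therefore a maximal simple cuspidal $k$-type attached to the same simple stratum.

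The substantive case is when $\chi\circ\det$ is nontrivial on $\rU^1(\fA)$. Let $n_0\ge 1$ be the least integer with $\chi\circ\det$ trivial on $\rU^{n_0+1}(\fA)$, and pick $c\in\mathfrak{P}^{-n_0}$ such that $\chi\circ\det$ agrees with $\bar\psi_c$ on $\rU^{[n_0/2]+1}(\fA)$. If $\beta\in F$ or $(J,\lambda)$ is of level zero, Lemma \ref{lem 2'} directly gives $\mathfrak{H}(\beta+c,\fA)=\mathfrak{H}(\beta,\fA)$ and $\mathfrak{J}(\beta+c,\fA)=\mathfrak{J}(\beta,\fA)$ and exhibits $\theta\otimes\chi\circ\det$ as a simple character for $[\fA,n',0,\beta+c]$ with $n'=\max(n,n_0)$. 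If instead $\beta\notin F$, Lemma \ref{lem 2} provides the same conclusion: $[\fA,n',0,\beta+c]$ is again a simple stratum, the $\mathfrak{H}$ and $\mathfrak{J}$ groups agree, and $\nC_\ell(\fA,0,\beta+c)=\nC_\ell(\fA,0,\beta)\otimes\chi\circ\det$.

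Since the groups $J^1$ and $J$ depend only on these $\mathfrak{H}$ and $\mathfrak{J}$ data, which are unchanged, the Heisenberg representation attached to the new simple character $\theta\otimes\chi\circ\det$ is $\eta\otimes(\chi\circ\det)\vert_{J^1}$, and $\kappa\otimes(\chi\circ\det)\vert_J$ is a $(\beta+c)$-extension of it. The cuspidal factor of $J/J^1$ is unaffected. To verify maximality, I would use Corollary \ref{cor 4} to lift $\chi$ to a $\obQ_\ell$-quasicharacter $\chi_{\bC}$ of $F^\times$, lift $(J,\lambda)$ to a maximal simple cuspidal $\obQ_\ell$-type via Proposition \ref{prop 2'}, invoke the characteristic zero statement from the appendix of \cite{BuKuII} to conclude that the twisted pair is a maximal simple cuspidal $\obQ_\ell$-type, and then reduce modulo $\ell$ using Proposition \ref{prop 2'} again to descend the conclusion to $k$. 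The main technical point, already embedded in the proof of Lemma \ref{lem 2}, is the compatibility between reduction modulo $\ell$ and the twist construction, which is the only nontrivial check once the lifting framework is in place.
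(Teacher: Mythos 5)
Your proposal is correct and follows essentially the same route as the paper: the decisive step in both is to lift $\chi$ via Corollary \ref{cor 4} and $(J,\lambda)$ via Proposition \ref{prop 2'}, invoke the appendix of \cite{BuKuII} in characteristic zero, and reduce modulo $\ell$ again by Proposition \ref{prop 2'}, with Lemmas \ref{lem 2'} and \ref{lem 2} supplying the compatibility of the groups $\mathfrak{H},\mathfrak{J}$ and of the twisted simple characters. Your preliminary case split and the remarks on the Heisenberg representation and $\beta$-extension are consistent extra detail but not a different argument.
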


\begin{proof}
Let $(J,\lambda_{\bC})$ be an $\ell$-integral maximal simple cuspidal $\bC$-type of $\rG$, whose reduction modulo $\ell$ is isomorphic to $(J,\lambda)$. Let $\chi_{\bC}$ be a $\bC$-quasicharacter of $F^{\times}$ whose reduction modulo $\ell$ is isomorphic to $\chi$ (by \ref{cor 4}). Then by the appendix of \cite{BuKuII}, the $\ell$-integral type $(J,\lambda_{\bC}\otimes\chi_{\bC}\circ\det)$ is also maximal cuspidal simple. Thus its reduction modulo $\ell$ is maximal simple cuspidal $k$-type by Proposition \ref{prop 2'}. Let $c\in\mathrm{F}$ be the element corresponding to a $\bC$-lifting of $\chi$, and $\beta$ corresponding to a simple character $\theta$ (this is well-defined, because $\mathrm{H}^1(\beta)$ is pro-$p$) contained in $(J,\lambda_{\bC})$ (as in lemma \ref{lem 2'} or \ref{lem 2}), then the two lemmas above imply that the reduction modulo $\ell$ of $\theta_{\bC}\otimes\chi\circ\det$ is a simple character contained in $(J,\lambda\otimes\chi\circ\det)$. And $\mathrm{H}^1(\beta+c)=\mathrm{H}^1(\beta)$, where $\mathrm{H}^1=\mathfrak{H}(\beta+c, \fA)\cap \rU^1(\fA)$.
\end{proof}

\begin{rem}
\label{rem 33}
Let $(J_{\rM},\lambda_{\rM})$ be a maximal simple cuspidal $k$-type of $\rM$, where $\rM$ is a Levi subgroup of $\rG$. Then $\lambda_{\rM}\cong\lambda_1\otimes\cdots\otimes\lambda_r$ for some $r\in\mathbb{N}^{\ast}$, where $(J_i,\lambda_i)$ are maximal simple cuspidal $k$-types of $\mathrm{GL}_{n_i}(F)$. Hence for any $k$-quasicharacter $\chi$ of $F^{\times}$, then new $k$-type $(J_{\rM},\lambda_{\rM}\otimes\chi\circ\det)$ is also maximal simple cuspidal of $\rM$.
\end{rem}

\subsubsection{Intertwining and weakly intertwining}
In this section, for any closed subgroup $H$ of $\rG$, we always use $H'$ to denote its intersection with $\rG'$. Let $\rM$ denote any Levi subgroup of $\rG$.

\begin{prop}
\label{prop 0.1}
\label{Lprop 0.1}
Let $K$ be a compact subgroup of $\rM$, and $\rho$ be an irreducible $k$-representation of $K$. The restriction $\res_{K'}^K\rho$ is semisimple. 
\end{prop}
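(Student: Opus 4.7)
The plan is to reduce to a purely finite-group statement and then apply Clifford's theorem, which holds in arbitrary characteristic.

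First I would reduce to a finite quotient of $K$. Since $\rho$ is smooth and irreducible, its kernel $K_0 = \ker(\rho)$ is open and normal in $K$, and since $K$ is compact, the quotient $K/K_0$ is finite. So $\rho$ may be regarded as an irreducible $k$-representation of the finite group $K/K_0$.

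Next, I would verify that $K'$ is normal in $K$. For this, note that since $\rM = \GL_{n_1}(F) \times \cdots \times \GL_{n_r}(F)$ for some decomposition of $n$, the subgroup $\rM'=\rM\cap\rG'$ is the kernel of the homomorphism $\rM \to F^{\times}$ sending $(g_1,\dots,g_r)$ to $\prod_i \det(g_i)$; in particular $\rM'$ is normal in $\rM$, so $K' = K \cap \rM'$ is normal in $K$. Consequently the image of $K'$ in $K/K_0$, namely $K'K_0/K_0$, is a normal subgroup of $K/K_0$.

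Now I would apply Clifford's theorem (valid over a field of any characteristic, since its proof only uses that the $G$-translates of a simple $N$-submodule are still simple and sum to a $G$-stable submodule): for the finite group $K/K_0$, its normal subgroup $K'K_0/K_0$, and the irreducible $k$-representation $\rho$, the restriction of $\rho$ to $K'K_0/K_0$ is a direct sum of $K/K_0$-conjugates of some simple $k[K'K_0/K_0]$-module, and in particular is semisimple. Pulling this semisimple decomposition back along the surjection $K' \twoheadrightarrow K'/(K'\cap K_0) \cong K'K_0/K_0$ yields a decomposition of $\res_{K'}^{K}\rho$ into simple smooth $k$-representations of $K'$, which is exactly the claim.

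There is no real obstacle: the only subtlety is making sure one does not need Maschke (which would require $|K/K_0|$ to be prime to $\ell$), but Clifford's semisimplicity of restrictions to normal subgroups is independent of characteristic, so nothing further is needed.
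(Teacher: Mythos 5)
Your proof is correct and takes essentially the same approach as the paper: the paper sets $O=\ker\rho$ (open, since a smooth irreducible representation of a compact group is finite-dimensional), observes that $O\cdot K'$ is an open normal subgroup of finite index in $K$, and invokes Clifford theory, which is exactly what your passage to the finite quotient $K/K_0$ and its normal subgroup $K'K_0/K_0$ amounts to. The only difference is cosmetic: the paper restricts to $O\cdot K'$ and then to $K'$ using that $O$ acts trivially, whereas you inflate the semisimple decomposition from $K'K_0/K_0$ back to $K'$.
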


\begin{proof}
Let $O$ denote the kernel of $\rho$, which is a normal subgroup of $K$. The subgroup $O\cdot K'$ is also a compact open normal subgroup of $K$, hence with finite index in $K$. We deduce that the restriction $\res_{O\cdot K'}^{K}\rho$ is semisimple by Clifford theory, furthermore the restriction $\res_{K'}^K \rho$ is semisimple.
\end{proof}

\begin{prop}
\label{prop 0.2}
\label{Lprop 0.2}
Let $K$ be a compact open subgroup of $\rM$, $\rho$ an irreducible smooth representation of $K$, and $\rho'$ an irreducible component of the restriction $\res_{K'}^K\rho$. Let $\bar{\rho}$ be an irreducible representation of $K$ such that $\res_{K'}^{K}\bar{\rho}$ also contains $\rho'$. Then there exists a $k$-quasicharacter $\chi$ of $F^{\times}$ such that $\rho\cong\bar{\rho}\otimes\chi\circ \mathrm{det}$.
\end{prop}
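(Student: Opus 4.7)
The plan is to produce a $k$-quasicharacter $\chi$ of $F^\times$ with $\rho \cong \bar{\rho} \otimes (\chi\circ\det)$. I would proceed in three steps: realize a nonzero $K'$-equivariant map as an eigenvector for a character of $K/K'$, use Schur's lemma to identify $\rho$ with a twist of $\bar{\rho}$, and extend that character to $F^\times$.

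By Proposition~\ref{Lprop 0.1} both $\res_{K'}^{K}\rho$ and $\res_{K'}^{K}\bar{\rho}$ are semisimple, and the assumption that $\rho'$ is common to both gives a nonzero element of $V := \Hom_{K'}(\rho,\bar{\rho})$. Since $\rG'$ is normal in $\rG$, the subgroup $K' = K\cap\rG'$ is normal in $K$, so $K$ acts on $V$ by conjugation $(k\cdot f)(v) = \bar{\rho}(k)\, f(\rho(k^{-1})v)$; this action is trivial on $K'$ and therefore factors through $K/K'$. Writing $\rM = \prod_i \GL_{n_i}(F)$, the kernel of the product-of-determinants map $\det:\rM\to F^\times$ is $\rM'$, so $\det$ induces an injection $K/K' \hookrightarrow F^\times$ with compact image $\det(K)\subset \rO_F^\times$. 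In particular $K/K'$ is profinite abelian.

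Since smooth irreducible $k$-representations of the profinite group $K$ are finite-dimensional, so is $V$, and the $K/K'$-action factors through a finite abelian quotient. As $k$ is algebraically closed, $V$ decomposes as a direct sum of $k$-character eigenspaces $V_\psi$. Pick a character $\psi$ with $V_\psi \ne 0$ and a nonzero $f\in V_\psi$. Unwinding $(k\cdot f) = \psi(k) f$ yields $f\circ\rho(k) = \psi(k)^{-1}\bar{\rho}(k)\circ f$ for all $k\in K$, so $f$ is a nonzero $K$-intertwiner $\rho \to \bar{\rho}\otimes\psi^{-1}$; Schur's lemma then forces $\rho \cong \bar{\rho}\otimes\psi^{-1}$.

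Finally, because $\psi^{-1}$ is trivial on $K'$, the isomorphism $K/K' \xrightarrow{\sim} \det(K)$ induced by $\det$ turns it into a $k$-character of the compact subgroup $\det(K)\subset F^\times$. Using that $k^\times$ is divisible (as $k$ is algebraically closed), I would extend this character first to $\rO_F^\times$ and then to $F^\times = \varpi^{\mathbb{Z}}\times \rO_F^\times$ by choosing its value on a uniformizer $\varpi$; the resulting $k$-quasicharacter $\chi$ of $F^\times$ then satisfies $\rho \cong \bar{\rho}\otimes(\chi\circ\det)$. The main technical subtlety is ensuring that the conjugation action of $K$ on $V$ genuinely descends to $K/K'$ (using normality of $K'$ in $K$) and that this quotient injects into $F^\times$ through the determinant; once those are pinned down, the extension of the character is routine.
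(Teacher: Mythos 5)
Your argument is correct in substance, but it takes a genuinely different route from the paper's proof of this proposition. The paper first arranges, by twisting, that $\bar{\rho}$ is trivial on a pro-$p$ open normal subgroup $U\subset\mathrm{Ker}(\rho)$, then applies Clifford theory to the normal subgroup $K'U$ of $K$ and Frobenius reciprocity to embed $\rho$ into $\ind_{K'U}^{K}\res_{K'U}^{K}\bar{\rho}\cong\bar{\rho}\otimes\ind_{K'U}^{K}(1)$, and finally identifies the Jordan--H\"older factors of $\ind_{K'U}^{K}(1)$ as characters of $K/K'U$ that factor through $\det$ and extend to $F^{\times}$. You instead work directly with $V=\Hom_{K'}(\rho,\bar{\rho})$, which is nonzero by the semisimplicity of the two restrictions (Proposition \ref{Lprop 0.1}), note that the conjugation action of $K$ on $V$ is trivial on $K'$ and factors through $K/K'\cong\det(K)\subset\mathcal{O}_F^{\times}$ (indeed through a finite abelian quotient, since $\rho$ and $\bar{\rho}$ have open kernels), extract a character eigenvector, and conclude with Schur's lemma and the extension of the character from $\det(K)$ to $F^{\times}$. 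This is essentially the paper's ``proof version 2'' of Proposition \ref{Lproposition 9} transported to the level of compact open subgroups: it is more direct and avoids the Mackey/Clifford bookkeeping, whereas the paper's route exhibits the twisting character concretely inside $\ind_{K'U}^{K}(1)$. The final extension step (from $\det(K)$, which is open in $\mathcal{O}_F^{\times}$, to $F^{\times}$, using divisibility of $k^{\times}$) is the same in both arguments.

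One step needs repair as written: since $\mathrm{char}\,k=\ell$ may divide the order of the finite quotient of $K/K'$ acting on $V$ (for instance when $\ell$ divides $q-1$), you cannot claim that $V$ decomposes as a direct sum of character eigenspaces; the action need not be semisimple. But you only need a single eigenvector, and a commuting family of operators on a nonzero finite-dimensional vector space over an algebraically closed field always has a common eigenvector, whose eigenvalues define the character $\psi$. Replacing the decomposition claim by this common-eigenvector argument (exactly the device used in the paper's second proof of Proposition \ref{Lproposition 9}) makes your proof complete.
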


\begin{proof}

Let $U$ be any pro-$p$ normal subgroup of $K$ contained in the kernel of $\rho$, hence with finite index. Let's consider $\Ind_{U'}^{U}(1)$, which is semisimple, thus by lemma of Schur it is a direct sum of characters in the form of $\chi\circ\mathrm{det}\vert_U$. Since $\chi$ can be extended to a quasicharacter of $F^{\times}$, and we note the extended quasicharacter as $\chi$ as well, then we write $\chi\circ\mathrm{det}\vert_U$ as $(\chi\circ\mathrm{det})\vert_U$. The fact that $\res_{U'}^{K}\rho$ contains the trivial character induces the same property for $\res_{U'}^K\bar{\rho}$. By Frobenius reciprocity, we know that $\res_{U}^K\bar{\rho}$ contains a character in the form of $(\chi\circ\mathrm{det})\vert_U$, and the irreducibility implies that it is in fact a multiple of this character. We can hence assume that $\bar{\rho}$ is trivial on $U$.

By the Clifford theory, the restriction of $\rho$ (resp. $\bar{\rho}$) to $K'U$ is semisimple. Hence $\Hom_{K'U}(\rho,\bar{\rho})\neq0$. Applying Frobenius reciprocity, we see that $\rho$ is a subrepresentation of $\ind_{K'U}^{K}\res_{K'U}^K\bar{\rho}$, which is equivalent to $\bar{\rho}\otimes\ind_{K'U}^{K}(1)$ by 5.2 d, chapitre I, \cite{V1}. In fact, the Jordan-H$\ddot{\mathrm{o}}$lder factors are $\bar{\rho}\otimes\chi\circ\mathrm{det}$: Any irreducible factor $\tau$ of $\ind_{K'U}^{K}(1)$ can be view an irreducible $k$-representation of the quotient group $K\slash K'U$, which is isomorphic to a subgroup of the finite abelian group $\mathcal{O}_{F}^{\times}\slash\det{U}$, where $\mathcal{O}_{F}$ indicates the ring of integers of $F$. Hence $\tau$ must be a $k$-character of $K\slash K'U$ and can be extended to $F^{\times}$, since we can first extend $\tau$ to $\mathcal{O}_F^{\times}\slash\det{U}$ (hence to $\mathcal{O}_F^{\times}$) and $F^{\times}\cong\mathcal{O}_F^{\times}\times\mathbb{Z}$. We denote this extension by $\tilde{\tau}$. It is clear that $\tau\cong\tilde{\tau}\circ\det\vert_{K}$.

\end{proof}

\begin{defn}
\label{prepdefn 01}
Let $\rG$ be a locally profinite group, and $R$ be an algebraically closed field. Let $K_i$ be an open compact subgroup of $\rG$ for $i=1,2$, and $\rho_i$ a $R$-representation of $K_i$. Define $i_{K_1,K_2}x(\rho_2)$ to be the induced $R$-representation 
$$\ind_{K_1\cap x(K_2)}^{K_1}\res_{K_1\cap x(K_2)}^{x(K_2)}x(\rho_2),$$
where $x(\rho_2)$ is the conjugation of $\rho_2$ by $x$. 
\begin{itemize}
\item We say an element $x\in\rG$ weakly intertwines $\rho_1$ with $\rho_2$, if $\rho_1$ is an irreducible subquotient of $i_{K_1,K_2}x(\rho_2)$. And $\rho_1$ is weakly intertwined with $\rho_2$ in $\rG$, if $\rho_1$ is isomorphic to a subquotient of $\ind_{K_2}^{\rG}\rho_2$. We denote $\mathrm{I}_{\rG}^{w}(\rho_1,\rho_2)$ the set of elements in $\rG$, which weakly intertwines $\rho_1$ with $\rho_2$. When $\rho_1=\rho_2$, we abbreviate $\mathrm{I}_{\rG}^{w}(\rho_1,\rho_2)$ as $\mathrm{I}_{\rG}^{w}(\rho_1)$.
\item We say the element $x\in\rG$ intertwines $\rho_1$ with $\rho_2$, if the $\Hom$ set
$$\Hom_{kK_1}(\rho_1,i_{K_1,K_2}x(\rho_2)) \neq 0.$$
Representation $\rho_1$ is intertwined with $\rho_2$ in $\rG$, if the $\Hom$ set
$$\Hom_{k\rG}(\ind_{K_1}^{\rG}\rho_1,\ind_{K_2}^{\rG}\rho_2)\neq 0.$$
We denote $\mathrm{I}_{\rG}(\rho_1,\rho_2)$ the set of elements in $\rG$, which intertwine $\rho_1$ with $\rho_2$. When $\rho_1=\rho_2$, we abbreviate $\mathrm{I}_{\rG}(\rho_1,\rho_2)$ as $\mathrm{I}_{\rG}(\rho_1)$.
\end{itemize}
When $\rho_1$ is irreducible, we deduce directly from Mackey's decomposition formulae that $\rho_1$ is (weakly) intertwined with $\rho_2$ in $\rG$ if and only if there exists an element $x\in\rG$, such that $x$ (weakly) intertwines $\rho_1$ with $\rho_2$.
\end{defn}

\begin{prop}
\label{prop 0.3}
\label{Lprop 0.3}
For $i= 1,2$, let $K_i$ be a compact open subgroup of $\rM$, and $\rho_i$ an irreducible representation of $K_i$ and $\rho_i'$ be an irreducible component of $\res_{K_i'}^{K_i}\rho_i$. Let $x\in\rM$ that weakly intertwines $\rho_1'$ with $\rho_2'$. Then there exists a $k$-quasicharacter $\chi$ of $F^{\times}$ such that $x$ weakly intertwines $\rho_1$ with $\rho_2\otimes\chi\circ\mathrm{det}$.
\end{prop}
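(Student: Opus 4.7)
\medskip

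\noindent\textbf{Proposal of proof.} Write $H = K_1 \cap x(K_2)$ and $H' = K_1' \cap x(K_2') = H \cap \rG'$, and set
$$\sigma = i_{K_1,K_2}x(\rho_2) = \ind_{H}^{K_1}\res_H^{x(K_2)}x(\rho_2), \qquad \sigma' = i_{K_1',K_2'}x(\rho_2'),$$
so that by hypothesis $\rho_1'$ is an irreducible subquotient of $\sigma'$. Since $\det$ is invariant under conjugation by elements of $\rM\subset\rG$ and tensoring with a character commutes with parabolic induction/restriction, we have $\sigma \otimes \chi\circ\det \cong i_{K_1,K_2}x(\rho_2 \otimes \chi\circ\det)$ for any quasicharacter $\chi$ of $F^{\times}$. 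So the goal reduces to finding $\chi$ such that $\rho_1$ is a subquotient of $\sigma \otimes \chi\circ\det$.

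The first step is to transport $\rho_1'$ from $\sigma'$ into $\res_{K_1'}^{K_1}\sigma$. Apply the Mackey decomposition formula to $\res_{K_1'}^{K_1}\sigma = \res_{K_1'}^{K_1}\ind_H^{K_1}\res_H^{x(K_2)}x(\rho_2)$; the double coset $y=1$ contributes the direct summand $\ind_{H'}^{K_1'}\res_{H'}^{H}x(\rho_2)$, since $H\cap K_1' = H'$. Now Proposition~\ref{Lprop 0.1} applied to the compact subgroup $x(K_2)$ and its irreducible $k$-representation $x(\rho_2)$ shows that $\res_{x(K_2')}^{x(K_2)}x(\rho_2)$ is semisimple, and contains $x(\rho_2')$ as a direct summand. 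Restricting further to $H'$ and then inducing to $K_1'$, we deduce that $\sigma'$ is a subrepresentation of $\ind_{H'}^{K_1'}\res_{H'}^H x(\rho_2)$, and thus a subrepresentation of $\res_{K_1'}^{K_1}\sigma$. In particular, $\rho_1'$ is a Jordan--H\"older factor of $\res_{K_1'}^{K_1}\sigma$.

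Since $\sigma$ has finite length (as $H$ has finite index in the compact-open group $K_1$), the restriction $\res_{K_1'}^{K_1}\sigma$ has a Jordan--H\"older series whose factors are exactly the irreducible components (with multiplicity) of the $\res_{K_1'}^{K_1}\tilde{\rho}$ as $\tilde{\rho}$ runs over the Jordan--H\"older factors of $\sigma$. Hence there exists an irreducible subquotient $\tilde{\rho}$ of $\sigma$ such that $\rho_1'$ appears in $\res_{K_1'}^{K_1}\tilde{\rho}$. Both $\tilde{\rho}$ and $\rho_1$ are then irreducible $k$-representations of $K_1$ whose restrictions to $K_1'$ share the common irreducible component $\rho_1'$.

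The last step is to invoke Proposition~\ref{Lprop 0.2}: there exists a $k$-quasicharacter $\chi$ of $F^{\times}$ such that $\rho_1 \cong \tilde{\rho}\otimes\chi\circ\det$. Twisting the inclusion of $\tilde{\rho}$ as a subquotient of $\sigma$ by $\chi\circ\det$ gives $\rho_1$ as a subquotient of $\sigma \otimes \chi\circ\det = i_{K_1,K_2}x(\rho_2\otimes\chi\circ\det)$, which is precisely the statement that $x$ weakly intertwines $\rho_1$ with $\rho_2\otimes\chi\circ\det$. The technical heart of the argument is the Mackey/semisimplicity step that embeds $\sigma'$ into $\res_{K_1'}^{K_1}\sigma$; once this is in place the rest is a direct application of Proposition~\ref{Lprop 0.2}.
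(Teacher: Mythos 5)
Your proposal is correct and follows essentially the same route as the paper: embed $i_{K_1',K_2'}x(\rho_2')$ into $\res_{K_1'}^{K_1}i_{K_1,K_2}x(\rho_2)$ via Mackey's formula and semisimplicity of $\res_{K'}^{K}$, use finite length and Jordan--H\"older to produce an irreducible subquotient of $i_{K_1,K_2}x(\rho_2)$ whose restriction to $K_1'$ contains $\rho_1'$, and conclude with Proposition~\ref{Lprop 0.2} that this subquotient is a twist of $\rho_1$ by $\chi\circ\det$. Your write-up merely makes the Mackey and twisting steps more explicit than the paper does (which is fine), the only difference with the paper's statement being the harmless replacement of $\chi$ by $\chi^{-1}$.
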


\begin{proof}
By Mackey's decomposition formula, $i_{K_1',K_2'}x(\rho_2')$ is a subrepresentation of $i_{K_1,K_2}x(\rho_2)$. Since $i_{K_1,K_2}x(\rho_2)$ has finite length, the uniqueness of Jordan-H\"older factors implies that there exists an irreducible subquotient of $i_{K_1,K_2}x(\rho_2)$, whose restriction to $K_1'$ contains $\rho_1'$ as a direct components. By \ref{prop 0.2}, this irreducible subquotient is isomorphic to $\rho_1\otimes\chi\circ\mathrm{det}$, where $\chi$ is a quasicharacter. By definition, this means $\rho_1$ is weakly intertwined with $\rho_2\otimes\chi^{-1}\circ\mathrm{det}$ by $x$.
\end{proof}

Now we begin to consider the maximal simple cuspidal $k$-types of $\rG=\mathrm{GL}_n(F)$.
\begin{prop}
\label{prop 1}
Let $(J,\lambda)$ be a maximal simple cuspidal $k$-type of $\rG$, and $\chi$ a $k$-quasicharacter of $F^{\times}$. If $(J,\lambda\otimes\chi\circ\det)$ is weakly intertwined with $(J,\lambda)$, then they are intertwined. And there exists an element $x\in\rU(\fA)$ such that $x(J)=J$ and $x(\lambda)\cong\lambda\otimes\chi\circ\det$.
\end{prop}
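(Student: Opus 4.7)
The plan is first to use the weak intertwining hypothesis to obtain an intertwining of the underlying simple characters on $H^1$, then to leverage the maximality of the type to push the intertwining element into $\rU(\fA)$, and finally to upgrade weak intertwining to a genuine intertwining by exploiting that such an element normalizes $J$. This parallels the characteristic-zero argument in the appendix of \cite{BuKuII}.

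First, by Corollary \ref{cor 3} together with Lemma \ref{lem 2}, $(J,\lambda\otimes\chi\circ\det)$ is a maximal simple cuspidal $k$-type whose underlying simple character lies in $\mathcal{C}_\ell(\fA,0,\beta+c)$ for some $c\in F$; in particular $F[\beta+c]=F[\beta]=E$, so the centralizer $B:=\mathrm{Z}_{\rA}(E)$ is shared by both types. The hypothesis supplies $g\in\rG$ such that $\lambda$ is an irreducible subquotient of $\ind_{J\cap gJg^{-1}}^{J}\res_{J\cap gJg^{-1}}^{gJg^{-1}}g(\lambda\otimes\chi\circ\det)$. Restricting this induced representation to the pro-$p$ subgroup $H^1=\mathfrak{H}(\beta,\fA)\cap\rU^1(\fA)$ and using that $\lambda|_{H^1}$ is $\theta$-isotypic (since $\lambda$ is built from a $\beta$-extension of the unique $\eta$ containing $\theta$), a Mackey analysis forces some element in the double coset $JgJ$ to intertwine $\theta$ with $\theta\otimes\chi\circ\det$. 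By the intertwining formula for simple characters (\cite{BuKu}, extended to the $\ell$-modular setting in \cite{V2,MS}), this element lies in $J^1 B^\times J^1$.

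Next, maximality means $\fB:=\fA\cap B$ is the maximal $\fo_E$-order in $B\simeq\mathrm{M}_m(E)$, so $B^\times$ is generated by $\rU(\fB)\subseteq\rU(\fA)$ together with the center $E^\times$. Absorbing the $J^1$-factors into $J$, the intertwining element factors as $zu$ with $z\in E^\times$ and $u\in\rU(\fB)$. The component $z$ acts on $\lambda$ through a central character, so after possibly adjusting $\chi$ by an unramified twist of $F^\times$ (which does not alter $\lambda\otimes\chi\circ\det$ on $J$, since unramified characters of $F^\times$ are trivial on $\det(J)\subseteq\fo_F^\times$), the weak intertwining is realized by $u$ alone. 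We thus obtain $x:=u\in\rU(\fB)\subseteq\rU(\fA)$ weakly intertwining $\lambda$ with $\lambda\otimes\chi\circ\det$, and $\rU(\fB)$ preserves the structure defining $J$, so $x(J)=J$.

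Finally, since $x$ normalizes $J$ we have $J\cap xJx^{-1}=J$, and the induced representation appearing in the weak-intertwining condition collapses to $x(\lambda\otimes\chi\circ\det)$, which is already irreducible. A subquotient relation between two irreducible $J$-representations is an isomorphism, yielding $x(\lambda)\cong\lambda\otimes\chi\circ\det$. This at once establishes both that the two types are genuinely intertwined in $\rG$ and that the required element $x\in\rU(\fA)$ exists.

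The main obstacle lies in the second paragraph, specifically the absorption of the $E^\times$-component: $E^\times$ is not contained in $\rU(\fA)$ (a uniformizer $\varpi_E$ has non-unit determinant), so one must verify with care that such a translation contribution either is excluded by the cuspidal structure on $J/J^1\simeq\GL_m(k_E)$ or can be compensated by an unramified twist. This part requires a detailed comparison of central characters of $\lambda$ and $\lambda\otimes\chi\circ\det$, and constitutes the technical heart of the argument, directly mirroring the characteristic-zero analysis of \cite{BuKuII}.
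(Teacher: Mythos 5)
There is a genuine gap, concentrated exactly where you flag it, and it cannot be repaired along the lines you propose. The formula $\mathrm{I}_{\rG}(\theta)=J^{1}B^{\times}J^{1}$ is the \emph{self}-intertwining of a single simple character; it says nothing about the set of elements intertwining $\theta$ with the \emph{different} simple character $\theta\otimes\chi\circ\det\in\mathnormal{C}_{\ell}(\fA,0,\beta+c)$, so your second paragraph starts from an unjustified membership in $J^{1}B^{\times}J^{1}$. What actually produces an element of $\rU(\fA)$ is the ``intertwining implies conjugacy'' theorem for simple characters (Theorem 3.5.11 of \cite{BuKu}, applied after lifting the two simple characters to characteristic zero, which is legitimate because $\rH^{1}$ is pro-$p$, and after matching the levels of the underlying strata via 2.3.4 and 2.6.3 of \cite{BuKu}): it gives directly some $x\in\rU(\fA)$ with $x(\theta_{2})=\theta_{1}$, and then $x(J)=J$ because $\mathrm{I}_{\rG}(\theta_{1})\cap\rU(\fA)=J$. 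Your substitute for this --- factoring the intertwiner as $zu$ with $z\in E^{\times}$, $u\in\rU(\fB)$ --- rests on the claim that $B^{\times}$ is generated by $E^{\times}$ and $\rU(\fB)$, which is false whenever $B\cong\Mat_{m}(E)$ with $m\ge 2$ (by the Cartan decomposition, $\GL_{m}(E)\ne E^{\times}\GL_{m}(\fo_{E})$); maximality of the type forces $\fB$ to be a maximal order but does not force $m=1$, so the ``absorption'' step collapses and no unramified twist of $\chi$ can compensate for a non-central elementary divisor.

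A second, independent leap is the assertion that the element $x$ so obtained \emph{weakly intertwines} $\lambda$ with $\lambda\otimes\chi\circ\det$; knowing only that $x$ matches the simple characters does not give this. The route the paper takes at this point is to use the uniqueness of the Heisenberg representation $\eta$ above a fixed simple character (Proposition 2.2 of \cite{MS}) to get $x(\eta_{2})\cong\eta_{1}$, and then the fact (\cite[Corollary 8.4]{V3}) that the $\eta_{1}$-isotypic part of $\res_{J}^{\rG}\ind_{J}^{\rG}\lambda$ is a direct factor which is a multiple of $\lambda$: since the weak-intertwining hypothesis places $\lambda\otimes\chi\circ\det$ (hence, after conjugating by $x$, the representation $x(\lambda\otimes\chi\circ\det)$, which contains $\eta_{1}$) inside $\res_{J}^{\rG}\ind_{J}^{\rG}\lambda$ as a subquotient, irreducibility forces $x(\lambda\otimes\chi\circ\det)\cong\lambda$, which yields both the genuine intertwining and the conjugacy statement. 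Your final paragraph (normalizing $J$ plus ``subquotient of an irreducible is an isomorphism'') would be fine once such an $x$ is in hand, but as written the two earlier steps it depends on are not established.
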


\begin{proof}
There is a surjection from $\res_{H^1}^J\lambda$ to $\theta_1$. By Frobenius reciprocity, there is an injection from $\lambda$ to $\ind_{H^1}^J\theta_1$, and exactness of the functors ensure that there exists an injection: $\res_{H^1}^G\ind_J^G\lambda\hookrightarrow \res_{H^1}^G\ind_{H^1}^G\theta_1$. Whence, by hypothesis, $\res_{H^1}^J\lambda\otimes\chi\circ\mathrm{det}$ is a subquotient of $\res_{H^1}^G\ind_{H^1}^G\theta_1$. After Corollary \ref{cor 3}, the groups $\mathrm{H}^1(\beta+c)=\mathrm{H}^1(\beta)$. Hence $\res_{\mathrm{H}^1}^{\rG}(\lambda\otimes\chi\circ\mathrm{det})$ is a multiple of $\theta_2$, from which we deduce that $\theta_2$ is a subquotient of $\res_{\mathrm{H}^1}^{\rG}\ind_{\mathrm{H}^1}^{\rG}\theta_1$.

Notice that $\mathrm{H}^1$ is a prop-$p$ group, and any smooth representation of $\mathrm{H}^1$ is semisimple. It follows that $\theta_2$ is a sub-representation of $\res_{\mathrm{H}^1}^{\rG}\ind_{\mathrm{H}^1}^{\rG}\theta_1$, which is equivalent to say that $\theta_2$ is intertwined with $\theta_1$ in $\rG$. Let $i=1,2$ and $\theta_{i_{\bC}}$ be $\bC$-simple characters whose reduction modulo $\ell$ is isomorphic to $\theta_i$, then $\theta_{1_{\bC}}$ is intertwined with $\theta_{2_{\bC}}$ in $\rG$ cause $\mathrm{H}^1$ is pro-$p$. It follows that the nonsplit fundamental strata $[\fA,n,n-1,\beta]$ and the nonsplit fundamental strata $[\fA,m,m-1,\beta+c]$ are intertwined. We deduce that $n=m$ by 2.3.4 and 2.6.3 of \cite{BuKu}. Then we apply Theorem 3.5.11 of \cite{BuKu}: there exists $x\in\mathrm{U}(\fA)$ such that $x(\mathrm{H}^1)=\rH^1$, $\nC(\fA,0,\beta)=\nC(\fA,0,x(\beta+c))$ and $x(\theta_{2_{\bC}})=\theta_{1_{\bC}}$, hence $x(\theta_2)=\theta_1$. In particular, $x(J)$ is a subset of $\mathcal{I}_{\mathrm{U}(\fA)}(\theta_1)$. Meanwhile, the 2.3.3 of \cite{MS} and 3.1.15 of \cite{BuKu} implies that $\mathcal{I}_{\rG}(\theta_1)\cap\mathrm{U}(\fA)=J$, then $x(J)=J$. Proposition 2.2 of \cite{MS} shows the uniqueness of $\eta_1$, hence $x(\eta_2)\cong\eta_1$. From  \cite[Corollary 8.4]{V3} we know that the $\eta_1$-isotypic part of $\res_J^{\rG}\ind_J^{\rG}(\lambda)$ can be viewed as a representation of $J$, which is a direct factor of $\res_J^{\rG}\ind_J^{\rG}(\lambda)$ and is multiple of $\lambda$ when $(J,\lambda)$ is maximal simple. Since $x(\lambda\otimes\chi\circ\det)$ could only be a subquotient of the $\eta_1$-isotypic part of $\res_J^{\rG}\ind_J^{\rG}(\lambda)$ and $\ind_J^{\rG}\lambda\cong\ind_J^{\rG}x(\lambda)$, we deduce from above that $\Hom_{kJ}(\lambda\otimes\chi\circ\det, \res_J^{\rG}\ind_J^{\rG}\lambda)\neq0$.
\end{proof}

\begin{cor}
\label{cor 5}
For any $g\in\rG$, if $g$ weakly intertwines $(J,\lambda\otimes\chi\circ\det)$ and $(J,\lambda)$, then $g$ intertwines $(J,\lambda\otimes\chi\circ\det)$ and $(J,\lambda)$.
\end{cor}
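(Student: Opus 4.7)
The plan is to replay the proof of Proposition~\ref{prop 1} while keeping track of the specific element $g$ throughout, rather than settling for the existence of some intertwining element extracted from the global weak-intertwining hypothesis. Write $\lambda_1:=\lambda$ and $\lambda_2:=\lambda\otimes\chi\circ\det$, with associated simple characters $\theta_i$ on $\rH^1$ and Heisenberg representations $\eta_i$ on $J^1$ as in Definition~\ref{defn 1122}.

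I would first reduce to simple characters. The hypothesis says $\lambda_2$ is a $J$-subquotient of $M:=\ind_{J\cap g(J)}^J g(\lambda_1)$. Restricting to $\rH^1$ and using that $\res_{\rH^1}^J\lambda_2$ is a multiple of $\theta_2$ (Corollary~\ref{cor 3}, which also yields $\rH^1(\beta+c)=\rH^1(\beta)$), Mackey forces $\theta_2$ to be an $\rH^1$-subquotient of $\ind_{\rH^1\cap g(\rH^1)}^{\rH^1}g(\theta_1)$. Since $\rH^1$ is pro-$p$, smooth representations are semisimple and this subquotient is automatically a sub-representation, i.e.\ $g\in\mathcal{I}_\rG(\theta_2,\theta_1)$. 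By uniqueness of $\eta_i$ above $\theta_i$ (Proposition~2.2 of \cite{MS}) and the standard Heisenberg argument, this lifts to an intertwining $\Hom_{k(J^1\cap g(J^1))}(\eta_2,g(\eta_1))\neq 0$.

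Next I would apply \cite[Corollary 8.4]{V3} element-wise to $M$ rather than to $\ind_J^\rG\lambda_1$. That corollary isolates the $\eta_1$-isotypic part of $\res_{J^1}^J M$ as a genuine $J$-direct summand $M_{\eta_1}$ of $M$. The previous step identifies $\eta_2$ with $g(\eta_1)$ on $J^1\cap g(J^1)$, so the subquotient $\lambda_2$ of $M$ must lie inside $M_{\eta_1}$. Since $(J,\lambda)$ is maximal simple, $J/J^1$ is a finite general linear group and $M_{\eta_1}$ is a single block whose structure is controlled by representations of $J/J^1$; this rigidity, combined with $M_{\eta_1}$ being a direct summand of $M$, upgrades the subquotient $\lambda_2$ to a sub-representation, giving $\Hom_{kJ}(\lambda_2,M)\neq 0$, that is, $g$ intertwines $(J,\lambda_2)$ with $(J,\lambda_1)$.

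The main obstacle is the element-wise invocation of \cite[Corollary 8.4]{V3}: one must apply the $\eta_1$-isotypic decomposition to the twisted induction $M$ rather than to $\ind_J^\rG\lambda_1$ itself, and then use the Heisenberg identification from the first step to place $\lambda_2$ inside that summand. The maximality of $(J,\lambda)$ is essential to conclude: for a non-maximal type, the isotypic summand $M_{\eta_1}$ is richer and the passage from subquotient to sub-representation could fail.
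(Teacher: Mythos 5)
Your outline has the right shape---pass to an isotypic summand for the pro-$p$ group $J^1$ and upgrade the subquotient $\lambda_2=\lambda\otimes\chi\circ\det$ to a subrepresentation---but the decisive step is missing, and there is also a bookkeeping slip. Since $\res_{J^1}^{J}\lambda_2$ is a multiple of $\eta_2$, the subquotient $\lambda_2$ of $M:=i_{J,g(J)}g(\lambda)$ sits in the $\eta_2$-isotypic summand $M_{\eta_2}$, not in $M_{\eta_1}$: knowing that $g$ intertwines $\eta_1$ with $\eta_2$ only identifies them on $J^1\cap g(J^1)$ and does not make $\eta_1\cong\eta_2$ as representations of $J^1$ (in Theorem \ref{thm 7} the classes $x_i(\eta)$ are pairwise non-isomorphic, so this distinction matters). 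More seriously, the ``rigidity'' you invoke is precisely what has to be proved. The $\eta_2$-isotypic summand of a smooth $k$-representation of $J$ corresponds, through a $\beta$-extension, to a module over the group algebra of the finite group $J/J^1$, and in characteristic $\ell$ this algebra need not be semisimple; a subquotient of such a module is in general not a submodule. So ``controlled by representations of $J/J^1$'' buys nothing, and applying \cite[Corollary 8.4]{V3} ``element-wise to $M$'' is not licensed: that corollary asserts that the $\eta_1$-isotypic part of the specific representation $\res_J^{\rG}\ind_J^{\rG}\lambda$ is a direct summand \emph{and a multiple of} $\lambda$; it says nothing directly about $M$, nor about the $\eta_2$-isotypic part. (Your first step also glosses a point: Mackey applied to $\res_{\rH^1}^{J}M$ produces intertwining of $\theta_2$ with $ug(\theta_1)$ for some $u\in J$, not with $g(\theta_1)$ itself; this is repairable, but it is also unnecessary, see below.)

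What closes the gap---and what the paper does---is a conjugation trick that transports the Vign\'eras statement to the correct isotypic component. The hypothesis on $g$ feeds Proposition \ref{prop 1}, which produces $x\in\rU(\fA)$ with $x(J)=J$, $x(\lambda)\cong\lambda\otimes\chi\circ\det$, hence $x(\eta_1)\cong\eta_2$. Since $\ind_J^{\rG}\lambda\cong x(\ind_J^{\rG}\lambda)$ as $\rG$-representations, the $x(\eta_1)$-isotypic part of $\res_J^{\rG}\ind_J^{\rG}\lambda$ is the $x$-conjugate of its $\eta_1$-isotypic part, hence by \cite[Corollary 8.4]{V3} a multiple of $x(\lambda)\cong\lambda_2$, in particular semisimple. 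Mackey makes $M$ a direct summand of $\res_J^{\rG}\ind_J^{\rG}\lambda$, so $M_{\eta_2}$ is a subrepresentation of that multiple of $\lambda_2$; consequently the subquotient $\lambda_2$ of $M_{\eta_2}$ is actually a subrepresentation, i.e.\ $\Hom_{kJ}(\lambda_2,M)\neq 0$, which is the assertion that $g$ intertwines $(J,\lambda_2)$ with $(J,\lambda)$. Note that once $x$ is available, your element-wise reduction to simple characters is not needed at all.
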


\begin{proof}
By Mackey's decomposition formula $i_{J,g(J)}g(\lambda)$ is a direct factor of $\res_J^{\rG}\ind_J^{\rG}(\lambda)$. On the other hand, we notice that the $\ind_J^{\rG}\lambda$ is isomorphic to $x(\ind_J^{\rG}\lambda)$ as $\rG$-representation, so they are equivalent after restricting to $J$. Hence the $x(\eta_1)$-isotypic part $(\res_J^{\rG}\ind_J^{\rG}\lambda)^{x(\eta_1)}$ is isomorphic to $x(\res_J^{\rG}\ind_J^{\rG}\lambda)^{x(\eta_1)}$ as $J$ representation. The later one is isomorphic to $x((\res_J^{\rG}\ind_J^{\rG}\lambda)^{\eta_1})$, which is a multiple of $x(\lambda)$. In the proof of \ref{prop 1}, there exists $x\in\mathrm{U}(\fA)$ such that $x(\eta_1)\cong\eta_2$, $x(\lambda)\cong\lambda\otimes\chi\circ\det$. By hypothesis $\lambda\otimes\chi\circ\det^{x(\eta_1)}$ is a subquotient of $i_{J,g(J)}g(\lambda)$, hence a subquotient of $(i_{J,g(J)}g(\lambda))^{x(\eta)}$. And $(i_{J,g(J)}g(\lambda))^{x(\eta)}$ is a sub-representation of $(\res_J^{\rG}\ind_J^{\rG}\lambda)^{x(\eta)}$, whence a multiple of $x(\lambda)$ as well. So $\lambda\otimes\chi\circ\det^{x(\eta_1)}$ is a sub-representation of $(i_{J,g(J)}g(\lambda))^{x(\eta)}$. We finish the proof.
\end{proof}

\subsubsection{Decomposition of $\res_{J}^{\rG}\ind_{J}^{\rG}\lambda$}
In this section, we need to do some computation to obtain the decomposition in \ref{thm 7}, which plays a key role in the proof of Proposition \ref{prop 14}. And this consists half of the proof of Theorem \ref{thm 16}.

\begin{thm}
\label{thm 7}
Let $(J,\lambda)$ be a maximal simple cuspidal $k$-type of $\rG$. There exists an integer $m$ and a decomposition:
$$\res_J^{\rG}\ind_J^{\rG}\lambda\cong(\oplus_{i=1}^{m}\ x_i(\Lambda(\lambda)))\oplus W$$
where $x_i\in\mathrm{U}(\fA)$, and $x_1=1$. The representation $\Lambda(\lambda)$ is semisimple, and a multiple of $\lambda$. For each $x_i$, the representation $x_i(\Lambda(\lambda))$ is the $x_i$-conjugation of $\Lambda(\lambda)$. The elements $x_i$'s satisfy that $x_i(\eta)\ncong x_j(\eta)$ if $i\neq j$ (see Definition \ref{defn 1122} for $\eta$), and let $\lambda'$ be any irreducible sub-representation of $\res_{J'}^{J}\lambda$, then $\lambda'$ is not equivalent to any irreducible subquotient of $\res_{J'}^{J}W$.
\end{thm}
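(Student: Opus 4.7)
The strategy is to refine Mackey's decomposition of $\res_J^{\rG}\ind_J^{\rG}\lambda$ by grouping terms according to $J^1$-isotypic type. Starting from
\[
\res_J^{\rG}\ind_J^{\rG}\lambda \;\cong\; \bigoplus_{g \in J \backslash \rG / J} i_{J,J}\, g(\lambda),
\]
decompose each summand along the irreducible $J^1$-types appearing in it; this is possible because $J^1$ is a pro-$p$ normal subgroup of $J$. On $J^1$ the representation $\lambda$ is $\eta$-isotypic, and by \cite[Corollary~8.4]{V3} (already invoked in the proof of Proposition~\ref{prop 1}) the $\eta$-isotypic part of $\res_J^{\rG}\ind_J^{\rG}\lambda$ is a direct $J$-summand and is a multiple of $\lambda$; denote it $\Lambda(\lambda)$.

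Next enumerate the $J^1$-types in the $\rU(\fA)$-orbit of $\eta$ that actually occur. Choose $x_1=1,x_2,\ldots,x_m\in\rU(\fA)$ with $x_i(J)=J$ such that the representations $x_i(\eta)$ appear in $\res_{J^1}^{\rG}\ind_J^{\rG}\lambda$ and are pairwise non-isomorphic. For each $i$, conjugation by $x_i$ identifies the $\eta$-isotypic component of $\res_J^{\rG}\ind_J^{\rG}\lambda$ with the $x_i(\eta)$-isotypic component, producing a $J$-subrepresentation isomorphic to $x_i(\Lambda(\lambda))$. Let $W$ be the complement of $\bigoplus_{i=1}^{m} x_i(\Lambda(\lambda))$ in the $J^1$-isotypic decomposition.

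For the disjointness statement, suppose $\lambda' \hookrightarrow \res_{J'}^J \lambda$ appears as an irreducible subquotient of $\res_{J'}^J W$, coming from some irreducible $J$-subquotient $\mu$ of $W$. Proposition~\ref{prop 0.2} forces $\mu \cong \lambda \otimes \chi \circ \det$ for some $k$-quasicharacter $\chi$. Since $\mu$ is a subquotient of $\res_J^{\rG}\ind_J^{\rG}\lambda$, $\lambda \otimes \chi\circ\det$ is weakly intertwined with $\lambda$ in $\rG$, so Corollary~\ref{cor 5} and Proposition~\ref{prop 1} produce $x \in \rU(\fA)$ with $x(J)=J$ and $x(\lambda) \cong \lambda \otimes \chi\circ\det$. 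Then $\mu$ lies in the $x(\eta)$-isotypic summand, which must equal $x_i(\Lambda(\lambda))$ for the unique $i$ with $x_i(\eta)\cong x(\eta)$, contradicting $\mu \subset W$. The main obstacles are verifying that the list $\{x_i\}$ is finite and that each $x_i(\eta)$-isotypic piece is exactly $x_i(\Lambda(\lambda))$ rather than a strictly larger $J$-subspace; both rely on a careful analysis of how $\rU(\fA)$-conjugation interacts with the direct-summand statement of \cite[Corollary~8.4]{V3}, together with the maximality of $(J,\lambda)$ to bound the number of inequivalent $x(\eta)$ that can arise.
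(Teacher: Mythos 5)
Your plan is correct and is essentially the paper's own argument: the same inputs appear in the same roles --- \cite[Corollary 8.4]{V3} for the $\eta$-isotypic summand $\Lambda(\lambda)$, the conjugation identity $(\res_J^{\rG}\ind_J^{\rG}\lambda)^{x(\eta)}\cong x\bigl((\res_J^{\rG}\ind_J^{\rG}\lambda)^{\eta}\bigr)\cong x(\Lambda(\lambda))$ (as in the proof of Corollary \ref{cor 5}) to pin down each $x_i(\eta)$-isotypic piece, and Lemma \ref{lem 9} (which you use implicitly when lifting the $J'$-subquotient $\lambda'$ to a $J$-subquotient $\mu$ of $W$) together with Proposition \ref{prop 0.2}, Corollary \ref{cor 5} and Proposition \ref{prop 1} for the disjointness --- the paper merely organizes this iteratively, peeling off one conjugate $x(\Lambda(\lambda))$ at a time rather than taking the full $J^1$-isotypic decomposition at once. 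The two obstacles you flag are settled by exactly these tools: the $x_i(\eta)$-isotypic component equals $x_i(\Lambda(\lambda))$ by the conjugation identity above, and finiteness of the list $\{x_i\}$ follows simply from $[\rU(\fA):J]<\infty$ and the fact that $J$ fixes the isomorphism class of $\eta$ (no appeal to maximality is needed), which is also how the paper's iteration terminates.
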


\begin{rem}
From now on, let $(J,\lambda)$ be any maximal simple cuspidal $k$-type of $\rG$. We always use $\Lambda_{\lambda}$ to denote $\oplus_{i=1}^{m}\ x_i(\Lambda(\lambda))$, where $\Lambda(\lambda)$ has been defined in Theorem \ref{thm 7}, and we could write the decomposition in Theorem \ref{thm 7} as:
$$\res_J^{\rG}\ind_J^{\rG}\lambda\cong\Lambda_{\lambda}\oplus W.$$
\end{rem}

To prove Theorem \ref{thm 7}, we need the following lemmas:
\begin{lem}
\label{lem 8}
Let $K_1,K_2$ be two compact open subgroups of $\rG$ such that $K_1\subset K_2$. Then the compact induction $\ind_{K_1}^{K_2}$ respect infinite direct sum.
\end{lem}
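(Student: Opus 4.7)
The plan is to reduce the statement to the observation that $\ind_{K_1}^{K_2}$ is essentially a \emph{finite} direct sum functor, after which commutation with arbitrary direct sums is automatic.

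First I would note that since $K_2$ is compact and $K_1$ is open in $K_2$, the quotient $K_2/K_1$ is both compact and discrete, hence finite. Let $\{g_1,\ldots,g_r\}$ be a set of representatives for $K_2/K_1$. Then for any smooth $k$-representation $V$ of $K_1$, compact induction agrees with ordinary induction, and there is a $k$-linear isomorphism
\begin{equation*}
\ind_{K_1}^{K_2} V \;\xrightarrow{\sim}\; \bigoplus_{j=1}^{r} V, \qquad f \longmapsto \bigl(f(g_1),\ldots,f(g_r)\bigr),
\end{equation*}
coming from the fact that every function $f: K_2 \to V$ satisfying $f(kh)=h^{-1}f(k)$ for $h\in K_1$ is determined by its values on $\{g_1,\ldots,g_r\}$ (support is automatically finite).

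Next I would let $(V_i)_{i\in I}$ be any family of smooth $k$-representations of $K_1$ and consider the natural $kK_2$-linear map
\begin{equation*}
\Phi : \bigoplus_{i\in I}\ind_{K_1}^{K_2}V_i \;\longrightarrow\; \ind_{K_1}^{K_2}\!\Bigl(\bigoplus_{i\in I}V_i\Bigr)
\end{equation*}
obtained by applying $\ind_{K_1}^{K_2}$ to each inclusion $V_i \hookrightarrow \bigoplus_j V_j$ and summing. Using the isomorphism above, checking that $\Phi$ is an isomorphism reduces to the fact that a finite direct sum commutes with an arbitrary direct sum: both sides are canonically identified with $\bigoplus_{j=1}^{r}\bigoplus_{i\in I} V_i$.

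There is no real obstacle here; the only point requiring care is the assertion that $K_2/K_1$ is finite, which follows at once from the compactness of $K_2$ and the openness of $K_1$. Once that is in hand, the compact-induction functor is a finite direct sum of ``evaluation'' functors, and the conclusion is formal.
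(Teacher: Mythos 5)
Your argument is correct and follows essentially the same route as the paper: finiteness of $K_2/K_1$ (open subgroup of a compact group), identification of $\ind_{K_1}^{K_2}V$ with a finite direct sum of copies of $V$ via evaluation at coset representatives (the paper additionally spells out why every function satisfying the transformation rule is automatically smooth, which your "compact induction agrees with ordinary induction" remark is implicitly using), and then the formal commutation of a finite direct sum with an arbitrary one. No gap to report.
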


\begin{proof}
Let $I$ be an index set, and $(V_i,\pi_i)$ be $k$-representations of $K_1$. Define $\pi=\oplus_{i\in I}\pi_i$. By definition of compact induction, the representation space of $\ind_{K_1}^{K_2}\pi$ are the smooth vectors of the $k$-vector space consisting with function $f:K_2\rightarrow V$ such that $f(hg)=\pi (h)f(g)$, where $h\in K_1$, $g\in K_2$, and $K_2$ acts as right transition. Notice first that every function satisfied the condition above is smooth. In fact, the quotient group $K_1/ K_2$ is finite, of which let $a_1,\ldots,a_m$ be a set of representatives in $K_2$. Then there is an bijection from the vector space, consisting of the functions on $K_2$ verified the condition above, to $V^m$, which is sending $f$ to $f(a_1),\ldots,f(a_m)$. Now let $f$ be any such function on $K_2$. For any $j\in \{1,\ldots,m \}$, there exists an open subgroup $H_j\subset K_1$ which stabilizes $v_j$. Let $g\in K_2$, the value $(a_j^{-1}(g) (f))(a_j)=f(a_j)$. Hence the open compact subgroup $H=\cap_{j=1}^m a_j^{-1}(H_j)$ stablizes $f$, so $f$ is smooth. Notice that $\oplus_{j=1}^m(\oplus_{i\in I} V_i)\cong \oplus_{i\in I}(\oplus_{j=1}^m V_i)$ as vector spaces, which the result follows.
\end{proof}

\begin{lem}
\label{lem 9}
Let $K$ be a compact open subgroup of $\rM$, where $\rM$ is a Levi subgroup of $\rG$, and $K'=K\cap\rG'$. Let $\pi$ be a $k$-representation of $K$. If $\tau'$ is an irreducible subquotient of the restricted representation $\res_{K'}^K\pi$, then there exists an irreducible subquotient $\tau$ of $\pi$, such that $\tau'$ is an irreducible direct component of $\res_{K'}^{K}\tau$.
\end{lem}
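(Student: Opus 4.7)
The plan is to reduce to a finite-dimensional situation where one can compare Jordan--Hölder factors on both sides of the restriction functor, and then invoke Proposition~\ref{Lprop 0.1} to pass from subquotient to direct summand.

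First I would fix $K'$-stable subspaces $W_1 \subset W_2$ of $\pi$ with $W_2/W_1 \cong \tau'$ and pick a vector $v \in W_2$ whose class in $\tau'$ is nonzero. Since $\pi$ is smooth and $K$ is compact, the stabiliser of $v$ contains an open normal subgroup of $K$, so the $K$-subrepresentation $V := k[K]\cdot v$ of $\pi$ is finite dimensional. The key elementary observation is that $\tau'$ survives the intersection with $V$: setting $W_i^V := V \cap W_i$, the second isomorphism theorem gives an injection $W_2^V/W_1^V \hookrightarrow W_2/W_1 = \tau'$, and this injection is nonzero because $v \in W_2^V \setminus W_1^V$; irreducibility of $\tau'$ then forces $W_2^V/W_1^V \cong \tau'$. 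So $\tau'$ is an irreducible subquotient of $\res_{K'}^{K} V$, and it suffices to prove the lemma with $\pi$ replaced by the finite-dimensional $K$-module $V$.

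Next I would choose a Jordan--Hölder filtration $0 = V_n \subset V_{n-1} \subset \cdots \subset V_0 = V$ of $V$ as a $K$-module, so that each successive quotient $J_i := V_i/V_{i+1}$ is an irreducible smooth $k$-representation of $K$. By Proposition~\ref{Lprop 0.1}, every $\res_{K'}^{K} J_i$ is semisimple; hence the multiset of Jordan--Hölder constituents of $\res_{K'}^{K} V$ is exactly the disjoint union, over $i$, of the irreducible direct summands of $\res_{K'}^{K} J_i$. Since $\tau'$ is a Jordan--Hölder constituent of $\res_{K'}^{K} V$ (any irreducible subquotient of a finite-length module is), Jordan--Hölder uniqueness forces $\tau'$ to appear as a direct summand of $\res_{K'}^{K} J_i$ for some index $i$. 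Taking $\tau := J_i$ gives the desired irreducible subquotient of $\pi$.

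The argument involves no real obstacle; the only delicate point is ensuring that $\tau'$ is not lost when one passes from $\pi$ to the finitely generated piece $V$, which is handled by the explicit choice of the generator $v$ and the second isomorphism theorem. The remainder is a standard Jordan--Hölder/Clifford-type argument made legitimate by the semisimplicity statement of Proposition~\ref{Lprop 0.1}, which is precisely what converts ``subquotient in the restriction'' into ``direct summand in the restriction of an irreducible $K$-subquotient.''
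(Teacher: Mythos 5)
Your proof is correct, and its endgame coincides with the paper's: reduce to a module of finite length, then combine uniqueness of Jordan--H\"older constituents with Proposition \ref{prop 0.1} (the restriction to $K'$ of an irreducible smooth $k$-representation of $K$ is semisimple) to upgrade ``constituent of the restriction'' to ``direct summand of the restriction of some irreducible $K$-subquotient''. Where you genuinely differ is the reduction step. The paper fixes a pro-$p$ open subgroup $H\subset K$, embeds $\pi$ into $\ind_H^K\res_H^K\pi\cong\bigoplus_{i}\ind_H^K\pi_i$ (which requires Lemma \ref{lem 8}, i.e.\ commutation of compact induction with infinite direct sums, plus the finite length of each $\ind_H^K\pi_i$), and then intersects with a finite subsum containing a lift of a generator of $\tau'$. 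You instead take the cyclic $K$-subrepresentation $V=k[K]\cdot v$ generated by such a lift, which is finite dimensional for the elementary reason that the stabiliser of $v$ is open, hence of finite index in the compact group $K$; the second isomorphism theorem then shows that $\tau'$ survives as a subquotient of $\res_{K'}^{K}V$. Your route is more economical: it bypasses Lemma \ref{lem 8} and the induced module altogether, and it produces $\tau$ visibly as a subquotient of $\pi$ itself, whereas the paper's final sentence literally exhibits an irreducible subquotient of $\bigoplus_{i_1,\dots,i_m}\ind_H^K\pi_i$ and leaves implicit that one should intersect this finite-length piece back with the image of $\pi$ to land inside $\pi$ as the statement demands. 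The only point worth stating explicitly in your write-up is that each $\res_{K'}^{K}J_i$ has finite length (immediate here since $V$, hence $J_i$, is finite dimensional), so that ``semisimple'' really does yield finitely many direct summands and the Jordan--H\"older bookkeeping applies.
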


\begin{proof}
Let $H$ be a pro-$p$ open compact subgroup of $K$. The representation $\res_H^K \pi$ is semisimple, which can be written as $\oplus_{i\in I}\pi_i$, where $I$ is an index set. There is an injection from $\pi$ to $\ind_H^K\res_H^K \pi$, and the lemma \ref{lem 8} implies that $\ind_H^K\res_H^K \pi\cong\oplus_{i\in I}\ind_H^K \pi_i$. Notice that for each $i\in I$, the representation $\ind_H^K\pi_i$ has finite length. Let $W'$, $V'$ be two sub-representations of $\pi'=\res_{K'}^{K}\pi$, such that $\tau'\cong W'/V'$. When $\tau'$ is non-trivial, there exists $x\in W'$ such that $x\notin V'$. Since $\ind_H^K\res_H^K \pi$ is isomorphic to a direct sum of $\ind_H^K \pi_i$, there exists a finite index set $\{ i_1,\ldots,i_m\} \subset I$, where $m\in\mathbb{N}^{\ast}$, such that $x\in \oplus_{i_1,\ldots,i_m} \ind_H^K\pi_i$. We have:
$$0\neq(W'\cap \oplus_{i_1,\ldots,i_m} \ind_H^K\pi_i) / (V' \cap \oplus_{i_1,\ldots,i_m} \ind_H^K\pi_i )\hookrightarrow W'\slash V',$$
Since $W'\slash V'$ is irreducible, the injection above is an isomorphism, and we conclude that
$$(W'\cap \oplus_{i_1,\ldots,i_m} \ind_H^K\pi_i) / (V' \cap \oplus_{i_1,\ldots,i_m} \ind_H^K\pi_i )\cong W'/V'\cong \tau'.$$
Since the restricted representation $\res_{K'}^K \oplus_{i_1,\ldots,i_m} \ind_H^K\pi_i$ has finite dimension hence finite length, by the uniqueness of Jordan-H\"older factors, there exists an irreducible subquotient of $\oplus_{i_1,\ldots,i_m} \ind_H^K\pi_i$, whose restriction to $K'$ is semisimple (by Proposition \ref{prop 0.1}) and containing $\tau'$ as a subrepresentation. 
\end{proof}

Now we look back Theorem \ref{thm 7}.
\begin{proof} of \ref{thm 7}:

By \cite[Corollary 8.4]{V3}, we can decompose $\res_J^{\rG}\ind_J^{\rG}\lambda\cong \Lambda(\lambda)\oplus W_1$, where any irreducible subquotient of $W_1$ is not isomorphic to $\lambda$. Let $\lambda'$ be an irreducible subrepresentation of the semisimple $k$-representation $\res_{J'}^{J}\lambda$. If $\lambda'$ is an irreducible subquotient of $\res_{J'}^{J}W_1$, by Lemma \ref{lem 9} and Propositon \ref{prop 0.2}, there exists a $k$-quasicharacter $\chi$ of $F^{\times}$ such that $\lambda\otimes\chi\circ\det$ is an irreducible subquotient of $W_1$. This follows that $\lambda\otimes\chi\circ\det$ is weakly intertwined with $\lambda$. By Proposition \ref{prop 1}, they are intertwined and there exists $x\in\mathrm{U}(\fA)$ such that $\lambda\otimes\chi\circ\det\cong x(\lambda)$. The fact that $\lambda\otimes\chi\circ\det$ is a subquotient of $W_1$ implies that  $x(\eta)\ncong\eta$. As in the proof of Corollary \ref{cor 5}, we have:
$$(\Lambda(\lambda))^{x(\eta)}\oplus W_1^{x(\eta)}\cong(\res_J^{\rG}\ind_J^{\rG}\lambda)^{x(\eta)}\cong x((\res_J^{\rG}\ind_J^{\rG}\lambda)^{\eta}),$$
and the later one is isomorphic to $x(\Lambda(\lambda))$, which is a direct sum of $x(\lambda)$. Since $x(\eta)\ncong\eta$, we have $(\Lambda(\lambda))^{x(\eta)}=0$. As for $W_1$, thus we can decompose $W_1$ as $W_1^{x(\eta)}\oplus W_2$. Hence $W_1^{x(\eta)}\cong x(\Lambda(\lambda))$. Now we obtain an isomorphism:
$$\res_J^{\rG}\ind_J^{\rG}\lambda\cong\Lambda(\lambda)\oplus x(\Lambda(\lambda))\oplus W_2,$$
where $W_2^{x(\eta)}=0$ and $W_2^{\eta}=0$. This implies any irreducible subquotient of $W_2$ is not isomorphic to $\lambda$ neither $x(\lambda)$. If $\lambda'$ is an irreducible subquotient of $W_2$, we can repeat the steps above, then find a $k$-quasicharacter $\chi_2$, an element $x_2\in\mathrm{U}(\fA)$, and decompose $W_2$ as $x_2(\Lambda(\lambda))\oplus W_3$, where any $W_3^{x_2(\eta)}=0$. Furthermore, any irreducible representation of $J$, whose restriction to $J'$ contains $\lambda'$ as a subrepresentation, is $\mathrm{U}(\fA)$-conjugate to $\lambda$. The quotient group $\rU(\fA)/ J$ is finite, hence the set of irreducible representations $\{ x(\lambda) \}_{x\in\rU(\fA)}$ is finite, which means after repeat the steps above for finite times, we could obtain the decomposition as required.
\end{proof}

\subsubsection{Projective normalizer $\tilde{J}$ and its subgroups}

Now we will recall one definition and some propositions given by Bushnell and Kutzko in \cite{BuKuII} when they consider the $\obQ_{\ell}$-representations of $\rG'$.
\begin{defn}[Bushnell,Kutzko]
\label{defn 11}
We define the projective normalizer $\tilde{J}=\tilde{J}(\lambda)$ of $(J,\lambda)$. Let $\fA$ be the principal order attached to $(J,\lambda)$. Then define $\tilde{J}$ to be the group of all $x\in \mathrm{U}(\fA)$ such that:
\begin{itemize}
\item $xJx^{-1}=J$, and
\item there exists a $k$-quasicharacter $\chi$ of $F^{\ast}$ such that $x(\lambda)\cong\lambda\otimes\chi\circ\det$.
\end{itemize}
\end{defn}

\begin{prop}
\label{prop 12}
Let $(J,\lambda)$ be a simple type in $\rG$ as in definition \ref{defn 11}, and $\chi$ be a $k$-quasicharacter of $F^{\ast}$. The following are equivalent:
\begin{enumerate}
\item $\lambda\cong\lambda\otimes\chi\circ\det,$
\item $\chi\circ\det\vert_{J^1}$ is trivial and $\sigma\otimes\chi\circ\det\vert_{\mathrm{U}(\mathfrak{B})}\cong\sigma$,
\item $\chi\circ\det\vert_{J^1}$ is trivial, and $\lambda$, $\lambda\otimes\chi\circ\det$ are intertwined in $\rG$.
\end{enumerate}
\end{prop}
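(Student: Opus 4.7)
The plan is to establish the cycle (1)$\Rightarrow$(2)$\Rightarrow$(1) by unpacking the decomposition $\lambda=\kappa\otimes\sigma$ of a maximal simple cuspidal $k$-type, and then to obtain (1)$\Leftrightarrow$(3) from the intertwining results already proved in this section, notably Proposition \ref{prop 1} and Corollary \ref{cor 5}.

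For (1)$\Rightarrow$(2), I would first restrict the assumed isomorphism $\lambda\cong\lambda\otimes\chi\circ\det$ to $J^1$: both sides are isotypic of type $\eta$, so one gets $\eta\otimes\chi\circ\det|_{J^1}\cong\eta$. Restricting this further to $H^1$, on which $\eta$ is a multiple of the $1$-dimensional simple character $\theta$, immediately forces $\chi\circ\det|_{H^1}$ to be trivial. The remaining, more delicate, step is to propagate the triviality from $H^1$ to all of $J^1$; this rests on the Heisenberg structure of $\eta$ on $J^1/H^1$ and on the way $\det$ factors through that quotient. The cleanest route is to lift $(J,\lambda)$ to an $\ell$-integral maximal simple cuspidal $\bar{\mathbb{Q}}_\ell$-type via Proposition \ref{prop 2'} and to lift $\chi$ to a $\bar{\mathbb{Q}}_\ell$-quasicharacter via Corollary \ref{cor 4}, apply the characteristic-zero version from the appendix of \cite{BuKuII}, and then reduce modulo $\ell$. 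Once $\chi\circ\det|_{J^1}$ is known to be trivial, $\chi\circ\det|_J$ factors through $J/J^1\cong\mathrm{U}(\mathfrak{B})/\mathrm{U}^1(\mathfrak{B})$, and the isomorphism $\kappa\otimes\sigma\cong\kappa\otimes(\sigma\otimes\chi\circ\det|_J)$ combined with the irreducibility of $\kappa|_{J^1}=\eta$ forces $\sigma\cong\sigma\otimes\chi\circ\det|_{\mathrm{U}(\mathfrak{B})}$. The converse (2)$\Rightarrow$(1) simply runs these steps in reverse: triviality of $\chi\circ\det$ on $J^1$ inflates the given isomorphism on $\mathrm{U}(\mathfrak{B})$ to an isomorphism of $\sigma$ as $J$-representations, and tensoring with $\kappa$ produces (1).

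The implication (1)$\Rightarrow$(3) is immediate, since an isomorphism is a fortiori an intertwining and the previous step supplies the triviality on $J^1$. For (3)$\Rightarrow$(1), I would apply Proposition \ref{prop 1} (intertwining being in particular weak intertwining) to produce $x\in\mathrm{U}(\fA)$ with $x(J)=J$ and $x(\lambda)\cong\lambda\otimes\chi\circ\det$. Restricting to $J^1$ and using $\chi\circ\det|_{J^1}=1$ yields $x(\eta)\cong\eta$, so $x$ belongs to the intertwining of $\eta$ inside $\mathrm{U}(\fA)$. This intertwining equals $J$, by the same mechanism as the identity $\mathcal{I}_{\rG}(\theta)\cap\mathrm{U}(\fA)=J$ used in the proof of Proposition \ref{prop 1}; hence $x\in J$ and $\lambda\cong x(\lambda)\cong\lambda\otimes\chi\circ\det$. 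The main obstacle in the whole scheme is the propagation of the triviality of $\chi\circ\det$ from $H^1$ to $J^1$ in (1)$\Rightarrow$(2); everything else is essentially formal manipulation of the tensor decomposition $\lambda=\kappa\otimes\sigma$, and the lifting-to-characteristic-zero detour is what avoids redoing the Heisenberg computation in positive characteristic.
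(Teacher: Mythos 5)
Most of your outline coincides with the paper's own argument: the necessity of the condition on the simple character is obtained by restricting to $H^1$, the implication (2)$\Rightarrow$(1) is immediate from $J/J^1\cong\rU(\mathfrak{B})/\rU^1(\mathfrak{B})$, the direction starting from (3) is handled exactly as in the paper via Proposition \ref{prop 1}, the isomorphism $x(\eta)\cong\eta$ (uniqueness of $\eta$) and $\mathcal{I}_{\rG}(\theta)\cap\rU(\fA)=J$, and your appeal to the irreducibility of $\kappa\vert_{J^1}=\eta$ to cancel $\kappa$ from $\kappa\otimes\sigma\cong\kappa\otimes\sigma\otimes\chi\circ\det$ is precisely the Schur-type computation of Proposition 5.3.2 of \cite{BuKu} that the paper writes out. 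That the paper organizes everything as the cycle (2)$\Rightarrow$(1)$\Rightarrow$(3)$\Rightarrow$(2), so that the cancellation is performed once inside (3)$\Rightarrow$(2), is only a cosmetic difference.

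The genuine gap is in the step you yourself flag as delicate: deducing from (1) that $\chi\circ\det$ is trivial on all of $J^1$, not merely on $H^1$. Your proposed detour (lift $(J,\lambda)$ by Proposition \ref{prop 2'}, lift $\chi$ by Corollary \ref{cor 4}, apply the characteristic-zero statement of \cite{BuKuII}, reduce modulo $\ell$) does not go through: the characteristic-zero result takes as hypothesis an isomorphism $\lambda_{\bC}\cong\lambda_{\bC}\otimes\chi_{\bC}\circ\det$, and nothing guarantees that the chosen lifts satisfy it. The representations $\lambda_{\bC}$ and $\lambda_{\bC}\otimes\chi_{\bC}\circ\det$ are merely two lifts of the $k$-type $\lambda\cong\lambda\otimes\chi\circ\det$, and reduction modulo $\ell$ does not separate lifts (non-isomorphic $\bar{\Q}_\ell$-types can have isomorphic reductions), so the mod-$\ell$ isomorphism cannot be promoted to characteristic zero. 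Note moreover that no restriction argument on $J^1$ can close this gap: for any character $\xi$ of $J^1$ trivial on $H^1$ one has $\eta\otimes\xi\cong\eta$, since both contain $\theta$ and $\eta$ is the unique irreducible representation of $J^1$ containing $\theta$; so the Heisenberg structure is blind to the difference between $H^1$-triviality and $J^1$-triviality. What is actually needed is the group-theoretic ingredient of Bushnell--Kutzko's original proof (in effect the comparison of $\det(J^1)$ with $\det(H^1)$), a statement about subgroups of $\rG$ that is independent of the coefficient field; this is what the paper imports by following the proof of Proposition 2.3 of \cite{BuKuII} (the correct characteristic-zero reference for this proposition, rather than the appendix), its own write-up recording only the $H^1$-triviality and leaving the passage to $J^1$ to that source. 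Replace the lifting detour by this determinant comparison and the rest of your argument stands.
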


\begin{proof}
The proof in Proposition 2.3 \cite{BuKuII} still works in our case, and we write it here to ensure it in the modulo $\ell$ case. We prove this proposition in the order of $(2)\rightarrow (1)\rightarrow (3) \rightarrow (2)$. 

Since $J/ J'\cong \rU(\mathfrak{B}/\rU^1(\mathfrak{B}))$, the implication $(2)\rightarrow (1)$ is trivial. Now let us assume $\lambda$ is equivalent to $\lambda\otimes\chi\circ\det$. Restricting to $H^1$, we see that the simple character $\theta\cong\theta\otimes\chi\circ\det\vert_{H^1}$, which implies $\chi\circ\det\vert_{H^1}$ is trivial. Now assume $(3)$ holds. Proposition \ref{prop 1} gives an element $x\in \mathrm{U}(\fA)$ such that $x(J)=J$ and $x(\lambda)=\lambda\otimes\chi\circ\det$. We have $\chi\circ\det$ is trivial on $J^1$. Combining this fact with the uniqueness of $\eta$ corresponding to any fixed simple character of $H^1$ (see Proposition 2.2 of \cite{MS}), we have $x(\eta)\cong\eta$. In particular, $x\in \mathcal{I}_{\rG}(\theta)=J^1 B^{\times}J^1$, by IV.1.1 in \cite{V2}, hence $x\in J^1B^{\ast}J^1\cap\rU(\fA)=J$. Whence $\lambda\cong x(\lambda)\cong\lambda\otimes\chi\circ\det$. We therefore have $\kappa\otimes\sigma\cong\kappa\otimes\sigma\otimes\chi\circ\det$, where $\kappa$ is a $\beta$-extension of $\eta$ to $J$.

As indicate in the proof of Bushnell and Kutzko, from now on, we use the technique in Proposition 5.3.2 of \cite{BuKu}: Let $X$ denote the representation space of $\kappa$ and $Y$ the representation space of $\sigma$, which can be identified with the representation space of $\sigma\otimes\chi\circ\det$. Let $\phi$ be the isomorphism between $\kappa\otimes\sigma$ and $\kappa\otimes\sigma\otimes\chi\circ\det$. We may write $\phi$ as $\sum_{j}S_j\otimes T_j$ where $S_j\in\mathrm{End}_k(X)$ and $T_j\in\mathrm{End}_k(Y)$, and where $\{ T_j \}$ are linearly independent. Let $g\in J^1$, we have $\kappa\otimes\sigma(g)\circ\phi =\phi\circ(\kappa\otimes\sigma\otimes\chi\circ\det)(g)$. Since $J^1\subset \mathrm{ker}(\sigma)= \mathrm{ker}(\sigma\otimes\chi\circ\det)$, this relation reads:
$$(\eta(g)\otimes 1)\circ \sum_{j}S_j\otimes T_j=(\sum_j S_j\otimes T_j)\circ(\eta(g)\otimes1),$$
which is equivalent to say that:
$$\sum_j(\eta(g)\circ S_j-S_j\circ\eta(g))\otimes T_j=0.$$
The linearly independence of $T_j$ implies that $S_j\in\mathrm{End}_{kJ^1}(\eta)=k^{\ast}$, by the lemma of Schur. Hence $\phi=\mathrm{1} \otimes \sum_{j}S_j\cdot T_j$. Now note $T= \sum_{j}S_j\cdot T_j$ and take $g\in J$, the morphism relation reads:
$$(\kappa(g)\otimes \sigma(g))\circ(1\otimes  T)=\kappa(g)\otimes(\sigma(g)\circ T)=\kappa(g)\otimes (T\circ\sigma\otimes\chi\circ\det(g))$$
$$=(1\otimes T)\circ (\kappa(g)\otimes\sigma\otimes\chi\circ\det(g)),$$
which says $T\in\mathrm{Hom}_{kJ}(\sigma,\sigma\otimes\chi\circ\det)\neq 0$. We finish the proof.
\end{proof}

\begin{cor} [Bushnell,Kutzko]
\label{cor 13}
Let $x\in\tilde{J}(\lambda)$, and let $\chi$ be a quasicharacter of $F^{\ast}$ such that $x(\lambda)\cong\lambda\otimes\chi\circ\det$. Then:
\begin{enumerate}
\item the map $x \mapsto \chi\circ\det\vert_{J^1}$ is an injective homomorphism $\tilde{J}/J\rightarrow (\det(J^1))^{\wedge}$. The later one denotes the dual group of the subgroup $\det(J^1)$ of $F^{\times}$;
\item $\tilde{J}/ J$ is a finite abelian $p$-group, where $p$ is the residual characteristic of $F$.
\end{enumerate}
\end{cor}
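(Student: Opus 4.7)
The plan splits naturally into three stages: show the assignment $x\mapsto\chi\circ\det|_{J^1}$ is a well-defined homomorphism, show it is injective on $\tilde{J}/J$, and then extract the structural consequences in (2) from the shape of the target group. Proposition \ref{prop 12} is the essential black box for the first two stages, and compactness of $\rU(\fA)$ together with the pro-$p$ nature of $J^1$ will handle the third.

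For well-definedness, suppose that both $\chi$ and $\chi'$ satisfy $x(\lambda)\cong\lambda\otimes\chi\circ\det\cong\lambda\otimes\chi'\circ\det$. Then $\lambda\cong\lambda\otimes(\chi\chi'^{-1})\circ\det$, and the implication $(1)\Rightarrow(2)$ of Proposition \ref{prop 12} forces $(\chi\chi'^{-1})\circ\det|_{J^1}$ to be trivial; hence $\chi\circ\det|_{J^1}=\chi'\circ\det|_{J^1}$. For the homomorphism property, I take $x,y\in\tilde{J}$ with $x(\lambda)\cong\lambda\otimes\chi_x\circ\det$ and $y(\lambda)\cong\lambda\otimes\chi_y\circ\det$. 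Because $\det$ is conjugation-invariant, the character $\chi_y\circ\det$ of $J=xJx^{-1}$ satisfies $x(\chi_y\circ\det)=\chi_y\circ\det$, so
\[
(xy)(\lambda)=x\bigl(y(\lambda)\bigr)\cong x(\lambda)\otimes\chi_y\circ\det\cong\lambda\otimes(\chi_x\chi_y)\circ\det,
\]
and restricting to $J^1$ gives the multiplicativity.

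For injectivity, assume $x\in\tilde{J}$ lies in the kernel, so $\chi_x\circ\det|_{J^1}$ is trivial. The implication $(2)\Rightarrow(1)$ of Proposition \ref{prop 12} yields $\lambda\cong\lambda\otimes\chi_x\circ\det\cong x(\lambda)$, which in particular means that $x$ intertwines $\lambda$ in $\rG$. Since $\lambda$ contains the simple character $\theta$ on $\rH^1$, any intertwiner of $\lambda$ intertwines $\theta$, and so $x\in\mathcal{I}_{\rG}(\theta)\cap\rU(\fA)=J$, using the same equality invoked in the proof of Proposition \ref{prop 1}. This establishes the injection $\tilde{J}/J\hookrightarrow(\det(J^1))^{\wedge}$ asserted in (1).

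Part (2) then falls out of the structure of the target. Since $\tilde{J}\subset\rU(\fA)$ and $J$ is open in the compact group $\rU(\fA)$, the index $[\rU(\fA):J]$ is finite, so $\tilde{J}/J$ is finite; commutativity is inherited from the abelian dual group $(\det(J^1))^{\wedge}$. For the $p$-group statement I use that $J^1$ is pro-$p$, hence $\det(J^1)$ is a pro-$p$ subgroup of $F^{\times}$, and any continuous $k^{\times}$-valued character of a pro-$p$ group factors through a finite $p$-group quotient and therefore has $p$-power order. The only delicate point I foresee is the injectivity step: one must be careful that, in the modulo $\ell$ setting, the relation $\mathcal{I}_{\rG}(\theta)\cap\rU(\fA)=J$ is available from \cite{MS} and \cite{BuKu} exactly as quoted in the proof of Proposition \ref{prop 1}; everything else is formal.
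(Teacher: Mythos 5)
Your argument follows essentially the same route as the paper's: Proposition \ref{prop 12} is the black box for well-definedness, the standard intertwining computation gives injectivity, and part (2) is deduced from $J^1$ being pro-$p$. Your explicit check of multiplicativity (which the paper merely asserts) and your finiteness argument via $[\rU(\fA):J]<\infty$ are correct, and your variant of concluding $x\in J$ through $\mathcal{I}_{\rG}(\theta)\cap\rU(\fA)=J$ — rather than the paper's use of the intertwining of $\lambda$ itself and $JB^{\times}J\cap\rU(\fA)=J$ — is legitimate, since $\lambda\vert_{H^1}$ is $\theta$-isotypic, so any intertwiner of $\lambda$ intertwines $\theta$; this is exactly the equality already quoted in the proof of Proposition \ref{prop 1}.

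There is, however, one concrete flaw in the injectivity step: you invoke the implication $(2)\Rightarrow(1)$ of Proposition \ref{prop 12}, but condition (2) has two clauses — triviality of $\chi\circ\det$ on $J^1$ \emph{and} $\sigma\otimes\chi\circ\det\vert_{\rU(\mathfrak{B})}\cong\sigma$ — and you have only verified the first, so the deduction $\lambda\cong\lambda\otimes\chi_x\circ\det$ does not follow as written. The repair is immediate and is what the paper actually does: use $(3)\Rightarrow(1)$ instead. The second clause of (3), namely that $\lambda$ and $\lambda\otimes\chi_x\circ\det$ are intertwined in $\rG$, is automatic in your situation, because $\lambda\otimes\chi_x\circ\det\cong x(\lambda)$ with $x$ normalizing $J$, so that
$$\ind_J^{\rG}(\lambda\otimes\chi_x\circ\det)\cong\ind_{x(J)}^{\rG}x(\lambda)\cong\ind_J^{\rG}\lambda,$$
and the relevant $\Hom$-space is nonzero. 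With that substitution your injectivity argument is complete, and the rest of the proof (including the $p$-power order of the characters $\chi\circ\det\vert_{J^1}$ forcing $\tilde{J}/J$ to be a finite abelian $p$-group) goes through as you wrote it.
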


\begin{proof}
For (1). Let $x\in\tilde{J}$. Suppose there exist two $k$-quasicharacter $\chi_1,\chi_2$ of $F^{\ast}$, such that $x(\lambda)\cong \lambda\otimes\chi_1\circ\det$ and $\lambda\otimes\chi_1\circ\det\cong\lambda\otimes\chi_2\circ\det$. This is equivalent to say that
$$\lambda\cong\lambda\otimes(\chi_1\circ\det)\otimes(\chi_2^{-1}\circ\det)\cong\lambda\otimes(\chi_1\otimes\chi_2^{-1})\circ\det.$$
The equivalence between (1) and (2) of Proposition \ref{prop 12} implies that $(\chi_1\otimes \chi_2^{-1})\circ\det\vert_{J^1}$ is trivial. Hence $\chi_1\circ\det\vert_{J^1}\cong\chi_2\circ\det\vert_{J^1}$. So the map is well defined, and is clearly a morphism between groups. Now suppose that $x\in\tilde{J}$ and $\chi$ is a $k$-quasicharacter of $F^{\times}$ which is trivial on $\det(J^1)$, such that $x(\lambda)\cong\lambda\otimes\chi\circ\det$. As in the Proposition \ref{prop 12}, the equivalence of conditions means that $\lambda\cong\lambda\otimes\chi\circ\det$. Thus $x$ intertwined $\lambda$ to itself. Whence the element $x$ belongs to $J B^{\times} J\cap\rU(\fA)=J$.

For (2). Since $J^1$ is a pro-$p$ group, this is induced directly from (1).
\end{proof}

\subsubsection{Two conditions for irreducibility}
\label{section 07}
In this section, let $(J,\lambda)$ be any maximal simple cuspidal $k$-type of $\rG$. We will construct a compact subgroup $M_{\lambda}$ of $\rG'$ and a family of irreducible representations $\lambda_{M_{\lambda}}'$ of $M_{\lambda}$, such that the induced representation $\ind_{M_{\lambda}}^{\rG'}\lambda_{M_{\lambda}}'$ is irreducible and cuspidal (Theorem \ref{thm 16}). And in the next section, we will see that any irreducible cuspidal $k$-representation $\pi'$ of $\rG'$ can be constructed in this manner.

To check the irreducibility of this induced representation, we only need to calculate its intertwining set in $\rG'$, when considering representations in characteristic $0$, but this is not sufficient in the case of modulo $\ell$. As noted in lemma 4.2 in article \cite{V3}, Vign\'eras presents a criterium of irreducibility in modulo $\ell$ cases:

\begin{lem} [criterium of irreducibility by Vign\'eras]
\label{lem 19}
Let $K$ be an open compact subgroup of $\rG'$, and $\pi'$ be a $k$-irreducible representation of $K$. The induced representation $\ind_K^{\rG'}\pi'$ is irreducible, when
\begin{enumerate}
\item $\mathrm{End}_{k\rG'}(\ind_{K}^{\rG'}\pi')=k$,
\item for any $k$-irreducible representation $\nu$ of $\rG'$, if $\pi'$ is contained in $\res_{K}^{\rG'}\nu$ then there is a surjection which maps $\res_{K}^{\rG'}\nu$ to $\pi'$.
\end{enumerate}
\end{lem}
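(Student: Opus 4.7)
The plan is to prove irreducibility of $\Pi:=\ind_K^{\rG'}\pi'$ by contradiction. Since $\pi'$ is an irreducible smooth representation of the compact open subgroup $K$, it is finite-dimensional, and the canonical copy of $\pi'$ inside $\res_K^{\rG'}\Pi$ (the functions supported on $K$) generates $\Pi$ as a $\rG'$-module; in particular $\Pi$ is finitely generated. Suppose $X\subsetneq\Pi$ is a nonzero proper subrepresentation. The $K$-irreducibility of $\pi'$ forces $\pi'\cap X=0$ (otherwise $X\supset\pi'$ and hence $X=\Pi$), so $\pi'$ embeds into $\res_K^{\rG'}(\Pi/X)$. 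By finite generation $\Pi/X$ has an irreducible quotient $\nu$; the composite $\pi'\to\res_K^{\rG'}\nu$ is nonzero (since $\pi'$ generates $\Pi$ and $\nu$ is a nonzero quotient) and therefore injective.

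Applying hypothesis (2) yields a surjection $s:\res_K^{\rG'}\nu\twoheadrightarrow\pi'$, and the right-Frobenius reciprocity for the smooth induction $\Ind_K^{\rG'}$ turns this into an injection $j:\nu\hookrightarrow\Ind_K^{\rG'}\pi'$. Let $\iota:\Pi\hookrightarrow\Ind_K^{\rG'}\pi'$ denote the canonical inclusion of compact into smooth induction, and let $p:\Pi\twoheadrightarrow\nu$ be the surjection constructed above. The intersection $j(\nu)\cap\iota(\Pi)$ inside $\Ind_K^{\rG'}\pi'$ is a $\rG'$-subrepresentation of the irreducible $\nu$, hence is either $0$ or all of $j(\nu)$. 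In the latter case $\nu\subset\Pi$, and the composite $\Pi\twoheadrightarrow\nu\hookrightarrow\Pi$ is an endomorphism of $\Pi$; by hypothesis (1) it is a scalar, and is nonzero since $j$ is injective and $p$ is surjective, hence it is an isomorphism forcing $\ker(p)=0$, which contradicts $\ker(p)\supset X\neq 0$.

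The remaining case $j(\nu)\cap\iota(\Pi)=0$ is the main obstacle. Here $\iota$ and $\alpha:=j\circ p$ give two linearly independent elements of $\Hom_{k\rG'}(\Pi,\Ind_K^{\rG'}\pi')$, since $\iota$ has trivial kernel while $\ker(\alpha)\supset X$. Via right-Frobenius this Hom space is $\Hom_{kK}(\res_K^{\rG'}\Pi,\pi')$, and by the Mackey formula $\res_K^{\rG'}\Pi\cong\bigoplus_{g\in K\backslash\rG'/K}\ind_{K\cap gKg^{-1}}^{K}\,{}^g\pi'$, so it decomposes as $\bigoplus_g\Hom_{K\cap gKg^{-1}}({}^g\pi',\pi')$. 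Similarly, hypothesis (1) identifies $\mathrm{End}_{k\rG'}(\Pi)$ with $\bigoplus_g\Hom_{K\cap gKg^{-1}}(\pi',{}^g\pi')=k$, forcing the contributions at $g\notin K$ to vanish on the $\pi'\to{}^g\pi'$ side. The delicate step, since these restrictions need not be semisimple in characteristic $\ell>0$, is to transfer this vanishing to the opposite direction $\Hom_{K\cap gKg^{-1}}({}^g\pi',\pi')=0$ for $g\notin K$; this is precisely where hypothesis (2) must be brought in, applied to an irreducible quotient of $\Pi$ produced by a hypothetical extra Hom element. Once $\dim_k\Hom_{k\rG'}(\Pi,\Ind_K^{\rG'}\pi')=1$ is established, the linear independence of $\iota$ and $\alpha$ yields the desired contradiction, completing the proof.
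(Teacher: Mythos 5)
First, note that the paper itself does not prove this lemma: it is quoted from Vign\'eras \cite{V3} (Lemma 4.2 there), so your attempt can only be judged on its own terms. Your setup is sound and the easy half is correct: finite generation of $\Pi=\ind_K^{\rG'}\pi'$, the fact that $\pi'\cap X=0$ for a proper subrepresentation $X$, the passage to an irreducible quotient $\nu$ of $\Pi/X$ with $\pi'\hookrightarrow\res_K^{\rG'}\nu$, the use of hypothesis (2) plus the adjunction $\Hom_K(\res_K^{\rG'}\nu,\pi')\cong\Hom_{k\rG'}(\nu,\Ind_K^{\rG'}\pi')$, and the dichotomy on $j(\nu)\cap\iota(\Pi)$; in the case $j(\nu)\subseteq\iota(\Pi)$ your contradiction via hypothesis (1) is complete.

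The genuine gap is the other case, $j(\nu)\cap\iota(\Pi)=0$, which is the heart of the matter, and your text only announces a plan there rather than carrying it out. Concretely, you propose to establish $\dim_k\Hom_{k\rG'}(\Pi,\Ind_K^{\rG'}\pi')=1$, equivalently to deduce $\Hom_{K\cap gKg^{-1}}({}^g\pi',\pi')=0$ for $g\notin K$ from hypothesis (1), ``bringing in hypothesis (2)''. But (2) is a statement about irreducible representations $\nu$ of $\rG'$ and gives no coset-by-coset control of the spaces $\Hom_{K\cap gKg^{-1}}({}^g\pi',\pi')$; there is no evident mechanism by which it transfers the vanishing of $\Hom_{K\cap gKg^{-1}}(\pi',{}^g\pi')$ to the opposite direction when the restrictions to $K\cap gKg^{-1}$ fail to be semisimple. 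Worse, the intermediate claim is not even implied by the conclusion of the lemma: if $\Pi$ is irreducible one may take $\nu=\Pi$ in (2), which is satisfied by the projection onto the trivial-coset Mackey component, so (1) and (2) together place no upper bound on $\prod_{g}\Hom_{K\cap gKg^{-1}}({}^g\pi',\pi')$ beyond the $g\in K$ term. Thus the final step of your plan aims at a statement that neither follows from the hypotheses by your argument nor is needed for (or guaranteed by) the result, and the proof is incomplete precisely where the characteristic-$\ell$ difficulty lies. (Two smaller points: $\Hom_K(\res_K^{\rG'}\Pi,\pi')$ is a direct product, not a direct sum, over the double cosets; and the identification $\mathrm{End}_{k\rG'}(\Pi)\cong\bigoplus_g\Hom_{K\cap gKg^{-1}}(\pi',{}^g\pi')$ uses the finite-dimensionality of $\pi'$ — both harmless, but worth stating.)
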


As in 8.3 chapter I of \cite{V1}, the first criterion of irreducibility is equivalent to say that the intertwining set $\mathrm{I}_{\rG'}(\pi')=K$. 

\begin{cor}
\label{cor 99}
Let $(J,\lambda)$ be a maximal simple cuspidal $k$-type in $\rG$. The induced $k$-representation $\ind_{J}^{\tilde{J}}\lambda$ is irreducible.
\end{cor}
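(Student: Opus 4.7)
The plan is to verify the two hypotheses of Vign\'eras' criterion (Lemma \ref{lem 19}), applied with the locally profinite group $\tilde{J}$ in place of $\rG'$, the open compact subgroup $J$ in place of $K$, and the irreducible $k$-representation $\lambda$ in place of $\pi'$. (The proof of Lemma \ref{lem 19} is general and goes through for any locally profinite ambient group; since $\tilde{J}$ is itself compact, the representation $\ind_J^{\tilde{J}}\lambda$ will moreover be finite-dimensional, which keeps everything clean.) The two conditions to check are $\End_{k\tilde{J}}(\ind_J^{\tilde{J}}\lambda)=k$ (equivalently, $\mathrm{I}_{\tilde{J}}(\lambda)=J$) and the surjection condition on restrictions of irreducible $\tilde{J}$-representations.

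For the first condition, take $x\in\tilde{J}$. By Definition \ref{defn 11} we have $xJx^{-1}=J$, and there is a $k$-quasicharacter $\chi$ of $F^{\times}$ with $x(\lambda)\cong\lambda\otimes\chi\circ\det$. Since $J\cap xJx^{-1}=J$, the induced module $i_{J,J}x(\lambda)$ is just $x(\lambda)$, and Schur's lemma yields
\[
x\in\mathrm{I}_{\tilde{J}}(\lambda)\iff \Hom_{kJ}(\lambda,\lambda\otimes\chi\circ\det)\neq 0\iff \lambda\cong\lambda\otimes\chi\circ\det.
\]
By Proposition \ref{prop 12} this forces $\chi\circ\det\vert_{J^1}$ to be trivial, and by the injectivity of the map $\tilde{J}/J\to(\det J^1)^{\wedge}$ in Corollary \ref{cor 13}(1) this in turn forces $x\in J$. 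Hence $\mathrm{I}_{\tilde{J}}(\lambda)=J$, which gives the first hypothesis.

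For the second condition, let $\nu$ be an irreducible $k$-representation of $\tilde{J}$ whose restriction $\res_J^{\tilde{J}}\nu$ contains $\lambda$. Since $J$ is normal in $\tilde{J}$, the standard Clifford argument applies in arbitrary characteristic: picking any simple $J$-submodule $V_0$ of $\res_J^{\tilde{J}}\nu$, each conjugate $xV_0$ for $x\in\tilde{J}$ is again a simple $J$-submodule, so $\sum_{x\in\tilde{J}/J}xV_0$ is a $\tilde{J}$-stable semisimple $J$-submodule, hence equals all of $\nu$ by irreducibility of $\nu$. Thus $\res_J^{\tilde{J}}\nu$ is semisimple, and the embedded copy of $\lambda$ is a direct summand, providing the required surjection $\res_J^{\tilde{J}}\nu\twoheadrightarrow\lambda$.

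With both conditions verified, Lemma \ref{lem 19} concludes the irreducibility of $\ind_J^{\tilde{J}}\lambda$. The only place where any real input is needed is the first condition, which is where the definition of the projective normalizer $\tilde{J}$ and the injectivity statement of Corollary \ref{cor 13}(1) are used in an essential way; the second condition is essentially formal once normality of $J$ in $\tilde{J}$ is invoked, so I do not expect any serious obstacle.
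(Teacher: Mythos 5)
Your proposal is correct and follows the same overall template as the paper: both verify the two conditions of Vign\'eras' criterion (Lemma \ref{lem 19}) for $(J,\lambda)$ inside $\tilde{J}$. The differences are in how the two conditions are discharged. For the first condition the paper merely asserts $\mathrm{I}_{\tilde{J}}(\lambda)=J$; you supply the argument, and it is the intended one: since every $x\in\tilde{J}$ normalizes $J$, intertwining reduces to $\lambda\cong\lambda\otimes\chi\circ\det$, Proposition \ref{prop 12} then forces $\chi\circ\det$ to be trivial on $J^1$, and the injectivity of $\tilde{J}/J\rightarrow(\det J^1)^{\wedge}$ from Corollary \ref{cor 13} gives $x\in J$ (no circularity, as both results precede the corollary). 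For the second condition you genuinely diverge: you use only the normality of $J$ in $\tilde{J}$ and the Clifford-type semisimplicity argument (the same mechanism as Propositions \ref{Lprop 0.1} and \ref{propver13.1}) to see that $\res_J^{\tilde{J}}\nu$ is semisimple, so an embedded copy of $\lambda$ is automatically a direct summand. The paper instead pulls the problem up to $\rG$: Frobenius reciprocity produces a surjection $\res_J^{\rG}\ind_J^{\rG}\lambda\twoheadrightarrow\res_J^{\tilde{J}}\nu$, and the relevant isotypic part, which is a direct summand and a multiple of $\lambda$ by Corollary 8.4 of \cite{V3}, yields the surjection onto $\lambda$. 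Your route is shorter and more elementary, and it works precisely because $J$ is normal in $\tilde{J}$; the paper's heavier isotypic-part argument is the one that still functions in the later analogues (Theorem \ref{thm 10}, Proposition \ref{prop 14}), where the inducing subgroup is no longer normal in the ambient group, which explains why the same machinery is deployed here as well.
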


\begin{proof}
Lemma \ref{lem 19} can be applied in this case after changing $\rG'$ to any locally pro-finite group. First, we calculate $\mathrm{End}_{k\tilde{J}}(\ind_{J}^{\tilde{J}}\lambda)$, which equals to $k$ since the intertwining group $\mathrm{I}_{\tilde{J}}(\lambda)=J$. Now we consider the second condition. Let $\nu$ be an irreducible $k$-representation of $\tilde{J}$, such that 
$$\lambda\hookrightarrow\res_{J}^{\tilde{J}}\lambda.$$
By Frobenius reciprocity and the exactness of functors $\ind$ and $\res$, we have a surjection:
$$\res_{J}^{\rG}\ind_{J}^{\rG}\lambda\rightarrow\res_{J}^{\tilde{J}}\nu.$$
The $(J^1,\kappa)$-isotypic part $\nu^{\kappa}$ of $\res_{J}^{\tilde{J}}\nu$ is a direct component as $J$ representation, and $\nu^{\kappa}$ is a quotient of the $(J^1,\kappa)$-isotypic part $\lambda^{\kappa}$ of $\lambda$ as $J$ representation. The later one is a multiple of $\lambda$ by Corollary $8.4$ of \cite{V3}. Hence the surjection required in the second condition of Lemma \ref{lem 19} exists.
\end{proof}

\begin{thm}
\label{thm 10}
Let $\lambda'$ be a subrepresentation of $\res_{J'}^J\lambda$. Then $\lambda'$ verifies the second condition of irreducibility. This is to say that for any irreducible representation $\pi'$ of $\rG'$, if there is an injection: $\lambda'\hookrightarrow \res_{J'}^{\rG'}\pi'$, then there is a surjection: $\res_{J'}^{\rG'}\pi'\twoheadrightarrow \lambda'.$
\end{thm}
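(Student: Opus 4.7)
The plan is to combine Frobenius reciprocity, Mackey's double coset formula for $\rG' \backslash \rG / J$, and the structural Theorem \ref{thm 7}.

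First, I would realise $\pi'$ as an $\rG'$-quotient of $\res_{\rG'}^{\rG}\ind_J^{\rG}\lambda$. From $\lambda' \hookrightarrow \res_{J'}^{\rG'}\pi'$, Frobenius reciprocity produces a nonzero $\rG'$-map $\ind_{J'}^{\rG'}\lambda' \to \pi'$, which is surjective since $\pi'$ is irreducible. By Proposition \ref{prop 0.1}, $\res_{J'}^J\lambda$ is semisimple with $\lambda'$ as a direct summand, so $\ind_{J'}^{\rG'}\lambda'$ is a direct summand of $\ind_{J'}^{\rG'}\res_{J'}^J\lambda$. Mackey's formula identifies this latter module with the $g=e$ piece of $\res_{\rG'}^{\rG}\ind_J^{\rG}\lambda$ along $\rG'\backslash\rG/J$, making it a direct summand of $\res_{\rG'}^{\rG}\ind_J^{\rG}\lambda$; composing the projection with the surjection onto $\pi'$ yields an $\rG'$-surjection $\res_{\rG'}^{\rG}\ind_J^{\rG}\lambda \twoheadrightarrow \pi'$.

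Next, I would restrict this surjection to $J'$ to obtain a surjection of $J'$-representations
$$X := \res_{J'}^{J}\bigl(\res_{J}^{\rG}\ind_{J}^{\rG}\lambda\bigr) \twoheadrightarrow Y := \res_{J'}^{\rG'}\pi'.$$
Theorem \ref{thm 7} gives $\res_J^{\rG}\ind_J^{\rG}\lambda = \Lambda_\lambda \oplus W$ with $\Lambda_\lambda$ semisimple (a multiple of $\lambda$ and of certain $\rU(\fA)$-conjugates), and with $\lambda'$ not a subquotient of $\res_{J'}^J W$. Each irreducible $J$-constituent of $\Lambda_\lambda$ has semisimple restriction to $J'$ by Proposition \ref{prop 0.1}, so $\res_{J'}^J\Lambda_\lambda$ is semisimple; decompose it as $X^{(\lambda')} \oplus R$ into its $\lambda'$-isotypic part and the sum of remaining isotypic components. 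Setting $X' := R \oplus \res_{J'}^J W$ gives $X = X^{(\lambda')} \oplus X'$, with $X/X' \cong X^{(\lambda')}$ a nonzero multiple of $\lambda'$, and, crucially, with $\lambda'$ not a composition factor of $X'$.

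Finally, I would extract the quotient. Let $Y'$ be the image of $X'$ under $X \twoheadrightarrow Y$. As a quotient of $X'$, $Y'$ contains no composition factor isomorphic to $\lambda'$, so the hypothesised inclusion $\lambda' \hookrightarrow Y$ cannot land in $Y'$; hence the composition $\lambda' \hookrightarrow Y \twoheadrightarrow Y/Y'$ is injective and in particular $Y/Y' \neq 0$. But $Y/Y'$ is a quotient of $X/X' = X^{(\lambda')}$, hence a nonzero semisimple $\lambda'$-isotypic $J'$-module, and therefore admits a surjection onto $\lambda'$. Composing $Y \twoheadrightarrow Y/Y' \twoheadrightarrow \lambda'$ gives the desired $\res_{J'}^{\rG'}\pi' \twoheadrightarrow \lambda'$. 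The substantive ingredient is Theorem \ref{thm 7}, which guarantees the absence of $\lambda'$ as a subquotient of $\res_{J'}^J W$; once that structural fact is granted, the remainder is a formal manipulation via Frobenius reciprocity, Mackey's formula, and the semisimplicity provided by Proposition \ref{prop 0.1}.
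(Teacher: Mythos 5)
Your proposal is correct and follows essentially the same route as the paper: Frobenius reciprocity plus Mackey's formula to produce the surjection $\res_{J'}^{\rG}\ind_J^{\rG}\lambda \twoheadrightarrow \res_{J'}^{\rG'}\pi'$, then Theorem \ref{thm 7} to rule out $\lambda'$ appearing in $\res_{J'}^J W$, and the semisimplicity of $\res_{J'}^J\Lambda_\lambda$ to extract the surjection onto $\lambda'$. Your final bookkeeping (quotienting by the image of the whole non-$\lambda'$-isotypic part rather than by $(W+\ker\iota)/\ker\iota$) is only a cosmetic refinement of the paper's argument.
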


\begin{proof}
Since $J$ is open, every double coset $\rG' g J$ is open and closed, hence we could apply Mackey's decomposition formula:
$$\res_{\rG'}^{\rG}\ind_J^{\rG}\lambda \cong \oplus_{a\in J\backslash \rG/\rG'} \ind_{\rG'\cap a(J)}^{\rG'}\res_{\rG'\cap a(J)}^{a(J)}a(\lambda).$$
We take $a=1$, then $\ind_{J'}^{\rG'}\res_{J'}^J\lambda$ is a direct factor of $\res_{\rG'}^{\rG}\ind_J^{\rG}\lambda$. The hypothesis $\lambda'\hookrightarrow\res_{J'}^{\rG'}\pi'$ implies a surjection from $\ind_{J'}^{\rG'}\lambda'$ to $\pi'$ by Frobenius reciprocity. Since $\res_{J'}^J\lambda$ is semisimple with finite length by Proposition \ref{prop 0.1} and the functor $\ind_{J'}^{\rG'}$ respects finite direct sum, we have an surjection:
$$\ind_{J'}^{\rG'}\res_{J'}^J\lambda\twoheadrightarrow \pi',$$
hence we obtain a surjection:
$$\res_{\rG'}^{\rG}\ind_J^{\rG}\lambda\twoheadrightarrow\pi'.$$
Now consider the surjection:
$$\iota:\res_{J'}^{\rG}\ind_{J}^{\rG}\lambda\twoheadrightarrow \res_{J'}^{\rG'}\pi'.$$
Meanwhile, by Theorem \ref{thm 7}, we could decompose $\res_J^{\rG}\ind_J^{\rG}\lambda\cong \Lambda_{\lambda}\oplus W$. We have $\Lambda_{\lambda}\oplus W/\mathrm{ker}{\iota}\cong\res_{J'}^{\rG'}\pi'$. If the image of the injection $\lambda'\hookrightarrow \Lambda_{\lambda}\oplus W/\mathrm{ker}(\iota)$ is contained in $W+\mathrm{ker}(\iota)/\mathrm{ker}(\iota)$, then $\lambda'$ is an irreducible subquotient of $W$, which is contradicted with Theorem \ref{thm 7}. Hence the image of the composed morphism: 
$$\lambda'\hookrightarrow \Lambda_{\lambda}\oplus W/\mathrm{ker}(\iota)\twoheadrightarrow \Lambda_{\lambda}\oplus W/(W+\mathrm{ker}(\iota))\cong\Lambda_{\lambda}/(\Lambda_{\lambda}\cap(W+\mathrm{ker}(\iota)))$$
is non-trivial. Since $\Lambda_{\lambda}/(\Lambda_{\lambda}\cap(W+\mathrm{ker}(\iota)))$ is a quotient of $\Lambda_{\lambda}$, and the functor $\res_{J'}^J$ maps any irreducible representation of $J$ to a semisimple representation with finite length of $J'$, the representation $\res_{J'}^J\Lambda_{\lambda}$ is semisimple with finite length of $J'$. So does the quotient $\Lambda_{\lambda}/ (\Lambda_{\lambda}+\mathrm{ker}(\iota))$, of which $\lambda'$ is an irreducible direct component. This implies a surjection: $\res_{J'}^{\rG'}\pi'\twoheadrightarrow \lambda'$.
\end{proof}

In the theorem above, we proved that $\lambda'$ verifies the second condition of irreducible criterium of irreducibility in lemma \ref{lem 19}. Unfortunately, $(J',\lambda')$ does not satisfies the first condition. This is also false for representations of characteristic $0$. A natural idea is to construct a open compact subgroup of $\rG'$, which is bigger than $J'$. In the case of characteristic $0$, Bushnell and Kutzko calculated in \cite{BuKuII}. This group is $\tilde{J}'_{\bC}=\tilde{J}_{\bC}\cap\rG'$, the intersection of projective normalizer of a $\obQ_{\ell}$-maximal cuspidal simple type and $\rG'$. We will see in Proposition \ref{prop 24} and definition \ref{defn 22} that this group is $\tilde{J'}=\tilde{J}\cap\rG'$ in the case of modulo $\ell$.

\begin{prop}
\label{prop 17}
Let $L$ be any subgroup of $\tilde{J}'=\tilde{J}\cap\rG'$ such that $J'\subset L\subset \tilde{J}'$, and $\lambda'$ an irreducible subrepresentation of $\lambda\vert_{J'}$. Then the induced representation $\ind_{J'}^{L}\lambda'$ is semisimple.
\end{prop}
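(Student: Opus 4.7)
The plan is to show that $\ind_{J'}^{L} \lambda'$ is semisimple by combining Clifford theory for the normal inclusion $J' \triangleleft L$ with a Maschke-style averaging argument, which works because $[L:J']$ turns out to be invertible in $k$.

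First I would verify that $J'$ is a normal subgroup of $L$. By Definition \ref{defn 11} every element of $\tilde{J}$ normalizes $J$; since $L \subset \tilde{J}' = \tilde{J}\cap\rG'$, each $l \in L$ satisfies $lJl^{-1} = J$ and $l \in \rG'$, hence
\[
l J' l^{-1} \;=\; l(J \cap \rG')l^{-1} \;=\; (lJl^{-1}) \cap \rG' \;=\; J'.
\]
Next I would control the index $[L:J']$. The natural map $\tilde{J}'/J' \to \tilde{J}/J$ induced by the inclusion is injective (by the second isomorphism theorem applied to $J\cdot\tilde{J}' \subset \tilde{J}$), so by Corollary \ref{cor 13}~(2) the group $\tilde{J}'/J'$ is a finite abelian $p$-group, hence so is its subgroup $L/J'$. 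Since $p \neq \ell$, the order $[L:J']$ is invertible in $k$.

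With these two facts in hand I would finish by Mackey plus averaging. Applying Mackey's decomposition to the normal inclusion $J' \triangleleft L$ yields
\[
\res_{J'}^L \ind_{J'}^L \lambda' \;\cong\; \bigoplus_{\bar{g} \in L/J'} g(\lambda'),
\]
a finite direct sum of irreducible $J'$-representations (conjugation preserves irreducibility), hence semisimple as a $kJ'$-module. Given any $L$-subrepresentation $W \subset \ind_{J'}^L \lambda'$, semisimplicity over $J'$ supplies a $J'$-equivariant projector $p$ onto $W$; averaging $p$ over the coset space $L/J'$ (which is legitimate because $[L:J']$ is invertible in $k$) produces an $L$-equivariant projector, whose kernel is a complement of $W$ in $\ind_{J'}^L \lambda'$. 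This gives semisimplicity as an $L$-module.

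I do not foresee a substantive obstacle here: the statement reduces to the standard prime-to-$\ell$ finite-index Clifford/Maschke package once one observes that the defining properties of $\tilde{J}$ force $[L:J']$ to be a power of $p$. The only minor point to be careful about is the injectivity $\tilde{J}'/J' \hookrightarrow \tilde{J}/J$ used to transport Corollary \ref{cor 13}~(2) from $\rG$ to $\rG'$; everything else is routine.
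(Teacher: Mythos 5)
Your proof is correct, but it takes a different route from the paper's. The paper does not argue via Maschke averaging at all: it first embeds $\ind_{J'}^{\tilde{J}'}\lambda'$ (via Mackey) into $\res_{\tilde{J}'}^{\tilde{J}}\ind_{J}^{\tilde{J}}\lambda\cong\oplus_{g\in J\backslash\tilde{J}}\res_{J'}^{J}g(\lambda)$, which is semisimple by Proposition \ref{prop 0.1} together with the irreducibility of $\ind_J^{\tilde{J}}\lambda$ (Corollary \ref{cor 99}); it then restricts to $L$, using Clifford theory for the normal finite-index inclusion $L\subset\tilde{J}'$ (which is characteristic-free, needing no invertibility of the index), and observes that $\ind_{J'}^{L}\lambda'$ sits inside $\res_{L}^{\tilde{J}'}\ind_{J'}^{\tilde{J}'}\lambda'$. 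Your argument instead works entirely inside $L$: you check $J'\triangleleft L$, identify $\tilde{J}'/J'$ with a subgroup of $\tilde{J}/J$ (via $\tilde{J}'\cap J=J'$), invoke Corollary \ref{cor 13}~(2) to see that $[L:J']$ is a power of $p$, hence invertible in $k$ since $\ell\neq p$, and then run the standard Clifford-plus-Maschke argument to split any $L$-subrepresentation. What your approach buys is self-containedness and economy: it bypasses Proposition \ref{prop 0.1}, Corollary \ref{cor 99} and the Mackey computation inside $\ind_J^{\tilde{J}}\lambda$, and it makes transparent exactly where $\ell\neq p$ enters (through the $p$-group structure of $\tilde{J}/J$). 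What the paper's approach buys is independence from the prime-to-$\ell$ condition on the index (its Clifford-theoretic step would survive even if $\tilde{J}/J$ were not a $p$-group) and consistency with the semisimplicity machinery it has already set up and reuses elsewhere, e.g.\ in the proof of Proposition \ref{prop 14}. The only point in your write-up that deserved care --- the injectivity of $\tilde{J}'/J'\hookrightarrow\tilde{J}/J$ --- you handle correctly, since $J\cap\tilde{J}'=J\cap\rG'=J'$ and $J$ is normal in $\tilde{J}$.
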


\begin{proof}
By Mackey's decomposition formula, the induced representation $\ind_{J'}^{\tilde{J}'}\lambda'$ is a subrepresentation of $\res_{\tilde{J}'}^{\tilde{J}}\ind_J^{\tilde{J}}\lambda$. Applying Mackey's decomposition formula, we have
$$\res_{\tilde{J}'}^{\tilde{J}}\ind_J^{\tilde{J}}\lambda\cong\oplus_{g\in J\backslash\tilde{J}}\res_{J'}^{J}g(\lambda),$$
since $\tilde{J}$ normalises $J$ and $J'$. Hence $\res_{\tilde{J}'}^{\tilde{J}}\ind_J^{\tilde{J}}\lambda$ is semisimple by Proposition \ref{prop 0.1} and the fact that $\ind_J^{\tilde{J}}\lambda$ is irreducible. Since $L$ is a normal open subgroup of $\tilde{J'}$, the index of $L$ in $\tilde{J}'$ is finite. Hence the restricted representation $\res_{L}^{\tilde{J'}}\ind_{J'}^{\tilde{J}'}\lambda'$ is semisimple by Clifford theory, of which $\ind_{J'}^{L}\lambda'$ is a subrepresentation. Now we obtain the result.
\end{proof}

\begin{prop}
\label{prop 14}
Let $\lambda'$ be an irreducible subrepresentation of $\res_{J'}^J\lambda$, and $\lambda_L'$ an irreducible subrepresentation of $\ind_{J'}^{L}\lambda'$. Then $\lambda_L'$ verifies the second condition of irreducibility. This is to say that for any irreducible representation $\pi'$ of $\rG'$, if there is an injection $\lambda_L' \hookrightarrow \res_{L'}^{\rG'}\pi'$, then there exists a surjection $\res_{L'}^{\rG'}\pi'\twoheadrightarrow \lambda_{L}'$, where $L'=L\cap\rG'$.
\end{prop}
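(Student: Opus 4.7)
The plan is to imitate the proof of Theorem~\ref{thm 10}, using Proposition~\ref{prop 17} to pass from $L$ to $J'$ and a $\tilde J$-equivariant refinement of Theorem~\ref{thm 7}.

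First, Frobenius reciprocity applied to $\lambda_L'\hookrightarrow\res_L^{\rG'}\pi'$ produces a surjection $\ind_L^{\rG'}\lambda_L'\twoheadrightarrow\pi'$; since $\ind_{J'}^L\lambda'$ is semisimple by Proposition~\ref{prop 17}, $\lambda_L'$ is a direct summand of it, and transitivity of induction makes $\ind_L^{\rG'}\lambda_L'$ a direct summand of $\ind_{J'}^{\rG'}\lambda'$, so $\ind_{J'}^{\rG'}\lambda'\twoheadrightarrow\pi'$. As in the proof of Theorem~\ref{thm 10}, Mackey's formula exhibits $\ind_{J'}^{\rG'}\res_{J'}^J\lambda$ as a direct summand of $\res_{\rG'}^{\rG}\ind_J^{\rG}\lambda$ through the trivial double coset; restricting the resulting surjection to $L$ gives $\iota\colon\res_L^{\rG}\ind_J^{\rG}\lambda\twoheadrightarrow\res_L^{\rG'}\pi'$.

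Next, I would choose the decomposition $\res_J^{\rG}\ind_J^{\rG}\lambda\cong\Lambda_\lambda\oplus W$ of Theorem~\ref{thm 7} $\tilde J$-equivariantly, by letting $\Lambda_\lambda$ be the sum of the $(J^1,\eta')$-isotypic components of $\res_{J^1}^{\rG}\ind_J^{\rG}\lambda$ as $\eta'$ runs over the $\tilde J$-orbit of the simple character $\eta$ attached to $(J,\lambda)$ (a finite set of size $[\tilde J:J]$ by Corollary~\ref{cor 13}) and $W$ the sum of the remaining isotypic parts. Since $L\subset\tilde J'\subset\tilde J$ normalises $J^1$, both subspaces are $L$-stable and we get $\res_L^{\rG}\ind_J^{\rG}\lambda\cong\Lambda_\lambda|_L\oplus W|_L$. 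The $L$-action on $\Lambda_\lambda=\bigoplus_{x\in\tilde J/J}x(\Lambda(\lambda))$ is by left multiplication on $\tilde J/J$, with every stabiliser equal to $L\cap J=J'$; each $L$-orbit has size $[L:J']$ and contributes an $L$-summand isomorphic to $\ind_{J'}^L(\Lambda(\lambda)|_{J'})$, i.e.\ to a multiple of $\ind_{J'}^L\res_{J'}^J\lambda$, which is semisimple and contains $\lambda_L'$ as a direct summand by Propositions~\ref{prop 0.1} and~\ref{prop 17}. Thus $\Lambda_\lambda|_L$ is semisimple and contains $\lambda_L'$.

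It remains to rule out $\lambda_L'$ as a subquotient of $W|_L$: by Clifford theory (applied to $J'\triangleleft L$ with $L/J'$ a finite group of order prime to $\ell$), $\res_{J'}^L\lambda_L'$ is semisimple and contains $\lambda'$, so any $\lambda_L'$-subquotient of $W|_L$ would yield a $\lambda'$-subquotient of $\res_{J'}^L W|_L=\res_{J'}^J W$, contradicting Theorem~\ref{thm 7}. Mimicking the end of Theorem~\ref{thm 10}, writing $\res_L^{\rG'}\pi'\cong(\Lambda_\lambda|_L\oplus W|_L)/\ker\iota$, the image of $\lambda_L'$ under the given embedding cannot lie entirely in $(W|_L+\ker\iota)/\ker\iota$, hence maps nontrivially into the quotient $\Lambda_\lambda|_L/(\Lambda_\lambda|_L\cap(W|_L+\ker\iota))$ of the semisimple $\Lambda_\lambda|_L$; being a subrepresentation of a semisimple $L$-representation it is a direct summand, and the desired surjection $\res_L^{\rG'}\pi'\twoheadrightarrow\lambda_L'$ follows.

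The main obstacle is justifying the $\tilde J$-equivariance used above; this is not explicit in Theorem~\ref{thm 7}, but follows from its proof once one observes that $\tilde J$ normalises $J^1$ and permutes its characters, stabilising the finite $\tilde J$-orbit of $\eta$ setwise, so that grouping the corresponding isotypic components yields a $\tilde J$-stable $\Lambda_\lambda$ with $\tilde J$-stable complement $W$.
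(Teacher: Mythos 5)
Your argument is correct in substance, but it reaches the conclusion by a technically different route than the paper. The paper never needs an $L$-stable decomposition: it descends all the way to $J'$ (obtaining a surjection $\res_{J'}^{\rG}\ind_J^{\rG}\lambda\twoheadrightarrow\res_{J'}^{\rG'}\pi'$), then transports the decomposition of Theorem \ref{thm 7} to the $L$-level by embedding $\res_{L}^{\rG'}\pi'$ into $\ind_{J'}^{L}\res_{J'}^{\rG'}\pi'$ and using exactness of $\ind_{J'}^{L}$, ruling out the $W$-part via a second application of Frobenius reciprocity together with $\res_{J'}^{L}\lambda_L'\hookrightarrow\oplus_a\res_{J'}^{J}a(\lambda)$, and finally invoking semisimplicity of $\ind_{J'}^{L}\res_{J'}^{J}\Lambda_{\lambda}$ (Proposition \ref{prop 17} plus Lemma \ref{lem 8}). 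You instead upgrade Theorem \ref{thm 7} to a $\tilde J$-equivariant statement, so that $\res_L^{\rG}\ind_J^{\rG}\lambda=\Lambda_\lambda\vert_L\oplus W\vert_L$ as $L$-representations, identify $\Lambda_\lambda\vert_L$ with a multiple of $\ind_{J'}^{L}\res_{J'}^{J}\lambda$ via the orbit-stabiliser analysis, and exclude $\lambda_L'$ from $W\vert_L$ by Clifford theory; the closing quotient argument is then the same as the paper's. What your route buys is a cleaner endgame (no $\ind_{J'}^{L}$ bookkeeping); what it costs is the equivariance claim, which you rightly flag.

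On that claim: it does follow from the proof of Theorem \ref{thm 7}, but the justification needs one more observation than "group the isotypic components". You must know that the paper's $\Lambda_\lambda=\oplus_i x_i(\Lambda(\lambda))$ is the sum of the $(J^1,\eta')$-isotypic parts over the \emph{full} $\tilde J$-orbit of $\eta$, i.e.\ that the $x_i$ exhaust $\tilde J/J$; otherwise some $x(\eta)$-isotypic part (a multiple of $x(\lambda)$, whose restriction to $J'$ contains $\lambda'$) would sit inside $W$, and your $W$ would differ from the paper's. This exhaustion does hold: if some $x\in\tilde J$ had $x(\eta)\ncong x_i(\eta)$ for all $i$, the stated property of $W$ in Theorem \ref{thm 7} would be violated, using Proposition \ref{prop 0.2} and Proposition \ref{prop 1} exactly as in its proof; and $x(\eta)\cong\eta$ forces $x\in J$ because $\mathrm{I}_{\rG}(\theta)\cap\rU(\fA)=J$ (this, rather than Corollary \ref{cor 13}, is the right reference for the orbit having $[\tilde J:J]$ distinct classes, though the exact size is not needed). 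With this spelled out, your proof is complete.
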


\begin{proof}
We have proved in Proposition \ref{prop 17} that $\ind_{J'}^L\lambda'$ is semisimple. Hence the injection from $\lambda_L'$ to $\res_L^{\rG'}\pi'$ induces a non-trivial homomorphism $\ind_{J'}^L \lambda'\twoheadrightarrow \res_L^{\rG'}\pi'$. By Frobenius reciprocity, we obtain an injection from $\lambda'$ to $\res_{J'}^{\rG'}\pi'$. Thus there exists a non-trivial homomorphism $\res_{J'}^{J}\lambda\rightarrow\res_{J'}^{\rG'}\pi'$. After applying Frobenius reciprocity and the exactness of the functor $\res_{J'}^{\rG'}$, we obtain a surjection:
$$\res_{J'}^{\rG'}\ind_{J'}^{\rG'}\res_{J'}^{J}\lambda\twoheadrightarrow\res_{J'}^{\rG'}\pi'.$$
By Mackey's decomposition formula, the $k$-representation $\ind_{J'}^{\rG'}\res_{J'}^{J}\lambda$ is a direct component of $\res_{\rG'}^{\rG}\ind_{J}^{\rG}\lambda$, and combining this fact with the exactness of functor $\res_{J'}^{\rG'}$, the $k$-representation $\res_{J'}^{\rG'}\ind_{J'}^{\rG'}\res_{J'}^{J}\lambda$ is a direct component of $\res_{J'}^{\rG}\ind_{J}^{\rG}\lambda$. Hence the surjection above implies a non-trivial homomorphism:
$$\res_{J'}^{\rG}\ind_{J}^{\rG}\lambda\twoheadrightarrow \res_{J'}^{\rG'}\pi'.$$
By Proposition \ref{thm 7}, the left hand side isomorphic to $\res_{J'}^J\Lambda_{\lambda}\oplus\res_{J'}^J W$, where any irreducible subquotient of $\res_{J'}^J W$ is not isomorphic to any irreducible subrepresentation of $\res_{J'}^J\lambda$. Now we obtain an equivalence of $\res_{J'}^{\rG'}\pi'$ with $(\res_{J'}^J\Lambda_{\lambda}\oplus\res_{J'}^J W )/ K$, where $K$ (a $k$-representation of $J'$) is the kernel of the surjection above.

We have:
$$\lambda_L'\hookrightarrow\res_{L}^{\rG'}\pi'\hookrightarrow\ind_{J'}^L\res_{J'}^{\rG'}\pi',$$
and the last factor is isomorphic to $\ind_{J'}^{L}((\res_{J'}^{J}\Lambda_{\lambda}\oplus\res_{J'}^J W)/K)$. We note this composed homomorphism from $\lambda_L'$ to $\ind_{J'}^{L}((\res_{J'}^{J}\Lambda_{\lambda}\oplus\res_{J'}^J W)/K)$ as $\tau$.

Since the functor $\ind_{J'}^L$ is exact, the right side is isomorphic to $(\ind_{J'}^L\res_{J'}^J \Lambda_{\lambda}\oplus\ind_{J'}^L\res_{J'}^J W) / \ind_{J'}^L K$. And we consider the representation $\ind_{J'}^L(\res_{J'}^J W + K)/\ind_{J'}^L K$, which is isomorphic to $\ind_{J'}^L((\res_{J'}^JW+ K)/K)$. 
We assume the image $\tau(\lambda_L')$ in $\ind_{J'}^{L}((\res_{J'}^{J}\Lambda_{\lambda}\oplus\res_{J'}^J W)/K)$ is contained in $\ind_{J'}^L((\res_{J'}^J W +K)/K)$. Then $\tau$ is a non-trivial morphism from $\lambda_{L}'$ to $\ind_{J'}^L((\res_{J'}^J W +K)/K)$. By Frobenius reciprocity, we deduce a non-trivial morphism from $\res_{J'}^L\lambda_L'$ to $(\res_{J'}^J W +K)/K$. 
Notice that
$$\res_{J'}^L \lambda_L'\hookrightarrow\res_{J'}^{\tilde{J}}\ind_J^{\tilde{J}}\lambda\cong\oplus_{a\in J\backslash\tilde{J}/ J'}\res_{J'}^J a(\lambda),$$
and by definition of $\tilde{J}$, the representation $a(\lambda)\cong\lambda\otimes\chi\circ\det$, for some $k$-quasicharacter $\chi$ of $F^{\times}$. Hence
$$\oplus_{a\in J\backslash\tilde{J}/ J'}\res_{J'}^J a(\lambda)\cong\oplus_{J\backslash\tilde{J}/ J'} \res_{J'}^J \lambda.$$
Thus there exists an irreducible direct component $\lambda''$ of $\res_{J'}^J\lambda$, from which there is an injective morphism to $(\res_{J'}^JW+K)/K\cong \res_{J'}^JW/(\res_{J'}^JW\cap K)$.
Hence $\lambda''$ is isomorphic to a subquotient of $\res_{J'}^J W$. This is contradicted to Theorem \ref{thm 7}. So the image $\tau(\lambda_L')$ is not contained in $\ind_{J'}^L((\res_{J'}^J W+K)/K)$. We deduce that the composed map:
$$\lambda_L'\hookrightarrow\res_L^{\rG'}\pi'\rightarrow\ind_{J'}^{L}(\res_{J'}^J(\Lambda_{\lambda}\oplus W)/K)/\ind_{J'}^L((\res_{J'}^J W+K)/K),$$
is non-trivial. The right hand side factor:
$$ \ind_{J'}^{L}(\res_{J'}^J(\Lambda_{\lambda}\oplus W)/K)/\ind_{J'}^L((\res_{J'}^J W+K)/K)$$
$$\cong\ind_{J'}^L(\res_{J'}^J(\Lambda_{\lambda}\oplus W) / (\res_{J'}^J W+K)).$$
Notice that $\res_{J'}^J(\Lambda_{\lambda}\oplus W)/(\res_{J'}^JW+K)$ is a quotient, hence isomorphic to a subrepresentation of $\res_{J'}^J\Lambda_{\lambda}$. The representation $\res_{J'}^J\Lambda_{\lambda}$ is semisimple, with irreducible direct components in the form of $x(\lambda')$, where $x\in\rU(\fA)$. Furthermore, since $\ind_{J'}^{L}x(\lambda')$ is a direct component of $\res_{L}^{\tilde{J}'}\ind_{J'}^{\tilde{J}'}x(\lambda)$, as in the proof of Proposition \ref{prop 17}, it is semisimple. After lemma \ref{lem 8}, we deduce that $\ind_{J'}^{L}\res_{J'}^J\Lambda_{\lambda}$ is semisimple, and so is the subrepresentation $\ind_{J'}^L(\res_{J'}^J(\Lambda_{\lambda} \oplus W)/((\res_{J'}^J W)+K))$, of which $\lambda_L'$ is a direct factor. Hence we finish the proof.
\end{proof}

\begin{defn}
\label{defn 15}
Let $(J,\lambda)$ be a maximal simple cuspidal $k$-type of $\rG$, and $\lambda'$ be any irreducible subrepresentation of $\res_{J'}^J\lambda$. Define $M_{\lambda}$ to be the subgroup of $\tilde{J}'$ consisting with all the elements $x\in\tilde{J}'$, such that $x(\lambda')\cong\lambda'$.
\end{defn}

\begin{rem}
Since $M_{\lambda}$ normalizes $J$ and the intersection $M_{\lambda}\cdot J\cap \rG'$ equals to $M_{\lambda}$, we deduce that $J$ normalizes $M_{\lambda}$. Notice that irreducible subrepresentations of $\res_{J'}^J\lambda$ are $J$-conjugate. Hence the group $M_{\lambda}$ depends only on $\lambda$.
\end{rem}

We will prove at the end of this section, that the couple $(M_{\lambda},\lambda_{M_{\lambda}}')$ verifies the two criterium of irreducibility. The first criterion has been checked in Proposition \ref{prop 14}. And we will calculate its intertwining group in $\rG'$ in two steps. First is to prove $\mathrm{I}_{\rG'}(\ind_{J'}^{\rU(\fA)'}\lambda')\subset\rU(\fA)'$ (Proposition \ref{prop 18}), and then prove that $\mathrm{I}_{\rU(\fA)'}\lambda_{M_{\lambda}}'=M_{\lambda}$ (Theorem \ref{thm 16}).

\begin{prop}
\label{prop 18}
Let $\lambda'$ be an irreducible subrepresentation of $\res_{J'}^J\lambda$, then the intertwining set $\mathrm{I}_{\rG'}(\ind_{J'}^{\rU(\fA)'}\lambda')$ is contained in $\rU(\fA)'$.
\end{prop}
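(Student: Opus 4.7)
The plan is to reduce the statement to the intertwining set of $\lambda'$ itself in $\rG'$, and then transport this across the bridge to $\rG$ supplied by Propositions \ref{prop 0.3}, \ref{prop 1} and Corollary \ref{cor 5}. Concretely, set $\pi=\ind_{J'}^{\rU(\fA)'}\lambda'$. Applying Mackey's double coset formula to $\res^{\rU(\fA)'}_{\rU(\fA)'\cap g\rU(\fA)'g^{-1}}\pi$ and to $g\pi$, an element $g\in\rG'$ intertwines $\pi$ if and only if there exist $u_1,u_2\in\rU(\fA)'$ such that $u_1 g u_2\in\mathrm{I}_{\rG'}(\lambda')$. Hence $\mathrm{I}_{\rG'}(\pi)=\rU(\fA)'\cdot\mathrm{I}_{\rG'}(\lambda')\cdot\rU(\fA)'$, and since $\rU(\fA)'$ is a group, it suffices to prove the stronger inclusion $\mathrm{I}_{\rG'}(\lambda')\subset\rU(\fA)'$.

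So fix $g\in\mathrm{I}_{\rG'}(\lambda')$. A fortiori $g$ weakly intertwines $\lambda'$ with itself, so Proposition \ref{prop 0.3} yields a $k$-quasicharacter $\chi$ of $F^{\times}$ such that $g$ weakly intertwines $\lambda$ with $\lambda\otimes\chi\circ\det$ in $\rG$; Corollary \ref{cor 5} promotes this to genuine intertwining, and Proposition \ref{prop 1} then provides $x\in\rU(\fA)\cap\tilde J$ with $x(J)=J$ and $x(\lambda)\cong\lambda\otimes\chi\circ\det$. Because $x$ normalises $J$, the combined intertwining condition says exactly that $gx\in\mathrm{I}_{\rG}(\lambda)$.

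Now invoke the standard Bushnell--Kutzko description (valid modulo $\ell$ via \cite{MS} and Proposition \ref{prop 2'}) of the intertwining of a maximal simple cuspidal type: $\mathrm{I}_{\rG}(\lambda)=J E^{\times}J$, where $E=F[\beta]$ is the field attached to the underlying simple stratum. Write $gx=j_1 e j_2$ with $j_1,j_2\in J$ and $e\in E^{\times}$. All of $j_1,j_2,x$ lie in $\rU(\fA)$, so their determinants lie in $\mathfrak{o}_F^{\times}$; on the other hand $\det(g)=1$ because $g\in\rG'$. Taking determinants in $gx=j_1 e j_2$ forces $\det(e)\in\mathfrak{o}_F^{\times}$, but $\det(e)=N_{E/F}(e)^{n/[E:F]}$ has $F$-valuation a positive multiple of $\val_E(e)$, so $\val_E(e)=0$ and $e\in\rU(\fB)\subset J$. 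Hence $gx\in J$, which gives $g\in J x^{-1}\subset J\tilde J=\tilde J\subset\rU(\fA)$, and combined with $g\in\rG'$ we conclude $g\in\rU(\fA)'$, as desired.

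The main obstacle is the first step: one must verify carefully that Mackey really does reduce $\mathrm{I}_{\rG'}(\pi)$ to $\rU(\fA)'\mathrm{I}_{\rG'}(\lambda')\rU(\fA)'$ (this is standard but it uses that $J'$ is open in the compact $\rU(\fA)'$). The remaining ingredients, namely the twist argument of Propositions \ref{prop 0.3}--\ref{prop 1}, the description of $\mathrm{I}_{\rG}(\lambda)$, and the determinant/valuation obstruction, all combine cleanly once this reduction is in place.
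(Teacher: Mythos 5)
Your argument is correct, and its endgame is the same as the paper's: both routes funnel everything through Proposition \ref{prop 0.3}, Corollary \ref{cor 5} and Proposition \ref{prop 1} to produce an element of $\rU(\fA)$ carrying $\lambda$ to its twist, then use the description $\mathrm{I}_{\rG}(\lambda)=JE^{\times}J=E^{\times}J$ together with the valuation constraint ($\det(e)\in\mathfrak{o}_F^{\times}$ iff $e\in\mathfrak{o}_E^{\times}$ for $e\in E^{\times}$) to conclude that the intertwining element lies in $\rU(\fA)\cap\rG'=\rU(\fA)'$. Where you genuinely differ is the reduction step. The paper never isolates the statement $\mathrm{I}_{\rG'}(\lambda')\subset\rU(\fA)'$: it views $\ind_{J'}^{\rU(\fA)'}\lambda'$ inside $\res_{\rU(\fA)'}^{\rU(\fA)}\tau$ with $\tau=\ind_{J}^{\rU(\fA)}\lambda$, applies Proposition \ref{prop 0.3} at the level of $\rU(\fA)$ to get weak intertwining of $\tau$ with a twist, and then needs a two-fold Mackey decomposition plus uniqueness of Jordan--H\"older factors to extract an element $y\in\rU(\fA)g\rU(\fA)$ weakly intertwining $\lambda$ with $\lambda\otimes\chi\circ\det$. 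You instead invoke the exact intertwining formula for compact induction, $\mathrm{I}_{\rG'}(\ind_{J'}^{\rU(\fA)'}\lambda')=\rU(\fA)'\,\mathrm{I}_{\rG'}(\lambda')\,\rU(\fA)'$, which is Proposition 3 of \S I.8.10 of \cite{V1} (valid over $k$, and used by the paper itself only later, in the proof of Theorem \ref{thm 16}; it requires exactly what you note, that $J'$ is open of finite index in the compact open group $\rU(\fA)'$), and then apply Proposition \ref{prop 0.3} directly at the level of $J$. This buys you a cleaner intermediate fact, $\mathrm{I}_{\rG'}(\lambda')\subset\rU(\fA)'$, which sharpens Lemma \ref{lem 17} (stated there only for elements of $\rU(\fA)'$), and it avoids the paper's passage through weak intertwining of the large induced representation $\tau$. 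The only caveat is notational: whether ``$g$ intertwines $\lambda$ with $x(\lambda)$'' translates into $gx\in\mathrm{I}_{\rG}(\lambda)$ or $gx^{-1}\in\mathrm{I}_{\rG}(\lambda)$ depends on the conjugation convention, but in either case the extra factor lies in $\rU(\fA)$, so the determinant argument and the conclusion are unaffected.
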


\begin{proof}
Let $\tau$ denote the irreducible representation $\ind_{J}^{\rU(\fA)}\lambda$. The induced representation $\ind_{J'}^{\rU(\fA)'}\lambda'$ is a subrepresentation of $\res_{\rU(\fA)'}^{\rU(\fA)}\tau$, thus is semisimple with finite direct components. We write $\ind_{J'}^{\rU(\fA)'}\lambda'$ as $\oplus_{i\in I}\tau_i'$, where $\tau_i'$ are irreducible direct components and $I$ is a finite index set. Let $g\in\rG$, we have an equality:
$$\mathrm{I}_g(\ind_{J'}^{\rU(\fA)'}\lambda')=\bigcup_{i\in I,j\in I}\mathrm{Hom}(\tau_i',i_{\rU(\fA)',g(\rU(\fA)')}\tau_j').$$
Hence we have:
$$\mathrm{I}_{\rG'}(\ind_{J'}^{\rU(\fA)'}\lambda')= \bigcup_{i\in I,j\in I}\mathrm{I}_{\rG'}(\tau_i',\tau_j').$$ 

Now we assume that $g\in\rG'$ intertwines $\tau_i'$ with $\tau_j'$. Since $\tau_i'$ and $\tau_j'$ are direct components of $\res_{\rU(\fA)'}^{\rU(\fA)}\tau$, by Proposition \ref{prop 0.3}, there exists a $k$-quasicharacter $\chi$ of $F^{\times}$ such that $g$ weakly intertwines $\tau$ with $\tau\otimes\chi\circ\det$. This implies that $\tau$ is a subquotient of $i_{\rU(\fA),g(\rU(\fA))}\tau\otimes\chi\circ\det$, hence the restriction $\res_{J}^{\rU(\fA)}\tau$ is a subquotient of $\res_{J}^{\rU(\fA)}i_{\rU(\fA),g(\rU(\fA))}\tau\otimes\chi\circ\det$. Since $\lambda$ is a subrepresentation of $\res_{J}^{\rU(\fA)}\tau$, it is hence a subquotient of $\res_{J}^{\rU(\fA)}i_{\rU(\fA),g(\rU(\fA))}\tau\otimes\chi\circ\det$. We have
$$\res_{J}^{\rU(\fA)}i_{\rU(\fA),g(\rU(\fA))}\tau\otimes\chi\circ\det$$
$$=\res_{J}^{\rU(\fA)}\ind_{\rU(\fA)\cap g(\rU(\fA))}^{\rU(\fA)}\res_{\rU(\fA)\cap g(\rU(\fA))}^{g(\rU(\fA))}\ind_{g(J)}^{g(\rU(\fA))}g(\lambda)\otimes\chi\circ\det$$

Applying Mackey's decomposition formula two times to the later factor, we obtain that $\res_{J}^{\rU(\fA)}i_{\rU(\fA),g(\rU(\fA))}(\tau\otimes\chi\circ\det)$ is isomorphic to a finite direct sum, whose direct components are in the form of $\ind_{J\cap y(J)}^{J}\res_{J\cap y(J)}^{y(J)}y(\lambda\otimes\chi\circ\det)$, where $y\in \rU(\fA) g\rU(\fA)$. More precisely,
$$\res_{J}^{\rU(\fA)}i_{\rU(\fA),g(\rU(\fA))}(\tau\otimes\chi\circ\det)$$
$$=\bigoplus_{\beta\in \rU(\fA)\cap\alpha g(J)\backslash\rU(\fA)\slash J}\bigoplus_{\alpha\in g(J)\backslash g(\rU(\fA))\slash\rU(\fA)\cap g(\rU(\fA))}\ind_{J\cap y (J)}^{J}\res_{J\cap y (J)}^{\beta(\rU(\fA))\cap y (J)}y (\lambda)\otimes\chi\det$$
where $y=\beta\alpha g$.

After the uniqueness of Jordan-H\"older factors, the representation $\lambda$ is weakly intertwined with $\lambda\otimes\chi\circ\det$ by some $y\in \rU(\fA)g \rU(\fA)$. Hence $y$ intertwines $\lambda$ with $\lambda\otimes\chi\circ\det$ by Corollary \ref{cor 5}, and there exists $x\in\rU(\fA)$ such that $x(\lambda\otimes\chi\circ\det)\cong\lambda$ by Proposition \ref{prop 1}. The element $yx^{-1}$ intertwines $\lambda$ to itself, and hence lies in $E^{\times} J$. Therefore $g\in \rU(\fA)\mathrm{E}^{\times}J \rU(\fA)\cap\rG'$. Furthermore, we have $\rU(\fA)\mathrm{E}^{\times}J \rU(\fA)\cap\rG=\rU(\fA)'$, because $E^{\times}$ normalises $\rU(\fA)$ and for any $e\in E^{\times}$, $\det(e)\in\mathfrak{o}_F^{\times}$ if and only $e\in\mathfrak{o}_E^{\times}$, 
where $\mathfrak{o}_F$,$\mathfrak{o}_E$ denote the ring of integers of $F$,$E$ respectively. Hence for any $a\in\rU(\fA)$, $\det(ea)=1$ if and only if $ea\in \rU(\fA)\cap\rG'$. From which, we deduce that $\mathrm{I}_{\rG'}(\ind_{J'}^{\rU(\fA)'}\lambda')=\rU(\fA)'$. 
\end{proof}

\begin{lem}
\label{lem 17}
Let $\lambda'$ be an irreducible component of $\res_{J'}^{J}\lambda$, and let $x\in\rU(\fA)'$ intertwines $\lambda'$. Then $x\in\tilde{J}'$.
\end{lem}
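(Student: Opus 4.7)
The plan is to reduce the intertwining statement on $\rG'$ to an intertwining statement on $\rG$, where the projective normalizer $\tilde{J}$ already controls everything, and then to invoke the modulo $\ell$ machinery developed earlier in the section.

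First I would translate the hypothesis. Since $x \in \rU(\fA)'$ intertwines the irreducible component $\lambda'$ of $\res_{J'}^{J}\lambda$ with itself, Proposition \ref{prop 0.3} (applied with $K_1 = K_2 = J$ and $\rho_1 = \rho_2 = \lambda$) produces a $k$-quasicharacter $\chi$ of $F^{\times}$ such that $x$ weakly intertwines $\lambda$ with $\lambda \otimes \chi \circ \det$. Corollary \ref{cor 5} then upgrades this to a genuine intertwining in $\rG$, i.e.\ $\Hom_{J \cap x(J)}(\lambda, x(\lambda) \otimes \chi \circ \det) \neq 0$. This is the place where the preceding technical work in positive characteristic pays off.

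Next I would apply Proposition \ref{prop 1}: it yields an element $z \in \rU(\fA)$ with $z(J) = J$ and $z(\lambda) \cong \lambda \otimes \chi \circ \det$, so that $z \in \tilde{J}$ by Definition \ref{defn 11}. The key computation is
$$(xz)(J) = x(J), \qquad (xz)(\lambda) = x(z(\lambda)) \cong x(\lambda) \otimes \chi \circ \det,$$
which turns the intertwining of $\lambda$ with $\lambda \otimes \chi \circ \det$ by $x$ into an intertwining of $\lambda$ with itself by $xz$. Since $\lambda$ contains the simple character $\theta$, the element $xz$ also intertwines $\theta$; the identity $\mathcal{I}_{\rG}(\theta) \cap \rU(\fA) = J$ (quoted in the proof of Proposition \ref{prop 18}) then forces $xz \in J$.

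To conclude, I would observe that $\tilde{J}$ is a group containing $J$: the two defining conditions in Definition \ref{defn 11} are stable under products and inverses, with the associated quasicharacters composing naturally. From $x = (xz) \, z^{-1}$ with $xz \in J \subset \tilde{J}$ and $z \in \tilde{J}$, one gets $x \in \tilde{J}$, and combined with $x \in \rG'$ this yields $x \in \tilde{J} \cap \rG' = \tilde{J}'$, as required. The only genuinely delicate point is the weak-intertwining to intertwining passage (Corollary \ref{cor 5}) and the existence of the conjugator $z$ (Proposition \ref{prop 1}); both rest on the behaviour of simple characters under twists by $\chi \circ \det$ established in Lemma \ref{lem 2} and Corollary \ref{cor 3}, and once invoked the argument reduces to the bookkeeping above.
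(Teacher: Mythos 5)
Your argument is correct and follows essentially the same route as the paper: Proposition \ref{prop 0.3}, Corollary \ref{cor 5} and Proposition \ref{prop 1} produce the conjugator in $\tilde{J}$, composing with it turns the twisted intertwining into a self-intertwining inside $\rU(\fA)$, and this forces membership in $J$, hence $x\in\tilde{J}\cap\rG'=\tilde{J}'$. The only (harmless) deviation is that you land in $J$ via $\mathrm{I}_{\rG}(\theta)\cap\rU(\fA)=J$ through the simple character, whereas the paper uses $\mathrm{I}_{\rG}(\lambda)\subset E^{\times}J$ and $E^{\times}J\cap\rU(\fA)=J$; both identities are already invoked elsewhere in the section.
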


\begin{proof}
If $x\in\rU(\fA)'$ intertwines $\lambda'$, then by Proposition \ref{prop 0.3} the element $x$ weakly intertwines $\lambda$ with $\lambda\otimes\chi\circ\det$ for some quasicharacter $\chi$ of $F^{\times}$. Then Corollary \ref{cor 5} implies that $x$ intertwines $\lambda$ with $\lambda\otimes\chi\circ\det$, and Proposition \ref{prop 1} implies that there exists an element $y\in\rU(\fA)$ such that $y(J)=J$ and $y(\lambda)\cong\lambda\otimes\chi\circ\det$. By definition of $\tilde{J}$, this element $y$ is clearly contained in $\tilde{J}$. The element $xy^{-1}$ therefore intertwines $\lambda$, 
 \S IV.$1.1$ in \cite{V2} says that $x\in E^{\times} J y\cap\rU(\fA)'$. However, $E^{\times}J\cap \rU(\fA)=J$ and $y\in\rU(\fA)$. We deduce that $x\in Jy\cap\rU(\fA)'\subset\tilde{J}'$.
\end{proof}

\begin{thm}
\label{thm 16}
Let $\lambda_{M_{\lambda}}'$ be an irreducible subrepresentation of $\ind_{J'}^{M_{\lambda}}\lambda'$. Then the induced representation $\ind_{M_{\lambda}}^{\rG'}\lambda_{M_{\lambda}}'$ is irreducible and cuspidal.
\end{thm}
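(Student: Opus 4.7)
The plan is to verify the two conditions of Vign\'eras's irreducibility criterion (Lemma \ref{lem 19}) for the pair $(M_{\lambda}, \lambda_{M_{\lambda}}')$, and then deduce cuspidality separately by embedding $\ind_{M_{\lambda}}^{\rG'}\lambda_{M_{\lambda}}'$ into the restriction to $\rG'$ of the cuspidal $k$-representation $\ind_{J}^{\rG}\lambda$. The second condition of Lemma \ref{lem 19} is provided directly by Proposition \ref{prop 14}: since $J' \subset M_{\lambda} \subset \tilde{J}'$, taking $L = M_{\lambda}$ there shows that $\lambda_{M_{\lambda}}'$ verifies this condition.

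The first condition, which by \S 8.3 of Chapter I of \cite{V1} is equivalent to $\I_{\rG'}(\lambda_{M_{\lambda}}') = M_{\lambda}$, I prove by establishing the non-trivial inclusion $\subseteq$ in three steps. First, Proposition \ref{prop 17} applied with $L = M_{\lambda}$ shows that $\ind_{J'}^{M_{\lambda}}\lambda'$ is semisimple, so $\lambda_{M_{\lambda}}'$ is a direct summand of it; inducing further by transitivity of compact induction, $\ind_{M_{\lambda}}^{\rU(\fA)'}\lambda_{M_{\lambda}}'$ becomes a direct summand of $\ind_{J'}^{\rU(\fA)'}\lambda'$. Writing $\ind_{M_{\lambda}}^{\rG'}\lambda_{M_{\lambda}}' = \ind_{\rU(\fA)'}^{\rG'}(\ind_{M_{\lambda}}^{\rU(\fA)'}\lambda_{M_{\lambda}}')$ and expanding the endomorphism algebra via Mackey's formula, one sees that every element of $\I_{\rG'}(\lambda_{M_{\lambda}}')$ must intertwine a pair of irreducible $\rU(\fA)'$-summands of $\ind_{J'}^{\rU(\fA)'}\lambda'$, and Proposition \ref{prop 18} then forces $g \in \rU(\fA)'$. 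Second, Frobenius reciprocity yields $\lambda' \hookrightarrow \res_{J'}^{M_{\lambda}}\lambda_{M_{\lambda}}'$, so restricting a non-zero $(M_{\lambda} \cap gM_{\lambda}g^{-1})$-intertwiner of $\lambda_{M_{\lambda}}'$ to the subgroup $J' \cap gJ'g^{-1}$ shows that $g$ also intertwines $\lambda'$, and Lemma \ref{lem 17} then places $g$ in $\tilde{J}'$. Third, using Mackey's formula together with the defining property that every $m \in M_{\lambda}$ satisfies $m(\lambda') \cong \lambda'$, one obtains $\res_{J'}^{M_{\lambda}}\ind_{J'}^{M_{\lambda}}\lambda' = [M_{\lambda}:J'] \cdot \lambda'$, so that $\res_{J'}^{M_{\lambda}}\lambda_{M_{\lambda}}'$ is a non-zero multiple of $\lambda'$; since $g \in \tilde{J}'$ normalises $J'$, the same computation gives $\res_{J'}g(\lambda_{M_{\lambda}}')$ as a non-zero multiple of $g(\lambda')$, and the existence of a non-zero $J'$-map between these multiples forces $g(\lambda') \cong \lambda'$, so that $g \in M_{\lambda}$.

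For cuspidality, the chain $\lambda_{M_{\lambda}}' \hookrightarrow \ind_{J'}^{M_{\lambda}}\lambda' \hookrightarrow \ind_{J'}^{M_{\lambda}}\res_{J'}^{J}\lambda$ together with transitivity of compact induction gives an embedding $\ind_{M_{\lambda}}^{\rG'}\lambda_{M_{\lambda}}' \hookrightarrow \ind_{J'}^{\rG'}\res_{J'}^{J}\lambda$, and the latter is, by the same Mackey decomposition already used in the proof of Theorem \ref{thm 10}, the direct summand indexed by $a = 1$ of $\res_{\rG'}^{\rG}\ind_{J}^{\rG}\lambda$. Because the standard parabolic subgroups of $\rG'$ are $\rP \cap \rG'$ for standard parabolic subgroups $\rP$ of $\rG$, with the same unipotent radical, cuspidality passes from $\rG$ to $\rG'$ under restriction and thence to any subrepresentation; since $(J, \lambda)$ is a maximal simple cuspidal $k$-type, $\ind_{J}^{\rG}\lambda$ is cuspidal, which concludes the argument. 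The main obstacle is the third step of the intertwining calculation: promoting mere intertwining of $\lambda_{M_{\lambda}}'$ into the genuine stabilisation $g(\lambda') \cong \lambda'$ required to land inside $M_{\lambda}$, rather than merely inside $\tilde{J}'$.
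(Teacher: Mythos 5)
Your irreducibility argument is essentially the paper's: you verify Vign\'eras's criterion (Lemma \ref{lem 19}), quote Proposition \ref{prop 14} for the second condition, and pin down $\mathrm{I}_{\rG'}(\lambda_{M_{\lambda}}')=M_{\lambda}$ using Proposition \ref{prop 18} and Lemma \ref{lem 17}. The only real difference is how you pass between intertwining of $\lambda_{M_{\lambda}}'$ and of $\lambda'$: you restrict an intertwining operator to $J'$ and use that $\res_{J'}^{M_{\lambda}}\lambda_{M_{\lambda}}'$ is a nonzero multiple of $\lambda'$ (legitimate, since $M_{\lambda}$ normalises $J'$ and stabilises $\lambda'$), whereas the paper runs the comparison through $\mathrm{I}_{\rU(\fA)'}(\ind_{J'}^{M_{\lambda}}\lambda')$ via Proposition 3 of \S I.8.10 of \cite{V1}; both are sound, and your final Schur-lemma step from $\tilde{J}'$ down to $M_{\lambda}$ is exactly the point the paper settles by the same normalisation property.

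The gap is in the cuspidality argument. You assert that $\ind_{J}^{\rG}\lambda$ is cuspidal and that cuspidality then passes through restriction to $\rG'$ and to subrepresentations. But $\ind_{J}^{\rG}\lambda$ is reducible of infinite length, while cuspidality in this paper (Definition \ref{cusp 1}) is defined only for irreducible representations; what your argument actually requires is the quasi-cuspidality statement $r_{\rL}^{\rG}(\ind_{J}^{\rG}\lambda)=0$ for every proper Levi $\rL$ of $\rG$. That statement is true, but it is neither proved in the paper nor contained in the results it cites, and it does not follow formally from $(J,\lambda)$ being a cuspidal type without an extra argument (finite generation of the compact induction, adjunction between $r$ and $i$, and the fact that an irreducible subquotient of a proper parabolic induction is never cuspidal). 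As written, this step is therefore unjustified. A repair entirely inside the paper's toolkit, and the route the paper itself takes in the Levi case (Lemma \ref{Llemma 11}, Theorem \ref{Ltheorem 5}), is: having proved $\pi'=\ind_{M_{\lambda}}^{\rG'}\lambda_{M_{\lambda}}'$ irreducible, note that it contains $\lambda'$; by Proposition \ref{prop 6} it is a component of $\res_{\rG'}^{\rG}\pi$ for some irreducible $\pi$ of $\rG$, which by Lemma \ref{lem 9} and Proposition \ref{prop 0.2} contains $\lambda\otimes\chi\circ\det$, again a maximal simple cuspidal type by Corollary \ref{cor 3}; hence $\pi$ is cuspidal (\S IV.1.2--1.3 of \cite{V2}), and since $r_{\rL'}^{\rG'}\circ\res_{\rG'}^{\rG}=\res_{\rL'}^{\rL}\circ r_{\rL}^{\rG}$ (the unipotent radicals coincide), cuspidality descends to every component of $\res_{\rG'}^{\rG}\pi$, in particular to $\pi'$. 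Note that the paper's own proof of Theorem \ref{thm 16} treats only irreducibility and is silent on cuspidality, so your instinct to supply an argument was right; it just needs the form above.
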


\begin{proof}
We only need to verify that $(M_{\lambda},\lambda_{M_{\lambda}}')$ satisfies the two conditions of irreducibility. We have proved in Proposition \ref{prop 14} that $(M_{\lambda},\lambda_{M_{\lambda}}')$ verifies the second condition. It is left only to prove the intertwining set of $\lambda_{M_{\lambda}}'$ in $\rG'$ equals to $M_{\lambda}$, i.e. $\mathrm{I}_{\rG'}(\lambda_{M_{\lambda}}')=M_{\lambda}$. In Lemma \ref{lem 17}, we have proved that $\mathrm{I}_{\rU(\fA)'}\lambda'\subset \tilde{J'}$. Since $\tilde{J}'$ normalizes $J'$, then $x\in \mathrm{I}_{\rU(\fA)}\lambda'$ verifying $\mathrm{Hom}_{J'}(\lambda',x(\lambda)')\neq 0$, which is equivalent to say that $x(\lambda')\cong\lambda$. Hence $\mathrm{I}_{\rU(\fA)}\lambda'\subset M_{\lambda}$. By the Proposition 3 in 8.10, chapter I of \cite{V1}, let $g\in\rG'$ and $X$ a finite set of $\rG'$ such that $M_{\lambda}gM_{\lambda}=\cup_{x\in X}J'xJ'$, then there is an $k$-isomorphism:
\begin{equation}
\label{equa 1}
\mathrm{I}_{g^{-1}}(\ind_{J'}^{M_{\lambda}} \lambda')\cong\oplus_{j\in X}\mathrm{I}_{(gj^{-1})}(\lambda').
\end{equation}
Furthermore, we have:
\begin{equation}
\label{equa 2}
\mathrm{I}_{\rU(\fA)'}(\ind_{J'}^{M_{\lambda}} \lambda')=M_{\lambda},
\end{equation}
Hence $\mathrm{I}_{\rU(\fA)'}(\lambda_{M_{\lambda}}')=M_{\lambda}$ follows by the inclusion:
$$\mathrm{I}_{\rU(\fA)'}(\lambda_{M_{\lambda}}')\subset\mathrm{I}_{\rU(\fA)'}(\ind_{J'}^{M_{\lambda}} \lambda').$$
Whence, there left to prove that
$$\mathrm{I}_{\mathrm{\rG'}}(\lambda_{M_{\lambda}}')\subset \rU(\fA)'.$$
Notice that $\ind_{M_{\lambda}}^{\rU(\fA)'}\lambda_{M_{\lambda}}'$ is a subrepresentation of $\res_{\rU(\fA)'}^{\rU(\fA)}\tau$, where $\tau=\ind_{J}^{\rU(\fA)}\lambda$ (as in the proof of Proposition \ref{prop 18}). We have:
$$\mathrm{I}_{\rG'}(\ind_{M_{\lambda}}^{\rU(\fA)}\lambda_{M_{\lambda}}')\subset\mathrm{I}_{\rG'}(\res_{\rU(\fA)'}^{\rU(\fA)}\tau),$$
since $\res_{\rU(\fA)'}^{\rU(\fA)}\tau$ is semisimple. We obtain then
\begin{equation}
\label{equa 3} 
\mathrm{I}_{\rG'}(\ind_{M_{\lambda}}^{\rU(\fA)}\lambda_{M_{\lambda}}')\subset\rU(\fA)'
\end{equation}
by Proposition \ref{prop 18}. Now use one more time Proposition 3 in 8.10, chapter I of \cite{V1} as equation (\ref{equa 1}) and equation (\ref{equa 2}): Let $h\in\rG'$ and $Y$ a finite set of $\rG'$ such that $\rU(\fA)h\rU(\fA)=\cup_{y\in Y}M_{\lambda}yM_{\lambda}$, then there is an $k$-isomorphism:
$$\mathrm{I}_{h^{-1}}(\ind_{M_{\lambda}}^{\rU(\fA)'} \lambda_{M_{\lambda}}')\cong\oplus_{s\in Y}\mathrm{I}_{(hs^{-1})}(\lambda_{M_{\lambda}}').$$
Hence we have:
$$\mathrm{I}_{\rG'}(\lambda_{M_{\lambda}}')\subset\mathrm{I}_{\rG'}(\ind_{M_{\lambda}}^{\rU(\fA)'}\lambda_{M_{\lambda}}').$$
Combining with the equation \ref{equa 3}, we deduce the result.
\end{proof}

\subsubsection{Cuspidal $k$-representations of $\rG'$}
Let $\rM$ denote a Levi subgroup of $\rG$, and $\rM'=\rM\cap\rG'$. In this section, we consider the restriction functor $\res_{\rM'}^{\rM}$, which has been studied by Tadi$\acute{\mathrm{c}}$ in \cite{TA} for representations with characteristic $0$. In his article, he proved that any irreducible complex representation of $\rM'$ is contained in an irreducible complex representation of $\rM$, and they are cuspidal simultaneously. His method can be adapted for the case of modulo $\ell$.

\begin{prop}
\label{propver13.1}
Let $K$ be a locally pro-finite group, and $K'\subset K$ is a closed normal subgroup of $K$ with finite index. Let $(\pi,V)$ be an irreducible $k$-representation of $K$, then the restricted representation $\res_{K'}^{K}\pi$ is semisimple with finite length.
\end{prop}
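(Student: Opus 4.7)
The plan is to adapt the standard Clifford theory argument to the smooth, possibly infinite-dimensional setting, using the finite index $[K:K'] < \infty$ to obtain finite generation of $V$ over $k[K']$. First I would pick a nonzero vector $v \in V$; by irreducibility of $\pi$ as a $k[K]$-module, $V = k[K]\cdot v$. Choosing coset representatives $g_1,\ldots,g_n$ with $K = \bigsqcup_{i=1}^n g_i K'$ and using the normality of $K'$ in $K$, this rewrites as
$$V = \sum_{i=1}^{n} g_i \, k[K'] v = \sum_{i=1}^{n} k[K']\, g_i v,$$
which shows that $V$ is finitely generated as a $k[K']$-module by $g_1 v,\ldots,g_n v$.

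Next I would apply Zorn's lemma to the poset of proper $k[K']$-submodules of $V$. Finite generation ensures that the union of any ascending chain of proper $K'$-submodules remains proper: if the union were all of $V$, each generator $g_i v$ would lie in some member of the chain, so a common upper bound (which exists since a chain is totally ordered) would be a proper $K'$-submodule containing every $g_i v$, hence equal to $V$, a contradiction. So there is a maximal proper $K'$-submodule $V' \subset V$, and $V/V'$ is a simple smooth $K'$-representation.

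For each $g \in K$, the translate $gV'$ is again a $K'$-submodule (since conjugation by $g$ preserves $K'$), and is again maximal proper (as the action of $g$ is a $k$-linear bijection of $V$ permuting $K'$-submodules); moreover $gV'$ depends only on the coset $gK' \in K/K'$, so there are at most $n$ such translates. The intersection
$$V'' \;=\; \bigcap_{g \in K/K'} gV'$$
is $K$-stable, because left multiplication by any $h \in K$ merely reindexes the set of cosets. It is also properly contained in $V$ (each $gV'$ is), so the $K$-irreducibility of $\pi$ forces $V'' = 0$. The natural map
$$V \hookrightarrow \bigoplus_{g \in K/K'} V/gV'$$
then embeds $V$ as a $K'$-submodule of a finite direct sum of simple $K'$-modules, so $V$ is itself semisimple of finite length (every submodule of a semisimple module of finite length shares these properties).

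The main obstacle is Step 2, namely the existence of a maximal proper $K'$-submodule; in general this fails for smooth representations of a locally profinite group, and it is precisely the finite-index hypothesis that rescues us through the finite generation established in Step 1. The remaining steps are formal Clifford theory, where $K$-irreducibility is used only to conclude that the $K$-stable intersection of the finitely many conjugates of the chosen maximal proper submodule must vanish.
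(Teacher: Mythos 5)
Your proof is correct and follows essentially the same route as the paper's (which adapts Vign\'eras, \S I.6.12): finite generation of $V$ over $k[K']$ via coset representatives, existence of a maximal proper $K'$-submodule, and the observation that the intersection of its finitely many $K$-translates (equivalently, the kernel of the map to $\bigoplus_i V/k_i(V_0)$) is $K$-stable, hence zero, embedding $V$ into a finite direct sum of simple $K'$-modules. You merely spell out the Zorn's lemma step and the finite generation that the paper asserts without detail.
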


\begin{proof}
The proof is the same as \S 6.12,II in \cite{V1}. We repeat it again is to check that we can drop the condition that $[K:K']$ is inversible in $k$.

The restricted representation $\res_{K'}^{K}\pi$ is finitely generated, hence has an irreducible quotient. Let $V_0$ be the sub-representation such that $V\slash V_0$ is irreducible. Let $\{k_1,...,k_m\},m\in\mathbb{N}$ be a family of representatives of the quotient $K\slash K'$. Now we consider the kernel of the non-trivial projection from $\res_{K'}^{K}\pi$ to $\oplus_{i=1}^{m}V\slash k_i(V_0)$, which is $K$-stable, hence equals to $0$ since $\pi$ is irreducible. We deduce that $\res_{K'}^{K}\pi$ is a sub-representation of $\oplus_{i=1}^{m}V\slash k_i(V_0)$ hence is semisimple.
\end{proof}

\begin{prop}
\label{prop 6}
Let $\pi$ be any irreducible $k$-representation of $\rM$, then the restriction $\res_{\rM'}^{\rM}\pi$ is semisimple with finite length, and the direct components are $\rM$-conjugate. Conversely, let $\pi'$ be any irreducible $k$-representation of $\rM'$, then there exists an irreducible representation $\pi$ of $\rM$, such that $\pi'$ is a direct component of $\res_{\rM'}^{\rM}\pi$.
\end{prop}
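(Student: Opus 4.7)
The strategy adapts Tadi\'c's approach from the characteristic-zero case \cite{TA}. The key observation is that although $\rM/\rM'$ is not compact, the subgroup $\rM'':=\rM'\Cent(\rM)$ is normal of finite index in $\rM$. Indeed, writing $\rM=\prod_i\GL_{n_i}(F)$, the total determinant identifies $\rM/\rM'\cong F^\times$ and $\rM''/\rM'\cong(F^\times)^d$ where $d=\gcd(n_1,\ldots,n_r)$, and $F^\times/(F^\times)^d$ is finite for $F$ a non-archimedean local field. This reduction to a finite-index subgroup is precisely what makes Proposition \ref{propver13.1} applicable.

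For the first assertion, let $\pi$ be an irreducible smooth $k$-representation of $\rM$. Schur's lemma, valid because $k$ is algebraically closed and $\rM$ has a countable neighborhood basis at the identity, provides a central character $\omega_\pi$. Applying Proposition \ref{propver13.1} to the inclusion $\rM''\subset\rM$ shows that $\res_{\rM''}^{\rM}\pi$ is semisimple of finite length, and standard Clifford theory shows its irreducible constituents form a single $\rM$-conjugacy orbit. For any such constituent $\sigma$, the center $\Cent(\rM)$ acts by the scalar character $\omega_\pi$, so every $\rM'$-stable subspace of $\sigma$ is automatically $\rM''$-stable; hence $\res_{\rM'}^{\rM''}\sigma$ remains irreducible, and the first assertion follows.

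For the converse, let $\pi'$ be an irreducible $k$-representation of $\rM'$. Schur again provides a character $\chi'$ of $\Cent(\rM)\cap\rM'$ describing its action on $\pi'$. I extend $\chi'$ to a smooth character $\chi$ of $\Cent(\rM)$ as follows: choose a compact open subgroup $U\subset\Cent(\rM)$ with $U\cap\rM'\subset\Ker(\chi')$, so that $\chi'$ descends to a character of the finitely generated abelian group $(\Cent(\rM)\cap\rM')/(U\cap\rM')$, which injects into $\Cent(\rM)/U$; since $k^\times$ is divisible, this character extends to $\Cent(\rM)/U$, and pulling back produces the desired smooth $\chi$. Now define a smooth representation $\pi''$ of $\rM''$ on the underlying space of $\pi'$ by $(m'z)\cdot v:=\chi(z)\pi'(m')v$ for $m'\in\rM'$, $z\in\Cent(\rM)$; this is well-defined precisely because $\chi|_{\Cent(\rM)\cap\rM'}=\chi'$. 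Since $[\rM:\rM'']<\infty$, the induced representation $\Ind_{\rM''}^{\rM}\pi''$ has finite length and admits an irreducible subrepresentation $\pi$. Frobenius reciprocity gives $\Hom_{\rM''}(\pi'',\res_{\rM''}^{\rM}\pi)\neq 0$; combined with the semisimplicity from the first assertion, $\pi''$ is a direct summand of $\res_{\rM''}^{\rM}\pi$, and restricting further to $\rM'$ exhibits $\pi'$ as a direct summand of $\res_{\rM'}^{\rM}\pi$.

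The main technical obstacle I anticipate is the character-extension step, and more generally verifying that each tool (Schur's lemma, Clifford theory, finite-index induction) carries over to the modular setting without requiring $[\rM:\rM'']$ to be invertible in $k$. Proposition \ref{propver13.1} has already been set up to avoid that hypothesis, and divisibility of $k^\times$ is what ultimately makes the character extension possible; nevertheless, each ingredient should be confirmed individually in positive characteristic before combining them.
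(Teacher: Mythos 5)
Your reduction hinges on the claim that $\rM''=\rM'\Cent(\rM)$ has finite index in $\rM$, via ``$F^\times/(F^\times)^d$ is finite for $F$ a non-archimedean local field.'' That is the gap: it is false when $F$ has positive characteristic $p$ and $p\mid d=\gcd(n_1,\ldots,n_r)$, a case the paper explicitly includes ($F$ is only assumed non-archimedean locally compact of residual characteristic $p$, so $F=\mathbb{F}_q((t))$ is allowed). For such $F$ the group $1+\mathfrak{p}_F$ is an infinite-rank pro-$p$ group, so $F^\times/(F^\times)^p$ is infinite; already for $\rM=\GL_p(\mathbb{F}_q((t)))$ the subgroup $\Cent(\rM)\rM'$ is cocompact but of infinite index. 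Consequently Proposition \ref{propver13.1} does not apply to $\rM''\subset\rM$, the semisimplicity and finite-length statements for $\res_{\rM''}^{\rM}\pi$ are not obtained this way, and in the converse direction $\Ind_{\rM''}^{\rM}\pi''$ need not have finite length, so the irreducible subrepresentation $\pi$ cannot be extracted as you do. (Your other ingredients — Schur's lemma, the extension of the central character using divisibility of $k^\times$, Clifford theory over a genuinely finite-index normal subgroup — are fine, and indeed Proposition \ref{propver13.1} was designed to avoid invertibility of the index in $k$; the failure is purely the finiteness of the index.)

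The paper's proof only uses that $\rM/\Cent(\rM)\rM'$ is \emph{compact}, never finite. For cuspidal $\pi$ it shows $\res_{\rM'}^{\rM}\pi$ is finitely generated and $Z'$-compact and then proves admissibility by a coefficient-support argument (Silberger's method), getting finite length; the general case is reduced to this via $\res_{\rM'}^{\rM}\circ i_{\rL}^{\rM}\cong i_{\rL'}^{\rM'}\circ\res_{\rL'}^{\rL}$ (Theorem \ref{thm 5.2}), and semisimplicity comes from summing the $\rM$-translates of an irreducible subrepresentation. For the converse it extends $\pi'$ trivially across $\mathrm{S}=\varpi_F^{\bZ}$ (so no central-character extension over all of $\Cent(\rM)$ is needed), uses compactness of $\rM/\mathrm{S}\rM'$ to see that $\ind_{\mathrm{S}\rM'}^{\rM}\tilde{\pi}$ is admissible, and extracts an irreducible subrepresentation from a finitely generated (hence finite-length) subrepresentation. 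Your argument is a correct and shorter alternative when $\mathrm{char}\,F=0$ (or more generally $p\nmid d$), but to cover the paper's generality the finite-index step must be replaced by a compactness/admissibility argument of this kind.
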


\begin{proof}
For the first part of this proposition. The method of Silberger in \cite{Si} when $\ell=0$ can be generalised to our case that $\ell$ is positive. We first assume that $\pi$ is cuspidal. Let $Z$ denote the center of $\rM$, and the quotient $\rM\slash Z\rM'$ is compact. Since for any vector $v$ in the representation space of $\pi$ the stabiliser $\mathrm{Stab}_{\rM}(v)$ is open, the image of $\mathrm{Stab}_{\rM}(v)$ has finite index in the quotient group $\rM\slash Z\rM'$. Combining with Schur's lemma, the restricted $k$-representation $\res_{\rM'}^{\rM}\pi$ is finitely generated. By \S 2.7,II in \cite{V1} the restricted representation is $Z'=Z\cap\rM'$-compact.

Let $(v_1,...,v_m),m\in\mathbb{N}$ be a family of generators of the representation space of $\res_{\rM'}^{\rM}\pi$. For any compact open subgroup $K$ of $\rM'$, we want to prove the space $V^{K}$ is finitely dimensional. We could always assume that $K$ stabilises $v_i,i=1,...,m$, and consider the map 
$$\alpha_i:g\hookrightarrow e_Kgv_i,i=1,...m,$$
where $e_K$ is the idempotent associated $K$ in the Heck algebra of $\rM'$. Apparently, the space $V^{K}$ is generated by $\mathcal{L}=\{e_Kgv_i,g\in\rM',i=1,...,m\}$. If the dimension of $V^{K}$ is infinite, we can choose a infinite subset $\mathcal{L'}$ of $\mathcal{L}$ which forms a basis of $V^{K}$, especially there exists $i_0\in\{1,...,m\}$ such that $\rM_{i_0}'=\{g\in\rM',e_Kgv_{i_0}\in\mathcal{L'}\}$ is an infinite set. In particular, cosets $gK,g\in\rM_{i_0}'$ are disjoint since $K$ stabilizes $v_{i_0}$. Furthermore, since the center $Z'$ of $\rM'$ acts as a character on $\res_{\rM'}^{\rM}\pi$, which means $Z'$ stabilises each $v_i$, the images of cosets $gK,g\in\rM_{i_0}'$ are disjoint in the quotient $\rM'\slash Z'$. Let $v_{i_0}^{\ast}$ be an $k$-linear form of $V^{K}$ which equals to $1$ on the set $\mathcal{L}'$. The above analysis implies that the image of the support of coefficient $\langle v_{i_0}^{\ast}e_K,gv_{i_0}\rangle=\langle v_{i_0}^{\ast},e_Kgv_{i_0}\rangle$ in $\rM'\slash Z'$ contains infinite disjoint cosets $gK,g\in\rM_{i_0}'$, which contradicts with the assumption that $\res_{\rM'}^{\rM}\pi$ is $Z'$-compact. We conclude that $\res_{\rM'}^{\rM}\pi$ is finitely generated and admissible, hence has finite length.

Now we come back to the general case:$\pi$ is irreducible representation of $\rM$. We first prove that $\res_{\rM'}^{\rM}\pi$ has finite length, then we prove it is semisimple. For the first part, it is sufficient to prove the restricted representation $\res_{\rM'}^{\rM}\pi$ is finitely generated and admissible. Let $(\rL,\sigma)$ be a cuspidal pair in $\rM$ such that $\pi$ is a sub-representation of $i_{\rL}^{\rM}\sigma$. Applying Theorem \ref{thm 5.2}, we have $\res_{\rM'}^{\rM}i_{\rL}^{\rM}\sigma\cong i_{\rL'=\rL\cap\rM'}^{\rM'}\res_{\rL'}^{\rL}\sigma$. We have proved that $\res_{\rL'}^{\rL}\sigma$ is admissible and finitely generated. Since normalised parabolic induction $i_{\rL'}^{\rM'}$ respect admissibility and finite generality, the $k$-representation $\res_{\rL'}^{\rL}\sigma$ is also admissible and finitely generated, and hence has finite length. So does its sub-representation $\res_{\rM'}^{\rM}\pi$. For the semi-simplicity, let $W$ be an irreducible sub-representation of $\res_{\rM'}^{\rM}\pi$, of which $gW$ is also an irreducible sub-representation for $g\in\rM$. Let $W'=\sum_{g\in\rM'}g(W)$, which is a semisimple (by the equivalence condition in \S A.VII. of \cite{Re}) sub-representation of $\res_{\rM'}^{\rM}\pi$. Obviously, $\rM$ stabilises $W'$, hence $W'=\res_{\rM'}^{\rM}\pi$ by the irreducibility of $\pi$.

Now we consider the second part of this proposition, and apply the proof of Proposition \S 2.2 in \cite{TA} in our case. Let $\pi'$ be any irreducible $k$-representation of $\rM'$, and $\mathrm{S}$ the subgroup of $\mathrm{Z}$ generated by the scalar matrix $\varpi_{F}$, where $\varpi_{F}$ is the uniformizer of the ring of integers of $\fo_{F}$. It is clear that the intersection $\mathrm{S}\cap\rM'=\{ \mathds{1}\}$. Hence we could let $\tilde{\pi}$ denote the extension of $\pi$ to $\mathrm{S}\rM'$, where $\mathrm{S}$ acts as identity. The quotient group $\rM/\mathrm{S}\rM'$ is compact, hence the induced representation $\ind_{\mathrm{S}\rM'}^{\rM}\tilde{\pi}$ is admissible (see the formula in \S I,5.6 of \cite{V1}). For any $\rM$-subrepresentation $\tau$ of $\ind_{\mathrm{S}\rM'}^{\rM}\tilde{\pi}$, there is a surjective morphism from $\res_{\mathrm{S}\rM'}^{\rM}\tau$ to $\tilde{\pi}$, defined as $f\mapsto f(\mathds{1})$. This induced a surjective morphism from $\res_{\rM'}^{\rM}\tau$ to $\pi'$.

Now let $\pi_1$ be a finitely generated subrepresentation of $\ind_{\mathrm{S}\rM'}^{\rM}\tilde{\pi}$. Since $\pi_1$ is finite type and admissible, it has finite length containing an irreducible subrepresentation noted as $\pi$. And there is a surjective morphism from $\res_{\rM'}^{\rM}\pi$ to $\pi'$. Combining this with the first part above, the representation $\pi$ is the one we want.
\end{proof}

\begin{cor}
\label{cor 20}
Let $\pi$ be an irreducible $k$-representation of $\rM$. If the restricted representation $\res_{\rM'}^{\rM}\pi$ contains an irreducible cuspidal $k$-representation of $\rM'$, then $\pi$ is cuspidal. This is to say that any cuspidal $k$-representation of $\rM'$ is a subrepresentation of $\res_{\rM'}^{\rM}\pi$ for some cuspidal $k$-representation $\pi$ of $\rM$.
\end{cor}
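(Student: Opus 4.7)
The plan is to argue by contradiction: suppose $\pi$ is not cuspidal, and use the commutation of restriction with parabolic induction to transfer the failure of cuspidality from $\pi$ to the given cuspidal component $\pi'$ of $\res_{\rM'}^{\rM}\pi$, producing a contradiction. By Definition \ref{cusp 1}, if $\pi$ is not cuspidal, there exists a proper Levi subgroup $\rL$ of $\rM$ and an irreducible $k$-representation $\sigma$ of $\rL$ such that $\pi$ is either a subrepresentation or a quotient of $i_{\rL}^{\rM}\sigma$.

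Assume first that $\pi \hookrightarrow i_{\rL}^{\rM}\sigma$. Since $\res_{\rM'}^{\rM}$ is exact and commutes with parabolic induction via Theorem \ref{thm 5.2}, setting $\rL'=\rL\cap\rM'$ we obtain
$$\res_{\rM'}^{\rM}\pi \hookrightarrow \res_{\rM'}^{\rM}i_{\rL}^{\rM}\sigma \cong i_{\rL'}^{\rM'}\res_{\rL'}^{\rL}\sigma.$$
By Proposition \ref{prop 6} applied to $\rL$, we can decompose $\res_{\rL'}^{\rL}\sigma \cong \bigoplus_{j} \sigma_j'$ as a finite direct sum of irreducible $k$-representations of $\rL'$, and by exactness of $i_{\rL'}^{\rM'}$ this gives $i_{\rL'}^{\rM'}\res_{\rL'}^{\rL}\sigma \cong \bigoplus_j i_{\rL'}^{\rM'}\sigma_j'$. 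By Proposition \ref{prop 6} applied to $\rM$, the restriction $\res_{\rM'}^{\rM}\pi$ is semisimple, so the hypothesis that it contains the cuspidal $\pi'$ means that $\pi'$ is a direct summand, hence in particular embeds into $\bigoplus_j i_{\rL'}^{\rM'}\sigma_j'$. Since $\pi'$ is irreducible, composing with a suitable projection yields an embedding $\pi' \hookrightarrow i_{\rL'}^{\rM'}\sigma_{j_0}'$ for some index $j_0$.

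Now $\rL'$ is a proper Levi of $\rM'$ (the intersection-with-$\rG'$ correspondence preserves the partial order between standard Levi subgroups, a fact used implicitly throughout the paper), so the embedding above contradicts the cuspidality of $\pi'$. The case where $\pi$ is a quotient of $i_{\rL}^{\rM}\sigma$ is treated identically: exactness produces a surjection $i_{\rL'}^{\rM'}\res_{\rL'}^{\rL}\sigma \twoheadrightarrow \res_{\rM'}^{\rM}\pi$, and by semisimplicity $\pi'$ is a quotient of the left-hand side, hence a quotient of some $i_{\rL'}^{\rM'}\sigma_{j_0}'$, again contradicting the cuspidality of $\pi'$. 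The second assertion of the corollary is immediate: given a cuspidal irreducible $\pi'$ of $\rM'$, the second part of Proposition \ref{prop 6} furnishes an irreducible $k$-representation $\pi$ of $\rM$ with $\pi'\hookrightarrow\res_{\rM'}^{\rM}\pi$, and the first part of the corollary forces such $\pi$ to be cuspidal.

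The argument is essentially bookkeeping once Theorem \ref{thm 5.2} and Proposition \ref{prop 6} are in hand; the only mild point requiring care is the observation that an irreducible subrepresentation (resp.\ quotient) of a finite direct sum is in fact a subrepresentation (resp.\ quotient) of one of the summands, which I expect to be the routine part rather than the main obstacle.
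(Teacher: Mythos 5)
Your argument is correct, but it runs in the opposite direction from the paper's. You argue by contradiction on the induction side: assuming $\pi$ is a sub or quotient of $i_{\rL}^{\rM}\sigma$, you push this through the commutation $\res_{\rM'}^{\rM}\,i_{\rL}^{\rM}\sigma\cong i_{\rL'}^{\rM'}\res_{\rL'}^{\rL}\sigma$ (Theorem \ref{thm 5.2}), decompose $\res_{\rL'}^{\rL}\sigma$ into irreducibles via Proposition \ref{prop 6} applied to $\rL$, and land $\pi'$ as a sub (resp.\ quotient) of a single $i_{\rL'}^{\rM'}\sigma_{j_0}'$, contradicting cuspidality of $\pi'$ in the sense of Definition \ref{cusp 1}; the reduction to one summand is indeed routine. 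The paper instead works on the Jacquet side: it uses the other half of Proposition \ref{prop 6}, namely that the components of $\res_{\rM'}^{\rM}\pi\cong\oplus_i\pi_i'$ are all $\rM$-conjugate to the cuspidal $\pi'$, and the trivial compatibility $\res_{\rL'}^{\rL}\,r_{\rL}^{\rM}\pi\cong\oplus_i r_{\rL'}^{\rM'}\pi_i'$ (immediate since the unipotent radical lies in $\rM'$), so each summand vanishes and hence $r_{\rL}^{\rM}\pi=0$. The paper's route is shorter: no case split into sub versus quotient, no appeal to the Bernstein--Zelevinsky commutation theorem, and no need to decompose $\res_{\rL'}^{\rL}\sigma$; it does, however, lean on the conjugacy statement of Proposition \ref{prop 6}, which you avoid entirely. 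Your route also implicitly invokes the equivalence between the two characterizations of cuspidality used in the paper (not being a sub/quotient of a proper parabolic induction, as in Definition \ref{cusp 1}, versus vanishing of Jacquet modules, as in Section \ref{chapter 3}); this equivalence is standard via Frobenius reciprocity and second adjunction and is used freely elsewhere in the paper, so it is not a gap, but it is worth being aware that the two proofs certify cuspidality of $\pi$ in these two different senses.
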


\begin{proof}
For the first part above, we know that the direct components of $\res_{\rM'}^{\rM}\pi$ are $\rM$-conjugate by Proposition \ref{prop 6}. Let $\mathrm{P}'=\mathrm{L}'\cdot\rU$ be any proper parabolic subgroup of $\rM'$ and $\mathrm{P}=\mathrm{L}\cdot\rU$ the proper parabolic subgroup of $\rM$ such that $\mathrm{P}\cap\rM'=\mathrm{P}'$ and $\mathrm{L}\cap \rM'=\mathrm{L}'$. Let $\pi_0'$ be any direct component of $\res_{\rM'}^{\rM}\pi\cong\oplus_{i\in I}\pi_i'$, where $I$ is a finite index set and $\pi_i'$ are irreducible representations of $\rM'$, and for each $i\in I$ let $a_i\in\rM$ such that $\pi_i'\cong a_i(\pi')$. In particular, we could assume that $\{a_i\}_{i\in I}$ is a subset of $\mathrm{L}$. We have:
$$\res_{\mathrm{L'}}^{\mathrm{L}}r_{\mathrm{L}}^{\rM}\pi\cong\oplus_{i\in I} r_{\mathrm{L'}}^{\rM'}\pi_i'.$$
Meanwhile, since the unipotent radical $\mathrm{U}$ is normal in $\mathrm{L}$, we deduce that:
$$r_{\mathrm{L'}}^{\rM'}\pi_i'\cong a_i(r_{\mathrm{L'}}^{\rG'}\pi')\cong 0.$$
Hence $\pi$ is cuspidal as required.
\end{proof}

\begin{cor}
\label{cor 21}
For any irreducible cuspidal $k$-representation $\pi'$ of $\rG'$, there exists a maximal simple cuspidal $k$-type $(J,\lambda)$ of $\rG$, and $M_{\lambda}$ as in definition \ref{defn 15}. There exists a direct component $\lambda_{M_{\lambda}}'$ of $\ind_{J'}^{M_{\lambda}}\res_{J'}^J\lambda$ such that $\pi'$ is isomorphic to the induced representation $\ind_{M_{\lambda}}^{\rG'} \lambda_{M_{\lambda}}'$.
\end{cor}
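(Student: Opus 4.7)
The plan is to combine Corollary \ref{cor 20} with the Bushnell--Kutzko type theory for $\rG$ and with Theorem \ref{thm 16}, reducing the statement to a decomposition of $\ind_{J'}^{\rG'}\lambda'$ into irreducibles.

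First, I apply Corollary \ref{cor 20} to $\pi'$: there exists an irreducible cuspidal $k$-representation $\pi$ of $\rG$ such that $\pi'\hookrightarrow \res_{\rG'}^{\rG}\pi$. By the mod $\ell$ version of the Bushnell--Kutzko classification (invoked via Proposition \ref{prop 2'} together with the compact induction description of cuspidals), there is a maximal simple cuspidal $k$-type $(J,\lambda)$ of $\rG$ such that $\pi\cong\ind_J^{\rG}\lambda$. Applying Mackey's formula to $\res_{\rG'}^{\rG}\ind_J^{\rG}\lambda$ (as already done inside the proof of Theorem \ref{thm 10}), the piece corresponding to the double coset of $1$ gives that $\ind_{J'}^{\rG'}\res_{J'}^{J}\lambda$ is a direct summand of $\res_{\rG'}^{\rG}\pi$, and by Proposition \ref{prop 6} this restriction is semisimple of finite length. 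Hence $\pi'$ is an irreducible direct summand of $\ind_{J'}^{\rG'}\res_{J'}^{J}\lambda$.

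Next, by Frobenius reciprocity and the semisimplicity of $\res_{J'}^J\lambda$ (Proposition \ref{prop 0.1}), there is an irreducible component $\lambda'$ of $\res_{J'}^J\lambda$ such that $\pi'$ is a quotient of $\ind_{J'}^{\rG'}\lambda'$. By transitivity of compact induction, $\ind_{J'}^{\rG'}\lambda'\cong\ind_{M_{\lambda}}^{\rG'}\ind_{J'}^{M_{\lambda}}\lambda'$. Proposition \ref{prop 17} tells us that $\ind_{J'}^{M_{\lambda}}\lambda'$ is semisimple; write it as a finite direct sum $\bigoplus_{j}\lambda_{M_{\lambda},j}'$ of irreducible components. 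Lemma \ref{lem 8} ensures that compact induction commutes with this direct sum, so
\[
\ind_{J'}^{\rG'}\lambda'\;\cong\;\bigoplus_{j}\ind_{M_{\lambda}}^{\rG'}\lambda_{M_{\lambda},j}'.
\]
By Theorem \ref{thm 16}, each summand $\ind_{M_{\lambda}}^{\rG'}\lambda_{M_{\lambda},j}'$ is already irreducible and cuspidal. Since $\pi'$ is irreducible and a nonzero quotient of this semisimple representation of finite length, it must coincide with one of the summands, giving $\pi'\cong\ind_{M_{\lambda}}^{\rG'}\lambda_{M_{\lambda}}'$ for the corresponding $\lambda_{M_{\lambda}}'$.

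The only potential obstacle is the first reduction, namely producing the cuspidal $k$-representation $\pi$ of $\rG$ realized as compact induction of a maximal simple cuspidal $k$-type; this relies on the classification of cuspidal $k$-representations of $\GL_n(F)$ by maximal simple cuspidal types, which is standard in the mod $\ell$ setting via Vign\'eras. Everything else is a formal combination of Mackey, Frobenius reciprocity, and the irreducibility criterion already proved in Theorem \ref{thm 16}.
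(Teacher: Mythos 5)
Your overall strategy is the same as the paper's: reduce via Corollary \ref{cor 20} to an irreducible cuspidal $\pi$ of $\rG$ containing $\pi'$, use the type-theoretic description of $\pi$, apply Mackey's formula to locate $\ind_{J'}^{\rG'}\res_{J'}^{J}\lambda$ inside $\res_{\rG'}^{\rG}\pi$, and conclude with Theorem \ref{thm 16}. The one genuine defect is the assertion that $\pi\cong\ind_{J}^{\rG}\lambda$. This is false: $J$ is compact and does not contain the centre, and a cuspidal representation of $\GL_n(F)$ is compactly induced not from $(J,\lambda)$ but from an extension $\Lambda$ of $\lambda$ to the compact-mod-centre group $E^{\times}J$ (this is exactly how the paper, and the definition of maximal simple cuspidal type used there, phrases it: $\pi\cong\ind_{E^{\times}J}^{\rG}\Lambda$). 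Indeed $\ind_{J}^{\rG}\lambda\cong\ind_{E^{\times}J}^{\rG}\ind_{J}^{E^{\times}J}\lambda$, and $\ind_{J}^{E^{\times}J}\lambda$ already surjects onto every extension of $\lambda$ to $E^{\times}J$ (the quotient $E^{\times}J/J$ being infinite cyclic), so this induced representation is far from irreducible. The step is repairable, and the repair is precisely what the paper does: apply Mackey to $\res_{\rG'}^{\rG}\ind_{E^{\times}J}^{\rG}\Lambda$ and use $E^{\times}J\cap\rG'=J'$, so that the trivial double coset contributes $\ind_{J'}^{\rG'}\res_{J'}^{E^{\times}J}\Lambda=\ind_{J'}^{\rG'}\res_{J'}^{J}\lambda$ as a summand of $\res_{\rG'}^{\rG}\pi$; nothing downstream in your argument changes.

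Granting that fix, your endgame is correct but takes a mildly different route from the paper. You use Frobenius reciprocity to choose an irreducible component $\lambda'$ of $\res_{J'}^{J}\lambda$ with $\pi'$ a quotient of $\ind_{J'}^{\rG'}\lambda'$, then write $\ind_{J'}^{\rG'}\lambda'\cong\bigoplus_j\ind_{M_{\lambda}}^{\rG'}\lambda_{M_{\lambda},j}'$ by transitivity and the semisimplicity of $\ind_{J'}^{M_{\lambda}}\lambda'$ (Proposition \ref{prop 17}), and identify $\pi'$ with one summand using the irreducibility in Theorem \ref{thm 16}. The paper instead observes that the specific irreducible $\ind_{M_{\lambda_0}}^{\rG'}\lambda_{M_{\lambda_0}}'$ produced by Theorem \ref{thm 16} is a direct component of $\res_{\rG'}^{\rG}\pi$, identifies it with $g(\pi')$ for some $g\in\rG$ via Proposition \ref{prop 6}, and then transports the whole datum by $g^{-1}$. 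Your variant avoids the conjugation step and is, if anything, slightly cleaner; the paper's variant avoids decomposing $\ind_{J'}^{\rG'}\lambda'$ at all. Both rest on the same inputs.
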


\begin{proof}
Applying Corollary \ref{cor 20}, let $\pi$ be an irreducible cuspidal $k$-representation of $\rG'$ which contains $\pi'$ as a sub-$\rG'$-representation. Let $(J_0,\lambda_0)$ be the maximal simple cuspidal $k$-type of $\rG$ corresponding to $\pi$, and $(M_{\lambda_0}, \lambda_{M_{\lambda_0}}')$ as in Theorem \ref{thm 16}. We know that $\pi$ is isomorphic to $\ind_{E^{\times}J_0}^{\rG}\Lambda_0$, where $\Lambda_0$ is an extension of $\lambda$ to $E^{\times}J_0$, and the intersection $E^{\times}J_0\cap\rG'=J_0'$. Then after applying Mackey's decomposition formula to $\res_{\rG'}^{\rG}\pi$, we obatin that of which the representation $\ind_{J_0'}^{\rG'}\res_{J_0'}^J\lambda_0$ is a subrepresentation. Hence, $\ind_{M_{\lambda_0}}^{\rG'}\lambda_{M_{\lambda_0}}'$ is isomorphic to some direct component of $\res_{\rG'}^{\rG}\pi$, which is isomorphic to $g(\pi')$ for some $g\in \rG$ by Proposition \ref{prop 6}. This implies that $\pi'$ contains $g^{-1}(\lambda_{M_{\lambda_0}}')$. Notice that $g^{-1}(M_{\lambda_0})=M_{g^{-1}(\lambda_0)}$ and $g^{-1}(\lambda_{M_{\lambda_0}}')$ is a direct component of $\ind_{g^{-1}(J')}^{M_{g^{-1}(\lambda_0)}}g^{-1}(\lambda')$, so we could write is as $\lambda_{M_{g^{-1}(\lambda_0)}}'$. Hence by Frobenius reciprocity and Theorem \ref{thm 16}, this implies that $\pi'\cong\ind_{M_{g^{-1}(\lambda_0)}}^{\rG'}g^{-1}(\lambda_{M_{g^{-1}(\lambda_0)}}')$. And $(g^{-1}(J_0),g^{-1}(\lambda))$ is the required maximal simple cuspidal $k$-type.
\end{proof}

\subsection{Whittaker models and maximal simple cuspidal $k$-types of $\rG'$}
\subsubsection{Uniqueness of Whittaker models}
In this section, we will see that the subgroup $M_{\lambda}$ of $\tilde{J}'=\tilde{J}(\lambda)\cap\rG'$ in the definition \ref{defn 22} actually coincides with $\tilde{J}'$. In other words, we will prove that for any element $x\in\rU(\fA)$, if $x$ normalises $J$ and $x(\lambda)\cong\lambda\otimes\chi\circ\det$ for some $k$-quasicharacter $\chi$ of $F^{\times}$, then $x(\lambda')\cong\lambda'$, for any irreducible direct component $\lambda'$ of $\lambda\vert_{J'}$.

Let $\rU=\rU_n(F)$ be the group consisting with those strictly upper triangular matrices in $\rG$.  A non-degenerate character $\psi$ of $\rU$ is a $k$-quasicharacter defined on $\rU$. Let
$P_n=\mathrm{P}_n(F)$ be the mirabolic subgroup of $\mathrm{GL}_n(F)$, and $P_n'=P_n\cap \SL_n(F)$. We denote the unipotent radical of $P_n$ as $V_{n-1}$, which is an abelian group isomorphic to the additive group $F^{n-1}$. The unipotent radical of $P_n'$ is also $V_{n-1}$.

\begin{defn}
\label{defn 25}
\begin{enumerate}
\item $r_{\mathrm{id}}:=r_{\rG_{n-1},P_n}$ the functor of $V_{n-1}$-coinvariants of representations of $P_{n}$, $r_{\mathrm{id}'}:=r_{\rG_{n_1}',P_n'}$ the functor of $V_{n-1}$-coinvariants of representations of $P_{n}'$.
\item $r_{\psi}:=r_{\psi,P_{n-1},P_n}$ the functor of $(V_{n-1},\psi)$-coinvariants of representations of $P_{n}$, $r_{\psi}':=r_{\psi,P_{n-1}',P_n'}$ the functor of $(V_{n-1},\psi)$-coinvariants of representations of $P_{n}'$.
\end{enumerate}
\end{defn}

\begin{defn}
\label{defn 26}
Let $1\leq k\leq n$ and $\pi\in \mathrm{Mod}_{k}P_n$, $\pi'\in\mathrm{Mod}_k P_n'$. We define the $k$-th derivative of $\pi$ to be the representation $\pi^{(k)}:=r_{\mathrm{id}}r_{\psi}^{k-1}\pi$, and the $k$-th derivative of $\pi'$ relative to $\psi$ to be the representation $\pi'^{(\psi,k)}:=r_{\mathrm{id}}'r_{\psi}'^{k-1}\pi'$.
\end{defn}

\begin{rem}
\label{rem 27}
The unipotent radical of $P_n$ and $P_n'$ coincide and $\rU\subset\rG'$, so $\res_{\rG_{n-k}'}^{\rG_{n-k}}\pi^{(k)}$ is equivalent to $(\res_{\rG_{n}'}^{\rG_n}\pi)^{(k)}$, where $\pi\in\mathrm{Mod}_n\rG_n$.
\end{rem}

\begin{prop}
\label{prop 23}
Let $\pi$ be a cuspidal $k$-representation of $\rG$, then the restriction $\res_{\rG'}^{\rG}\pi$ is multiplicity free.
\end{prop}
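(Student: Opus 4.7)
The plan is to exploit the uniqueness of the Whittaker model. Recall that a cuspidal $k$-representation $\pi$ of $\rG=\GL_n(F)$ is generic: for any non-degenerate character $\psi$ of the upper-triangular unipotent $\rU=\rU_n(F)$, the space of $(\rU,\psi)$-coinvariants $\pi_{\rU,\psi}$ (equivalently, the top derivative $\pi^{(n)}$ in the notation of Definition \ref{defn 26}) is one-dimensional. This is the modular analogue of the classical uniqueness of Whittaker model; in our setting it follows from the construction $\pi\cong\ind_{\tilde{J}}^{\rG}\Lambda$ of \ref{thm 16}--\ref{defn 22} combined with a Mackey computation, and will be an input to the argument.

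Decompose $\res_{\rG'}^{\rG}\pi=\bigoplus_i m_i\pi_i'$ into isotypic components with pairwise non-isomorphic irreducible constituents $\pi_i'$. By Proposition \ref{prop 6}, the $\pi_i'$ form a single $\rG$-conjugacy class. Because $\det:\rT\to F^{\times}$ is surjective, we have $\rG=\rT\rG'$, so the $\pi_i'$ are in fact $\rT$-conjugate. For each $t\in\rT$, conjugation by $t$ is an autoequivalence of $\res_{\rG'}^{\rG}\pi$ as a $\rG'$-representation that permutes the $\pi_i'$; hence it preserves their multiplicities, and transitivity of the $\rT$-action forces all the $m_i$ to equal a common value $m$.

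Since $\rU\subset\rG'$, the coinvariant functor $V\mapsto V_{\rU,\psi}$ yields the same vector space whether $\pi$ is viewed as a $\rG$- or a $\rG'$-representation, so that
\[
1 \;=\; \dim\pi_{\rU,\psi} \;=\; \sum_i m_i\dim(\pi_i')_{\rU,\psi} \;=\; m\sum_i \dim(\pi_i')_{\rU,\psi}.
\]
The right-hand side is a non-negative integer multiple of $m$, forcing $m=1$. Hence $\res_{\rG'}^{\rG}\pi$ is multiplicity free.

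The main obstacle is the input step $\dim\pi^{(n)}=1$: one must verify that cuspidal $k$-representations of $\GL_n(F)$ remain generic in the modular setting. The conjugacy and dimension-count steps are then automatic, as is the identification of coinvariants for $\rG$ and $\rG'$, which uses only that $V_{n-1}\subset\rG'$.
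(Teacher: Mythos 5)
Your argument is correct and follows essentially the same route as the paper: both rest on $\dim\pi^{(n)}=1$ for cuspidal $k$-representations of $\GL_n(F)$ (the paper cites Vign\'eras, \S III.1.7 of \cite{V1}, for this input --- note that Theorem \ref{thm 16} and Definition \ref{defn 22} concern $\rG'$ and are not the right source), on the compatibility of the top derivative with restriction to $\rG'$ (Remark \ref{rem 27}), and on the semisimplicity and conjugacy of components from Proposition \ref{prop 6}. Your explicit intermediate step that $\rT$-conjugation forces all multiplicities to be equal is a clean way of spelling out the paper's terse final deduction, but it is not a genuinely different method.
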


\begin{proof}
We have proved in Proposition \ref{prop 6}, that the restriction $\res_{\rG_n'}^{\rG_n}\pi$ is semisimple with finite direct components. Hence we could write it as $\oplus_{i=1}^m \pi_i$, where $m\in\mathbb{N}$ and $\pi_i$'s are irreducible $k$-cuspidal representations of $\rG_n'$. Let $\psi$ be any non-degenerate character of $\rU$. As in 1.7 chapter III of \cite{V1}, we obtain that $\mathrm{dim}\pi^{(n)}=1$. We apply Remark \ref{rem 27} above, then
$$\mathrm{dim}(\res_{\rG_n'}^{\rG_n}\pi)^{(n)}=\oplus_{i=1}^m\mathrm{dim}(\pi_i)^{(\psi,n)}=1.$$
So there exists one unique components $\pi_{i_0}$, where $1\leq i_0\leq m$, such that $\pi_{i_0}^{(\psi,n)}$ is non-trivial. And we deduce the result.
\end{proof}

\begin{cor}
Let $\pi'$ be an irreducible cuspidal $k$-representation of $\rG'$. Then there exists a non-degenerate character $\psi$ of $\rU$, such that $\mathrm{dim}\pi'^{(g(\psi),n)}=1$.
\end{cor}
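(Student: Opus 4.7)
The plan is to combine Corollary \ref{cor 20}, Proposition \ref{prop 6}, and the dimension count extracted in the proof of Proposition \ref{prop 23}, with a conjugation/transport argument to land on the specific component $\pi'$. First, I would apply Corollary \ref{cor 20} to pick an irreducible cuspidal $k$-representation $\pi$ of $\rG$ such that $\pi'$ is a direct summand of $\res_{\rG'}^{\rG}\pi$. By Proposition \ref{prop 6} I then write
$$\res_{\rG'}^{\rG}\pi \cong \bigoplus_{i=1}^m \pi_i,$$
with the $\pi_i$ all $\rG$-conjugate and $\pi' \cong \pi_{i_0}$ for some $i_0$.

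Next, fix any non-degenerate character $\psi_0$ of $\rU$. The proof of Proposition \ref{prop 23} (using Remark \ref{rem 27}) gives
$$1 = \dim \pi^{(n)} = \dim \bigl(\res_{\rG'}^{\rG}\pi\bigr)^{(\psi_0,n)} = \sum_{i=1}^m \dim \pi_i^{(\psi_0,n)},$$
so there is a unique index $j_0$ with $\dim \pi_{j_0}^{(\psi_0,n)} = 1$ and $\dim \pi_i^{(\psi_0,n)} = 0$ for $i \neq j_0$. If $j_0 = i_0$, take $\psi = \psi_0$ and we are done.

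In the remaining case, I transport the Whittaker structure by $\rG$-conjugation. Choose $g \in \rG$ with $\pi_{i_0} \cong g(\pi_{j_0})$. Conjugation by $g$ identifies $\pi_{i_0}$ with $\pi_{j_0}$ as abstract representations, and sends the Whittaker datum $(\rU,\psi_0)$ to a non-degenerate Whittaker datum on $g\rU g^{-1}$. Since the derivative functor is covariant under inner automorphisms (it is defined intrinsically in terms of the mirabolic–unipotent pair and its character), one obtains $\dim \pi_{i_0}^{(g\psi_0,n)} = \dim \pi_{j_0}^{(\psi_0,n)} = 1$, which is exactly the statement (interpreting $g(\psi)$ in the statement as the $g$-twist of $\psi$). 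If instead one insists on a non-degenerate character of the fixed $\rU$, one combines $\mathrm{Int}(g)$ with a further conjugation by an element of $\rG'$ (using that $\rG'$ acts transitively on the set of Borel subgroups, i.e.\ on conjugates of $\rU$) and then by a diagonal element (using that $\rT$ acts transitively on non-degenerate characters of $\rU$) to return the conjugated datum to $(\rU,\psi)$ for some non-degenerate $\psi$; the same covariance then yields $\dim \pi'^{(\psi,n)} = 1$.

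The main obstacle is the bookkeeping in this last transport step: $\rG$-conjugation does not in general preserve $\rU$, so one must verify carefully that combining an inner automorphism of $\rG'$ with a diagonal twist really does produce an honest non-degenerate character of $\rU$ and that the derivative functor intertwines the two conjugated situations. Once this covariance is in place the proof is short; the dimension count from Proposition \ref{prop 23} supplies both the existence of \emph{some} component with top derivative of dimension one and the uniqueness (hence the equality "$=1$" rather than "$\geq 1$"), and the conjugation step moves this property onto the prescribed component $\pi' = \pi_{i_0}$.
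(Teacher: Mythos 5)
Your proposal is correct and follows essentially the same route as the paper: pick a cuspidal $\pi$ of $\rG$ containing $\pi'$, use the dimension count $\dim\pi^{(n)}=1$ from the proof of Proposition \ref{prop 23} to isolate the unique component with nonzero top derivative, and transport by conjugation. The only difference is that the paper shortcuts your last bookkeeping step by noting that the components of $\res_{\rG'}^{\rG}\pi$ are already conjugate by \emph{diagonal} matrices (since $\rG=\rG'\rT$ and conjugation by $\rG'$ does not change the isomorphism class), and diagonal matrices normalize $\rU$ and carry non-degenerate characters to non-degenerate characters, so no detour through transitivity on Borel subgroups is needed.
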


\begin{proof}
This is deduced from Corollary \ref{cor 20}. In fact the direct components of $\res_{\rG'}^{\rG}\pi$ is Corollary \ref{cor 20} are conjugated to each other by diagonal matrices, and the conjugation of non-degenerate characters of $\rU$ by any diagonal matrix is also a non-degenerate character of $\rU$.
\end{proof}

\subsubsection{Distinguished cuspidal $k$-types of $\rG'$}

\begin{prop}
\label{prop 24}
Let $(J,\lambda)$ be a maximal simple cuspidal $k$-type of $\rG$, and $\tilde{J}$ the projective normalizer of $\lambda$. Then the subgroup $M_{\lambda}$ in definition \ref{defn 15} of $\tilde{J}'$ coincides with $\tilde{J}'$.
\end{prop}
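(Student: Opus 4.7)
The plan is to use the uniqueness of Whittaker models developed in Section 4.2.1 to force every element of $\tilde{J}'$ to stabilize (up to isomorphism) each irreducible component of $\res_{J'}^{J}\lambda$. Since $M_\lambda\subset\tilde{J}'$ by construction, the remaining inclusion $\tilde{J}'\subset M_\lambda$ is equivalent to showing that the natural action of $\tilde{J}'$ on the set of isomorphism classes of irreducible components of $\res_{J'}^{J}\lambda$ is trivial.

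First I would fix $x\in\tilde{J}'$ and an irreducible component $\lambda'$ of $\res_{J'}^{J}\lambda$. Since $x\in\tilde{J}$ there is a $k$-quasicharacter $\chi$ of $F^{\times}$ with $x(\lambda)\cong\lambda\otimes\chi\circ\det$, and because $\det$ is trivial on $J'$ the restriction of this isomorphism to $J'$ yields $x\bigl(\res_{J'}^{J}\lambda\bigr)\cong\res_{J'}^{J}\lambda$ as $J'$-representations. Hence conjugation by $x$ permutes the (finitely many) irreducible components, so $x(\lambda')\cong\lambda_j'$ for some index $j$, and the task is to prove $\lambda_j'\cong\lambda'$.

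Next, assume toward contradiction that $\lambda'\not\cong\lambda_j'$ as $J'$-representations. Applying Theorem \ref{thm 16} to $\lambda'$ and to $\lambda_j'=x(\lambda')$ produces two irreducible cuspidal $k$-representations $\pi'=\ind_{M_\lambda}^{\rG'}\lambda_{M_\lambda}'$ and $\pi_j'=\ind_{x M_\lambda x^{-1}}^{\rG'} x(\lambda_{M_\lambda}')$ of $\rG'$. Because $x\in\rG'$ implements an inner automorphism of $\rG'$, we have $\pi_j'\cong x(\pi')\cong\pi'$ as abstract $\rG'$-representations, and by Corollary \ref{cor 21} both embed into $\res_{\rG'}^{\rG}\pi$ (where $\pi$ is the cuspidal of $\rG$ attached to $(J,\lambda)$), a restriction which is multiplicity-free by Proposition \ref{prop 23}.

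The contradiction will be forced by the Whittaker/derivative analysis of Section 4.2.1. The corollary after Proposition \ref{prop 23} asserts that there is a non-degenerate character $\psi$ of $\rU$ for which $\dim\pi'^{(\psi,n)}=1$. Tracing the construction of $\pi'$ via Mackey decomposition, the unique (up to scalar) $\psi$-Whittaker functional on $\pi'$ descends to a canonical non-zero datum on the component $\lambda'$ from which $\pi'$ was built; the same construction from $\lambda_j'$ would produce a second, linearly independent Whittaker functional coming from a non-isomorphic $J'$-sub-summand. This would contradict either the $1$-dimensionality of $\pi'^{(\psi,n)}$ or the multiplicity-freeness of $\res_{\rG'}^{\rG}\pi$, forcing $\lambda_j'\cong\lambda'$ and hence $x\in M_\lambda$.

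The main technical obstacle will be making the last step rigorous: one has to show concretely that the Whittaker functional on $\pi'$ can be detected at the level of the type-theoretic datum $\lambda'$, so that non-isomorphic components of $\res_{J'}^{J}\lambda$ genuinely contribute independent Whittaker functionals after compact induction, rather than being absorbed into a single one. This will rely on the decomposition of $\res_{J}^{\rG}\ind_{J}^{\rG}\lambda$ from Theorem \ref{thm 7} together with the compatibility between the parabolic derivative functors $r_{\psi}'$ on $\rG'$ and their counterparts on $\rG$ (Remark \ref{rem 27}), to isolate the contribution of $\lambda'$ versus $\lambda_j'$ in the Kirillov/Whittaker model.
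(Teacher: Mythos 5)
Your overall strategy is the paper's: build $\pi'=\ind_{M_{\lambda}}^{\rG'}\lambda_{M_{\lambda}}'$ from Theorem \ref{thm 16}, conjugate by an element $x\in\tilde{J}'\setminus M_{\lambda}$, and contradict the multiplicity-freeness of $\res_{\rG'}^{\rG}\pi$ (Proposition \ref{prop 23}, i.e.\ uniqueness of Whittaker models). But there is a genuine gap at the decisive step. Having observed that $\pi_j'=\ind_{M_{\lambda}}^{\rG'}x(\lambda_{M_{\lambda}}')\cong x(\pi')\cong\pi'$, you only assert that both ``embed into'' $\res_{\rG'}^{\rG}\pi$. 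That is vacuous: two isomorphic irreducible representations always land in the same isotypic component, and no contradiction with multiplicity-freeness arises unless you prove that $\pi'$ occurs with multiplicity at least two, i.e.\ that the two induced representations occur as two \emph{distinct} direct summands of $\res_{\rG'}^{\rG}\pi$ (equivalently, give two linearly independent Whittaker functionals). You acknowledge this as the ``main technical obstacle'' and propose to detect the Whittaker functional of $\pi'$ at the level of the type using Theorem \ref{thm 7} and Remark \ref{rem 27}; but Remark \ref{rem 27} concerns compatibility of derivatives with restriction from $\rG$ to $\rG'$, not with compact induction from an open compact subgroup, and no mechanism is given for propagating a Whittaker datum through $\ind_{M_{\lambda}}^{\rG'}$. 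The claim that non-isomorphic $J'$-components ``genuinely contribute independent Whittaker functionals after compact induction'' is exactly the statement to be proved, and it is left unproved — so the proof is missing its key step.

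For comparison, the paper closes this gap with no further Whittaker analysis. From the intertwining computation in the proof of Theorem \ref{thm 16}, $\mathrm{I}_{\rG'}(\lambda_{M_{\lambda}}')=M_{\lambda}$, so $x\notin M_{\lambda}$ forces $x(\lambda_{M_{\lambda}}')\ncong\lambda_{M_{\lambda}}'$ as $M_{\lambda}$-representations (one can also see this by Clifford theory, since $\res_{J'}^{M_{\lambda}}\lambda_{M_{\lambda}}'$ is $\lambda'$-isotypic while $\res_{J'}^{M_{\lambda}}x(\lambda_{M_{\lambda}}')$ is $x(\lambda')$-isotypic). Then the Mackey decomposition $\res_{M_{\lambda}}^{\tilde{J}'}\ind_{J'}^{\tilde{J}'}\res_{J'}^{J}\lambda\cong\bigoplus_{J'\backslash\tilde{J}'/M_{\lambda}}\ind_{J'}^{M_{\lambda}}\res_{J'}^{J}\lambda$ shows that $x(\lambda_{M_{\lambda}}')$ is also a direct summand of $\ind_{J'}^{M_{\lambda}}\res_{J'}^{J}\lambda$; inducing this decomposition to $\rG'$ and using that $\ind_{J'}^{\rG'}\res_{J'}^{J}\lambda$ is a direct summand of the semisimple $\res_{\rG'}^{\rG}\pi$, one exhibits $\ind_{M_{\lambda}}^{\rG'}\lambda_{M_{\lambda}}'$ and $\ind_{M_{\lambda}}^{\rG'}x(\lambda_{M_{\lambda}}')$ as two distinct irreducible direct summands of $\res_{\rG'}^{\rG}\pi$, which are nevertheless isomorphic by your inner-conjugation observation — the desired contradiction with Proposition \ref{prop 23}. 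Your outline can be completed, but the completion is this Mackey/intertwining argument, not the type-level tracking of Whittaker functionals you sketch.
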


\begin{proof}
Let $\Lambda$ be an extension of $\lambda$ to $E^{\times}J$. Then $\ind_{E^{\times}J}^{\rG}\Lambda$ is an irreducible cuspidal representation of $\rG$, we denote it as $\pi$. The restricted representation $\res_{\rG'}^{\rG}\pi$ is semisimple and its direct components are cuspidal. By Theorem \ref{thm 16}, there exists a direct component $\pi'$ of $\res_{\rG'}^{\rG}\pi$, such that $\pi'$ is isomorphic to $\ind_{M_{\lambda}}^{\rG'}\lambda_{M_{\lambda}}'$, for some $\lambda_{M_{\lambda}}'$. In the proof of Theorem \ref{thm 16}, we have showed that the intertwining subgroup $\mathrm{I}_{\rG'}(\lambda_{M_{\lambda}}')$ equals to $M_{\lambda}$. If $\tilde{J}'\neq M_{\lambda}$, and let $x$ be an element belonging to $\tilde{J'}$ but not to $M_{\lambda}$. Then $x(\lambda_{M_{\lambda}}')$ is not isomorphic to $\lambda_{M_{\lambda}}'$. However $x(\pi')\cong\pi'$, so $\res_{M_{\lambda}}^{\rG'}\pi'$ contains $x(\lambda_{M_{\lambda}})$, from which we deduce that
\begin{equation}
\label{equa 5}
\ind_{M_{\lambda}}^{\rG'}x(\lambda_{M_{\lambda}}')\cong\ind_{M_{\lambda}}^{\rG'}\lambda_{M_{\lambda}}'\cong\pi'.
\end{equation}
Meanwhile, by Mackey's decomposition formula
$$\res_{M_{\lambda}}^{\tilde{J}'}\ind_{J'}^{\tilde{J}'}\res_{J'}^{J}\lambda\cong\oplus_{J'\backslash \tilde{J}'/M_{\lambda}}\ind_{J'}^{M_{\lambda}}\res_{J'}^{J}\lambda,$$
hence $x(\lambda_{M_{\lambda}}')$ is another direct component of $\ind_{J'}^{M_{\lambda}}\res_{J'}^{J}\lambda$. Since we could change order of the functor $\ind_{M_{\lambda}}^{\rG'}$ and finite direct sum, and $\ind_{J'}^{\rG'}\res_{J'}^{J}\lambda$ is a subrepresentation of $\res_{\rG'}^{\rG}\pi$, the two representations $\ind_{M_{\lambda}}^{\rG'}x(\lambda_{M_{\lambda}}')$ and $\ind_{M_{\lambda}}^{\rG'}\lambda_{M_{\lambda}}'\cong\pi'$ are two different direct components of $\res_{\rG'}^{\rG}\pi$. By Proposition \ref{prop 23}, they are not isomorphic, which is contradicted to the equivalence \ref{equa 5}. Hence $\tilde{J}'=M_{\lambda}$.
\end{proof}

\begin{defn}
\label{defn 22}
Let $(J,\lambda)$ be a maximal simple cuspidal $k$-type of $\rG$ and $\tilde{J}'=\tilde{J}\cap \rG'$ as in definition \ref{defn 11}, and $\tilde{\lambda}'$ any direct component of $\ind_{J'}^{\tilde{J}'}\res_{J'}^{J}\lambda$. We define the couples $(\tilde{J'},\tilde{\lambda}')$ to be maximal simple cuspidal $k$-types of $\rG'$. By Corollary \ref{cor 21} and Proposition \ref{prop 24}, for any irreducible cuspidal $k$-representation of $\rG'$, there exists a maximal simple cuspidal $k$-type $(\tilde{J'},\tilde{\lambda}')$ of $\rG'$ such that $\pi'$ is isomorphic to $\ind_{\tilde{J}'}^{\rG'}\tilde{\lambda}'$.
\end{defn}

\subsection{Maximal simple cuspidal $k$-types for Levi subgroups of $\rG'$}
\label{section 06}
\subsubsection{Intertwining and weakly intertwining}
In this section, let $\rM$ denote any Levi subgroup of $\rG$ and for any closed subgroup $H$ of $\rG$, we always use $H'$ to denote its intersection with $\rG'$. We will consider the maximal simple cuspidal $k$-types of $\rM$. Recall that Proposition \ref{Lprop 0.1}, Proposition \ref{Lprop 0.2}, Definition \ref{prepdefn 01} and Proposition \ref{Lprop 0.3} will be used in this section.

\begin{prop}
\label{Lprop 1}
Let $(J_{\rM},\lambda_{\rM})$ be a maximal simple cuspidal $k$-type of $\rM$, and $\chi$ a $k$-quasicharacter of $F^{\times}$. If $(J_{\rM},\lambda_{\rM}\otimes\chi\circ\det)$ is weakly intertwined with $(J_{\rM},\lambda_{\rM})$, then they are intertwined. There exists an element $x\in\rU(\fA_{\rM})=\rU(\fA_1)\times\cdots\times\rU(\fA_r)$ such that $x(J_{\rM})=J_{\rM}$ and $x(\lambda_{\rM})\cong\lambda_{\rM}\otimes\chi\circ\det$, where $\fA_i$ is a hereditary order associated to $(J_i,\lambda_i)$ ($i=1,...,r$). Furthermore, for any $g\in\rG$, if $g$ weakly intertwines $(J_{\rM},\lambda_{\rM}\otimes\chi\circ\det)$ and $(J_{\rM},\lambda_{\rM})$, then $g$ intertwines $(J_{\rM},\lambda_{\rM}\otimes\chi\circ\det)$ and $(J_{\rM},\lambda_{\rM})$.

\end{prop}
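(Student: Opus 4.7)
The approach is to reduce Proposition~\ref{Lprop 1} to its $\mathrm{GL}_n$ counterparts (Proposition~\ref{prop 1} and Corollary~\ref{cor 5}) by exploiting the product structure of $\rM$. By Remark~\ref{rem 33}, writing $\rM \cong \mathrm{GL}_{n_1}(F) \times \cdots \times \mathrm{GL}_{n_r}(F)$, the maximal simple cuspidal $k$-type $(J_{\rM}, \lambda_{\rM})$ decomposes as $J_{\rM} = J_1 \times \cdots \times J_r$ and $\lambda_{\rM} \cong \lambda_1 \otimes \cdots \otimes \lambda_r$, where each $(J_i, \lambda_i)$ is a maximal simple cuspidal $k$-type of $\mathrm{GL}_{n_i}(F)$ with hereditary order $\fA_i$. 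The quasicharacter $\chi \circ \det$ restricts on $\rM$ to $\bigotimes_i \chi \circ \det_{n_i}$, so $\lambda_{\rM} \otimes \chi\circ\det \cong \bigotimes_i (\lambda_i \otimes \chi\circ\det)$.

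First I would handle weak intertwining by an element $g = (g_1, \ldots, g_r) \in \rM$. Since compact induction and restriction commute with direct products, the Mackey decomposition of $i_{J_{\rM}, g(J_{\rM})}\, g(\lambda_{\rM} \otimes \chi \circ \det)$ splits as the external tensor product of the analogous representations on each $\mathrm{GL}_{n_i}(F)$-factor. Consequently the weak intertwining hypothesis forces, factor by factor, that $g_i$ weakly intertwines $(J_i, \lambda_i \otimes \chi \circ \det)$ with $(J_i, \lambda_i)$ in $\mathrm{GL}_{n_i}(F)$. Proposition~\ref{prop 1} applied in each factor then provides $x_i \in \rU(\fA_i)$ with $x_i(J_i) = J_i$ and $x_i(\lambda_i) \cong \lambda_i \otimes \chi \circ \det$; setting $x = (x_1, \ldots, x_r)$ yields the desired element of $\rU(\fA_{\rM})$. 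Corollary~\ref{cor 5} applied factor-wise upgrades each weak intertwining $g_i$ to an honest intertwining, and the external tensor product of the nonzero $\mathrm{Hom}$-spaces supplies an intertwining operator for the full pair on $J_{\rM}$ by $g$.

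The main obstacle is extending the intertwining conclusion to an arbitrary $g \in \rG$, where $g(J_{\rM})$ need no longer be contained in $\rM$ and the clean factorisation breaks down. I would mimic the argument of Corollary~\ref{cor 5}: consider the $\eta_{\rM}$-isotypic component of $\res_{J_{\rM}}^{\rG}\ind_{J_{\rM}}^{\rG}\lambda_{\rM}$, where $\eta_{\rM} = \eta_1 \otimes \cdots \otimes \eta_r$ is the product of the Heisenberg representations attached factor-wise to the simple characters $\theta_i$ of $H_i^1$. By Proposition~2.2 of~\cite{MS} applied in each factor, $\eta_i$ is the unique irreducible representation of $J_i^1$ containing $\theta_i$, so $\eta_{\rM}$ is likewise unique on $J_{\rM}^1 = J_1^1 \times \cdots \times J_r^1$; hence the analogue of \cite[Corollary~8.4]{V3} for $\rM$ (applied via the product structure) identifies the $\eta_{\rM}$-isotypic part of $\res_{J_{\rM}}^{\rG}\ind_{J_{\rM}}^{\rG}\lambda_{\rM}$ with a multiple of $\lambda_{\rM}$. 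By Mackey, $i_{J_{\rM}, g(J_{\rM})}\, g(\lambda_{\rM})$ is a direct summand of this restriction, and its $x(\eta_{\rM})$-isotypic component (for $x$ furnished by the first step) is therefore a multiple of $x(\lambda_{\rM}) \cong \lambda_{\rM} \otimes \chi \circ \det$. The weak intertwining relation then forces a nonzero element in $\mathrm{Hom}_{k J_{\rM}}(\lambda_{\rM} \otimes \chi \circ \det,\ i_{J_{\rM}, g(J_{\rM})}\, g(\lambda_{\rM}))$, which is precisely the intertwining assertion. The delicate point requiring care is that the uniqueness-of-$\eta$ and the $\eta$-isotypic structure must be verified to behave correctly under the product decomposition and under conjugation by elements of $\rG$ that mix blocks of $\rM$; this is where I expect the bulk of the technical work to lie.
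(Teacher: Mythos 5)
Your factor-wise reduction is exactly the paper's argument: the paper's proof simply writes $\rM=\mathrm{GL}_{n_1}\times\cdots\times\mathrm{GL}_{n_r}$, observes that $J_{\rM}$, $\lambda_{\rM}$ and $\rU(\fA_{\rM})$ decompose accordingly, and deduces both assertions directly from Proposition \ref{prop 1} and Corollary \ref{cor 5}. Your first two paragraphs spell this out (including the Mackey decomposition along the product and the uniqueness of the tensor factorisation of irreducible subquotients, which the paper leaves implicit), and for an element $g=(g_1,\ldots,g_r)\in\rM$ your argument is complete and coincides with the paper's.

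The divergence is your last paragraph, where you read ``$g\in\rG$'' literally, allowing $g$ to mix the blocks of $\rM$, and try to rerun the proof of Corollary \ref{cor 5} inside $\res_{J_{\rM}}^{\rG}\ind_{J_{\rM}}^{\rG}\lambda_{\rM}$. The key claim there --- that the $\eta_{\rM}$-isotypic part of this restriction is a multiple of $\lambda_{\rM}$ --- is not available and is false in general: \cite[Corollary 8.4]{V3} concerns a maximal simple type of the group one induces to, whereas here $J_{\rM}$ is induced past $\rM$ all the way to $\rG$. For instance, take two blocks of equal size with level-zero types attached to distinct cuspidal representations of the same finite general linear group; the permutation matrix $w$ exchanging the blocks normalises $J_{\rM}$ and fixes $\eta_{\rM}$, so the Mackey summand $w(\lambda_{\rM})\cong\lambda_2\otimes\lambda_1\ncong\lambda_1\otimes\lambda_2$ sits inside the $\eta_{\rM}$-isotypic part, and your Hom-space conclusion does not follow. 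Note that the paper's one-line proof does not cover this case either: its deduction from Proposition \ref{prop 1} and Corollary \ref{cor 5} only yields the statement for $g\in\rM$, which is all that is used later (in Lemma \ref{Llemma 3} the weakly intertwining element lies in $\rM$). If you want an isotypic argument at the level of $\rM$, the correct substitute for your computation is Lemma \ref{Llemma 2}, i.e.\ Theorem \ref{thm 7} applied factor by factor.
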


\begin{proof}

By definition, write $\rM$ as a product $\mathrm{GL}_{n_1}\times\cdots\times\mathrm{GL}_{n_r}$, then $J_{\rM}=J_1\times\cdots\times J_r$ and $\lambda_{\rM}\cong\lambda_1\times\cdots\times\lambda_r$, where $(J_i,\lambda_i)$ are $k$-maximal cuspidal simple type of $\mathrm{GL}_{n_i}$ for $i\in\{ 1,\ldots,r\}$. The group $\rU(\fA_{\rM})=\rU(\fA_1)\times\cdots\times\rU(\fA_r)$. Hence the two results are directly deduced by \ref{prop 1} and \ref{cor 5}.

\end{proof}

\begin{defn}
\label{Ldefinition 21}
Let $(J_{\rM},\lambda_{\rM})$ be a $k$-maximal cuspidal simple type of $\rM$. We define the group of projective normalizer $\tilde{J_{\rM}}$ a subgroup of $J_{\rM}$. An element $x\in \rU(\fA_{\rM})$, where $\fA_{\rM}=\fA_1\times\cdots\times\fA_r$, belongs to $\tilde{J_{\rM}}$, if $x(J_{\rM})=J_{\rM}$, and there exists a $k$-quasicharacter $\chi$ of $F^{\times}$ such that $x(\lambda_{\rM})\cong \lambda_{\rM}\otimes\chi\circ\mathrm{det}$.  
\end{defn}

The induced $k$-representation $\tilde{\lambda}_{\rM}=\ind_{J_{\rM}}^{\tilde{J}_{\rM}}\lambda_{\rM}$ is irreducible by Corollary \ref{cor 99}, and according to \ref{Lprop 0.1}, the restriction $\res_{\tilde{J}_{\rM}'}^{\tilde{J_{\rM}}}\tilde{\lambda}_{\rM}$ is semisimple. Let $\mu_{\rM}$ denote one of its irreducible component. 

\begin{lem}
\label{Llemma 3}
Let $(J_{\rM},\lambda_{\rM})$ be a $k$-maximal cuspidal simple type of $\rM$, and $\nu_{\rM}$ and $\mu_{\rM}$ be two irreducible components of the restricted representation $\res_{\tilde{J}_{\rM}'}^{\tilde{J}_{\rM}}\tilde{\lambda}_{\rM}$. Then :
$$\mathrm{I}_{\rM'}^{w}(\nu_{\rM},\mu_{\rM})=\{ m\in \rM':m(\nu_{\rM})\cong\mu_{\rM} \},$$
whence $\mathrm{I}_{\rM'}^{w}(\nu_M,\mu_M)=\mathrm{I}_{\rM'}(\nu_M,\mu_M)$. In particular, $\mathrm{I}_{\rM'}(\mu_{\rM})$ is the normalizer group of $\mu_{\rM}$ in $\rM'$. Moreover, this group is independent of the choice of $\mu_{\rM}$.
\end{lem}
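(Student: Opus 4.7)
The plan is to reduce weak intertwining of components of $\res_{\tilde J_{\rM}'}^{\tilde J_{\rM}}\tilde\lambda_{\rM}$ to the intertwining of $\lambda_{\rM}$ itself, via Proposition \ref{Lprop 0.3}, and then invoke Proposition \ref{Lprop 1}. The inclusion $\{m\in\rM':m(\nu_{\rM})\cong\mu_{\rM}\}\subset\mathrm{I}_{\rM'}(\nu_{\rM},\mu_{\rM})\subset\mathrm{I}_{\rM'}^w(\nu_{\rM},\mu_{\rM})$ is immediate from the definitions. Conversely, given $m\in\mathrm{I}_{\rM'}^w(\nu_{\rM},\mu_{\rM})$, Proposition \ref{Lprop 0.3} (applied with $K_1=K_2=\tilde J_{\rM}$ and $\rho_1=\rho_2=\tilde\lambda_{\rM}$) produces a $k$-quasicharacter $\chi$ of $F^{\times}$ such that $m$ weakly intertwines $\tilde\lambda_{\rM}$ with $\tilde\lambda_{\rM}\otimes\chi\circ\det$.

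One then transfers this statement down to $\lambda_{\rM}$. Since $\tilde\lambda_{\rM}=\ind_{J_{\rM}}^{\tilde J_{\rM}}\lambda_{\rM}$ and $\tilde J_{\rM}$ normalises $J_{\rM}$, the restriction $\res_{J_{\rM}}^{\tilde J_{\rM}}\tilde\lambda_{\rM}$ decomposes as a direct sum of $\tilde J_{\rM}$-conjugates of $\lambda_{\rM}$, each of the form $\lambda_{\rM}\otimes\chi_j\circ\det$ by Definition \ref{Ldefinition 21}. Mackey's formula, applied inside the weak intertwining condition, together with the uniqueness of Jordan--H\"older factors, then yields a quasicharacter $\chi'$ such that $m$ weakly intertwines $\lambda_{\rM}$ with $\lambda_{\rM}\otimes\chi'\circ\det$. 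Proposition \ref{Lprop 1} upgrades this to actual intertwining and supplies $x\in\rU(\fA_{\rM})$ normalising $J_{\rM}$ with $x(\lambda_{\rM})\cong\lambda_{\rM}\otimes\chi'\circ\det$; by Definition \ref{Ldefinition 21} one has $x\in\tilde J_{\rM}$. Therefore $mx^{-1}$ intertwines $\lambda_{\rM}$ with itself and so lies in the intertwining set of the maximal simple type, which by the Bushnell--Kutzko theory is of the form $J_{\rM}\rE_{\rM}^{\times}J_{\rM}$. The constraint $\det(m)=1$ combined with this description forces $m\in\tilde J_{\rM}\cap\rM'=\tilde J_{\rM}'$; in particular $m$ normalises $\tilde J_{\rM}'$, so $m(\nu_{\rM})$ is another irreducible component of $\res_{\tilde J_{\rM}'}^{\tilde J_{\rM}}\tilde\lambda_{\rM}$, and the weak intertwining hypothesis forces $m(\nu_{\rM})\cong\mu_{\rM}$. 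This establishes both equalities $\mathrm{I}_{\rM'}^w(\nu_{\rM},\mu_{\rM})=\mathrm{I}_{\rM'}(\nu_{\rM},\mu_{\rM})=\{m\in\rM':m(\nu_{\rM})\cong\mu_{\rM}\}$.

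Specialising to $\nu_{\rM}=\mu_{\rM}$ identifies $\mathrm{I}_{\rM'}(\mu_{\rM})$ with the normaliser of $\mu_{\rM}$ in $\rM'$. The argument above places this normaliser inside $\tilde J_{\rM}'$, while $\tilde J_{\rM}'$ trivially fixes $\mu_{\rM}$ up to isomorphism (conjugation by elements of the group of definition), so $\mathrm{I}_{\rM'}(\mu_{\rM})=\tilde J_{\rM}'$, a group manifestly independent of the chosen component $\mu_{\rM}$. The main obstacle is the transfer in the second paragraph: one has to unpack Mackey's decomposition together with the twist $\chi\circ\det$ to extract the correct weak intertwining statement at the level of $\lambda_{\rM}$ without losing Jordan--H\"older control; once this is done, Propositions \ref{Lprop 0.3} and \ref{Lprop 1} combine with the standard description of $\mathrm{I}_{\rM}(\lambda_{\rM})$ to finish the argument.
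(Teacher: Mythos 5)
Your first two steps follow the paper's own route: reduce weak intertwining of $\nu_{\rM},\mu_{\rM}$ to a twisted weak intertwining of $\tilde{\lambda}_{\rM}$ via Proposition \ref{Lprop 0.3}, transfer it down to $\lambda_{\rM}$ through the decomposition $\res_{J_{\rM}}^{\tilde{J}_{\rM}}\tilde{\lambda}_{\rM}\cong\oplus_x\lambda_{\rM}\otimes\xi_x\circ\det$, and upgrade to genuine intertwining with Proposition \ref{Lprop 1}. The genuine gap is the final step, where from $mx^{-1}\in\mathrm{I}_{\rM}(\lambda_{\rM})=J_1E_1^{\times}J_1\times\cdots\times J_rE_r^{\times}J_r$ and $\det(m)=1$ you conclude $m\in\tilde{J}_{\rM}\cap\rM'=\tilde{J}_{\rM}'$. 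That deduction is only valid when $\rM=\rG$ (a single block), where $\det(e)\in\mathfrak{o}_F^{\times}$ forces $e\in\mathfrak{o}_E^{\times}$. For a proper Levi with $r\geq 2$ blocks the determinant-one condition couples the blocks: an element whose components involve $\varpi_{E_i}$-powers with valuations of the block determinants summing to zero lies in $\mathrm{I}_{\rM}(\lambda_{\rM})\cap\rM'$ but is not compact, hence not in $\tilde{J}_{\rM}'$. So your identification $\mathrm{I}_{\rM'}(\mu_{\rM})=\tilde{J}_{\rM}'$ is false in general, and it contradicts both the remark following the lemma (which describes $\mathrm{I}_{\rM'}(\mu_{\rM})=N_{\rM'}(\mu_{\rM})$ as merely compact modulo centre) and Theorem \ref{Ltheorem 5}, where the inducing subgroup is precisely this larger normalizer.

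The correct conclusion at that point is weaker: $m$ lies in $\mathrm{I}_{\rM}(\tilde{\lambda}_{\rM})\cap\rM'=\tilde{J}_{\rM}N_{\rM}(\lambda_{\rM})\cap\rM'$, hence normalizes $\tilde{J}_{\rM}'$ and fixes $\tilde{\lambda}_{\rM}$ up to isomorphism; since $m$ normalizes $\tilde{J}_{\rM}'$, the weak intertwining of the components then forces $m(\nu_{\rM})\cong\mu_{\rM}$, which yields both equalities without ever shrinking to $\tilde{J}_{\rM}'$. This also means your justification of the last assertion ("manifestly independent of $\mu_{\rM}$") evaporates, since it rested on the false identification. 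The paper proves independence separately: the irreducible components of $\res_{\tilde{J}_{\rM}'}^{\tilde{J}_{\rM}}\tilde{\lambda}_{\rM}$ form a single $\tilde{J}_{\rM}$-conjugacy class, and $\tilde{J}_{\rM}$ normalizes $N_{\rM'}(\mu_{\rM})$ because $N_{\rM}(\tilde{\lambda}_{\rM})/\tilde{J}_{\rM}$ is abelian (being a subgroup of $N_{\rM}(\lambda_{\rM})/J_{\rM}$), so that for $x\in\tilde{J}_{\rM}$ and $y\in N_{\rM'}(\mu_{\rM})$ one has $x^{-1}yx\in y\tilde{J}_{\rM}'$ and hence $x^{-1}yx(\mu_{\rM})\cong\mu_{\rM}$. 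You would need to supply this (or an equivalent) argument to complete the proof.
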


\begin{proof}
Let $m\in\rM'$ weakly intertwines $\mu_{\rM}$ with $\nu_{\rM}$. Then by \ref{Lprop 0.3}, the element $m$ weakly intertwines $\tilde{\lambda}_{\rM}$ with $\tilde{\lambda}_{\rM}\otimes\chi\circ\mathrm{det}$ for some $k$-quasicharacter $\chi$ of $F^{\times}$. By definition
$$\tilde{\lambda}_{\rM}\vert_{J_{\rM}}\cong\oplus_{x\in \tilde{J_{\rM}}/J_{\rM}}x(\lambda_{\rM})\cong \oplus_{x\in\tilde{J}_{\rM}/J_{\rM}}\lambda_{\rM}\otimes\xi_x\circ\mathrm{det}.$$
Since the induced representation $\ind_{J_{\rM}}^{\tilde{J}_{\rM}}\lambda_{\rM}\otimes\xi_x\circ\mathrm{det}\cong\tilde{\lambda}_{\rM}\otimes\xi_x\circ\mathrm{det}$, by Frobenius reciprocity, we have $\tilde{\lambda}_{\rM}\otimes\xi_x\circ\mathrm{det}\cong\tilde{\lambda}_{\rM}$ for every $x\in\tilde{J}_{\rM}/J_{\rM}$. It follows that for some $g\in \tilde{J}_{\rM}$, the element $gm$ weakly intertwines $\lambda_{\rM}$ with $\lambda_{\rM}\otimes\xi_x\cdot\chi\circ\mathrm{det}$ for some $x\in \tilde{J}_{\rM}/J_{\rM}$. Applying \ref{Lprop 1}, the element $gm$ intertwines $\lambda_{\rM}$ with $\lambda_{\rM}\otimes\xi_x\cdot\chi\circ\mathrm{det}$, and there exists an element $y\in\tilde{J}_{\rM}$ such that $y(\lambda_{\rM})\cong \lambda_{\rM}\otimes\xi_x\cdot\chi\circ\mathrm{det}$. Inducing this isomorphism to $\tilde{J}_{\rM}$, we see tha $\tilde{\lambda}_{\rM}\cong\tilde{\lambda}_{\rM}\otimes\chi\circ\mathrm{det}$, whence $m$ intertwines $\tilde{\lambda}_{\rM}$.

Furthermore, the intertwining set $\mathrm{I}_{\rM}(\lambda_{\rM})=N_{\rM}(\lambda_{\rM})$, the latter group is the normalizer of $\lambda_{\rM}$, which also normalizes $\rU(\fA_{\rM})$, hence normalizes $\tilde{J}_{\rM}$. We deduce that $\mathrm{I}_{\rM}(\tilde{\lambda}_{\rM})=\tilde{J}_{\rM} N_{\rM}(\lambda_{\rM})$. Then each element of $\mathrm{I}_{\rM}^{w}(\mu_{\rM},\nu_{\rM})$ normalizes $\tilde{\lambda}_{\rM}$ and the group $\tilde{J}_{\rM}'$. This gives the first two assertions.

To prove the third assertion, observe that the irreducible components of $\tilde{\lambda}_{\rM}\vert_{\tilde{J}_{\rM}'}$ form a single $\tilde{J}_{\rM}$-conjugacy class. We have to show therefore that $\tilde{J}_{\rM}$ normalizes $N_{\rM'}(\mu_{\rM})$.

The quotient group $N_{\rM}(\tilde{\lambda}_{\rM})/\tilde{J}_{\rM}$ is abelian. In fact, as we have proved above, it is a subgroup of $N_{\rM}(\lambda_{\rM})/J_{\rM}$. The latter group is abelian, since $N_{\rM}(\lambda_{\rM})$ can be written as $E_1^{\times}J_1\times\cdots\times E_r^{\times}J_r$, where $E_1,\ldots,E_r$ are field extensions of $F$. Now let $x\in \tilde{J}_{\rM}$ and $y\in N_{\rM'}(\mu_{\rM})$, we have $x^{-1}yx=y\cdot m$ for some $m\in\tilde{J}_{\rM}'$. Therefore:
$$x^{-1}yx(\mu_{\rM})\cong y(\mu_{\rM})\cong\mu_{\rM},$$
as required.
\end{proof}

\begin{rem}
To be more detailed, we proved that the intertwining group $\mathrm{I}_{\rM'}(\mu_{\rM})$ is the stabilizer group $N_{\rM'}(\mu_{\rM})$, which is a subgroup of $E_1^{\times}\tilde{J}_1\times\cdots\times E_r^{\times}\tilde{J}_r \cap \rM'$, hence a compact group modulo center.
\end{rem}

\subsubsection{Maximal simple cuspidal $k$-types of $\rM'$}
In this section, we construct maximal simple cuspidal $k$-types of $\rM'$ (\ref{Ldefinition 20}). This means that for any irreducible cuspidal $k$-representation $\pi'$, there exists an irreducible component $\mu_{\rM}$ of $\res_{\tilde{J}_{\rM}'}^{\tilde{J}_{\rM}}\tilde{\lambda}_{\rM}$, and an irreducible $k$-representation $\tau_{\rM'}$ of $N_{\rM'}(\mu_{\rM})$ containing $\mu_{\rM}$, such that $\pi'\cong\ind_{N_{\rM'}(\mu_{\rM})}^{\rM'}\tau_{\rM'}$. We follow the same method as in the case of $\rG'$, which is to calculate the intertwining group and verify the second condition of irreducibility (\ref{lem 19}).

\begin{lem}
\label{Llemma 2}
As in the case when $\rM=\rG$, we have a decomposition:
$$\res_{J_{\rM}}^{\rM}\ind_{J_{\rM}}^{\rM}\lambda_{\rM}\cong \Lambda_{\lambda_{\rM}}\oplus W_{\rM},$$
where $\Lambda_{\lambda_{\rM}}$ is semisimple, of which each irreducible component is isomorphic to $\lambda_{\rM}\otimes\chi\circ\mathrm{det}$ for some $k$-quasicharacter $\chi$ of $F^{\times}$. Non of irreducible subquotient of $W_{\rM}$ is contained in $\Lambda_{\lambda_{\rM}}$.
\end{lem}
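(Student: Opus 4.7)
The plan is to mimic the argument used to prove Theorem \ref{thm 7} in the earlier section, replacing $\rG$ by $\rM$ and using the Levi-variant tools that have already been assembled, namely Proposition \ref{Lprop 0.1}, Proposition \ref{Lprop 0.2}, Proposition \ref{Lprop 0.3} and Proposition \ref{Lprop 1}. Since $\rM \cong \GL_{n_1}(F) \times \cdots \times \GL_{n_r}(F)$ and $(J_{\rM},\lambda_{\rM}) \cong (J_1,\lambda_1) \otimes \cdots \otimes (J_r,\lambda_r)$ is a product of maximal simple cuspidal $k$-types, Corollary $8.4$ of \cite{V3} applies componentwise to produce an initial decomposition
\[
\res_{J_{\rM}}^{\rM}\ind_{J_{\rM}}^{\rM}\lambda_{\rM} \cong \Lambda(\lambda_{\rM}) \oplus W_1,
\]
where $\Lambda(\lambda_{\rM})$ is a multiple of $\lambda_{\rM}$ (thus of the required form with $\chi = 1$) and no irreducible subquotient of $W_1$ is isomorphic to $\lambda_{\rM}$.

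Next, I would test whether any irreducible component $\lambda_{\rM}'$ of $\res_{J_{\rM}'}^{J_{\rM}}\lambda_{\rM}$ appears as a subquotient of $\res_{J_{\rM}'}^{J_{\rM}} W_1$. If it does, Lemma \ref{lem 9} produces an irreducible subquotient $\bar{\rho}$ of $W_1$ whose restriction to $J_{\rM}'$ contains $\lambda_{\rM}'$; applying Proposition \ref{Lprop 0.2} shows $\bar{\rho} \cong \lambda_{\rM} \otimes \chi \circ \det$ for some $k$-quasicharacter $\chi$ of $F^{\times}$. Then $\lambda_{\rM} \otimes \chi \circ \det$ is weakly intertwined with $\lambda_{\rM}$ in $\rM$, so by Proposition \ref{Lprop 1} it is intertwined with $\lambda_{\rM}$ and there exists $x \in \rU(\fA_{\rM})$ with $x(\lambda_{\rM}) \cong \lambda_{\rM} \otimes \chi \circ \det$. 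Using the uniqueness of the Heisenberg representation $\eta_{\rM}$ attached to the simple character contained in $\lambda_{\rM}$ (the Levi analogue of Proposition $2.2$ of \cite{MS}), the $x(\eta_{\rM})$-isotypic part of $\res_{J_{\rM}}^{\rM}\ind_{J_{\rM}}^{\rM}\lambda_{\rM}$ is a direct summand equal to a multiple of $x(\lambda_{\rM}) \cong \lambda_{\rM} \otimes \chi \circ \det$, and this isotypic part is disjoint from $\Lambda(\lambda_{\rM})$ because $x(\eta_{\rM}) \not\cong \eta_{\rM}$ (otherwise $\bar{\rho} \cong \lambda_{\rM}$, contradicting the construction of $W_1$). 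Hence we can peel off this piece and write $W_1 \cong x(\Lambda(\lambda_{\rM})) \oplus W_2$.

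I would then iterate this peeling procedure. The key finiteness input is that $\rU(\fA_{\rM})/J_{\rM}$ is a finite group (since $\rU(\fA_{\rM}) = \rU(\fA_1) \times \cdots \times \rU(\fA_r)$ and each $\rU(\fA_i)/J_i$ is finite), so the set of $\rU(\fA_{\rM})$-conjugates $\{ x(\lambda_{\rM}) \}$ is finite, and the set of twists $\lambda_{\rM} \otimes \chi \circ \det$ that can possibly appear as subquotients is thus finite. After finitely many iterations we arrive at a decomposition
\[
\res_{J_{\rM}}^{\rM}\ind_{J_{\rM}}^{\rM}\lambda_{\rM} \cong \Lambda_{\lambda_{\rM}} \oplus W_{\rM},
\]
where $\Lambda_{\lambda_{\rM}}$ is a finite sum of $\rU(\fA_{\rM})$-conjugates of $\Lambda(\lambda_{\rM})$ (each a multiple of some $\lambda_{\rM} \otimes \chi \circ \det$) and $W_{\rM}$ contains no irreducible subquotient of the form $\lambda_{\rM} \otimes \chi' \circ \det$, as required.

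The main obstacle is to ensure the argument works verbatim in the Levi setting; all the building blocks have already been promoted from $\rG$ to $\rM$ in the excerpt (Proposition \ref{Lprop 1} being the crucial one), and the uniqueness of $\eta_{\rM}$ factorises over the product decomposition of $\rM$, so this obstacle is essentially bookkeeping rather than genuinely new difficulty.
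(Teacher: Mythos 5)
Your proposal is correct, but it is not the route the paper takes: the paper proves Lemma \ref{Llemma 2} by a one-line reduction to Theorem \ref{thm 7}, applied blockwise through the product structure $\rM\cong\GL_{n_1}(F)\times\cdots\times\GL_{n_r}(F)$, $J_{\rM}=J_1\times\cdots\times J_r$, $\lambda_{\rM}\cong\lambda_1\otimes\cdots\otimes\lambda_r$, so that $\res_{J_{\rM}}^{\rM}\ind_{J_{\rM}}^{\rM}\lambda_{\rM}$ is the tensor product of the decompositions $\Lambda_{\lambda_i}\oplus W_i$ already furnished for each $\GL_{n_i}(F)$. You instead re-run the whole proof of Theorem \ref{thm 7} inside $\rM$, using the Levi analogues (Lemma \ref{lem 9}, Propositions \ref{Lprop 0.2}, \ref{Lprop 0.3}, \ref{Lprop 1}) and the finiteness of $\rU(\fA_{\rM})/J_{\rM}$ to iterate the peeling of $x(\eta_{\rM})$-isotypic summands. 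Both arguments are sound, and the mathematical content is the same since the Levi analogues were themselves obtained by reduction to the blocks; the trade-off is worth recording. Your iteration produces the summands directly in the required form $\lambda_{\rM}\otimes\chi\circ\det$ with a single quasicharacter $\chi$ of $F^{\times}$ (via Proposition \ref{Lprop 0.2}) and, as a by-product, gives the finer statement that is what is actually used later (in the proof of Proposition \ref{Lproposition 4}): no irreducible component of $\res_{J_{\rM}'}^{J_{\rM}}\lambda_{\rM}$ occurs as a subquotient of $\res_{J_{\rM}'}^{J_{\rM}}W_{\rM}$. The blockwise deduction is much shorter but a priori yields components of $\bigotimes_i\Lambda_{\lambda_i}$ twisted by characters $(g_1,\ldots,g_r)\mapsto\prod_i\chi_i(\det g_i)$, which need not factor through the global determinant, so it still needs the small sorting argument (Lemma \ref{lem 9} together with Proposition \ref{Lprop 0.2}) to place each piece into $\Lambda_{\lambda_{\rM}}$ or $W_{\rM}$ as stated; your version makes that bookkeeping explicit at the cost of repeating the proof of Theorem \ref{thm 7}.
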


\begin{proof}
This is directly deduced from the decomposition in \ref{thm 7}.
\end{proof}

\begin{prop}
\label{Lproposition 4}
Let $\mu_{\rM}$ be an irreducible $k$-subrepresentation of $\res_{\tilde{J}_{\rM}'}^{\tilde{J}_{\rM}}\ind_{J_{\rM}}^{\tilde{J}_{\rM}}\lambda_{\rM}$. Then $\mu_{\rM}$ verifies the second condition of irreducibility (\ref{lem 19}). This means that for any irreducible representation $\pi'$ of $\rM'$, if there is an injection $\mu_{\rM}\rightarrow \res_{\tilde{J}_{\rM}'}^{\rM'}\pi'$, then there exists a surjection from $\res_{\tilde{J}_{\rM}'}^{\rM'}\pi'$ to $\mu_{\rM}$.
\end{prop}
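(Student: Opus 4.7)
The plan is to transcribe almost verbatim the proof of Theorem \ref{thm 10}, substituting the pair $(\tilde{J}_{\rM},\tilde{J}_{\rM}')$ for $(J,J')$ and invoking Lemma \ref{Llemma 2} at the crucial decomposition step. First, by Mackey's decomposition formula applied with the trivial double coset, $\ind_{\tilde{J}_{\rM}'}^{\rM'}\res_{\tilde{J}_{\rM}'}^{\tilde{J}_{\rM}}\tilde{\lambda}_{\rM}$ occurs as a direct factor of $\res_{\rM'}^{\rM}\ind_{\tilde{J}_{\rM}}^{\rM}\tilde{\lambda}_{\rM}$. The hypothesis $\mu_{\rM}\hookrightarrow\res_{\tilde{J}_{\rM}'}^{\rM'}\pi'$, Frobenius reciprocity, the semisimplicity of $\res_{\tilde{J}_{\rM}'}^{\tilde{J}_{\rM}}\tilde{\lambda}_{\rM}$ (Proposition \ref{Lprop 0.1}, since $\tilde{J}_{\rM}\subset\rU(\fA_{\rM})$ is compact), and the irreducibility of $\pi'$ together produce a surjection
$$\iota\colon \res_{\tilde{J}_{\rM}'}^{\rM}\ind_{\tilde{J}_{\rM}}^{\rM}\tilde{\lambda}_{\rM}\twoheadrightarrow\res_{\tilde{J}_{\rM}'}^{\rM'}\pi'.$$

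The next step exploits the identification $\ind_{\tilde{J}_{\rM}}^{\rM}\tilde{\lambda}_{\rM}\cong\ind_{J_{\rM}}^{\rM}\lambda_{\rM}$ and the $J_{\rM}$-equivariant decomposition $\Lambda_{\lambda_{\rM}}\oplus W_{\rM}$ of Lemma \ref{Llemma 2}. Both summands are $\tilde{J}_{\rM}$-stable, because any $y\in\tilde{J}_{\rM}$ sends a $J_{\rM}$-subrepresentation of type $\lambda_{\rM}\otimes\chi\circ\det$ to another of the same type, hence permutes the $J_{\rM}$-isotypic pieces assembled in $\Lambda_{\lambda_{\rM}}$. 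Moreover, $\mu_{\rM}$ is not a subquotient of $\res_{\tilde{J}_{\rM}'}^{\tilde{J}_{\rM}}W_{\rM}$: since $\res_{J_{\rM}}^{\tilde{J}_{\rM}}\tilde{\lambda}_{\rM}\cong\bigoplus_{x\in\tilde{J}_{\rM}/J_{\rM}}x(\lambda_{\rM})$ is a direct sum of twists of $\lambda_{\rM}$ whose restrictions to $J_{\rM}'$ all coincide with $\res_{J_{\rM}'}^{J_{\rM}}\lambda_{\rM}$, the subrepresentation $\mu_{\rM}$ contains, on restriction to $J_{\rM}'$, an irreducible component $\lambda_{\rM}'$ of $\res_{J_{\rM}'}^{J_{\rM}}\lambda_{\rM}$; if $\mu_{\rM}$ were a subquotient of $\res_{\tilde{J}_{\rM}'}^{\tilde{J}_{\rM}}W_{\rM}$, then $\lambda_{\rM}'$ would be a subquotient of $\res_{J_{\rM}'}^{J_{\rM}}W_{\rM}$, contradicting the property of $W_{\rM}$ inherited from Theorem \ref{thm 7}.

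Following the end of the proof of Theorem \ref{thm 10}, the image of $\mu_{\rM}$ in $(\Lambda_{\lambda_{\rM}}\oplus W_{\rM})/\ker\iota\cong\res_{\tilde{J}_{\rM}'}^{\rM'}\pi'$ does not lie inside $(W_{\rM}+\ker\iota)/\ker\iota$, and consequently the composition
$$\mu_{\rM}\hookrightarrow(\Lambda_{\lambda_{\rM}}\oplus W_{\rM})/\ker\iota\twoheadrightarrow\Lambda_{\lambda_{\rM}}/\bigl(\Lambda_{\lambda_{\rM}}\cap(W_{\rM}+\ker\iota)\bigr)$$
is non-trivial. The desired surjection $\res_{\tilde{J}_{\rM}'}^{\rM'}\pi'\twoheadrightarrow\mu_{\rM}$ will then follow by projecting onto the simple summand $\mu_{\rM}$, provided the target of the above display is semisimple as a $\tilde{J}_{\rM}'$-representation.

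The anticipated main obstacle is exactly this last semisimplicity, the sole genuine departure from the $\rG$-case where $\Lambda_{\lambda}$ was evidently $J'$-semisimple by Proposition \ref{Lprop 0.1}. To handle it, I would use that $[\tilde{J}_{\rM}:J_{\rM}]$ is a finite $p$-power by Corollary \ref{cor 13} (applied componentwise to the simple factors of $\rM$), hence invertible in $k$: the standard Maschke-type averaging over $\tilde{J}_{\rM}/J_{\rM}$ then upgrades $\Lambda_{\lambda_{\rM}}$ to a semisimple $\tilde{J}_{\rM}$-representation, Proposition \ref{Lprop 0.1} applied to each irreducible $\tilde{J}_{\rM}$-summand yields semisimplicity of $\res_{\tilde{J}_{\rM}'}^{\tilde{J}_{\rM}}\Lambda_{\lambda_{\rM}}$, and any quotient of a semisimple module is semisimple.
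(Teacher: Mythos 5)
Your proof is correct, but it does not follow the paper's own route: the paper proves Proposition \ref{Lproposition 4} simply by referring to Proposition \ref{prop 14}, whose mechanism stays at the level of $J_{\rM}'$ and transports the decomposition of Theorem \ref{thm 7} through the exact functor $\ind_{J'}^{L}$ with $L=\tilde{J}_{\rM}'$; the semisimplicity needed there is that of $\ind_{J'}^{L}\res_{J'}^{J}\Lambda_{\lambda}$, obtained Clifford-theoretically as in Proposition \ref{prop 17} (each $\ind_{J'}^{\tilde{J}'}x(\lambda')$ is a summand of the restriction to $\tilde{J}'$ of the irreducible $\ind_{J}^{\tilde{J}}\lambda$) together with Lemma \ref{lem 8}. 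You instead transcribe Theorem \ref{thm 10} with $(\tilde{J}_{\rM},\tilde{J}_{\rM}')$ in place of $(J,J')$, which forces two new ingredients the paper never uses: you upgrade $\Lambda_{\lambda_{\rM}}\oplus W_{\rM}$ of Lemma \ref{Llemma 2} to a decomposition of $\tilde{J}_{\rM}$-representations, and you obtain semisimplicity of $\res_{\tilde{J}_{\rM}'}^{\tilde{J}_{\rM}}\Lambda_{\lambda_{\rM}}$ by a Maschke average over the finite abelian $p$-group $\tilde{J}_{\rM}/J_{\rM}$ (Corollary \ref{cor 13}, with $p$ invertible in $k$ since $\ell\neq p$) followed by Proposition \ref{Lprop 0.1}. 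What your route buys is a shorter, more structural argument that avoids the $\ind_{J'}^{L}$ bookkeeping (Mackey, Lemma \ref{lem 8}, and the Frobenius descent back to $J'$); what the paper's route buys is that it never has to prove conjugation-stability of the decomposition and works uniformly for any intermediate group $L$. Two points in your sketch deserve to be spelled out, though both go through with the tools you already invoke: (i) stability under $\tilde{J}_{\rM}$ of \emph{both} summands, in particular of $W_{\rM}$ — if the projection of $y(W_{\rM})$ onto the semisimple $\Lambda_{\lambda_{\rM}}$ were nonzero, $W_{\rM}$ would acquire an irreducible subquotient of type $\lambda_{\rM}\otimes\chi\circ\det$, whose restriction to $J_{\rM}'$ contains $\lambda_{\rM}'$, contradicting the defining property of $W_{\rM}$; and (ii) the property of $W_{\rM}$ you use is the restriction-to-$J_{\rM}'$ statement of Theorem \ref{thm 7}, which is what Lemma \ref{Llemma 2} is meant to transfer to $\rM$ (it follows componentwise using Proposition \ref{Lprop 0.2}). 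With these made explicit, your argument is complete and valid.
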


\begin{proof}
A same proof can be given as in the case while $\rM'=\rG'$ (\ref{prop 14}). 
\end{proof}

\begin{prop}
\label{Lproposition 3}
Let $\tau_{\rM'}$ be an irreducible representation of $N_{\rM'}(\mu_{\rM})$ containing $\mu_{\rM}$. Then $\tau_{\rM'}$ verifies the second condition of irreducibility.
\end{prop}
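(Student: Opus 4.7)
The plan is to mirror the proof of Proposition \ref{prop 14}, now one level up: where that argument treated the pair $(L,\lambda_L')$ sitting above $(J',\lambda')$, we treat $(N_{\rM'}(\mu_{\rM}),\tau_{\rM'})$ sitting above $(\tilde{J}_{\rM}',\mu_{\rM})$, using Proposition \ref{Lproposition 4} as the starting input in place of Theorem \ref{thm 10}.

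Two preparatory facts are needed. First, $\tilde{J}_{\rM}'$ is normal in $N_{\rM'}(\mu_{\rM})$: indeed, any $y \in N_{\rM'}(\mu_{\rM})$ satisfies $y(\mu_{\rM}) \cong \mu_{\rM}$, and since $\mu_{\rM}$ is defined on $\tilde{J}_{\rM}'$ the conjugate $y\tilde{J}_{\rM}' y^{-1}$ must equal $\tilde{J}_{\rM}'$. Combined with the compactness-modulo-center of $N_{\rM'}(\mu_{\rM})$ recorded in the remark after Lemma \ref{Llemma 3}, a Clifford-theoretic argument analogous to Proposition \ref{prop 17} then shows that $\ind_{\tilde{J}_{\rM}'}^{N_{\rM'}(\mu_{\rM})}\mu_{\rM}$ is semisimple, so that $\tau_{\rM'}$ embeds in it as a direct summand. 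Second, one needs the analog of Theorem \ref{thm 7} for $\tilde{J}_{\rM}$ inside $\rM$, namely a decomposition $\res_{\tilde{J}_{\rM}}^{\rM}\ind_{\tilde{J}_{\rM}}^{\rM}\tilde{\lambda}_{\rM} \cong \Lambda_{\tilde{\lambda}_{\rM}} \oplus W_{\rM}$ in which no irreducible component of $\res_{\tilde{J}_{\rM}'}^{\tilde{J}_{\rM}}\tilde{\lambda}_{\rM}$ appears as an irreducible subquotient of $\res_{\tilde{J}_{\rM}'}^{\tilde{J}_{\rM}} W_{\rM}$; this is obtained by transcribing the proof of Theorem \ref{thm 7} in the Levi setting.

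Granted these inputs, the argument proceeds as follows. From an injection $\tau_{\rM'} \hookrightarrow \res_{N_{\rM'}(\mu_{\rM})}^{\rM'}\pi'$, the inclusion $\mu_{\rM} \subset \tau_{\rM'}$ yields $\mu_{\rM} \hookrightarrow \res_{\tilde{J}_{\rM}'}^{\rM'}\pi'$, and Proposition \ref{Lproposition 4} then supplies a surjection $\res_{\tilde{J}_{\rM}'}^{\rM'}\pi' \twoheadrightarrow \mu_{\rM}$. Transporting this via Frobenius reciprocity, exactness of restriction, and Mackey's formula applied to $\res_{\rM'}^{\rM}\ind_{\tilde{J}_{\rM}}^{\rM}\tilde{\lambda}_{\rM}$, one produces a nontrivial morphism from a quotient of $\res_{\tilde{J}_{\rM}'}^{\rM}\ind_{\tilde{J}_{\rM}}^{\rM}\tilde{\lambda}_{\rM}$ onto $\res_{\tilde{J}_{\rM}'}^{\rM'}\pi'$. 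Plugging in the decomposition $\Lambda_{\tilde{\lambda}_{\rM}}\oplus W_{\rM}$ and arguing exactly as in the closing paragraphs of Proposition \ref{prop 14}, the image of $\tau_{\rM'}$ cannot lie entirely in the $W_{\rM}$-contribution without forcing a component of $\res_{\tilde{J}_{\rM}'}^{\tilde{J}_{\rM}}\tilde{\lambda}_{\rM}$ to appear as a subquotient of $\res_{\tilde{J}_{\rM}'}^{\tilde{J}_{\rM}} W_{\rM}$, contradicting the second preparatory fact. Hence $\tau_{\rM'}$ projects nontrivially onto a semisimple piece built from $\ind_{\tilde{J}_{\rM}'}^{N_{\rM'}(\mu_{\rM})}$ of the $\Lambda_{\tilde{\lambda}_{\rM}}$-contribution, and the first preparatory fact ensures $\tau_{\rM'}$ is a direct summand of this piece, providing the desired surjection $\res_{N_{\rM'}(\mu_{\rM})}^{\rM'}\pi' \twoheadrightarrow \tau_{\rM'}$.

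The main obstacle I anticipate is the bookkeeping in the last step: because $N_{\rM'}(\mu_{\rM})$ can strictly contain $\tilde{J}_{\rM}'$, one must carefully track how the $\tilde{J}_{\rM}$-conjugacy orbit of $\mu_{\rM}$ distributes inside $\res_{\tilde{J}_{\rM}'}^{\tilde{J}_{\rM}}\Lambda_{\tilde{\lambda}_{\rM}}$ and verify that, after application of $\ind_{\tilde{J}_{\rM}'}^{N_{\rM'}(\mu_{\rM})}$ and passage to the quotient, the specific irreducible $N_{\rM'}(\mu_{\rM})$-summand onto which $\tau_{\rM'}$ projects is genuinely isomorphic to $\tau_{\rM'}$ and not merely to another constituent in the same $\tilde{J}_{\rM}'$-isotypic component.
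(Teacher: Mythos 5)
Your plan breaks at the first preparatory fact, and with it the final step. The whole difficulty in passing from $(\tilde{J}_{\rM}',\mu_{\rM})$ to $(N_{\rM'}(\mu_{\rM}),\tau_{\rM'})$ is that $N_{\rM'}(\mu_{\rM})$ is only compact \emph{modulo centre}, and for a proper Levi $\rM$ that centre is not compact: $N_{\rM'}(\mu_{\rM})$ contains $(Z(\rM)\cap\rM')\cdot\tilde{J}_{\rM}'$, so $\tilde{J}_{\rM}'$ has \emph{infinite} index in $N_{\rM'}(\mu_{\rM})$, the quotient containing a free abelian group of positive rank. Hence there is no Clifford-theoretic analogue of Proposition \ref{prop 17} here: the compactly induced representation $\ind_{\tilde{J}_{\rM}'}^{N_{\rM'}(\mu_{\rM})}\mu_{\rM}$ is not semisimple; as a module over the group algebra of a central lattice it is free, so no subrepresentation on which the centre acts by a character -- in particular no copy of the irreducible $\tau_{\rM'}$ -- embeds in it, and $\tau_{\rM'}$ is only a quotient of it, never a direct summand. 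The same failure destroys your closing step, which needs semisimplicity of $\ind_{\tilde{J}_{\rM}'}^{N_{\rM'}(\mu_{\rM})}$ applied to the $\Lambda_{\tilde{\lambda}_{\rM}}$-contribution in order to split off $\tau_{\rM'}$ and produce the surjection; in Proposition \ref{prop 14} this worked precisely because $L$ was compact and contained $J'$ with finite index (likewise the embedding of $\res_{L}^{\rG'}\pi'$ into $\ind_{J'}^{L}\res_{J'}^{\rG'}\pi'$ used there is a finite-index phenomenon and has no analogue for $\tilde{J}_{\rM}'\subset N_{\rM'}(\mu_{\rM})$). The second preparatory fact (a Theorem \ref{thm 7}-type decomposition for $(\tilde{J}_{\rM},\tilde{\lambda}_{\rM})$) is also not a mere transcription, but that is a secondary issue.

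The paper's proof avoids inducing from $\tilde{J}_{\rM}'$ to $N_{\rM'}(\mu_{\rM})$ altogether. Writing $N_{\rM'}=N_{\rM'}(\mu_{\rM})$, it applies Mackey's formula to $\res_{N_{\rM'}}^{\rM'}\ind_{N_{\rM'}}^{\rM'}\tau_{\rM'}$ over the double cosets $N_{\rM'}\backslash\rM'/N_{\rM'}$: the identity coset contributes $\tau_{\rM'}$ as a genuine direct summand, while for $a\notin N_{\rM'}$ the fact that $\tilde{J}_{\rM}'$ is the unique maximal compact open subgroup of $N_{\rM'}$ (so $\tilde{J}_{\rM}'\cap ba(N_{\rM'})=\tilde{J}_{\rM}'\cap ba(\tilde{J}_{\rM}')$) combined with Lemma \ref{Llemma 3} -- weak intertwining of $\mu_{\rM}$ in $\rM'$ occurs only inside $N_{\rM'}(\mu_{\rM})$ -- shows that the corresponding summand has no subquotient containing $\mu_{\rM}$ on restriction to $\tilde{J}_{\rM}'$, hence none isomorphic to $\tau_{\rM'}$. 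This yields $\res_{N_{\rM'}}^{\rM'}\ind_{N_{\rM'}}^{\rM'}\tau_{\rM'}\cong\tau_{\rM'}\oplus W_{N_{\rM'}}$ with no subquotient of $W_{N_{\rM'}}$ isomorphic to $\tau_{\rM'}$, and then the same quotient argument you invoke at the end closes the proof. If you want to repair your route, the essential input is Lemma \ref{Llemma 3} and the uniqueness of the maximal compact open subgroup of $N_{\rM'}(\mu_{\rM})$, not a Levi analogue of Theorem \ref{thm 7} together with Clifford theory.
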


\begin{proof}
Let $N_{\rM'}$ denote $N_{\rM'}(\mu_{\rM})$, then we have:
$$\res_{N_{\rM'}}^{\rM'}\ind_{N_{\rM'}}^{\rM'}\tau_{\rM'}\cong\oplus_{N_{\rM'}\backslash\rM' / N_{\rM'}}\ind_{N_{\rM'}\cap a(N_{\rM'})}^{N_{\rM'}}\res_{N_{\rM'}\cap a(N_{\rM'})}^{a(N_{\rM'})}a(\tau_{\rM'}).$$
Notice that $N_{\rM'}$ has a unique maximal open compact subgroup $\tilde{J}_{\rM}'$, hence $\tilde{J}_{\rM}'\cap ba(N_{\rM'})=\tilde{J}_{\rM}'\cap ba(\tilde{J}_{\rM}')$, for any $b,a\in\rM'$. Hence we have the following equivalence:
$$\res_{\tilde{J}_{\rM}'}^{N_{\rM'}}\ind_{N_{\rM'}\cap a(N_{\rM'})}^{N_{\rM'}}\res_{N_{\rM'}\cap a(N_{\rM'})}^{a(N_{\rM'})}a(\tau_{\rM'})$$
$$\cong\oplus_{b\in N_{\rM'}\cap a(N_{\rM'})\backslash N_{\rM'} / \tilde{J}_{\rM}'} \ind_{\tilde{J}_{\rM}'\cap ba(N_{\rM'})}^{\tilde{J}_{\rM}'}\res_{\tilde{J}_{\rM}'\cap ba(N_{\rM'})}^{ba(N_{\rM'})}ba(\tau_{\rM'})$$
$$\cong\oplus_{b \in N_{\rM'}\cap a(N_{\rM'})\backslash N_{\rM'} / \tilde{J}_{\rM}'} \ind_{\tilde{J}_{\rM}'\cap ba(\tilde{J}_{\rM}')}^{\tilde{J}_{\rM}'}\res_{\tilde{J}_{\rM}'\cap ba(\tilde{J}_{\rM}')}^{ba(\tilde{J}_{\rM}')}(\oplus\mu_{\rM}),$$
where $\oplus\mu_{\rM}$ denotes a finite multiple of $\mu_{\rM}$.

Let $a\notin N_{\rM'}$, then $ba$ is an element of $N_{\rM'}\cdot a$, and $N_{\rM'}\cdot a\cap N_{\rM'}=\emptyset$. By \ref{Llemma 3}, this means $ba\notin \mathrm{I}_{\rM'}^{w}(\mu_{\rM})$. This implies that non of irreducible subquotient of $\ind_{\tilde{J}_{\rM}'\cap ba(N_{\rM'})}^{\tilde{J}_{\rM}'}\res_{\tilde{J}_{\rM}'\cap ba(N_{\rM'})}^{ba(N_{\rM'})}ba(\tau_{\rM'})$ is isomorphic to $\mu_{\rM}$. Now combining with the first equivalence in this proof above, we obtain a decomposition:
$$\res_{N_{\rM'}}^{\rM'}\ind_{N_{\rM'}}^{\rM'}\tau_{\rM'}\cong\tau_{\rM'}\oplus W_{N_{\rM'}},$$
non of irreducible subquotient of $W_{N_{\rM'}}$ is isomorphic to $\tau_{\rM'}$.

Now we verify the second condition of $\tau_{\rM'}$. Let $\pi'$ be any irreducible $k$-representation of $\rM'$. If there is an injection $\tau_{\rM'}\hookrightarrow \res_{N_{\rM'}}^{\rM'}\pi'$, then $\res_{N_{\rM'}}^{\rM'}\pi'$ is isomorphic to a quotient representation $(\res_{N_{\rM'}}^{\rM'}\ind_{N_{\rM'}}^{\rM'}\tau_{\rM'})/ W_0$. And the image of the composed morphism below:
$$\tau_{\rM'}\hookrightarrow \res_{N_{\rM'}}^{\rM'}\pi'\cong(\res_{N_{\rM'}}^{\rM'}\ind_{N_{\rM'}}^{\rM'}\tau_{\rM'})/ W_0$$
is not contained in $(W_{N_{\rM'}}+W_0)/ W_0$ by the analysis above. Then we have a non trivial morphism:
$$\res_{N_{\rM'}}^{\rM'}\pi'\rightarrow (\tau_{\rM'}\oplus W_{N_{\rM'}})/ (W_{N_{\rM'}}+W_0)\cong\tau_{\rM'}.$$
Hence we finish the proof.

\end{proof}

\begin{lem}
\label{Llemma 0.3}
Let $\rG$ be a locally pro-finite group, and $K_1,K_2$ two open subgroups of $\rG$, where $K_1$ is the unique maximal open compact subgroup in $K_2$. Let $\pi$ be an irreducible $k$-representation of $K_2$, and $\tau$ an irreducible $k$-representation of $K_1$. Assume that $\pi\vert_{K_1}$ is a multiple of $\tau$. If $x\in\rG$ (weakly) intertwines $\pi$, then there exists an element $y\in K_2$ such that $yx$ (weakly) intertwines $\tau$.
\end{lem}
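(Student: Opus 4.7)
My plan is to reduce the statement to Mackey's decomposition formula, exploiting the hypothesis that $K_1$ is the unique maximal open compact subgroup of $K_2$. The first observation I would make is a purely group-theoretic one: since any conjugate $kK_1k^{-1}$ with $k\in K_2$ is another maximal open compact subgroup of $K_2$, uniqueness forces $K_1$ to be normal in $K_2$. More importantly, for any $g\in\rG$, the intersection $K_1\cap g(K_2)$ is an open compact subgroup of $g(K_2)$, whose unique maximal open compact subgroup is $g(K_1)$; hence $K_1\cap g(K_2)\subset g(K_1)$, and combined with the trivial reverse inclusion we obtain the equality
\[
K_1\cap g(K_2)=K_1\cap g(K_1).
\]
This identity is the main technical ingredient, but it is immediate from the hypotheses rather than a serious obstacle.

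Next I would apply Mackey's decomposition formula to restrict the induced representation $i_{K_2,K_2}x(\pi)=\ind_{K_2\cap x(K_2)}^{K_2}\res_{K_2\cap x(K_2)}^{x(K_2)}x(\pi)$ down to $K_1$. Using the identity above with $g=yx$ for a system of representatives $y$ of $K_1\backslash K_2/(K_2\cap x(K_2))$, and the fact that $K_1$ is normal in $K_2$ (so $y(K_1)=K_1$), the Mackey decomposition takes the clean form
\[
\res_{K_1}^{K_2} i_{K_2,K_2}x(\pi)\;\cong\;\bigoplus_{y}\ind_{K_1\cap yx(K_1)}^{K_1}\res_{K_1\cap yx(K_1)}^{yx(K_2)}yx(\pi).
\]
Since $\res_{yx(K_1)}^{yx(K_2)}yx(\pi)$ is a multiple of $yx(\tau)$ by hypothesis on $\pi|_{K_1}$, each summand is a multiple of $i_{K_1,K_1}yx(\tau)$.

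To finish, I would split into the two cases of the statement. For weak intertwining: if $x$ weakly intertwines $\pi$ then $\pi$ is a subquotient of $i_{K_2,K_2}x(\pi)$, hence $\tau$, being a summand of $\pi|_{K_1}$, is a subquotient of the above Mackey sum; by the uniqueness of Jordan--H\"older constituents in one of the summands, some $y\in K_2$ yields $\tau$ as a subquotient of (a multiple of) $i_{K_1,K_1}yx(\tau)$, so $yx$ weakly intertwines $\tau$. For intertwining: start from a non-zero element of $\Hom_{kK_2}(\pi,i_{K_2,K_2}x(\pi))$, restrict to $K_1$, and use the decomposition to produce a non-zero map $\tau\to \ind_{K_1\cap yx(K_1)}^{K_1}\res_{K_1\cap yx(K_1)}^{yx(K_1)}yx(\tau)=i_{K_1,K_1}yx(\tau)$ for some $y$, via Frobenius reciprocity applied twice (first to project onto a single summand where the composition is non-zero, then to convert the resulting non-zero map on $K_1\cap yx(K_1)$ back to an intertwining map on $K_1$).

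I do not expect a genuine obstacle beyond keeping track of indices in Mackey: the pivotal simplification $K_1\cap yx(K_2)=K_1\cap yx(K_1)$ together with $y(K_1)=K_1$ collapses the formula into exactly the shape of $i_{K_1,K_1}yx(\tau)$, so the two cases of the lemma follow in parallel from the same decomposition, once via subquotients and once via $\Hom$-spaces.
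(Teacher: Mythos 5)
Your argument is essentially the paper's own proof: both restrict $i_{K_2,K_2}x(\pi)$ to $K_1$ via Mackey's formula, use the uniqueness of the maximal open compact subgroup to get the key identity $K_1\cap yx(K_2)=K_1\cap yx(K_1)$, and then use that $\pi\vert_{K_1}$ is a multiple of $\tau$ to locate a coset representative $y$ with $yx$ (weakly) intertwining $\tau$. Your added remarks (normality of $K_1$ in $K_2$, and the explicit Hom-space treatment of the non-weak case) are correct refinements of the same approach, so the proposal is fine.
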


\begin{proof}
Since $\pi$ is isomorphic to a subquotient of $\ind_{K_2\cap x(K_2)}^{K_2}\res_{K_2\cap x(K_2)}^{x(K_2)}x(\pi)$, the restriction $\res_{K_1}^{K_2}\pi$ is isomorphic to a subquotient of $\res_{K_1}^{K_2}\ind_{K_2\cap x(K_2)}^{K_2}\res_{K_2\cap x(K_2)}^{x(K_2)}x(\pi)$. Applying Mackey's decomposition formula, we have
$$\res_{K_1}^{K_2}\ind_{K_2\cap x(K_2)}^{K_2}\res_{K_2\cap x(K_2)}^{x(K_2)}x(\pi)\cong \bigoplus_{a\in K_2\cap x(K_2)\slash K_2\backslash K_1}\ind_{K_1\cap ax(K_2)}^{K_1}\res_{K_1\cap ax(K_2)}^{ax(K_2)}ax(\pi).$$
Since $K_1\cap ax(K_2)$ is open compact in $ax(K_2)$, by the uniqueness of open compact subgroup in $ax(K_2)$, the intersection $K_1\cap ax(K_2)\subset ax(K_1)$, hence $K_1\cap ax(K_2)=K_1\cap ax(K_1)$. Write $\res_{K_1}^{K_2}\pi\cong\bigoplus_{I}\tau$, where $I$ is an index set. We have an equivalence
$$\res_{K_1\cap ax(K_1)}^{ax(K_1)}ax(\pi)\cong\bigoplus_{I}\res_{K_1\cap ax(K_1)}^{ax(K_1)}ax(\tau)$$
Since functors $\ind,\res$ can change order with infinite direct sum, we reform the first equivalence in this proof
$$\res_{K_1}^{K_2}\ind_{K_2\cap x(K_2)}^{K_2}\res_{K_2\cap x(K_2)}^{x(K_2)}x(\pi)$$
$$\cong \bigoplus_{I}\bigoplus_{a\in K_2\cap x(K_2)\slash K_2\backslash K_1}\ind_{K_1\cap ax(K_1)}^{K_1}\res_{K_1\cap ax(K_1)}^{ax(K_1)}ax(\tau).$$
As in the proof of Lemma \ref{lem 9}, this implies that there exists at least one $y\in K_2$ such that $\tau$ is an subquotient of $\ind_{K_1\cap yx(K_1)}^{K_1}\res_{K_1\cap yx(K_1)}^{yx(K_1)}yx(\tau)$.
\end{proof}

\begin{thm}
\label{Ltheorem 5}
The induced $k$-representation $\ind_{N_{\rM'}(\mu_{\rM})}^{\rM'}\tau_{\rM'}$ is cuspidal and irreducible. Conversely, any irreducible cuspidal representation $\pi'$ of $\rM'$ contains an irreducible $k$-representation $\tau_{\rM'}$ of $N_{\rM'}(\mu_{\rM})$, and $\pi'\cong\ind_{N_{\rM'}(\mu_{\rM}')}^{\rM'}\tau_{\rM'}$, where $\tau_{\rM'}$ and $N_{\rM'}(\mu_{\rM})$ are defined as in Proposition \ref{Lproposition 3} of some maximal simple cuspidal $k$-type $(J_{\rM},\lambda_{\rM})$ of $\rM$.
\end{thm}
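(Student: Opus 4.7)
My plan is to prove the forward direction (irreducibility and cuspidality of $\ind_{N_{\rM'}(\mu_{\rM})}^{\rM'}\tau_{\rM'}$) via Vign\'eras' criterion (Lemma \ref{lem 19}), and to settle the converse by mirroring the argument of Corollary \ref{cor 21}.

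For the forward direction, Proposition \ref{Lproposition 3} already gives the second condition of Lemma \ref{lem 19} for $\tau_{\rM'}$, so what remains is to show that the intertwining set $\mathrm{I}_{\rM'}(\tau_{\rM'})$ equals $N_{\rM'}(\mu_{\rM})$ (equivalently, $\End_{k\rM'}(\ind_{N_{\rM'}(\mu_{\rM})}^{\rM'}\tau_{\rM'}) = k$). Given any $x \in \mathrm{I}_{\rM'}(\tau_{\rM'})$, I will exploit the fact---recorded in the remark after Lemma \ref{Llemma 3}---that $\tilde{J}_{\rM}'$ is the unique maximal open compact subgroup of $N_{\rM'}(\mu_{\rM})$. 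Since $\tau_{\rM'}$ is irreducible on $N_{\rM'}(\mu_{\rM})$ and contains $\mu_{\rM}$, and since $N_{\rM'}(\mu_{\rM})$ stabilises the isomorphism class of $\mu_{\rM}$, the restriction $\res_{\tilde{J}_{\rM}'}^{N_{\rM'}(\mu_{\rM})}\tau_{\rM'}$ is a multiple of $\mu_{\rM}$. Lemma \ref{Llemma 0.3} then produces $y \in N_{\rM'}(\mu_{\rM})$ such that $yx$ intertwines $\mu_{\rM}$, whereupon Lemma \ref{Llemma 3} forces $yx \in N_{\rM'}(\mu_{\rM})$ and hence $x \in N_{\rM'}(\mu_{\rM})$. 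Cuspidality of the induced representation will then follow either directly---since $N_{\rM'}(\mu_{\rM})$ is compact modulo the centre of $\rM'$, matrix coefficients of $\ind_{N_{\rM'}(\mu_{\rM})}^{\rM'}\tau_{\rM'}$ are compactly supported modulo the centre---or by realising $\ind_{N_{\rM'}(\mu_{\rM})}^{\rM'}\tau_{\rM'}$ as a direct summand of $\res_{\rM'}^{\rM}\pi$ for some irreducible cuspidal $\pi$ of $\rM$ and invoking the natural analog of Corollary \ref{cor 20} for proper Levi subgroups of $\rM'$.

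For the converse, starting from an irreducible cuspidal $\pi'$ of $\rM'$, I will first apply Proposition \ref{prop 6} to obtain an irreducible $\pi$ of $\rM$ such that $\pi'$ is a direct summand of $\res_{\rM'}^{\rM}\pi$; the analog of Corollary \ref{cor 20} for Levi subgroups of $\rM'$ then ensures that $\pi$ is itself cuspidal. Applying the Bushnell--Kutzko theory of maximal simple cuspidal types componentwise on each $\mathrm{GL}_{n_i}$-factor of $\rM$, one writes $\pi \cong \ind_{N}^{\rM}\Lambda_{\rM}$, where $N$ is the normaliser of $\lambda_{\rM}$ in $\rM$ and $\Lambda_{\rM}$ extends $\lambda_{\rM}$. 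Mackey's decomposition formula applied to $\res_{\rM'}^{\rM}\pi$ exhibits $\ind_{J_{\rM}'}^{\rM'}\res_{J_{\rM}'}^{J_{\rM}}\lambda_{\rM}$ as a direct summand (via the trivial double-coset). Refining this by inducing successively through the chain $J_{\rM}' \subset \tilde{J}_{\rM}' \subset N_{\rM'}(\mu_{\rM})$ and picking at each stage the appropriate irreducible component by Frobenius reciprocity, one isolates an irreducible $\tau_{\rM'}$ of $N_{\rM'}(\mu_{\rM})$ such that some $\rM$-conjugate $g(\pi')$ is isomorphic to $\ind_{N_{\rM'}(\mu_{\rM})}^{\rM'}\tau_{\rM'}$. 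Replacing $(J_{\rM},\lambda_{\rM})$ by its $g^{-1}$-conjugate (which remains a maximal simple cuspidal $k$-type of $\rM$, by remark \ref{rem 33}) then yields the statement exactly.

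The principal obstacle I anticipate is the bookkeeping in the converse: at each stage of the chain $J_{\rM}' \subset \tilde{J}_{\rM}' \subset N_{\rM'}(\mu_{\rM})$ one must verify semisimplicity of the relevant induced representation (by an argument along the lines of Proposition \ref{prop 17}, which depends on the normality of each subgroup in the next) and choose the correct irreducible summand. A secondary but important point is ensuring that the $\rM$-conjugacy class of the summand of $\res_{\rM'}^{\rM}\pi$ isolated by Mackey really coincides with that of $\pi'$; this rests on an analog of the multiplicity-one result of Proposition \ref{prop 23} for a general Levi $\rM$, which should follow from the product structure of $\rM$ and the fact that each factor already satisfies multiplicity one.
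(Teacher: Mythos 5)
Your forward direction follows the paper: the second irreducibility condition is Proposition \ref{Lproposition 3}, the computation $\mathrm{I}_{\rM'}(\tau_{\rM'})=N_{\rM'}(\mu_{\rM})$ is exactly the intended combination of Lemma \ref{Llemma 0.3} (legitimate because $\res_{\tilde{J}_{\rM}'}^{N_{\rM'}(\mu_{\rM})}\tau_{\rM'}$ is $\mu_{\rM}$-isotypic and $\tilde{J}_{\rM}'$ is the unique maximal compact open subgroup of $N_{\rM'}(\mu_{\rM})$) with Lemma \ref{Llemma 3}, and your second route to cuspidality (realising the induced representation inside $\res_{\rM'}^{\rM}\pi$ for a cuspidal $\pi$ of $\rM$) is the one taken in the paper.

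The converse, however, has a genuine gap. For a proper Levi subgroup the trivial double coset in Mackey's formula for $\res_{\rM'}^{\rM}\ind_{N_{\rM}(\lambda_{\rM})}^{\rM}\Lambda_{\rM}$ gives $\ind_{N_{\rM}(\lambda_{\rM})\cap\rM'}^{\rM'}\res_{N_{\rM}(\lambda_{\rM})\cap\rM'}^{N_{\rM}(\lambda_{\rM})}\Lambda_{\rM}$, and $N_{\rM}(\lambda_{\rM})\cap\rM'$ is in general strictly larger than $J_{\rM}'$: it contains non-compact elements $(e_1j_1,\ldots,e_rj_r)$ with $e_i\in E_i^{\times}$ of non-zero valuation whose determinants cancel. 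Already for $\rM=\GL_1\times\GL_1$ and $\pi$ a character, $\res_{\rM'}^{\rM}\pi$ is a single character of $\rM'\cong F^{\times}$, while $\ind_{J_{\rM}'}^{\rM'}\res_{J_{\rM}'}^{J_{\rM}}\lambda_{\rM}$ is an induction from $\fo_F^{\times}$ to $F^{\times}$, hence infinite-dimensional; so your claim that Mackey ``exhibits $\ind_{J_{\rM}'}^{\rM'}\res_{J_{\rM}'}^{J_{\rM}}\lambda_{\rM}$ as a direct summand via the trivial double coset'' fails, and this is precisely where the Levi case differs from $\rM=\rG$ (Corollary \ref{cor 21}), where $E^{\times}J\cap\rG'=J'$. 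Your chain $J_{\rM}'\subset\tilde{J}_{\rM}'\subset N_{\rM'}(\mu_{\rM})$ never passes through the group that actually mediates between $\pi$ and $\pi'$, namely $N'=N_{\rM}(\mu)\cap\rM'$: the paper writes $\pi\cong\ind_{N_{\rM}(\mu)}^{\rM}\tau_{\rM}$ with $\tau_{\rM}=\ind_{N_{\rM}(\lambda_{\rM})}^{N_{\rM}(\mu)}\Lambda_{\rM}$, shows $\res_{N'}^{N_{\rM}(\mu)}\tau_{\rM}$ is semisimple of finite length, conjugates by an element of $\rM$ so that $\pi'$ contains one of its components $\tau'$, checks $\tau'$ contains $\mu_{\rM}$, uses that $N_{\rM'}(\mu_{\rM})$ is normal of finite index in $N'$ to extract a component $\tau_{\rM'}$ of $\res_{N_{\rM'}(\mu_{\rM})}^{N'}\tau'$ containing $\mu_{\rM}$, and only then concludes $\pi'\cong\ind_{N_{\rM'}(\mu_{\rM})}^{\rM'}\tau_{\rM'}$ by Frobenius reciprocity together with the irreducibility proved in the first part. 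Without this intermediate stage, your plan to ``refine by inducing successively and pick the appropriate component'' gives no guarantee that the representation you construct is (a conjugate of) $\pi'$. Finally, your closing concern is misplaced: identifying the $\rM$-conjugacy class does not require a multiplicity-one analogue of Proposition \ref{prop 23} for $\rM$; Proposition \ref{prop 6} already says all components of $\res_{\rM'}^{\rM}\pi$ are $\rM$-conjugate, and conjugating the type by $g^{-1}$ is harmless.
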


\begin{proof}
For the first assertion, we only need to verify the two condition of irreducibility. The second condition has been checked in \ref{Lproposition 3}. By \ref{Llemma 0.3} and \ref{Llemma 3}, we obtain that the induced $k$-representation $\ind_{N_{\rM'}(\mu_{\rM})}^{\rM'}\tau_{\rM'}$ is irreducible. Let $\pi'$ be the induced $k$-representation, and $\pi$ the $k$-irreducible representation as in \ref{prop 6}. We deduce from \ref{lem 9} and the fact that $\pi'$ contains $(J_{\rM}',\lambda_{\rM}')$, that $\pi$ contains $(J_{\rM},\lambda_{\rM}\otimes\chi\circ \det)$. Hence $\pi$ is cuspidal (\ref{cor 3}) and this implies that $\pi'$ is cuspidal. Conversely, let $\pi'$ be an irreducible cuspidal $k$-representation of $\rM'$, and $\pi$ be the irreducible cuspidal $k$-representation of $\rM$ which contains $\pi'$. Then there exists a maximal simple cuspidal $k$-type $(J_{\rM},\lambda_{\rM})$, and an extension $\Lambda_{\rM}$ of $\lambda_{\rM}$ to $N_{\rM}(\lambda_{\rM})$ such that $\pi\cong\ind_{N_{\rM}(\lambda_{\rM})}^{\rM}\Lambda_{\rM}$. Let $\mu=\ind_{J_{\rM}}^{\tilde{J}_{\rM}}\lambda_{\rM}$, and $N_{\rM}(\mu)$ be the normalizer of $\mu$ in $\rM$. By the transitivity of induction:
$$\pi\cong\ind_{N_{\rM}(\mu)}^{\rM}\circ\ind_{N_{\rM}(\lambda_{\rM})}^{N_{\rM}(\mu)}\Lambda_{\rM}.$$
Denote $\ind_{N_{\rM}(\lambda_{\rM})}^{N_{\rM}(\mu)}\Lambda_{\rM}$ as $\tau_{\rM}$, which is an irreducible representation containing $\mu$.

Till the end of this proof, we denote $\mu_{\rM}$ as a direct component of $\mu\vert_{\tilde{J}_{\rM}'}$, $N$ as $N_{\rM}(\mu)$, $N'$ as $N\cap \rM'$, and $N_{\rM'}$ as $N_{\rM'}(\mu_{\rM})$. Let $K$ be an open compact subgroup of $\tilde{J}_{\rM}$ contained in the kernel of $\tau_{\rM}$, and $Z$ be the center of $\rM$. Since the quotient $(Z\cdot N')\slash N$ is compact and the image of $K$ in this quotient is open, we deduce that $Z\cdot N'\cdot K$ is a normal subgroup with finite index of $N$. Hence the restriction $\res_{Z\cdot N'\cdot K}^{N}\tau_{\rM}$ is semisimple with finite length as in the first part of proof of \ref{prop 6}, from which we deduce that the restriction $\res_{N'}^{N}\tau_{\rM}$ is semisimple with finite length as well. After conjugate by an element $m$ in $\rM$, the cuspidal representation $\pi'$ contains a direct component of this restricted representation. We can assume that $m$ is identity, and denote this direct component as $\tau'$. Applying Frobenius reciprocity, the representation $\res_{\tilde{J}_{\rM}'}^{N}\ind_{\tilde{J}_{\rM}}^{N}\mu$ is semisimple, consisting of $\res_{\tilde{J}_{\rM}'}^{\tilde{J}_{\rM}}\mu$, hence $\tau'$ contains a $\mu_{\rM}$. Notice that $N_{\rM'}$ is a normal subgroup with finite index in $N'$. In fact, the group $N_{\rM'}$ contains $Z\cdot\tilde{J}_{\rM}'$. And as we have discussed after the proof of \ref{Llemma 3}, we could write $N'$ as a subgroup of $E_1^{\times}\tilde{J}_1\times\cdots\times E_r^{\times} \tilde{J}_r\cap\rM'=(E_1^{\times}\times\cdots\times E_r^{\times}\cap\rM')(\tilde{J}_{\rM}')$. Hence $\res_{N_{\rM'}}^{N'}\tau'$ is semisimple with finite length, and there must be one direct component $\tau_{\rM'}$ containing $\mu_{\rM}$. Since $\pi'$ contains $\tau_{\rM'}$, we have:
$$\pi'\cong\ind_{N_{\rM'}}^{\rM'}\tau_{\rM'}.$$
This ends the proof.
\end{proof}

\begin{defn}
\label{Ldefinition 20}
Let $(J_{\rM},\lambda_{\rM})$ be a maximal simple cuspidal $k$-type of $\rM$, and $\mu_{\rM}$ be an irreducible component of $\res_{\tilde{J}_{\rM}'}^{\tilde{J}_{\rM}}\tilde{\lambda}_{\rM}$, where $\tilde{J}_{\rM}$ and $\tilde{\lambda}_{\rM}$ are defined as in \ref{Ldefinition 21}. Let $N_{\rM'}(\mu_{\rM})$ be the normalizer group of $\mu_{\rM}$ in $\rM'$, and $\tau_{\rM'}$ an irreducible $k$-representation of $N_{\rM'}(\mu_{\rM})$ containing $\mu_{\rM}$. We define the couples in forms of $(N_{\rM'}(\mu_{\rM}),\tau_{\rM'})$ are the maximal simple cuspidal $k$-types of $\rM'$.
\end{defn}

\subsubsection{The $k$-representations $\pi$}
In this section $\pi'$ is an irreducible cuspidal $k$-representation of $\rM'$. We study the irreducible cuspidal $k$-representations $\pi$ of $\rM$, which contains $\pi'$ as a common component, and we prove that any two of them are different by a $k$-character of $\rM$ factor through determinant (Lproposition 9). This is the key to give the first description of supercuspidal support of $\pi'$ in the next section.

\begin{lem}
\label{Llemma 6}
Let $(J_{\rM},\lambda_{\rM})$ be a maximal simple cuspidal $k$-type of $\rM$, and $\mu=\ind_{J_{\rM}}^{\tilde{J}_{\rM}}\lambda_{\rM}$. Let $\tau$ be any irreducible $k$-representation of $N=N_{\rM}(\mu)$ containing $\mu$, then $\res_{N'}^{N}\tau$ is semisimple with finite length.
\end{lem}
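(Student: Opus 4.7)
The plan is to mirror the strategy used at the end of the proof of Theorem \ref{Ltheorem 5}, where exactly this fact is invoked for the specific representation $\tau_{\rM}=\ind_{N_{\rM}(\lambda_{\rM})}^{N}\Lambda_{\rM}$; the point of the lemma is to isolate that argument and show it works for an arbitrary irreducible $\tau$ of $N$ containing $\mu$. Throughout, $Z$ denotes the center of $\rM$.

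The first step is to check that $Z\cdot N'$ is a normal subgroup of finite index in $N$. Normality is immediate: $Z$ is central in $\rM$ and hence in $N$, while $N'=N\cap\rM'$ is normal in $N$ because $\rM'$ is normal in $\rM$. For the index, I use the product-of-determinants map $\det_{\rG}:\rM\to F^{\times}$, which has kernel $\rM'$ and sends $Z$ onto $(F^{\times})^d$ for $d=\gcd(n_1,\ldots,n_r)$ when $\rM=\mathrm{GL}_{n_1}\times\cdots\times\mathrm{GL}_{n_r}$. Thus $N/(Z\cdot N')$ embeds into $F^{\times}/(F^{\times})^d$, which is finite because $F$ is non-archimedean locally compact (both $\mathbb{Z}/d\mathbb{Z}$ and $\mathfrak{o}_F^{\times}/(\mathfrak{o}_F^{\times})^d$ are finite). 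Hence $[N:Z\cdot N']$ is finite.

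The second step is to apply Proposition \ref{propver13.1} to the inclusion $Z\cdot N'\trianglelefteq N$: since $\tau$ is irreducible and $Z\cdot N'$ is a closed normal subgroup of finite index, the restriction $\res_{Z\cdot N'}^{N}\tau$ is semisimple of finite length. Write it as $\bigoplus_{i=1}^{m}\sigma_i$ with $\sigma_i$ irreducible $k$-representations of $Z\cdot N'$.

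The third step is to descend from $Z\cdot N'$ to $N'$. Because $Z$ is central in $N$ and $\tau$ is irreducible, Schur's lemma forces $Z$ to act on $\tau$ by a $k$-character $\chi_Z$, hence also on every $\sigma_i$. Consequently, any $N'$-stable subspace of $\sigma_i$ is automatically $Z$-stable and therefore $(Z\cdot N')$-stable, so $\sigma_i\vert_{N'}$ is irreducible. Therefore
$$\res_{N'}^{N}\tau=\bigoplus_{i=1}^{m}\sigma_i\vert_{N'}$$
is semisimple of finite length, as required.

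There is no genuine obstacle in this argument; the only step requiring care is the finite-index verification in the first paragraph, and everything else is a direct application of Proposition \ref{propver13.1} together with Schur's lemma for the central action.
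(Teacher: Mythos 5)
Your steps 2 and 3 (Proposition \ref{propver13.1} applied to a normal subgroup of finite index, then Schur's lemma for the central action to descend to $N'$) are exactly the mechanism the paper uses, but your step 1 has a genuine gap: the index $[N:Z\cdot N']$ need not be finite. The embedding $N/(Z\cdot N')\hookrightarrow F^{\times}/(F^{\times})^{d}$ is correct, but the target is finite only when $\mathfrak{o}_F^{\times}/(\mathfrak{o}_F^{\times})^{d}$ is finite, and this fails when $F$ has characteristic $p$ and $p\mid d$ (the paper allows equal characteristic, e.g. $F=\mathbb{F}_q((t))$): there $1+\mathfrak{p}_F$ is a pro-$p$ group of infinite rank, so $\mathfrak{o}_F^{\times}/(\mathfrak{o}_F^{\times})^{p}$ is infinite. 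Nor is this just a weak bound: since $\tilde{J}_{\rM}\subset N$ and $\det(\tilde{J}_{\rM})$ is an open subgroup of $\mathfrak{o}_F^{\times}$, its image in $F^{\times}/(F^{\times})^{d}$ is already infinite in that situation, so $[N:Z\cdot N']$ is genuinely infinite (take for instance $\rM=\mathrm{GL}_p(F)$ with $\mathrm{char}(F)=p$, where $d=p$). Hence Proposition \ref{propver13.1} cannot be applied to $Z\cdot N'$ as you do.

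The paper's proof gets around precisely this point by enlarging the subgroup: since $\tau\vert_{\tilde{J}_{\rM}}$ is a multiple of $\mu$, the kernel of $\tau$ is open in $N$; on the other hand $\det(N)/\det(Z)$ is compact (because $F^{\times}/(F^{\times})^{d}$ is compact, even if infinite), so the subgroup $Z\cdot N'\cdot\ker(\tau)$, whose image in this compact quotient is open, has finite index in $N$. Proposition \ref{propver13.1} is then applied to $Z\cdot N'\cdot\ker(\tau)$, and one descends to $N'$ exactly as in your step 3, since $Z$ acts through a character (Schur) and $\ker(\tau)$ acts trivially. Your argument as written is fine when $\mathrm{char}(F)=0$, but to cover all non-archimedean locally compact $F$ you must insert $\ker(\tau)$ (or some equivalent use of the compactness of $\det(N)/\det(Z)$) as the paper does.
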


\begin{proof}
By the definition of $N$, we know the center $Z$ of $\rM$ is contained in $N$. Since $F^{\times}/ \det{Z}$ is compact, the quotient group $\det{N}/\det{Z}$ is compact as well. Notice that $\tau\vert_{\tilde{J}}$ is a multiple of $\mu$, then any open subgroup contained in the kernel of $\mu$ is also contained in the kernel $\mathrm{ker}(\tau)$ of $\tau$, which implies $\mathrm{ker}(\tau)$ is open. Hence $Z\cdot N'\cdot \mathrm{ker}(\tau)$ is a normal subgroup with finite index in $N$. Applying Proposition \ref{propver13.1}, the restricted $k$-representation $\res_{Z\cdot N'}^{N}\tau$ is semisimple with finite length, and by Schur's lemma we deduce that $\res_{N'}^{N}\tau$ is semisimple with finite length.
\end{proof}

\begin{lem}
\label{Llemma 7}
If $c_1,c_2$ two characters of $Z$ and they coincide on $Z\cap\rM'$, where $Z$ denotes the center of $\rM$. Then $c_1\circ c_2^{-1}$ can be extended to a character on $\rM$ which factor through $\det$.
\end{lem}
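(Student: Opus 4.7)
The plan is to produce the extension by factoring $\chi:=c_1 c_2^{-1}$ through the determinant. Writing $\rM\cong\prod_{i=1}^{r}\mathrm{GL}_{n_i}(F)$ as in Remark \ref{rem 33}, I identify $Z$ with $(F^\times)^r$ via the map sending $(\lambda_1,\ldots,\lambda_r)$ to the block scalar element $(\lambda_1 I_{n_1},\ldots,\lambda_r I_{n_r})$. Under this identification the restriction $\det\vert_Z$ becomes $(\lambda_1,\ldots,\lambda_r)\mapsto\prod_{i=1}^{r}\lambda_i^{n_i}$, whose kernel is exactly $Z\cap\mathrm{SL}_n(F)=Z\cap\rM'$.

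First, I would observe that the hypothesis $c_1\vert_{Z\cap\rM'}=c_2\vert_{Z\cap\rM'}$ says precisely that $\chi$ is trivial on $\ker(\det\vert_Z)$, so $\chi$ factors uniquely as $\chi=\psi_0\circ(\det\vert_Z)$ for a $k$-character $\psi_0$ of the subgroup $\det(Z)\subset F^\times$.

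Second, I would extend $\psi_0$ to a $k$-character $\psi$ of $F^\times$, and then set $\tilde\chi(m):=\psi(\det m)$; by construction $\tilde\chi$ is a $k$-character of $\rM$ factoring through $\det$ whose restriction to $Z$ coincides with $\chi$, proving the lemma.

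The only non-formal step is the extension of $\psi_0$ to $\psi$, and this is where the assumption that $k$ is algebraically closed enters: $k^\times$ is then divisible, hence injective as a $\mathbb{Z}$-module, so applying $\Hom(-,k^\times)$ to the inclusion $\det(Z)\hookrightarrow F^\times$ produces the desired extension. No further structural input about $F$ or $\rM$ is needed, so I do not anticipate a serious obstacle.
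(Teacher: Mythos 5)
Your algebraic skeleton is correct and follows a genuinely different route from the paper: the paper extends $c_1c_2^{-1}$ to $Z\cdot\rM'$ by declaring it trivial on $\rM'$, induces this character to $\rM$, and then uses finiteness of length, normality of $\rM'$ and Schur's lemma to extract an irreducible subquotient which is a character factoring through $\det$; you instead work entirely inside $F^{\times}$, using $Z\cap\rM'=\ker(\det\vert_Z)$ to factor $\chi=c_1c_2^{-1}$ through $\det(Z)$ and then extending to $F^{\times}$. Your route is more direct and makes it transparent that the resulting character of $\rM$ restricts to $c_1c_2^{-1}$ on $Z$.

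There is, however, a genuine gap at the extension step: in this paper all characters are smooth (the lemma is used in Lemma \ref{Llemma 8} to twist smooth representations, so a non-smooth $\psi\circ\det$ would be useless), whereas injectivity of $k^{\times}$ as a $\mathbb{Z}$-module applied to the abstract inclusion $\det(Z)\hookrightarrow F^{\times}$ only produces an abstract group homomorphism, with no control on continuity. Moreover the smoothness of $\psi_0$ itself is not formal: it requires that $\det\vert_Z$ be an open map onto its image $\det(Z)=(F^{\times})^{d}$, $d=\gcd(n_i)$, which is true (e.g.\ by the open mapping theorem for locally compact $\sigma$-compact groups, or by a direct computation with principal units) but needs saying, especially when $F$ has characteristic $p$ and $p$ divides the $n_i$, where power maps are not open on $F^{\times}$. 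Both points are repairable along your lines: since $\chi$ is trivial on an open subgroup of $Z$ and $\det\vert_Z$ is open onto its image, $\psi_0$ is trivial on $\det(Z)\cap(1+\mathfrak{p}_F^{m})$ for some $m$, hence defines a character of the image of $\det(Z)$ in $F^{\times}/(1+\mathfrak{p}_F^{m})$; there your divisibility argument applies, and inflating the extended character back to $F^{\times}$ gives a smooth $\psi$ with $\psi\circ\det\vert_Z=c_1c_2^{-1}$. With this modification your proof is complete; note that the paper's induction-theoretic argument avoids these topological issues automatically, since every object it manipulates is smooth by construction.
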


\begin{proof}
First, we extend $c_1\circ c_2^{-1}$ to $Z\cdot\rM'$: For any $a\in Z,b\in\rM'$, define $c_0(ab)=c_1\circ c_2^{-1}(a)$. This is well defines, since for any $a',b'$ such that $a'b'=ab$, then $a^{-1}a'\in Z\cap\rM'$. Hence $c_1\circ c_2^{-1}(a^{-1}a')=1$, which implies $c_0(ab)=c_0(a'b')$. Now consider $\Ind_{Z\cdot\rM'}^{\rM}c_0$, which has finite length. There is a surjection from $\res_{Z\cdot\rM'}^{\rM}\Ind_{Z\cdot\rM'}^{\rM}c_0$ to $c_0$, then of which there exists an irreducible $k$-subquotient $c$ containing $c_0$, by the uniqueness of Jordan-H\"older factors. According to the fact that $\rM'$ is normal in $\rM$ and $c_0$ is trivial on $\rM'$, the $k$-representation $\res_{\rM'}^{\rM}\Ind_{Z\cdot\rM'}^{\rM}c_0$ is a trivial. Hence $c$ is trivial on $\rM'$ as well, and hence factor throught $F^{\times}\cong\rM/\rM'$. Then by Schur's lemma, $c$ is a character factor through $\det$.
\end{proof}

\begin{lem}
\label{Llemma 8}
Let $\tau_1,\tau_2$ be two irreducible $k$-representations of $N$ (notion as in \ref{Llemma 6}). Assume that $\res_{N'}^{N}\tau_1$ and $\res_{N'}^{N}\tau_2$ have one direct component in common, then there exists a $k$-quasicharacter of $F^{\times}$ such that $\tau_1\cong\tau_2\otimes\chi\circ\det$.
\end{lem}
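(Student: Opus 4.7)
The plan is to imitate the strategy of Proposition \ref{prop 0.2}: first reduce to the case where $\tau_1$ and $\tau_2$ have matching central character, then embed $\tau_1$ into a tensor twist of $\tau_2$, and finally apply the projection formula for a finite-index induction. The hardest part will be that $N/N'$ is infinite, so the pro-$p$ argument of Proposition \ref{prop 0.2} does not apply directly; the substitute I plan to use is the finite-index subgroup $Z\cdot N'\subseteq N$, where $Z=Z(\rM)$.

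By Schur's lemma $Z$ acts on each $\tau_i$ through a character $c_i$, and since the common irreducible constituent $\tau'$ of $\res_{N'}^{N}\tau_1$ and $\res_{N'}^{N}\tau_2$ has a well-defined central character on $Z\cap N'$, one obtains $c_1|_{Z\cap N'}=c_2|_{Z\cap N'}$. Lemma \ref{Llemma 7} then produces a quasicharacter $\chi_0$ of $F^{\times}$ with $\chi_0\circ\det|_Z=c_1 c_2^{-1}$; after replacing $\tau_2$ by $\tau_2\otimes\chi_0\circ\det$ I may assume $c_1=c_2=:c$. Define $\tilde\tau'$ on $Z\cdot N'$ by $\tilde\tau'(zn')=c(z)\tau'(n')$; this is a well-defined irreducible extension of $\tau'$, and since $\tau_i|_{N'}$ is semisimple by Lemma \ref{Llemma 6}, one checks that $\tilde\tau'$ appears as a direct summand of $\res_{ZN'}^{N}\tau_i$ for $i=1,2$.

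The structural input needed is that $[N:Z\cdot N']$ is finite. Writing $\rM=\GL_{n_1}(F)\times\cdots\times\GL_{n_r}(F)$, one has $\det(Z)=F^{\times d}$ with $d=\gcd(n_1,\ldots,n_r)$, and hence $N/(Z\cdot N')\cong\det(N)/\det(Z)$ embeds into the finite group $F^{\times}/F^{\times d}$. With this in hand, $\res_{ZN'}^{N}\tau_i$ is semisimple of finite length by Clifford theory, Frobenius reciprocity yields injections $\tau_i\hookrightarrow\ind_{ZN'}^{N}\tilde\tau'$, and the projection formula for finite-index induction gives
$$\tau_1\hookrightarrow\ind_{ZN'}^{N}\tilde\tau'\hookrightarrow\ind_{ZN'}^{N}\res_{ZN'}^{N}\tau_2\cong\tau_2\otimes\ind_{ZN'}^{N}1_{ZN'}.$$

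To finish, observe that $\ind_{ZN'}^{N}1_{ZN'}$ has length $[N:Z\cdot N']$ and all of its Jordan--H\"older constituents are characters of the finite abelian group $N/(Z\cdot N')$. Therefore every irreducible subquotient of $\tau_2\otimes\ind_{ZN'}^{N}1_{ZN'}$ has the form $\tau_2\otimes\eta$ for some character $\eta$ of $N/(Z\cdot N')$, which forces $\tau_1\cong\tau_2\otimes\eta$. Since $N/(Z\cdot N')$ is a finite subgroup of $F^{\times}/F^{\times d}$ and $k^{\times}$ contains all roots of unity, $\eta$ lifts to a quasicharacter $\chi_1$ of $F^{\times}$ with $\chi_1\circ\det|_{N}=\eta$. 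Setting $\chi=\chi_0\chi_1$ gives $\tau_1\cong\tau_2\otimes\chi\circ\det$, as required.
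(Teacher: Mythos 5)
Your strategy is the one the paper uses (match central characters via Lemma \ref{Llemma 7}, then Frobenius reciprocity plus the projection formula for a finite-index subgroup, and identify the Jordan--H\"older constituents of the induced trivial representation as characters factoring through $\det$), but there is a genuine gap at the structural step: the claim that $[N:Z\cdot N']$ is finite. You argue that $N/(Z\cdot N')\cong\det(N)/\det(Z)$ embeds into $F^{\times}/(F^{\times})^{d}$ with $d=\gcd(n_1,\ldots,n_r)$ and that this group is finite. That is true when $\mathrm{char}\,F=0$, but the paper also allows $F$ of equal characteristic $p$, and if $p\mid d$ (e.g. $\rM=\GL_p(F)$ with $F=\mathbb{F}_q((t))$) the group $F^{\times}/(F^{\times})^{d}$ is infinite: by the Frobenius, the $p$-th powers among principal units are exactly the series in $t^{p}$, so already $(1+t\mathbb{F}_q[[t]])/\bigl((1+t\mathbb{F}_q[[t]])\cap(F^{\times})^{p}\bigr)$ is infinite. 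Worse, $\det(N)$ contains the open subgroup $\det(\tilde{J}_{\rM})$ of $\mathfrak{o}_F^{\times}$, so $\det(N)/\det(Z)$ is itself infinite in that case; hence $Z\cdot N'$ genuinely has infinite index in $N$, and your appeals to the projection formula for finite-index induction, to $\ind_{ZN'}^{N}1$ having length $[N:ZN']$, and to $N/(ZN')$ being a finite abelian group all collapse in the equal-characteristic case, which is within the scope of the statement.

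The repair is exactly the device the paper uses: since $N$ is compact modulo centre with $\tilde{J}_{\rM}$ as its unique maximal compact open subgroup, the irreducible representations $\tau_1,\tau_2$ are finite-dimensional with open kernels; choose an open compact subgroup $U\subset\Ker(\tau_1)\cap\Ker(\tau_2)\cap\tilde{J}_{\rM}$ and work with $Z\cdot N'\cdot U$ instead of $Z\cdot N'$. This subgroup does have finite index in $N$, because $\det(U)$ is open in $\mathfrak{o}_F^{\times}$ and $F^{\times}/\bigl((F^{\times})^{d}\cdot\det(U)\bigr)$ is finite in every characteristic. Your central-character reduction via Lemma \ref{Llemma 7} and the extension $\tilde\tau'$ (now extended trivially across $U$) go through unchanged, the constituents of $\ind_{ZN'U}^{N}1$ are characters of the finite abelian quotient $N/(ZN'U)$ which factor through $\det$ and extend to quasicharacters of $F^{\times}$ as in Lemma \ref{Llemma 7}, and the rest of your argument then coincides with the paper's proof.
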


\begin{proof}
The group $N$ is compact modulo center, and $\tilde{J}_{\rM}$ is the unique maximal open compact subgroup of $N$. Hence every irreducible $k$-representation of $N$ is finite dimensional, of which the kernel is always open. Let $U$ be an open compact subgroup contained in $\Ker{\tau_1}\cap\Ker{\tau_2}\cap\tilde{J}_{\rM}$. Let $c_1,c_2$ be the central characters of $\tau_1$ and $\tau_2$ respectively. According to \ref{Llemma 7}, there exists a $k$-quasicharacter $\chi$ of $F^{\times}$ such that $c_1\cong c_2\otimes\chi\circ\det$. After tensoring $\chi\circ\det$, we could assume that $c_1\cong c_2$. Hence:
$$\Hom_{Z\cdot N'\cdot U}(\res_{Z\cdot N'\cdot U}^{N}\tau_1,\res_{Z\cdot N'\cdot U}^{N}\tau_2)\neq 0.$$
Then:
$$\Hom_{N}(\tau_1,\ind_{Z\cdot N'\cdot U}^{N}\res_{Z\cdot N'\cdot U}^{N}\tau_2)\neq 0.$$
Since $\vert N:Z\cdot N'\cdot U\vert$ is finite, the later factor above has finite length and
$$\ind_{Z\cdot N'\cdot U}^{N}\res_{Z\cdot N'\cdot U}^{N}\tau_2\cong \tau_2\otimes\ind_{Z\cdot N\cdot U}^{N} 1.$$
Notice that any Jordan-Holder factor of $\ind_{Z\cdot N'\cdot U}^{N}1$ is a character factor through $\det\vert_{N}$, and $\vert F^{\times}: \det(N)\vert$ is finite. By the same reason as in the proof of \ref{Llemma 7}, we could extend each of them as a character of $\rM$ factor through $\det$. Hence there exists a $k$-quasicharacter $\chi$ of $F^{\times}$, such that $\tau_1\cong\tau_2\otimes\chi\circ\det$. 
\end{proof}

\begin{prop}
\label{Lproposition 9}
Let $\pi'$ be an irreducible cuspidal $k$-representation of $\rM'$. If $\pi_1,\pi_2$ two irreducible cuspidal $k$-representations of $\rM$, such that $\pi'$ appears as a direct component of $\res_{\rM'}^{\rM}\pi_1$ and $\res_{\rM'}^{\rM}\pi_2$ in common, then there exists a $k$-quasicharacter of $F^{\times}$ verifying that $\pi_1\cong \pi_2\otimes\chi\circ\det$.
\end{prop}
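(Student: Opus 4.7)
The plan is to reduce to Lemma~\ref{Llemma 8}, applied at the level of the compact-modulo-centre subgroup $N=N_{\rM}(\mu)$, by first matching the $\rM'$-level type of $\pi'$ extracted from $\pi_1$ with that extracted from $\pi_2$, and then lifting this matching back up to $\rM$. For each $i\in\{1,2\}$, I would choose a maximal simple cuspidal $k$-type $(J_{\rM,i},\lambda_{\rM,i})$ of $\rM$ contained in $\pi_i$, set $\mu_i=\ind_{J_{\rM,i}}^{\tilde{J}_{\rM,i}}\lambda_{\rM,i}$, $N_i=N_{\rM}(\mu_i)$, and write $\pi_i\cong\ind_{N_i}^{\rM}\tau_i$ via Theorem~\ref{Ltheorem 5}, with $\tau_i$ an irreducible $N_i$-representation containing $\mu_i$. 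Applying the converse direction of Theorem~\ref{Ltheorem 5} to $\pi'\subset\res_{\rM'}^{\rM}\pi_i$ then yields presentations $\pi'\cong\ind_{N_{\rM'}(\mu_{\rM,i})}^{\rM'}\tau_{\rM',i}$, where $\mu_{\rM,i}$ is a direct component of $\res_{\tilde{J}_{\rM,i}'}^{\tilde{J}_{\rM,i}}\mu_i$ and $\tau_{\rM',i}$ is an irreducible $N_{\rM'}(\mu_{\rM,i})$-representation containing $\mu_{\rM,i}$.

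Next I would match the two presentations. The intertwining computation of Lemma~\ref{Llemma 3}, combined with Mackey decomposition and the irreducibility of $\pi'$, implies that any two such presentations of $\pi'$ differ by $\rM'$-conjugation; after conjugating the type of $\pi_2$ by an appropriate element of $\rM'$ (which leaves $\pi_2$ unchanged up to isomorphism), I may assume $N_{\rM'}(\mu_{\rM,1})=N_{\rM'}(\mu_{\rM,2})=:N_{\rM'}$ and $\tau_{\rM',1}\cong\tau_{\rM',2}$. The uniqueness of the maximal open compact subgroup of $N_{\rM'}$ (see the remark following Lemma~\ref{Llemma 3}) forces $\tilde{J}_{\rM,1}'=\tilde{J}_{\rM,2}'=:\tilde{J}'$, and Clifford theory applied to the semisimple restriction $\res_{\tilde{J}'}^{N_{\rM'}}\tau_{\rM',1}$, whose components are pointwise fixed under $N_{\rM'}$-conjugation by the very definition of $N_{\rM'}(\mu_{\rM,i})$, gives $\mu_{\rM,1}\cong\mu_{\rM,2}$ as $\tilde{J}'$-representations. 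Using the intertwining argument of Proposition~\ref{Lprop 1}, together with the structural Corollary~\ref{cor 13} governing the finite abelian $p$-group $\tilde{J}_{\rM}/J_{\rM}$, I then arrange, by a further conjugation inside $\rU(\fA_{\rM})$, that $\tilde{J}_{\rM,1}=\tilde{J}_{\rM,2}=:\tilde{J}_{\rM}$ and $\mu_1\cong\mu_2\otimes\chi_0\circ\det$ for some $k$-quasicharacter $\chi_0$ of $F^{\times}$; since $\chi_0\circ\det$ is invariant under $\rM$-conjugation, this yields $N_1=N_{\rM}(\mu_1)=N_{\rM}(\mu_2)=N_2=:N$.

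Finally, $\tau_1$ and $\tau_2\otimes\chi_0\circ\det$ are two irreducible $N$-representations whose restrictions to $N\cap\rM'$ share a common irreducible component, obtained by lifting the common $\tilde{J}'$-component $\mu_{\rM,1}\cong\mu_{\rM,2}$ via the identified $\tau_{\rM',1}\cong\tau_{\rM',2}$ through the Theorem~\ref{Ltheorem 5} construction; Lemma~\ref{Llemma 8} then furnishes a $k$-quasicharacter $\chi_1$ of $F^{\times}$ with $\tau_1\cong\tau_2\otimes(\chi_0\chi_1)\circ\det$, and compactly inducing from $N$ to $\rM$, which commutes with tensoring by a character of $\rM$ factoring through $\det$, delivers $\pi_1\cong\pi_2\otimes(\chi_0\chi_1)\circ\det$, as required. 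The main obstacle I expect to encounter is the lifting step in the second paragraph: promoting the $\tilde{J}'$-level isomorphism $\mu_{\rM,1}\cong\mu_{\rM,2}$ to the $\rM$-level relation $\mu_1\cong\mu_2\otimes\chi_0\circ\det$, since the two projective normalizers $\tilde{J}_{\rM,1}$ and $\tilde{J}_{\rM,2}$ need not coincide a priori, so identifying them demands careful simple-type bookkeeping in the spirit of Proposition~\ref{Lprop 1}, using the control of $\tilde{J}_{\rM}/J_{\rM}$ provided by Corollary~\ref{cor 13}.
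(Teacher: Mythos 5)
Your overall strategy (reduce to Lemma~\ref{Llemma 8} at the level of the compact-mod-centre group $N$ and then induce up to $\rM$) is the same as the paper's first proof, but there is a genuine gap exactly at the point you flag as the ``main obstacle'', and it is not a bookkeeping issue that the cited results can absorb. You start from two \emph{independent} maximal simple types $(J_{\rM,1},\lambda_{\rM,1})\subset\pi_1$ and $(J_{\rM,2},\lambda_{\rM,2})\subset\pi_2$ and then try to match the resulting data. But Proposition~\ref{Lprop 1} only compares a type with its own determinant twist \emph{on the same group} $J_{\rM}$, Corollary~\ref{cor 13} only describes $\tilde{J}/J$ for one fixed type, and Lemma~\ref{Llemma 3} only computes intertwining between components of the restriction of a single $\tilde{\lambda}_{\rM}$. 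None of these lets you pass from ``$\mu_{\rM,1}\cong\mu_{\rM,2}$ as representations of $\tilde{J}_{\rM,1}'=\tilde{J}_{\rM,2}'$'' to ``$\tilde{J}_{\rM,1}=\tilde{J}_{\rM,2}$ and $\mu_1\cong\mu_2\otimes\chi_0\circ\det$'': the two projective normalizers (indeed the two hereditary orders $\fA_{\rM,i}$) are a priori unrelated, and no intertwining-implies-conjugacy statement for two \emph{distinct} maximal simple types of $\rM$, or for the $\rM'$-types of Definition~\ref{Ldefinition 20}, is available in the paper. The same problem already infects your earlier step: matching the two presentations $\pi'\cong\ind_{N_{\rM'}(\mu_{\rM,i})}^{\rM'}\tau_{\rM',i}$ up to $\rM'$-conjugation requires a conjugacy theorem for these types, which Theorem~\ref{Ltheorem 5} does not provide (it gives exhaustion, not uniqueness).

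The paper sidesteps this entirely: it fixes once and for all the type $(J_{\rM},\lambda_{\rM})$ of $\pi_1$, observes that the common component $\pi'$ forces $\res_{J_{\rM}'}\pi_2$ to contain a component $\lambda'$ of $\res_{J_{\rM}'}^{J_{\rM}}\lambda_{\rM}$, and then applies Lemma~\ref{lem 9} together with Proposition~\ref{Lprop 0.2} (both formulated for one fixed compact group) to conclude that $\pi_2$ contains $(J_{\rM},\lambda_{\rM}\otimes\chi_0\circ\det)$ on the \emph{same} $J_{\rM}$; after that, both $\pi_1$ and $\pi_2\otimes\chi_0^{-1}\circ\det$ are induced from extensions of the same $\mu$ to the same $N=N_{\rM}(\mu)$, and an intertwining argument plus Lemma~\ref{Llemma 8} identifies the two extensions up to a further determinant twist. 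If you want to keep your architecture you must either insert a conjugacy theorem for maximal simple types (Bushnell--Kutzko/Vign\'eras style ``intertwining implies conjugacy'', which the paper never invokes), or reorganize as the paper does so that only one $J_{\rM}$ ever appears. Note also that the paper's second proof avoids types altogether: $\rM/\rM'$ acts by commuting operators on the finite-dimensional space $\Hom_{\rM'}(\res_{\rM'}^{\rM}\pi_1,\res_{\rM'}^{\rM}\pi_2)$, a common eigenvector yields a character $\chi\circ\det$, and irreducibility gives $\pi_1\cong\pi_2\otimes\chi\circ\det$ --- a much shorter route you may prefer.
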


\begin{rem}
We will apply the proposition \ref{Lproposition 9} in the proof of the proposition \ref{Lproposition 10}, which is the first part of the uniqueness of supercuspidal support of $\mathrm{SL}_n(F)$. We will state two proofs of the proposition \ref{Lproposition 9} as below. The first proof is given through type theory while the second proof does not concern about type theory, which induce two parallel proofs of uniqueness of supercuspidal support of $\mathrm{SL}_n(F)$, with and without type theory respectively. The second proof is similar to that of the proposition in \S $\mathrm{VI.3.2.}$ in \cite{Re}, and also the proposition $\mathrm{2.4}$ in \cite{TA}.
\end{rem}

\begin{proof} [proof version 1]
Let $(J_{\rM},\lambda_{\rM})$ be a maximal simple cuspidal $k$-type of $\rM$ contained in $\pi_1$, and $\mu=\ind_{J_{\rM}}^{\tilde{J}_{\rM}}\lambda_{\rM}$. Then there is an extension $\tau$ of $\mu$ to $N=N_{\rM}(\mu)$ such that $\pi_1\cong\ind_{N}^{\rM}\tau$. Let $N'$ denote $N\cap \rM'$. As in the proof of \ref{Ltheorem 5}, there exists a direct component $\mu_{\rM}$ of $\res_{\tilde{J}_{\rM}'}^{\tilde{J}_{\rM}}\mu$ such that $\pi'\cong\ind_{N'}^{\rM'}\tau'$, where $\tau'$ is a direct component of $\res_{N'}^N\tau$ and $\tau'\cong\ind_{N_{\rM'}(\mu_{\rM})}^{N'}\tau_{\rM}'$. Here $\tau_{\rM}'$ is an irreducible $k$-representation containing $\mu_{\rM}$. By \ref{lem 9} $\pi_2$ contains $(J_{\rM},\lambda_{\rM}\otimes\chi_0\circ\det)$ for some $k$-quasicharacter $\chi_0$ of $F^{\times}$. Hence there is an extension $\Lambda_{\rM}$ of $\lambda_{\rM}$ on $N_{\rM}(\lambda_{\rM})$ such that $\pi_2\cong\ind_{N_{\rM}(\lambda_{\rM})}^{\rM}\Lambda_{\rM}\otimes\chi_0\circ\det$. Let $\tau_2$ denote $\ind_{N_{\rM}(\lambda_{\rM})}^{N}\Lambda_{\rM}\otimes\chi_0\circ\det$, which is an extension of $\mu\otimes\chi_0\circ\det$. After tensor $\chi_0^{-1}\circ\det$, we could assume that $\tau_2$ is an extension of $\mu$. Now we want to study the relation between $\tau$ and $\tau_2$.

First consider $\res_{N}^{\rM}\pi_2$:
$$\res_{N}^{\rM}\ind_{N}^{\rM}\tau_2\cong\oplus_{N\backslash \rM/N}\ind_{N\cap a(N)}^N\res_{N\cap a(N)}^{a(N)}a(\tau_2).$$
Since $[N:N\cap a(N)]$ is finite, the representation above is a direct sum of $k$-representations with finite length, and of which $\tau'$ is a sub-representation. Hence there exists an irreducible sub-quotient $\tau_1$ of $\res_{N}^{\rM}\pi_2$ such that $\tau'$ is a direct component of $\res_{N'}^N\tau_1$. By lemma \ref{Llemma 8}, there is a $k$-quasicharacter $\chi$ of $F^{\times}$ such that $\tau_1\cong\tau\otimes\chi\circ\det$.

We will prove that $\tau_1\cong\tau_2$. Assume that $\tau_1$ and $\tau_2$ are not isomorphic, then there exists $a\notin N$ such that $\tau_1$ is an irreducible subquotient of $\ind_{N\cap a(N)}^{N}\res_{N\cap a(N)}^{a(N)}a(\tau_2)$, which means $a$ weakly intertwines $\tau_1$ with $\tau_2$. Hence there exists $b\in N$ such that $ba$ weakly intertwines $\mu\otimes\chi\circ\det$ with $\mu$ and $c\in\tilde{J}_{\rM}, d\in cba(\tilde{J}_{\rM})$ such that $dcba$ weakly intertwines $\lambda_{\rM}\otimes\chi\circ\det$ with $\lambda_{\rM}$. Hence there is $g\in\tilde{J}_{\rM}$ such that $g(\lambda_{\rM}\otimes\chi\circ\det)\cong\lambda_{\rM}$. This implies that $\mu\otimes\chi\circ\det\cong\mu$. Then the element $ba$ weakly intertwines $\mu$ to itself. Then $\lambda_{\rM}$, as a subrepresentation of $\res_{J_{\rM}}^{\tilde{J}_{\rM}}\mu$, is a subquotient of:
$$\ind_{\tilde{J}_{\rM}\cap ba(\tilde{J}_{\rM})}^{\tilde{J}_{\rM}}\res_{\tilde{J}_{\rM}\cap ba(\tilde{J}_{\rM})}^{ba(\tilde{J}_{\rM})}\ind_{ba(J_{\rM})}^{ba(\tilde{J}_{\rM})}ba(\lambda_{\rM})$$
$$\cong\oplus_{ba(J_{\rM}) \backslash ba(\tilde{J}_{/rM})/ \tilde{J}_{\rM}\cap ba(\tilde{J}_{/rM})}\ind_{\tilde{J}_{\rM}\cap cba(J_{\rM})}^{cba(J_{\rM})} cba(\lambda_{\rM}).$$
Hence there is $c_0\in\tilde{J}_{\rM}$, such tha $bac_0 \in \mathrm{I}_{\rM}^w(\lambda_{\rM})=\mathrm{I}_{\rM'}(\lambda_{\rM})\subset N$, which is contradicted to our assumption that $a\notin N$. Hence $\tau_1\cong\tau_2$. We conclude that:
$$\pi_2\cong\ind_{N}^{\rM}\tau_1\cong(\ind_{N}^{\rM}\tau)\otimes\chi\circ\det\cong\pi_1\otimes\chi\circ\det.$$
\end{proof}

\begin{proof} [proof version 2]
The assumption implies that the set $\mathrm{Hom}_{\rM'}(\res_{\rM'}^{\rM}\pi_1,\res_{\rM'}^{\rM}\pi_2)$ is non-trivial. The group $\rM$ acts on this $\mathrm{Hom}$ set by
$$g\cdot f:=\pi_1(g)\circ f\circ\pi_2(g)^{-1},f\in\mathrm{Hom}_{\rM'}(\res_{\rM'}^{\rM}\pi_1,\res_{\rM'}^{\rM}\pi_2),g\in\rM.$$
This action factors through $\rM'$, hence induces an action of the abelian quotient group $\rM\backslash\rM'$ on this $\mathrm{Hom}$ set, which is a finitely dimensional $k$-vector space, since $\res_{\rM'}^{\rM}\pi_1$ and $\res_{\rM'}^{\rM}\pi_2$ are semisimple with finite length. Then the elements of $\rM\backslash\rM'$ forms a family of commutative linear operators on a finitely dimensional $k$-vector space, hence they have one common eigenvector. This is to say that there is an $k$-quasicharacter $\chi_0$ of $\rM\backslash\rM'$ such that $g\cdot f=\chi_0(g)f$ for each element $g\in\rM$, hence $\chi_0$ can be written as $\chi\circ\det$ for some $k$-quasicharacter $\chi$ of $F^{\times}$. Notice $f\in\mathrm{Hom}_{\rM'}(\pi_1\otimes\chi^{-1}\circ\det,\pi_2)$, by irreducibility, the $k$-representation $\pi_1\otimes\chi^{-1}\circ\det$ coincides with $\pi_2$.
\end{proof}

\subsubsection{First description of supercuspidal support}
Let $\pi$ and $\pi'$ be as in \ref{prop 6}. The supercuspidal support of $\pi$ is unique up to $\rM$-conjugate (\cite{V2}). We prove in this section, that the supercuspidal support of $\pi'$ is also unique up to $\rM$-conjugation (\ref{Lproposition 10}), which is the first description of supercuspidal support. Eventually, we will prove that the supercuspidal support of $\pi'$ is unique up to $\rM'$-conjugation in the next chapter.

\begin{lem}
\label{Llemma 12}
Let $\pi$ be an irreducible $k$-representation. If $\pi\otimes\chi\circ\det$ is supercuspidal for some $k$-quasicharacter $\chi$ of $F^{\times}$, then $\pi$ is supercuspidal.
\end{lem}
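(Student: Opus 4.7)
The plan is to argue by contrapositive, using that tensoring by $\chi\circ\det$ is an auto-equivalence of $\mathrm{Rep}_k(\rM)$ that commutes with parabolic induction in a precise sense.

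First I would suppose that $\pi$ is not supercuspidal, so that there exists a proper Levi subgroup $\rL$ of $\rM$ and an irreducible $k$-representation $\tau$ of $\rL$ such that $\pi$ appears as an irreducible subquotient of $i_{\rL}^{\rM}\tau$. The key observation is the projection formula: since the character $\chi\circ\det_{\rM}$ of $\rM$ restricts on $\rL$ to the character $\chi\circ\det_{\rL}$ (the determinant on $\rM$ restricts to the determinant on any standard Levi, computed blockwise), one has a natural isomorphism
\begin{equation*}
i_{\rL}^{\rM}(\tau\otimes\chi\circ\det_{\rL})\;\cong\;i_{\rL}^{\rM}(\tau)\otimes\chi\circ\det_{\rM},
\end{equation*}
which holds because normalised parabolic induction is the composition $\mathrm{Ind}\circ\mathrm{Inf}$ and both operations are compatible with twisting by a character that extends from the Levi to the whole group.

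Next, since the functor $(-)\otimes\chi\circ\det_{\rM}$ is exact and sends irreducible subquotients to irreducible subquotients, from $\pi\hookrightarrow\text{subquotient of }i_{\rL}^{\rM}\tau$ I obtain that $\pi\otimes\chi\circ\det_{\rM}$ is an irreducible subquotient of $i_{\rL}^{\rM}(\tau\otimes\chi\circ\det_{\rL})$. Because $\tau\otimes\chi\circ\det_{\rL}$ is again an irreducible $k$-representation of the proper Levi $\rL$, this contradicts the supercuspidality of $\pi\otimes\chi\circ\det$. Hence $\pi$ must itself be supercuspidal.

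The proof is essentially formal; there is no serious obstacle. The only point that needs a little care is verifying that $\chi\circ\det$ genuinely commutes with parabolic induction from an arbitrary standard Levi $\rL$ of $\rM$, i.e.\ the projection formula in the $\bmod\,\ell$ setting, but this follows directly from the definition of $i_{\rL}^{\rM}$ together with the fact that $\chi\circ\det$ is trivial on the unipotent radical of the parabolic and restricts to the analogous character on $\rL$.
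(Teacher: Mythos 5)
Your argument is correct and is essentially the paper's own: the paper likewise reduces to the compatibility $i_{\rL}^{\rM}(\tau\otimes\chi\circ\det)\cong (i_{\rL}^{\rM}\tau)\otimes\chi\circ\det$, justified by [V1, I.5.2 d)] and the fact that the unipotent radical lies in the kernel of $\det$, and concludes by contradiction with the supercuspidality of $\pi\otimes\chi\circ\det$. The only difference is cosmetic: the paper first checks cuspidality of $\pi$ via maximal simple cuspidal $k$-types before running the same twisting argument, a step your contrapositive formulation does not need.
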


\begin{proof}
If $\pi\otimes\chi\circ\det$ is supercuspidal, then it contains a maximal simple cuspidal $k$-type $(J_{\rM},\lambda_{\rM})$. Hence $\pi$ contains $(J_{\rM},\lambda_{\rM}\otimes\chi^{-1}\circ\det)$, which is also a maximal simple cuspidal $k$-type. Hence $\pi$ is cuspidal. Now assume that there is a supercuspidal representation $\tau$ of some proper Levi $\rL$ of $\rM$ such that $\pi$ is an irreducible subquotient of $i_{\rL}^{\rM}\tau$. Then $\pi\otimes\chi\circ\det$ is a subquotient of $i_{\rL}^{\rM}\tau\otimes\chi\circ\det$. In fact, we have 
$$i_{\rL}^{\rM}\tau\otimes\chi\circ\det\cong (i_{\rL}^{\rM}\tau)\times\chi\circ\det.$$
To obtain the equivalence above, we could apply [\S I,5.2,d)]\cite{V1}, by noticing that for any parabolic subgroup containing $\rL$, its unipotent radical is a subset of the kernel of $\det$.
\end{proof}

\begin{lem}
\label{Llemma 11}
Let $\pi'$ be an irreducible cuspidal $k$-representation of $\rM'$, and $\pi$ an irreducible $k$-representation of $\rM$ containing $\pi'$. Then $\pi'$ is supercuspidal if and only if $\pi$ is supercuspidal.
\end{lem}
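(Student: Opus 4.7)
The plan is to prove each direction of the equivalence by contradiction, relying on three tools already available: the restriction/parabolic-induction commutation formula $\res_{\rM'}^{\rM}i_{\rL}^{\rM}\tau\cong i_{\rL'}^{\rM'}\res_{\rL'}^{\rL}\tau$ used in the proof of Proposition \ref{prop 6}, Proposition \ref{Lproposition 9} (any two cuspidal $k$-representations of $\rM$ containing $\pi'$ differ by a twist $\chi\circ\det$), and Corollary \ref{cor 20} (cuspidality transfers between $\rM$ and $\rM'$). Note first that $\pi$ is automatically cuspidal by Corollary \ref{cor 20}, since $\res_{\rM'}^{\rM}\pi$ contains the cuspidal irreducible $\pi'$.

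For the direction $\pi$ supercuspidal $\Rightarrow$ $\pi'$ supercuspidal, I will assume to the contrary that $\pi'$ is a subquotient of $i_{\rL'}^{\rM'}\tau'$ for a proper Levi $\rL'\subsetneq \rM'$ and an irreducible $\tau'$. Lifting $\tau'$ to an irreducible $\tau$ of $\rL$ with $\tau'\hookrightarrow\res_{\rL'}^{\rL}\tau$ via the second half of Proposition \ref{prop 6}, exactness of parabolic induction together with the commutation formula yields
$$i_{\rL'}^{\rM'}\tau'\hookrightarrow i_{\rL'}^{\rM'}\res_{\rL'}^{\rL}\tau\cong\res_{\rM'}^{\rM}i_{\rL}^{\rM}\tau,$$
so $\pi'$ is a subquotient of $\res_{\rM'}^{\rM}i_{\rL}^{\rM}\tau$. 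Lemma \ref{lem 9} then produces an irreducible subquotient $\sigma$ of $i_{\rL}^{\rM}\tau$ such that $\pi'$ is a direct summand of $\res_{\rM'}^{\rM}\sigma$. Corollary \ref{cor 20} applied to $\sigma$ forces $\sigma$ to be cuspidal, whence Proposition \ref{Lproposition 9} gives $\sigma\cong\pi\otimes\chi\circ\det$ for some $k$-quasicharacter $\chi$ of $F^{\times}$. Twisting the subquotient relation by $\chi^{-1}\circ\det$ and using $i_{\rL}^{\rM}\tau\otimes\chi^{-1}\circ\det\cong i_{\rL}^{\rM}(\tau\otimes\chi^{-1}\circ\det)$ (as invoked in the proof of Lemma \ref{Llemma 12}), I will conclude that $\pi$ is a subquotient of $i_{\rL}^{\rM}(\tau\otimes\chi^{-1}\circ\det)$ with $\rL$ proper, contradicting the supercuspidality of $\pi$.

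For the converse direction, I will assume $\pi'$ is supercuspidal and suppose $\pi$ is a subquotient of $i_{\rL}^{\rM}\tau$ with $\rL\subsetneq \rM$ a proper Levi and $\tau$ irreducible. Then $\res_{\rM'}^{\rM}\pi$ is a subquotient of $\res_{\rM'}^{\rM}i_{\rL}^{\rM}\tau$, and the commutation formula together with the finite-length semisimple decomposition $\res_{\rL'}^{\rL}\tau\cong\bigoplus_{i}\tau_i'$ from Proposition \ref{prop 6} gives
$$\res_{\rM'}^{\rM}i_{\rL}^{\rM}\tau\cong i_{\rL'}^{\rM'}\res_{\rL'}^{\rL}\tau\cong\bigoplus_{i}i_{\rL'}^{\rM'}\tau_i'.$$
This is a finite direct sum of finite-length representations, so the irreducible factor $\pi'\subset\res_{\rM'}^{\rM}\pi$ must appear as a Jordan-H\"older constituent of some $i_{\rL'}^{\rM'}\tau_j'$, contradicting the supercuspidality of $\pi'$.

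I do not expect any genuine obstacle: the proof is a bookkeeping exercise assembling the commutation formula, Lemma \ref{lem 9}, Proposition \ref{Lproposition 9}, and Corollary \ref{cor 20}. The only mild subtlety, in the first direction, is that one must verify cuspidality of the intermediate subquotient $\sigma$ (via Corollary \ref{cor 20}) before applying Proposition \ref{Lproposition 9}, since Proposition \ref{Lproposition 9} requires both representations of $\rM$ to be cuspidal.
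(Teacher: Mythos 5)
Your proposal is correct, and it is in fact more complete than the paper's own proof of this lemma. The direction you treat second ($\pi'$ supercuspidal $\Rightarrow$ $\pi$ supercuspidal, argued by assuming $\pi$ is a subquotient of $i_{\rL}^{\rM}\tau$ and using $\res_{\rM'}^{\rM}i_{\rL}^{\rM}\tau\cong i_{\rL'}^{\rM'}\res_{\rL'}^{\rL}\tau$ together with the finite semisimple decomposition of $\res_{\rL'}^{\rL}\tau$) is exactly the argument written in the paper; there the two proofs coincide. Your first direction ($\pi$ supercuspidal $\Rightarrow$ $\pi'$ supercuspidal) does not appear in the paper's proof of the lemma at all: the paper only records the other implication. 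However, your argument for it --- lift $\tau'$ to an irreducible $\tau$ of $\rL$ by Proposition \ref{prop 6}, locate an irreducible subquotient $\sigma$ of $i_{\rL}^{\rM}\tau$ whose restriction to $\rM'$ has $\pi'$ as a direct component, check $\sigma$ is cuspidal by Corollary \ref{cor 20}, compare $\sigma$ with $\pi$ via Proposition \ref{Lproposition 9}, and untwist using the compatibility of $\chi\circ\det$ with parabolic induction as in Lemma \ref{Llemma 12} --- is essentially the mechanism the paper deploys later inside the proof of Proposition \ref{Lproposition 10}. So your route buys a genuinely two-sided proof of the equivalence, at the cost of invoking Proposition \ref{Lproposition 9}, which the paper's one-directional proof does not need.

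One repair is needed in the first direction: Lemma \ref{lem 9} is stated and proved for a \emph{compact open} subgroup $K$ of a Levi and $K'=K\cap\rG'$, so it cannot be cited verbatim with $K=\rM$ to produce $\sigma$. What you actually need is the elementary substitute: $i_{\rL}^{\rM}\tau$ has finite length (parabolic induction preserves admissibility and finite generation), restriction to $\rM'$ is exact, and restrictions of irreducible representations of $\rM$ are semisimple by Proposition \ref{prop 6}; running $\pi'$ through a Jordan--H\"older filtration of $i_{\rL}^{\rM}\tau$ then yields an irreducible subquotient $\sigma$ with $\pi'$ a direct component of $\res_{\rM'}^{\rM}\sigma$. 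This is the same step the paper itself asserts without detail in Proposition \ref{Lproposition 10}, so the gap is one of citation rather than of substance.
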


A similar result has been proved when $\pi'$ is cuspidal in Corollary \ref{cor 20}.

\begin{proof}
Applying \ref{Ltheorem 5}, there exists a maximal simple cuspidal $k$-type $(J_{\rM},\lambda_{\rM})$ and a direct component $\lambda_{\rM}'$ of $\lambda_{\rM}\vert_{\rM'}$, such that $\pi'$ contains $\lambda_{\rM}'$. Hence by \ref{lem 9}, the irreducible representation $\pi$ contains $\lambda_{\rM}\otimes\chi\circ\det$ for some $k$-quasicharacter $\chi$ of $F^{\times}$. Then by \S IV$1.2,1.3$ in \cite{V2} and \ref{cor 3}, this implies that $\pi$ is an irreducible cuspidal $k$-representation.

We assume that $\pi$ is non-supercuspidal, which means there exists a supercupidal representation $\tau$ of a proper Levi subgroup $\rL$ of $\rM$, the representation $\pi$ is a subquotient of the parabolic induction $i_{\rL}^{\rM}\tau$. Now by \S $5.2$ \cite{BeZe}, we obtain:
$$\res_{\rM'}^{\rM}i_{\rL}^{\rM}\tau\cong\ i_{\rL'}^{\rM'}\res_{\rL'}^{\rL}\tau.$$
There must be a direct component $\tau'$ of $\res_{\rL'}^{\rL}\tau$, and $\pi'$ be an irreducible subquotient of $i_{\rL'}^{\rM'}\tau'$. Hence $\pi'$ is not supercuspidal.

\end{proof}

\begin{prop}
\label{Lproposition 10}
Let $\pi'$ be an irreducible cuspidal $k$-representation of $\rM'$, and $\pi$ an irreducible cuspidal $k$-representation of $\rM$ such that $\pi$ contains $\pi'$. Let $[\rL,\tau]$ be the supercuspidal support of $\pi$, where $\rL$ is a Levi subgroup of $\rM$ and $\tau$ an irreducible supercuspidal $k$-representation of $\rL$. Let $\tau'$ be a direct component of $\res_{\rL'}^{\rL}\tau$. Any element in the supercuspidal support of $\pi'$ is contained in the $\rM$-conjugacy class of $(\rL',\tau')$.
\end{prop}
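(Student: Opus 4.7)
The plan is to start with an arbitrary element $(\rL_0',\tau_0')$ in the supercuspidal support of $\pi'$ and produce $m\in\rM$ that conjugates it to $(\rL',\tau')$. Let $\rL_0$ denote the Levi subgroup of $\rM$ with $\rL_0\cap\rM'=\rL_0'$. By Lemma \ref{Llemma 11} applied to $\tau_0'$, there exists an irreducible supercuspidal $k$-representation $\tau_0$ of $\rL_0$ such that $\tau_0'$ is a direct component of $\res_{\rL_0'}^{\rL_0}\tau_0$. Using the Bernstein--Zelevinsky formula $\res_{\rM'}^{\rM}\,i_{\rL_0}^{\rM}\tau_0\cong i_{\rL_0'}^{\rM'}\res_{\rL_0'}^{\rL_0}\tau_0$ (as invoked in the proof of Lemma \ref{Llemma 11}), the induced representation $i_{\rL_0'}^{\rM'}\tau_0'$ embeds in $\res_{\rM'}^{\rM}\,i_{\rL_0}^{\rM}\tau_0$, hence $\pi'$ is a subquotient of $\res_{\rM'}^{\rM}\,i_{\rL_0}^{\rM}\tau_0$.

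Since $i_{\rL_0}^{\rM}\tau_0$ has finite length and the restriction of any irreducible $k$-representation of $\rM$ to $\rM'$ is semisimple of finite length by Proposition \ref{prop 6}, a Jordan--H\"older argument as in Lemma \ref{lem 9} yields an irreducible subquotient $\pi_0$ of $i_{\rL_0}^{\rM}\tau_0$ such that $\pi'$ is a direct component of $\res_{\rM'}^{\rM}\pi_0$. By Corollary \ref{cor 20}, $\pi_0$ is cuspidal, and its supercuspidal support is $(\rL_0,\tau_0)$. Now both $\pi$ and $\pi_0$ are irreducible cuspidal $k$-representations of $\rM$ whose restrictions to $\rM'$ share $\pi'$ as a direct component, so Proposition \ref{Lproposition 9} provides a $k$-quasicharacter $\chi$ of $F^{\times}$ with $\pi_0\cong\pi\otimes\chi\circ\det$.

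Twisting the supercuspidal support through, $(\rL_0,\tau_0\otimes\chi^{-1}\circ\det)$ lies in the supercuspidal support of $\pi$. Applying the uniqueness of the supercuspidal support for $\rM$ (a product of general linear groups, by Vign\'eras \cite{V2}), there exists $m\in\rM$ such that $m(\rL_0)=\rL$ and $m(\tau_0\otimes\chi^{-1}\circ\det)\cong\tau$. Because $\chi\circ\det$ is trivial on $\rG'$, the restriction $\res_{\rL_0'}^{\rL_0}(\tau_0\otimes\chi^{-1}\circ\det)$ coincides with $\res_{\rL_0'}^{\rL_0}\tau_0$, so $\tau_0'$ remains a direct component of the twisted representation. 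Since $\rG'\triangleleft\rG$ implies $\rM'\triangleleft\rM$, we have $m(\rL_0')=m(\rL_0)\cap\rM'=\rL'$; consequently $m(\tau_0')$ is a direct component of $\res_{\rL'}^{\rL}\tau$.

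Finally, Proposition \ref{prop 6} says that the direct components of $\res_{\rL'}^{\rL}\tau$ form a single $\rL$-orbit, so one may choose $l\in\rL$ with $l(m(\tau_0'))\cong\tau'$. Setting $g=lm\in\rM$, the pair $(\rL_0',\tau_0')$ is $\rM$-conjugate by $g$ to $(\rL',\tau')$, which is what we wanted. The main obstacle I anticipate is verifying carefully that Lemma \ref{lem 9}, originally stated for compact open subgroups, transfers to the parabolic setting with $K=\rM$ and $K'=\rM'$; this requires the finite length of $i_{\rL_0}^{\rM}\tau_0$ together with Proposition \ref{prop 6}, but no new input. Everything else reduces to uniqueness of supercuspidal support for $\mathrm{GL}$-type Levi subgroups plus the triviality of $\det$ on $\rG'$.
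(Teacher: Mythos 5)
Your proposal is correct and follows essentially the same route as the paper's own proof: lift $\tau_0'$ to a supercuspidal $\tau_0$ of $\rL_0$ via Proposition \ref{prop 6} and Lemma \ref{Llemma 11}, use the restriction--induction formula to produce an irreducible subquotient $\pi_0$ of $i_{\rL_0}^{\rM}\tau_0$ whose restriction contains $\pi'$, apply Proposition \ref{Lproposition 9} to get $\pi_0\cong\pi\otimes\chi\circ\det$, and then invoke uniqueness of supercuspidal support for the $\mathrm{GL}$-type group $\rM$ together with triviality of $\chi\circ\det$ on $\rM'$. You are merely more explicit than the paper about two points it leaves implicit, namely the cuspidality of $\pi_0$ (via Corollary \ref{cor 20}) needed for Proposition \ref{Lproposition 9}, and the final conjugating element $g=lm\in\rM$ obtained from the $\rM$-conjugation for $\mathrm{GL}$ plus the $\rL$-conjugacy of the components of $\res_{\rL'}^{\rL}\tau$.
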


\begin{proof}
Let $\rL_0'$ be a Levi subgroup of $\rM'$ and $\tau_0'$ an irreducible supercuspidal $k$-representation of $\rL_0'$. Let $\tau_0$ be an irreducible $k$-representation of $\rL_0$ containing $\tau_0'$, hence $\tau_0$ is supercuspidal as in Lemma \ref{Llemma 11}.

If $\pi'$ is an irreducible subquotient of $i_{\rL_0'}^{\rM'}\tau_0'$. 
 By the same reason as in the proof of Lemma \ref{Llemma 11}, we know that there must be an irreducible subquotient of $i_{\rL_0}^{\rM}\tau_0$, noted as $\pi_0$, such that $\pi'$ is a direct component of $\res_{\rM'}^{\rM}\pi_0$. From \ref{Lproposition 9}, there exists a $k$-quasicharacter $\chi$ of $F^{\times}$ such that $\pi_0\cong\pi\otimes\chi\circ\det$. On the other hand, the supercuspidal support of $\pi\otimes\chi\circ\det$ is the $\rM$-conjugacy class of $(\rL,\tau\otimes\chi\circ\det)$. We assume that $\rL_0=\rL$ and $\tau_0\cong\tau\otimes\chi\circ\det$. Then $\tau_0'$ is a direct component of $\res_{\rL'}^{\rL}\tau\otimes\chi\circ\det\cong\res_{\rL'}^{\rL}\tau$.
\end{proof}

\section{Supercuspidal support}
\label{chapter 5}

\subsection{Uniqueness of supercuspidal support}
\subsubsection{The $n$-th derivative and parabolic induction}
\label{subsubsection 5.1.1}
Let $n_1,\ldots,n_m$ be a family of integers, and $\rM_{n_1,\ldots,n_m}$ denote the product $\mathrm{GL}_{n_1}\times\cdots\times\mathrm{GL}_{n_m}$, which can be canonically embedded into $\mathrm{GL}_{n_1+\cdots+n_m}$. Let $\rM_{n_1,\ldots,n_m}'$ denote $\rM_{n_1,\ldots,n_m}\cap\mathrm{SL}_{n_1+\cdots+n_m}$, and $P_{n_1}$ the mirabolic subgroup of $\mathrm{GL}_{n_1}$.

\begin{defn}
\label{ddef 1}
Let $n_1,\ldots,n_m$ be a family of positive integers, and $s\in\{1,\ldots,m\}$. We define:
\begin{itemize}
\item the mirabolic subgroup at place $s$ of $\rM_{n_1,\ldots,n_m}$, as $P_{(n_1,\ldots,n_m),s}=\mathrm{GL}_{n_1}\times\cdots\times\mathrm{GL}_{n_{s-1}}\times P_{n_s}\times\mathrm{GL}_{n_{s+1}}\times\cdots\times\mathrm{GL}_{n_m}$;
\item the mirabolic subgroup at place $s$ of $\rM_{n_1,\ldots,n_m}'$, as $P_{(n_1,\ldots,n_m),s}'=\mathrm{GL}_{n_1}\times\cdots\times\mathrm{GL}_{n_{s-1}}\times P_{n_s}\times\mathrm{GL}_{n_{s+1}}\times\cdots\times\mathrm{GL}_{n_m}\cap\rM_{n_1,\ldots,n_m}'$.
\end{itemize}
\end{defn}

For any $i\in\{1,\ldots,m\}$, let $\mathrm{U}_{n_i}$ be the subset of $\mathrm{GL}_{n_i}$, consisted with upper-triangular matrix with $1$ on the diagonal. We fix $\theta_{i}$ a non-degenerate character of $\rU_{n_i}$. It is clear that $\rU_{n_1,\ldots,n_m}=\rU_{n_1}\times\cdots\times\rU_{n_m}$ is a subgroup of $P_{(n_1,\ldots,n_m),s}$ and $P_{(n_1,\ldots,n_m),s}'$ for any $s\in\{1,\ldots,m\}$. Let $V_{n_s-1}$ denote the additive group of $k$-vector space with dimension $n_s-1$, which can be embedded canonically as a normal subgroup in $\rU_{n_1}\times\cdots\times\rU_{n_m}$. The subgroup $V_{n_s-1}$ is normal both in $P_{(n_1,\ldots,n_m),s}$ and $P_{(n_1,\ldots,n_m),s}'$, furthermore, we have $P_{(n_1,\ldots,n_m),s}=\rM_{n_1,\ldots,n_s-1,\ldots,n_m}\cdot V_{n_s-1}$ and $P_{(n_1,\ldots,n_m),s}'=\rM_{n_1,\ldots,n_s-1,\ldots,n_m}'\cdot V_{n_s-1}$.

Note $\gamma$ be any character of $\rU_{n_1}\times\cdots\times\rU_{n_m}$. For any $k$-representation $(E,\rho)\in\mathrm{Rep}_k(P_{(n_1,\ldots,n_m),s}')$, let $E_{s,\gamma}$ denote the subspace of $E$ generated by elements in form of $\rho(g)a-\gamma(h)a$, where $g\in V_{n_s-1},a\in E$. We define the coinvariants of $(E,\rho)$ according to $\theta$ as $E/E_{s,\gamma}$, and note it as $E(\gamma,s)$, and view $E(\gamma,s)$ as a $k$-representation of $\rM_{n_1,\ldots,n_s-1,\ldots,n_m}'$.

\begin{defn}
\label{ddef 2}
Fix a non-degenerate character $\theta$ of $\rU_{n_1}\times\cdots\times\rU_{n_m}$. 
\begin{itemize}
\item Let $(E,\rho)\in\mathrm{Rep}_k(P_{(n_1,\ldots,n_m),s}')$,
$$\Psi_{s}^{-}:\mathrm{Rep}_k(P_{(n_1,\ldots,n_m),s}')\rightarrow\mathrm{Rep}_k(\rM_{n_1,\ldots,n_s-1,\ldots,n_m}'),$$
which maps $E$ to $E(\mathds{1},s)$;
\item Let $(E,\rho)\in\mathrm{Rep}_k(\rM_{n_1,\ldots,n_s-1,\ldots,n_m}')$,
$$\Psi_{s}^{+}:\mathrm{Rep}_k(\rM_{n_1,\ldots,n_s-1,\ldots,n_m}')\rightarrow\mathrm{Rep}_k(P_{(n_1,\ldots,n_m),s}').$$
Write $P_{(n_1,\ldots,n_m),s}'=\rM_{n_1,\ldots,n_s-1,\ldots,n_m}' \cdot V_{n_s-1}$. Define $\Psi_{s}^{+}(E,\rho)=(E,\Psi^{+,s}(\rho))$ by $\Psi_{s}^{+}(\rho)(mg)(a)=\rho(m)(a)$, for any $m\in\rM_{n_1,\ldots,n_s-1,\ldots,n_m}',g\in V_{n_s-1}$ and $a\in E$;
\item Let $(E,\rho)\in\mathrm{Rep}_k(P_{(n_1,\ldots,n_m),s}')$,
$$\Phi_{\theta,s}^{-}:\mathrm{Rep}_k(P_{(n_1,\ldots,n_m),s}')\rightarrow\mathrm{Rep}_k(P_{(n_1,\ldots,n_s-1,\ldots,n_m),s}'),$$
which maps $E$ to $E(\theta,s)$;
\item Let $(E,\rho)\in\mathrm{Rep}_k(P_{(n_1,\ldots,n_s-1,\ldots,n_m),s}')$,
$$\Phi_{\theta,s}^{+}:\mathrm{Rep}_k(P_{(n_1,\ldots,n_s-1,\ldots,n_m),s}')\rightarrow\mathrm{Rep}_k(P_{(n_1,\ldots,n_m),s}'),$$
by $\Phi_{\theta,s}^{+}(\rho)=\ind_{P_{(n_1,\ldots,n_s-1,\ldots,n_m),s}'\cdot V_{n_s-1}}^{P_{(n_1,\ldots,n_m),s}'}\rho_{\theta}$, where $\rho_{\theta}(pg)(a)=\theta(g)\rho(p)(a)$, for any $p\in P_{(n_1,\ldots,n_s-1,\ldots,n_m),s}',g\in V_{n_s-1}$ and $a\in E$.
\end{itemize}
\end{defn}

\begin{rem}
\label{drem 3}
By the reason that for any $m\in\mathbb{Z}$ the group $V_m$ is a limite of pro-$p$ open compact subgroups, the four functors defined above are exact. In the definition of $\Phi_{\theta,s}^{+}$, we view $P_{(n_1,\ldots,n_s-1,\ldots,n_m),s}'$ as a subgroup of $P_{(n_1,\ldots,n_m),s}'$.
\end{rem}

The notion of derivatives is well defined for $k$-representations of $\rG$, now we consider the parallel operator of derivatives for Levi subgroups of $\rG'$.
\begin{defn}
\label{ddef 4}
We fix a non-degenerate character $\theta$ of $\rU_{n_1}\times\cdots\times\rU_{n_m}$. Let $(E,\rho)\in\mathrm{Rep}_k(P_{(n_1,\ldots,n_m,s)}')$, for any interger $s\in\{1,\ldots,m\}$ and $1\leq d\leq n_1+\ldots+n_s$, we define the derivative $\rho_{\theta,s}^{(d)}$:
\begin{itemize}
\item when $1\leq d\leq n_s$, $\rho_{\theta,s}^{(d)}=\Psi_s^{-}\circ(\Phi_{\theta,s}^-)^{d-1}\rho$;
\item when $n_s+1\leq d=n_s+\ldots+n_{s-l}+n'$, where $0\leq l\leq s-1$ and $1\leq n'\leq n_{s-l-1}$, then $\rho_{\theta,s}^{(d)}=\Psi_{s-l-1}^{-}\circ(\Phi_{\theta,s-l-1})^{n'-1}\circ(\Phi_{\theta,s-l})^{n_{s-l}-1}\circ\ldots\circ(\Phi_{\theta,s}^-)^{n_s-1}\rho$
\end{itemize}
\end{defn}

\begin{defn}
\label{ddef 5}
To simplify our notations, we need to introduce $\ind_{m}^{m-1}:\mathrm{Rep}_{k}(\rG_1)\rightarrow\mathrm{Rep}_k(\rG_2)$ according to different cases:
\begin{itemize}
\item When $\rG_1=\rM_{n_1,\ldots,n_m}'$ and $\rG_2=\rM_{n_1,\ldots,n_{m-1}+n_m}'$, we embed $\rG_1$ into $\rG_2$ as in the figure case $\mathrm{I}$, and $\ind_{m}^{m-1}$ is defined as $i_{\rU,1}$, and the later one is defined as in \S$1.8$ of \cite{BeZe};
\item When $\rG_1=P_{(n_1,\ldots,n_m),m}'$ and $\rG_2=P_{(n_1,\ldots,n_{m-1}+n_m),m-1}'$, we embed $\rG_1$ into $\rG_2$ as in the figure case $\mathrm{II}$, and $\ind_{m}^{m-1}$ is defined as $i_{\rU,1}$;
\item When $\rG_1=P_{(n_1,\ldots,n_m),m-1}'$ and $\rG_2=P_{(n_1,\ldots,n_{m-1}+n_m),m-1}'$, we embed $\rG_1$ into $\rG_2$ as in the figure case $\mathrm{III}$, and $\ind_{m}^{m-1}$ is defined as $i_{\rU,1}\circ\varepsilon$. Here $\varepsilon$ is a character of $P_{(n_1,\ldots,n_m),m-1}'$. Write $g\in P_{(n_1,\ldots,n_m),m-1}'\subset\rM_{n_1,\ldots,n_m}'$ as $(g_1,\ldots,g_m)$, define $\varepsilon(g)=\vert \det(g_m) \vert$, the absolute value of $\det(g_m)$. This $k$-character is well defined since $p\neq l$.
\end{itemize}
\end{defn}

\begin{figure}
\centering
\begin{tikzpicture}
\draw[fill=gray] (0,1) rectangle (1,0); 
\draw[fill=gray] (1,0) rectangle (3,-2);
\draw[fill=gray] (3,-2) rectangle (5,-4);
\draw[fill=white] (1,-2) rectangle (3,-4) ;
\draw [fill=white] (3,0) rectangle (5,-2);
\draw[decorate,decoration={brace,amplitude=7pt}]  (1,-2)--(1,0);
\draw[decorate,decoration={brace,amplitude=7pt}]  (1,-4)--(1,-2);
\node at (0.3,-1){$n_{m-1}$};
\node at (0.3,-3){$n_m$};
\node at (2,-3){$0$};
\node at (4,-1){$U$};
\end{tikzpicture}
\caption{Case I}

\centering
\begin{tikzpicture}
\draw[fill=gray] (0,1) rectangle (1,0);
\draw[fill=gray] (1,0) rectangle (3,-2);
\draw[fill=white] (3,0) rectangle (5,-2);
\draw[fill=white] (1,-2) rectangle (3,-4);
\draw[fill=gray] (3,-2) rectangle (5,-3.5);
\draw[fill=white] (3,-3.5) rectangle (4.5,-4);
\draw[fill=gray] (4.5,-3.5) rectangle (5,-4);
\draw[decorate,decoration={brace,amplitude=7pt}]  (1,-2)--(1,0);
\draw[decorate,decoration={brace,amplitude=7pt}]  (5,-2)--(5,-3.5);
\draw[decorate,decoration={brace,amplitude=3pt}]  (5,-3.5)--(5,-4);
\node at (0.3,-1){$n_{m-1}$};
\node at (5.85,-2.75) {$n_m-1$};
\node at (5.25,-3.75) {$1$};
\node at (2,-3){$0$};
\node at (4,-1){$U$};
\node at (4.75,-3.75) {$1$};
\node at (3.75,-3.75) {$0$};
\end{tikzpicture}
\caption{Case II}
\end{figure}

\begin{figure}
\centering
\begin{tikzpicture}
\draw[fill=gray] (0,1) rectangle (1,0); 
\draw[fill=gray] (1,0) rectangle (2.5,-1.5);
\draw[fill=gray] (2.5,-1.5) rectangle (4.5,-3.5);
\draw[fill=gray] (4.5,-3.5) rectangle (5,-4);
\draw[fill=white] (1,-1.5) rectangle (2.5,-3.5);
\draw[fill=white] (2.5,0) rectangle (4.5,-1.5);
\draw[fill=gray] (4.5,0) rectangle (5,-1.5);
\draw[fill=white] (4.5,-1.5) rectangle (5,-3.5);
\draw[fill=white] (1,-3.5) rectangle (2.5,-4);
\draw[fill=white] (2.5,-3.5) rectangle (4.5,-4);
\draw[decorate,decoration={brace,amplitude=7pt}]  (1,-1.5)--(1,0);
\draw[decorate,decoration={brace,amplitude=7pt}]  (5,-1.5)--(5,-3.5);
\draw[decorate,decoration={brace,amplitude=3pt}]  (5,-3.5)--(5,-4);
\node at (0.01,-0.75) {$n_{m-1}-1$};
\node at (5.6,-2.5) {$n_m$};
\node at (5.25,-3.75) {$1$};
\node at (4.75,-3.75) {1};
\node at (3.5,-0.75) {$U$};
\node at (1.75,-2.5) {$0$};
\node at (1.75,-3.75) {$0$};
\node at (3.5,-3.75) {$0$};
\node at (4.75,-2.5) {$0$};
\end{tikzpicture}
\caption{Case III}
\end{figure}
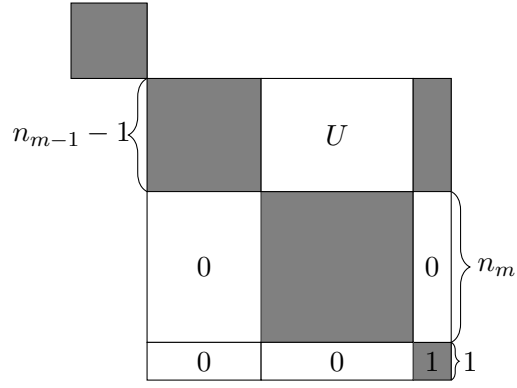

\begin{prop}
\label{dprop 5}
Assume that $\rho_1\in\mathrm{Rep}_k(\rM_{n_1,\ldots,n_m}'), \rho_2\in\mathrm{Rep}_k(P_{(n_1,\ldots,n_m),m}')$, and $\rho_3\in\mathrm{Rep}_k(P_{(n_1,\ldots,n_m),m-1}')$. The functor $\ind_{m}^{m-1}$ is defined as in \ref{ddef 5} according to different cases.
\begin{enumerate}
\item 
In $\mathrm{Rep}_k(P_{(n_1,\ldots,n_{m-1}+n_m),m-1}')$, there exists an exact sequence:
$$0\rightarrow\ind_m^{m-1}(\rho_1\vert_{P_{m,m-1}'})\rightarrow(\ind_m^{m-1}\rho_1)\vert_{P_{m-1,m-1}'}\rightarrow \ind_m^{m-1}(\rho_1\vert_{P_{m,m}'})\rightarrow 0,$$
where $P_{m,m-1}'=P_{(n_1,\ldots,n_m),m-1}',P_{m-1,m-1}'=P_{(n_1,\ldots,n_{m-1}+n_m),m-1}'$, and $P_{m,m}'=P_{(n_1,\ldots,n_m),m}'$.
\item When $2\leq m$, let $\dot{\theta}$ be a non-degenerate character of $\rU_{n_1}\times\cdots\times\rU_{n_{m-2}}\times\rU_{n_{m-1}+n_m}$, such that $\dot{\theta}\vert_{U_{n_1}\times\cdots\times \rU_{n_m}}\cong\theta$. We have equivalences:
\begin{itemize}
\item $\ind_{m}^{m-1}\circ\Psi_m^{-}\rho_2\cong\Psi_{m-1}^{-}\circ\ind_{m}^{m-1}\rho_2;$
\item $\ind_{m}^{m-1}\circ\Phi_{\theta,m}^{-}\rho_2\cong\Phi_{\dot{\theta},m-1}^{-}\circ\ind_{m}^{m-1}\rho_2.$
\end{itemize}
\item We have an equivalence:
$$\Psi_{m-1}^{-}\circ\ind_{m}^{m-1}\rho_3\cong\ind_{m}^{m-1}\circ\Psi_{m-1}^{-}\rho_3,$$
and an exact sequence:
$$0\rightarrow\ind_{m}^{m-1}\circ\Phi_{\theta,m-1}^{-}\rho_3\rightarrow\Phi_{\dot{\theta},m-1}^-\circ\ind_{m}^{m-1}\rho_3\rightarrow\ind_{m}^{m-1}((\Psi_{m-1}^{-}\rho_3)\vert_{P'})\rightarrow 0,$$
where $P'=P_{(n_1,\ldots,n_{m-1}-1,n_m),m}'$.
\end{enumerate}
\end{prop}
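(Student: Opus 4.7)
The plan is to mimic the Bernstein--Zelevinsky geometric lemma for the mirabolic filtration, adapted to the $\mathrm{SL}_n$-setting. The key engine is the Mackey/Bruhat decomposition of $\ind_m^{m-1}$ restricted to various mirabolic subgroups, combined with the observation that the twist $\varepsilon$ appearing in case III of Definition~\ref{ddef 5} is precisely the modulus factor needed so that the non-normalised induction on the mirabolic side matches the normalised induction on the Levi side.

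For (1), I would apply Mackey's formula to $(\ind_m^{m-1}\rho_1)\vert_{P_{m-1,m-1}'}$. With the parabolic $\mathrm{P}\subset \rM_{n_1,\ldots,n_{m-1}+n_m}'$ having unipotent radical $U$ as in Case~I, the natural action of $P_{m-1,m-1}'$ on $\mathrm{P}\backslash\rM_{n_1,\ldots,n_{m-1}+n_m}'$ has exactly two orbits: the closed orbit of vectors whose last coordinate vanishes, with stabiliser exchanging to $P_{m,m-1}'$, and the open orbit of vectors whose last coordinate is non-zero, with stabiliser exchanging to $P_{m,m}'$. The filtration on the induced representation coming from the closure order of these orbits yields the exact sequence, where the modulus character of the open cell is absorbed into $\varepsilon$ in the definition of $\ind_{m}^{m-1}$ in Case~III. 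One must check that the $\mathrm{SL}$-conditions (determinant $=1$) do not collapse or add orbits, which is straightforward because the scalar adjustment in the last $\mathrm{GL}_{n_m}$-block can always be transferred to the $\mathrm{GL}_{n_{m-1}}$-block by multiplying a diagonal matrix in $\rM_{n_1,\ldots,n_m}$ lying in $\rG'$.

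For (2), both claimed equivalences are formal once one notices that the coinvariant functors $\Psi_m^-$ and $\Phi^-_{\theta,m}$ are taken with respect to the group $V_{n_m-1}$ which sits inside the Levi factor $\mathrm{GL}_{n_m}$, hence inside the Levi of the parabolic used to define $\ind_m^{m-1}$; in particular $V_{n_m-1}$ commutes with the inducing unipotent $U$. Since $\ind_m^{m-1}$ is exact (\ref{drem 3}) and commutes with colimits, and since $\dot\theta\vert_{\rU_{n_1}\times\cdots\times\rU_{n_m}} \cong \theta$ is imposed precisely so that the characters match under restriction, both commutations reduce to an exchange-of-colimit argument on representation spaces, together with the observation that the mirabolic's inducing unipotent in Case~II is compatible with the standard one in Case~I (no extra Jacobian appears, since the character $\varepsilon$ is trivial on the Levi used here).

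For (3), the first equivalence is again of the type in (2): $\Psi_{m-1}^-$ involves $V_{n_{m-1}-1}$ acting on the $\mathrm{GL}_{n_{m-1}}$-block, which lies inside the Levi of the parabolic defining $\ind_m^{m-1}$ in Case~III, and so the coinvariants commute with induction (a short direct verification using Case~III's twist by $\varepsilon$ and the fact that $\varepsilon$ is trivial on the $\mathrm{GL}_{n_{m-1}}$-factor). The exact sequence for $\Phi_{\theta,m-1}^-$ is the real work: applying Mackey/geometric-lemma to $\ind_m^{m-1}\rho_3$ restricted along $V_{n_{m-1}-1}$-coinvariants twisted by $\theta$ produces, just as in (1), a two-term filtration corresponding to whether the generic point of the relevant $(n_{m-1}-1)$-dimensional space has a non-trivial component in the last row. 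One identifies the subobject with $\ind_m^{m-1}\circ\Phi_{\theta,m-1}^-\rho_3$ (closed orbit, stabiliser still a mirabolic of the $\mathrm{GL}_{n_{m-1}}$-type) and the quotient with $\ind_m^{m-1}((\Psi_{m-1}^-\rho_3)\vert_{P'})$ (open orbit, stabiliser drops a dimension in $\mathrm{GL}_{n_{m-1}}$ and becomes the extra mirabolic in $P'=P_{(n_1,\ldots,n_{m-1}-1,n_m),m}'$).

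The main obstacle will be the careful bookkeeping in part (3)'s exact sequence: one has to check that the modulus character entering the Mackey filtration is exactly cancelled by the twist $\varepsilon$ built into $\ind_m^{m-1}$ in Case~III, and that the stabiliser on the open orbit is genuinely $P'$ and not a larger or smaller group when intersected with $\rG'$. All the other claims are either the classical Bernstein--Zelevinsky calculation transferred verbatim or straightforward commutation-of-functors arguments enabled by the exactness listed in Remark~\ref{drem 3}.
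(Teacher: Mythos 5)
Your overall strategy --- running the Bernstein--Zelevinsky geometric lemma / Mackey orbit filtration through the mirabolic subgroups --- is exactly the paper's approach for parts (1) and (3): the paper sets up the data $(\mathrm{P},\mathrm{M},\mathrm{U},\mathrm{Q},\mathrm{N},\mathrm{V})$ of BZ \S 5.1 in each case, finds via Bruhat decomposition the closed orbit $\mathrm{P}\cdot e$ and the open orbit $\mathrm{P}\cdot\omega^{-1}$, and reads off the two-step filtration, with the character $\varepsilon$ of Definition \ref{ddef 5}, Case III, cancelling against the modulus/$\varepsilon_2$ factor on the open cell in (3). Your sketches of (1) and (3) are consistent with this.

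The genuine gap is in part (2). You declare both commutations ``formal'' on the grounds that the coinvariants are taken with respect to $V_{n_m-1}$, which sits inside the Levi and commutes with the inducing unipotent. But that is only true of the functors on the \emph{left} of the claimed isomorphisms. On the right, $\Psi_{m-1}^{-}$ and $\Phi_{\dot{\theta},m-1}^{-}$ are applied to representations of $P_{(n_1,\ldots,n_{m-1}+n_m),m-1}'$, so by Definition \ref{ddef 2} they take coinvariants with respect to $V_{n_{m-1}+n_m-1}$, a group of dimension $n_{m-1}+n_m-1$ which meets the inducing unipotent $U$ of Case II nontrivially; there is no exchange-of-colimits argument that moves it past $\ind_m^{m-1}$. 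One must again run the geometric lemma: for $\Phi_{\dot{\theta},m-1}^{-}\circ\ind_m^{m-1}$ there are \emph{two} $\mathrm{Q}$-orbits, and the isomorphism (rather than a two-term exact sequence) holds only because the open-orbit functor vanishes --- in the paper, because condition $(\ast)$ fails there, i.e.\ the conjugated non-degenerate character $\omega_0(\theta)$ does not agree with $\psi$ on $\omega_0(\mathrm{U})\cap\mathrm{V}$. This is precisely where the non-degeneracy of $\theta$ and the compatibility $\dot{\theta}\vert_{\mathrm{U}_{n_1}\times\cdots\times\mathrm{U}_{n_m}}\cong\theta$ enter, and it is a substantive step, not bookkeeping: the structurally identical computation in part (3), where the open-orbit character condition \emph{does} hold, produces the extra quotient term $\ind_m^{m-1}((\Psi_{m-1}^{-}\rho_3)\vert_{P'})$ instead of an isomorphism. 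Your proposal as written gives no reason why (2) yields an isomorphism while (3) yields only an exact sequence, so the second bullet of (2) (and, with the correct group $V_{n_{m-1}+n_m-1}$ identified, also the first bullet, which the paper handles by noting there is a single orbit in that configuration) is unproved.
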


\begin{proof}
As proved in the Appendix, Theorem 5.2 in \cite{BeZe} holds for $k$-representations of $\rM'$. And let $n=n_1+\ldots+n_m$

For $(1)$: Let $\rM'=\rM_{n_1,\ldots,n_m}'$ be embedded into $\rG'=\rM_{n_1,\ldots,n_{m-1}+n_m}'$ as in definition \ref{ddef 5}, figure $\mathrm{I}$. Define functor $\mathrm{F}$ as $\mathrm{F}(\rho_1)=\rho_1\vert_{P_{(n_1,\ldots,n_{m-1}+n_m),m-1}'}$, where the functor $\mathrm{F}$ is defined as in 5.1 \cite{BeZe} in the following situation: 
$$\rU=\rU_{n_{m-1}},\vartheta=1,\rN=P_{(n_1,\ldots,n_{m-1}+n_m),m-1}',\rV=\{ e\}.$$
To compute $\mathrm{F}$, we apply theorem 5.2 \cite{BeZe}. Condition $(1),(2)$ and $(\ast)$ from 5.1 \cite{BeZe} hold trivially. Let $\rT$ be the group of diagonal matrix, the $\mathrm{Q}$-orbits on $X=\mathrm{P}\backslash \rG$ is actually the $\rT\cdot\rN$-orbits, and the group $\rT\cdot\rN$ is a parabolic subgroup. By Bruhat decomposition $\rT\cdot\rN$ has two orbits: the closed orbit $Z$ of point $\mathrm{P}\cdot e\in X$ and the open orbit $Y$ of the point $\mathrm{P}\cdot\omega^{-1}\in X$, where $\omega$ is the matrix of the cyclic permutation $sgn(\sigma)\mathds{1}_{n_m}\cdot\sigma$, where 
$$\sigma=(N_1+\cdots+n_{m-1}\rightarrow n\rightarrow n-1\rightarrow\cdots\rightarrow n_1+\cdots+n_{m-1}),$$
and $sgn(\sigma)\mathds{1}_{n_m}$ denote an element in $\rM_{n_1,\ldots,n_m}'$, which equals to identity on the first $m-1$ blocs, and $sgn(\sigma)$ times identity on the last bloc, and $sgn(\sigma)$ denote the signal of $\sigma$. Now we consider condition $(4)$ from 5.1 \cite{BeZe}:
\begin{itemize}
\item Since $V=\{ e\}$, it is clear that $\omega(\mathrm{P}),\omega(\rM)$ and $\omega(U)$ are decomposable with respect to $(\rN,\rV)$;
\item Let us consider $\omega^{-1}(\mathrm{Q})=\omega^{-1}(\rN)$.
\end{itemize}
To study the intersection $\omega^{-1}(\rN)\cap (\rM\cdot\rU)$, first we consider the Levi subgroup $\rM_{n_1,\ldots,n_{m-1}+n_m-1,1}'$ and the corresponding standard parabolic subgroup 
$$\mathrm{P}'=\rM_{n_1,\ldots,n_{m-1}+n_m-1,1}'\cdot \rV_{n_{m-1}+n_m-1},$$
where $\rV_{n_{m-1}+n_m-1}$ denotes the unipotent radical of $\mathrm{P}'$. We have $\mathrm{N}\subset \mathrm{P}'$, hence $\omega^{-1}(\mathrm{N})\subset\omega^{-1}(\mathrm{P}')$. As in 6.1 of \cite{BeZe}, after fix a system $\Omega$ of roots, and denote $\Omega^{+}$ the set of positive roots. Then by Proposition in 6.2 \cite{BeZe}, we could write $\omega^{-1}(\mathrm{P}')=\rG(\mathcal{S})$ and $\mathrm{P}=\rG(\mathcal{P}),\rU=\rU(\mathcal{M})$ in the manner as in 6.1\cite{BeZe}, where $\mathcal{S},\mathcal{P}$ and $\mathcal{M}$ are convex subset of $\Omega$. So by Proposition in 6.1 \cite{BeZe}, we have:
$$\omega^{-1}(\mathrm{P}')\cap \mathrm{P}=\rG(\mathcal{S}\cap\mathcal{P});$$
$$\omega^{-1}(\mathrm{P}')\cap\rU=\rU(\mathcal{S}\cap\mathcal{P}\backslash\mathcal{M});$$
$$\omega^{-1}(\mathrm{P}')\cap\rM=\rG(\mathcal{S}\cap\mathcal{M}).$$
Hence 
$$\omega^{-1}(\mathrm{P}')\cap\mathrm{P}=(\omega^{-1}(\mathrm{P}')\cap\rM)\cdot(\omega^{-1}(\mathrm{P}')\cap\rU).$$
Notice that $\omega^{-1}(\mathrm{P}')\cap\rU=\omega^{-1}(\rN)\cap\rU$, we deduce that:
$$\omega^{-1}(\rN)\cap\mathrm{P}=(\omega^{-1}(\rN)\cap\rM)\cdot(\omega^{-1}(\rN)\cap\rU).$$
In the formula of $\Phi_{Z}$ in 5.2 \cite{BeZe}, since $\rU\cap\omega^{-1}(\rN)=\rU$, the characters $\varepsilon_1=\varepsilon_2=1$. Hence we obtain the exact sequence desired.

For $(2)$. In this part, the functor $\ind_{m}^{m-1}$ is always defined as the case $\mathrm{II}$ in \ref{ddef 5}. First we consider the case $\Psi_m^-$. Define functor $\mathrm{F}$ as $\Psi_{m-1}^-\circ\ind_{m}^{m-1}$. We write $\mathrm{F}$ as in \S $5.1$ \cite{BeZe} in the situation:
$$\rG=P_{(n_1,\ldots,n_{m-1}+n_m),m-1}',\rM=P_{(n_1,\ldots,n_m),m}',$$
$$\rN=\rM_{n_1,\ldots,n_{m-1}+n_m-1}',\mathrm{V}=\rV_{n_{m-1}+n_m-1},$$
and $\rU$ are defined in the \ref{ddef 5} case $\mathrm{II}$. Condition $(1)$ and $(2)$ of \S 5.1 \cite{BeZe} is clear. Since $\mathrm{Q}=\rG$, and there is only one $\mathrm{Q}$-orbit on $X=\mathrm{P} \backslash\rG$, conditions $(3),(4)$ hold trivially. Thus we obtain the equivalence:
$$\ind_{m}^{m-1}\circ\Psi_m^{-}\rho_2\cong\Psi_{m-1}^{-}\circ\ind_{m}^{m-1}\rho_2.$$

For the case $\Phi_{\dot{\theta},m-1}^-$: define functor $\mathrm{F}$ as $\Phi_{\dot{\theta},m-1}^-\circ\ind_{m}^{m-1}$. We write $\mathrm{F}$ as in \S $5.1$ \cite{BeZe} in the situation:
$$\rG=P_{(n_1,\ldots,n_{m-1}+n_m),m-1}',$$
$$\rN=P_{(n_1,\ldots,n_{m-1}+n_m-1),m-1}',\mathrm{V}=\mathrm{V}_{n_{m-1}+n_m-1},$$
and $\rM,\rU$ are defined as the case $\mathrm{II}$ of \ref{ddef 5}. Conditions $(1),(2)$ of \S $5.1$ \cite{BeZe} hold, and 
$$\mathrm{P}\backslash\rG/ \mathrm{Q}\cong \rM_{n_1,\ldots,n_{m-1},n_m-1}'\backslash\rM_{n_1,\ldots,n_{m-1}+n_m-1}'/ P_{(n_1,\ldots,n_{m-1}+n_m-1),m-1}'.$$
Hence as proved in $(1)$, the group $\mathrm{Q}$ has two orbits on $X=\mathrm{P}\backslash\rG$: the closed orbit of $\mathrm{P}\cdot e$ and the open orbit $\mathrm{P}\cdot\omega_0^{-1}$, where $\omega_0$ is the matrix $sgn(\sigma_0)\mathds{1}_{n_m-1}\cdot\sigma_0$. The matrix $\sigma_0$ corresponding to the cyclic permutation:
$$(n_1+\cdots+n_{m-1}\rightarrow n\rightarrow n-1\rightarrow \cdots\rightarrow n_1+\cdots+n_{m-1}).$$
Now we check the condition $(4)$ of \S $5.1$ \cite{BeZe}. Since
$$\mathrm{P}=\rM_{n_1,\ldots,n_{m-1}+n_m}'\cdot\mathrm{V},$$
$$\rM=\rM_{n_1,\ldots,n_m-1}'\cdot\mathrm{V}_{n_m-1},$$
and $\omega_0(\mathrm{V})=\mathrm{V}$, hence $\omega_0(\mathrm{P})$ and $\omega_0(\rM)$ are decomposable with respect to $(\rN,\rV)$. We deduce that $\omega_0(\rU)$ is decomposable with respect to $(\rN,\rV)$ by noticing that 
$\omega_0(\rU)=(\omega_0(\rU)\cap\rN)\cdot(\omega_0(\rU)\cap\rV)$. Now consider $\omega_0^{-1}(\mathrm{Q})$, $\omega_0^-(\rN)$ and $\omega_0^-(\rV)$. Since $\omega_0^{-1}(\rV)=\rV$, which is decomposable with respect to $(\rM,\rU)$ clearly. Notice that $\omega_0^-(\rV)\cap\mathrm{P}=\omega_0^-(\rV)$, and $\omega_0^-(\rN)$ is decomposable with respect to $(\rM,\rU)$ by $(1)$. We deduce that $\omega_0^-(\mathrm{Q})$ is decomposable with respect to $(\rM,\rU)$. And the condition $(\ast)$ does not hold for the orbit $\mathrm{P}\cdot\sigma_0$. Then by \S $5.2$ \cite{BeZe}, we obtain the equivalence
$$\ind_{m}^{m-1}\circ\Phi_{\theta,m}^{-}\cong\Phi_{\dot{\theta},m-1}^{-}\circ\ind_{m}^{m-1},$$
for every $\rho_2\in\mathrm{Rep}_k(P_{(n_1,\ldots,n_m),m}')$.

For part $(3)$. In the case of $\mathrm{F}=\Psi_{m-1}^-\circ\ind_m^{m-1}$, we have (in the manner of \S $5.1$ \cite{BeZe}):
$$\rG=P_{(n_1,\ldots,n_m),m}',$$
$$\rN=\rM_{n_1,\ldots,n_m-1}',\mathrm{V}=\mathrm{V}_{n_m-1},$$
and $\rM,\rU$ as in \ref{ddef 5} case $\mathrm{II}$. There is only one $\mathrm{Q}$-orbit on $\mathrm{P}\backslash\rG$, and condition $(1)-(4)$ and $(\ast)$ in \S $5.1$ \cite{BeZe} hold. Notice that $\varepsilon\circ\Psi_{m-1}^-\cong\Psi_{m-1}^-$ ($\varepsilon$ is defined in \ref{ddef 5}). After applying theorem $5.2$ of \cite{BeZe}, we obtain the equivalence:
$$\Psi_{m-1}^-\circ\ind_{m}^{m-1}\rho_3\cong\ind_{m}^{m-1}\circ\Psi_{m-1}^-\rho_3.$$

For the case $\mathrm{F}=\Phi_{\dot{\theta},m-1}^-\circ\ind_m^{m-1}$. We have (in the manner of \S $5.1$ \cite{BeZe}):
$$\rG=P_{(n_1,\ldots,n_m),m}',$$
$$\rN=P_{(n_1,\ldots,n_{m-1}+n_m-1),m-1}',\mathrm{V}=\mathrm{V}_{n_{m-1}+n_m-1},$$
and $\rM,\rU$ are defined as the case $\mathrm{II}$ of \ref{ddef 5}. As in the proof of part $(2)$, the group $\mathrm{Q}$ has two orbits on $\mathrm{P}\backslash\rG$: the closed one $\mathrm{P}\cdot e$ and the open one $\mathrm{P}^{-1}\cdot \omega_0$. The condition $(4)$ can be justified as part $(2)$, and condition $(\ast)$ is clear since $\omega_0(\rU)\cap\mathrm{V}=\mathds{1}$. Now we apply theorem $5.2$ \cite{BeZe}. The functor corresponds to the orbit $\mathrm{P}\cdot e$ is $\ind_{m}^{m-1}\circ\Phi_{\theta,m-1}^-$ by noticing $\varepsilon\circ\Phi_{\theta,m-1}^-\cong\Phi_{\theta,m-1}\circ\varepsilon$. Now we consider the functor corresponds to the orbit $\mathrm{P}\cdot\omega_0^{-1}$. Following the notation as \S $5.1$ \cite{BeZe}, the character $\psi'=\omega_0^{-1}(\psi)\vert_{\rM\cap\omega_0^{-1}(\mathrm{V})}$ is trivial. The character $\varepsilon_1$ is trivial, and $\varepsilon_2\cong\varepsilon^{-1}$. Hence the functor corresponded to the fixed orbit is
$$\ind_{m}^{m-1}\circ\res_{P'}^{\rM_{n_1,\ldots,n_{m-1},n_m}'}\circ\Psi_{m-1}^-,$$
from which we deduce the exact sequence desired.
\end{proof}

\begin{cor}
\label{dcor 6}
\begin{enumerate}
\item Let $\rho\in\mathrm{Rep}_k(P_{(n_1,\ldots,n_m),m}')$. Assume that $1\leq i\leq n_m$, then $(\ind_{m}^{m-1}\rho)_{\dot{\theta}, m-1}^{(i)}\cong\ind_{m}^{m-1}\rho_{\theta,m}^{(i)}$;
\item Let $\rho\in\mathrm{Rep}_k(P_{(n_1,\ldots,n_m),m-1}')$. Assume that $1\leq i\leq n_{m-1}+n_m$, then $(\ind_{m}^{m-1}\rho)_{\dot{\theta},m-1}^{(i)}$ is filtrated by
$\ind_{m}^{m-1}((\rho_{\theta,m}^{(i-j)})_{\theta,m-1}^{(j)})$, where $i-n_m\leq j\leq i$;
\item Let $\rho\in\mathrm{Rep}_k(\rM_{n_1,\ldots,n_m}')$. Assume that $i\geq 0$, then $(\ind_{m}^{m-1}\rho)_{\dot{\theta},m-1}^{(i)}$ is filtrated by $\ind_{m}^{m-1}((\rho_{\theta,m}^{(i-j)})_{\theta,m-1}^{(j)})$, where $i-n_m\leq j\leq i$;
\item Let $\rho\in\mathrm{Rep}_k(\rM_{n_1,\ldots,n_m}')$, there is an equivalence:
$$(\ind_2^1\circ\cdots\circ\ind_{m-1}^{m-2}\circ\ind_{m}^{m-1}\rho)_{\dot{\theta},1}^{(n_1+\cdots+n_m)}\cong(\cdots((\rho_{\theta,m}^{(n_m)})_{\theta,m-1}^{(n_{m-1})})\cdots)_{\theta,1}^{(n_1)}.$$
\end{enumerate}
\end{cor}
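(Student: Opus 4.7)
The plan is to derive all four parts from Proposition \ref{dprop 5} by iteration and induction, bootstrapping from (1) to (4). Throughout, the exactness of the four functors $\Psi_s^\pm, \Phi_{\theta,s}^\pm$ (Remark \ref{drem 3}) is what lets the filtration arguments go through.

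For (1), I would unfold the definition $\rho_{\theta,m}^{(i)} = \Psi_m^{-}(\Phi_{\theta,m}^{-})^{i-1}\rho$ and its $(\dot\theta,m-1)$-analogue. Since $i\le n_m \le n_{m-1}+n_m$, both expressions lie in the range where Definition \ref{ddef 4} gives the derivative by the single-place formula. The two equivalences in Proposition \ref{dprop 5}(2) say precisely that $\ind_m^{m-1}$ commutes with both $\Psi_m^-$ and $\Phi_{\theta,m}^-$ (turning them into $\Psi_{m-1}^-$ and $\Phi_{\dot\theta,m-1}^-$ on the outside), so iterating these equivalences $i-1$ times for $\Phi^-$ and once for $\Psi^-$ pushes $\ind_m^{m-1}$ outside and yields the desired isomorphism.

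For (2), I would induct on $i$. The crux is the exact sequence from Proposition \ref{dprop 5}(3):
\[
0 \longrightarrow \ind_m^{m-1}\circ\Phi_{\theta,m-1}^{-} \longrightarrow \Phi_{\dot\theta,m-1}^{-}\circ \ind_m^{m-1} \longrightarrow \ind_m^{m-1}\bigl((\Psi_{m-1}^{-}(-))|_{P'}\bigr) \longrightarrow 0.
\]
Applying $\Phi_{\dot\theta,m-1}^-$ to a piece $\ind_m^{m-1}((\rho_{\theta,m}^{(i-j)})_{\theta,m-1}^{(j)})$ therefore produces one subobject where the internal $j$ index gets incremented by $1$ (from the $\Phi_{\theta,m-1}^-$ term) and a quotient where a $\Psi_{m-1}^-$ is absorbed and replaced by a later $\Phi_{\theta,m}^-$ shift (via the restriction-to-$P'$ term; here $P'$ is $P_{(n_1,\ldots,n_{m-1}-1,n_m),m}'$, which has the mirabolic at place $m$, so subsequent derivatives can be read as part (1) of the corollary in disguise). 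Assembling these contributions step by step generates the bidegree filtration indexed by the range $i-n_m \le j \le i$. The final application of $\Psi_{m-1}^-$, exact by Remark \ref{drem 3}, preserves the filtration and yields the required pieces. This is where I expect the main obstacle to lie: verifying that the double iteration (inside $\Phi$-direction and outside $\Phi$-direction) assembles into the stated simple filtration without spurious cross terms requires careful bookkeeping.

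For (3), I would start from the short exact sequence in Proposition \ref{dprop 5}(1) and apply the exact functor $(-)^{(i)}_{\dot\theta,m-1}$ to it. This produces an exact sequence whose right term is treated by part (1) (giving the single piece $j=0$, which matches the definition of $\rho|_{P_{m,m}'}$ having a well-defined derivative at place $m$) and whose left term is treated by part (2) (giving the pieces for $j\ge 1$ in the range $i-n_m \le j \le i$). The index ranges glue perfectly into the claimed filtration of $(\ind_m^{m-1}\rho)^{(i)}_{\dot\theta,m-1}$. For (4), I would iterate (3) across the indices $m-1, m-2, \ldots, 1$. At each iteration the multiindex filtration expands, but the key collapsing observation is that $\sigma_{\theta,s}^{(k)} = 0$ whenever $k>n_s$, while the total derivative order is forced to be $n_1+\cdots+n_m$. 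Hence only the diagonal piece with $k_s = n_s$ for every $s$ survives, and since each such maximal derivative trivializes the corresponding Levi factor, the outer inductions collapse to the identity, yielding the claimed isomorphism.
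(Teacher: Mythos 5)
Your proposal is correct and follows essentially the same route as the paper, which deduces (1) from Proposition \ref{dprop 5}(2) together with exactness of the derivative functors, (2) from (1) and the exact sequence in Proposition \ref{dprop 5}(3), (3) from (1), (2) and Proposition \ref{dprop 5}(1), and (4) by iterating (3) and observing that all filtration pieces with a derivative order exceeding a block size vanish, leaving only the top term. The only difference is that you spell out the induction and the bookkeeping of the filtration indices, which the paper leaves implicit.
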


\begin{proof}
Part $(1)$ follows from the exactness of $\Phi_{\theta,m}^-,\Psi_m^-$ and \ref{dprop 5} $(2)$; $(2)$ from $(1)$ and \ref{dprop 5} $(3)$, $(3)$ from $(1),(2)$ and \ref{dprop 5} $(1)$. Part $(4)$ follows from $(3)$, by noticing that
$$\ind_2^1\circ\cdots\circ\ind_{m-1}^{m-2}\circ\ind_{m}^{m-1}\rho\cong i_{\rM_{n_1,\ldots,n_m}}^{\mathrm{GL}_{n_1+\cdots+n_m}}\rho.$$
In fact, this is the transitivity of parabolic induction.
\end{proof}

\subsubsection{Uniqueness of supercuspidal support}
\begin{prop}
\label{Uprop 1}
Let $\tau\in\mathrm{Rep}_{k}(\rM_{n_1,\ldots,n_m}')$, and $\theta$ a non-degenerate character of $\rU_{n_1,\ldots,n_m}$. Then $\tau_{\theta,m}^{(n_1+\ldots+n_m)}\neq 0$ is equivalent to say that $\Hom_{k[\rU_{n_1,\ldots,n_m}]}(\tau,\theta)\neq 0$. In particular, this is equivalent to say that $(\rU_{n_1,\ldots,n_m},\theta)$-coinvariants of $\tau$ is non-trivial.
\end{prop}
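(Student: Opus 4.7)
The plan is to route both equivalences through a common intermediate object, namely the $(\rU,\theta)$-twisted coinvariants $\tau_{\rU,\theta}$ of $\tau$, where $\rU = \rU_{n_1,\ldots,n_m}$. The equivalence of $\Hom_{k[\rU]}(\tau,\theta) \neq 0$ with the non-vanishing of these coinvariants is purely formal: a $k[\rU]$-linear map $\tau \to \theta$ must annihilate every vector $\pi(u)v - \theta(u)v$ with $u \in \rU$, $v \in \tau$, and hence factors uniquely through the quotient $\tau_{\rU,\theta}$. This yields a natural identification $\Hom_{k[\rU]}(\tau,\theta) \cong \Hom_k(\tau_{\rU,\theta}, k)$, which is non-zero if and only if $\tau_{\rU,\theta}$ is non-zero. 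This already handles the ``in particular'' clause of the statement.

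The substantive claim is then that the top derivative is exactly this coinvariant module. Unravelling Definition~\ref{ddef 4},
\begin{equation*}
\tau_{\theta,m}^{(n_1+\ldots+n_m)} \;=\; \Psi_1^{-} \circ (\Phi_{\theta,1}^{-})^{n_1-1} \circ (\Phi_{\theta,2}^{-})^{n_2-1} \circ \cdots \circ (\Phi_{\theta,m}^{-})^{n_m-1}(\tau).
\end{equation*}
By construction each $\Phi_{\theta,s}^{-}$ computes $(V_{n_s-1},\theta|_{V_{n_s-1}})$-coinvariants for $V_{n_s-1}$ a normal subgroup of the $s$-th factor unipotent $\rU_{n_s}$. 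The $n_s - 1$ successive subgroups that appear in the iterated application of $\Phi_{\theta,s}^{-}$ form an increasing chain of normal subgroups of $\rU_{n_s}$ whose union is $\rU_{n_s}$ itself (the ``last column of $\mathrm{GL}_{n_s}$, then the last column of the remaining $(n_s-1)\times(n_s-1)$ sub-block, etc.''). Since coinvariants compose along a filtration by normal subgroups, the composition $(\Phi_{\theta,s}^{-})^{n_s-1}$ computes exactly the $(\rU_{n_s},\theta|_{\rU_{n_s}})$-coinvariants. As the $\rU_{n_s}$ sit in distinct commuting blocks and $\theta = \theta_1 \otimes \cdots \otimes \theta_m$, composing for $s = m, m-1,\ldots,2$ yields the $(\rU_{n_2} \times \cdots \times \rU_{n_m},\,\theta)$-coinvariants of $\tau$; the final $(\Phi_{\theta,1}^{-})^{n_1-1}$ handles $\rU_{n_1}$, and $\Psi_1^{-}$ is the identity functor at the last stage since the first block has been reduced to size one and $V_0 = \{e\}$. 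Combining these identifications gives $\tau_{\theta,m}^{(n_1+\ldots+n_m)} \cong \tau_{\rU,\theta}$.

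The main obstacle is the combinatorial bookkeeping in the second step: verifying that the block transition from mirabolic index $s$ to $s-1$ is compatible with the accumulated coinvariant identifications, and that the specific chain of subgroups $V_\bullet$ quotiented out during $(\Phi_{\theta,s}^{-})^{n_s-1}$ really does exhaust $\rU_{n_s}$ with the character restrictions matching $\theta|_{\rU_{n_s}}$ at each stage. Once this is carefully unpacked the two equivalences follow at once:
\begin{equation*}
\tau_{\theta,m}^{(n_1+\ldots+n_m)} \neq 0 \;\Longleftrightarrow\; \tau_{\rU,\theta} \neq 0 \;\Longleftrightarrow\; \Hom_{k[\rU]}(\tau,\theta) \neq 0,
\end{equation*}
as required.
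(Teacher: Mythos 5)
Your proposal is correct and follows essentially the same route as the paper: the paper likewise identifies the top derivative with the full $(\rU,\theta)$-coinvariants, observing that the relations imposed by the successive row subgroups quotiented out by the iterated functors $\Phi^{-}_{\theta,\cdot}$, $\Psi^{-}_{\cdot}$ span the same subspace as the relations for all of $\rU$ because those subgroups generate $\rU$ and $\theta$ is determined by its restrictions to them, and then treats the Hom-versus-coinvariants equivalence as the formal universal property of twisted coinvariants. The only cosmetic difference is that you phrase the exhaustion step as composing coinvariants along a chain built from the row subgroups (which are not themselves nested, only their successive products are), whereas the paper simply invokes generation; the substance is identical.
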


\begin{proof}
In this proof, we use $\rU$ to denote $\rU_{n_1,\ldots,n_m}$. For the first equivalence, notice that $\Phi_{\theta,m}^-(\tau)\neq 0$ is equivalent to say that $(V_{n_m-1},\theta)$-coinvariants of $\tau$ is non-trivial. For $1\leq s\leq n_m-1$, let $V_{s}$ denote the subgroup of $\rU$ consisting with the matrices with non-zero coefficients only on the $(s+1)$-th line and the diagonal. Let $W$ denote the representation space of $\tau$. The space of $\tau_{\theta,m}^{(n_1+\cdots+n_m)}$ is isomorphic to the quotient of $W$ by the subspace $W_{\theta}$ generated by $g_s(w)-\theta(g_s)w$, for every $s$ and $g_s\in V_{s}$, $w\in W$. Meanwhile, since the subgroups $V_{s}$'s generate $\rU$, and $\theta$ is determined by $\theta\vert_{V_{s}}$ while considering every $s$, the subspace $W_{\theta}$ of $W$ is isomorphic to the subspace generated by $g(w)-\theta(g)w$, where $g\in\rU$. Hence $\tau_{\theta,m}^{(n_1+\cdots+n_m)}\neq 0$ is equivalent to say that $(\rU_{n_1,\ldots,n_m},\theta)$-coinvariants of $\tau$ is non-trivial. The second equivalence is clear, since the $(\rU,\theta)$-coinvariants of $\tau$ is the largest quotient of $\tau$ such that $\rU$ acts as a multiple of $\theta$. 
\end{proof}

\begin{prop}
\label{Uprop 2}
Let $\tau\in\mathrm{Rep}_{k}(\rM_{n_1,\ldots,n_m}')$, and $\rho$ be a subquotient of $\tau$. Let $\theta$ be a non-degenerate character of $\rU_{n_1,\ldots,n_m}$, and $\rho_{\theta,m}^{(n_1+\cdots+n_m)}$ is non-trivial, then $\tau_{\theta,m}^{(n_1+\cdots+n_m)}$ is non-trivial.
\end{prop}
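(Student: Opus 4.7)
The plan is to invoke Proposition~\ref{Uprop 1} to translate the statement entirely into a claim about $(\rU,\theta)$-coinvariants, where $\rU = \rU_{n_1,\ldots,n_m}$. After that translation, the proposition amounts to: if $\rho$ is a subquotient of $\tau$ and $\rho_{\rU,\theta} \neq 0$, then $\tau_{\rU,\theta} \neq 0$. This will follow purely formally provided the functor $V \mapsto V_{\rU,\theta}$ is exact on $\mathrm{Rep}_k(\rU)$.

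First I would establish the exactness of $(-)_{\rU,\theta}$. The unipotent radical $\rU$ is a union of its compact open subgroups $K$, and each such $K$ may be taken to be pro-$p$. Since $\mathrm{char}(k) = \ell \neq p$, any smooth $k$-representation of $K$ decomposes as a direct sum of its $\chi$-isotypic components as $\chi$ runs over smooth characters of $K$, so the functor $V \mapsto V_{K,\theta|_K}$ is exact (it picks out a direct summand). Passing to the filtered colimit over the poset of pro-$p$ compact open subgroups of $\rU$ preserves exactness, giving exactness of $(-)_{\rU,\theta}$.

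Next I would finish the argument. Write $\rho = \rho_2/\rho_1$ with $\rho_1 \subset \rho_2 \subset \tau$. Exactness of $(-)_{\rU,\theta}$ applied to the short exact sequence $0 \to \rho_1 \to \rho_2 \to \rho \to 0$ yields $(\rho_2)_{\rU,\theta}/(\rho_1)_{\rU,\theta} \cong \rho_{\rU,\theta}$, so $\rho_{\rU,\theta} \neq 0$ forces $(\rho_2)_{\rU,\theta} \neq 0$. Exactness applied to $0 \to \rho_2 \to \tau$ then gives an injection $(\rho_2)_{\rU,\theta} \hookrightarrow \tau_{\rU,\theta}$, so $\tau_{\rU,\theta} \neq 0$. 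Retranslating via Proposition~\ref{Uprop 1}, this is exactly $\tau_{\theta,m}^{(n_1+\cdots+n_m)} \neq 0$.

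The only step with any content is the exactness of $(-)_{\rU,\theta}$, and even that is essentially routine given the locally pro-$p$ structure of $\rU$ and the hypothesis $\ell \neq p$; the rest of the proof is a direct application of Proposition~\ref{Uprop 1}. I do not expect any genuine obstacle.
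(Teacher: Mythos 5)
Your proposal is correct and follows essentially the same route as the paper: the paper's proof likewise rests on exactness of the top derivative functor (a composition of the functors $\Psi^{-}$ and $\Phi_{\theta}^{-}$, exact because the relevant unipotent groups are unions of pro-$p$ compact open subgroups and $\ell\neq p$) followed by the same subobject/quotient chase, and your passage through Proposition \ref{Uprop 1} is only a cosmetic reformulation since the top derivative is literally the $(\rU,\theta)$-coinvariants. One small correction: the pro-$p$ compact open subgroups $K\subset\rU$ are nonabelian in general, so their smooth $k$-representations do not decompose into character-isotypic pieces alone; what you actually need (and what is true, by semisimplicity in characteristic $\ell\neq p$, equivalently by the averaging idempotent $e_{K,\theta\vert_K}$) is only that the $\theta\vert_K$-isotypic component is a direct summand, so exactness of $(-)_{K,\theta\vert_K}$, and hence of $(-)_{\rU,\theta}$ after the filtered colimit, stands.
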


\begin{proof}
We consider the $(n_1+...+n_m)$-th derivative functor corresponding to the non-degenerate character $\theta$, from the category $\mathrm{Rep}_{k}(\rM_{n_1,\ldots,n_m}')$ to the category of $k$-vector spaces, which maps $\tau$ to $\tau_{\theta,m}^{(n_1+\cdots+n_m)}$. By Definition \ref{ddef 4} and Remark \ref{drem 3}, this functor is a composition of functors $\Psi_{\cdot}^{-}$ and $\Phi_{\theta,\cdot}^{-}$, hence is exact. Let $\rho_0$ be a sub-representation of $\tau$ such that $\rho$ is a quotient representation of $\rho_0$. The exactness of derivative functor implies first that $\rho_{0_{\theta,m}}^{n_1+...+n_m}\neq 0$, and apply again the exactness we conclude that $\rho_{\theta,m}^{n_1+...+n_m}\neq 0$.
\end{proof}

\begin{thm}
\label{Uthm 3}
Let $\rM'$ be a Levi subgroup of $\rG'$, and $\rho$ an irreducible $k$-representation of $\rM'$. The supercuspidal support of $\rho$ is a $\rM'$-conjugacy class of one unique supercuspidal pair.
\end{thm}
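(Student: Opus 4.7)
The plan is to reduce to the cuspidal case and then to separate the irreducible components of $\res_{\rL'}^{\rL}\tau$ by comparing highest derivatives. Since any irreducible $k$-representation of $\rM'$ has a unique cuspidal support up to $\rM'$-conjugacy, I may assume $\rho = \pi'$ is cuspidal. By Proposition \ref{prop 6} there exists an irreducible cuspidal $k$-representation $\pi$ of $\rM$ containing $\pi'$ as a direct component of $\res_{\rM'}^{\rM}\pi$; let $[\rL,\tau]$ be the (unique, by \cite{V2}) $\rM$-conjugacy class of supercuspidal pairs supporting $\pi$, and fix an irreducible component $\tau'$ of $\res_{\rL'}^{\rL}\tau$. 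Proposition \ref{Lproposition 10} already tells us that every element of the supercuspidal support of $\pi'$ lies in the $\rM$-orbit of $(\rL',\tau')$, so to conclude it suffices to show that only one $\rL'$-conjugacy class of components of $\res_{\rL'}^{\rL}\tau$ can actually appear, i.e.\ that the component $\tau'$ is determined up to $\rL'$-conjugation by $\pi'$.

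To single out this component, I would choose a non-degenerate character $\dot{\theta}$ of the unipotent radical $\rU_{\rM}$ of a fixed Borel subgroup of $\rM$ so that the top $\dot{\theta}$-derivative of $\pi'$, in the sense of Definition \ref{ddef 4}, is non-zero. Such a $\dot{\theta}$ exists by the analogue for $\rM$ of the multiplicity-one argument of Proposition \ref{prop 23}, together with the fact that the irreducible components of $\res_{\rM'}^{\rM}\pi$ are $\rM$-conjugate to each other by diagonal elements, so that after replacing the initial non-degenerate character by a suitable conjugate the Whittaker functional indeed detects $\pi'$. Setting $\theta = \dot{\theta}|_{\rU_{\rL}}$, which is again non-degenerate by the compatibility of the unipotent radicals spelled out in Definition \ref{ddef 1}, the same argument applied to the supercuspidal $\tau$ produces exactly one irreducible component $\tau_0'$ of $\res_{\rL'}^{\rL}\tau$ whose top $\theta$-derivative is non-zero.

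Now suppose $\pi'$ is a subquotient of $i_{\rL'}^{\rM'}\tau'$ for some irreducible component $\tau'$ of $\res_{\rL'}^{\rL}\tau$. Proposition \ref{Uprop 2} transfers the non-vanishing of the top $\dot{\theta}$-derivative of $\pi'$ into non-vanishing of the top $\dot{\theta}$-derivative of $i_{\rL'}^{\rM'}\tau'$. By the iterated derivative formula Corollary \ref{dcor 6}(4), applied through the filtrations in Proposition \ref{dprop 5} and Corollary \ref{dcor 6}(1)--(3), this top $\dot{\theta}$-derivative is identified with the iterated top $\theta$-derivative of $\tau'$: indeed, among the terms $\ind_m^{m-1}((\tau'{}^{(i-j)}_{\theta,m})_{\theta,m-1}^{(j)})$ produced by Corollary \ref{dcor 6}(3), all intermediate ones with $0 < j < i$ vanish once we iterate the derivative functor up to full length, so only the extremal term survives. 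Hence the top $\theta$-derivative of $\tau'$ is non-zero, which by the previous paragraph forces $\tau' \cong \tau_0'$, completing the uniqueness.

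The main obstacle will be to verify the compatibility between the highest derivative on the $\rM'$-side and that on the $\rL'$-side, i.e.\ to carry out the iteration of Corollary \ref{dcor 6}(4) when $\rL'$ is a proper Levi of $\rM'$ and to show that the intermediate filtration steps genuinely vanish upon further differentiation; a secondary difficulty is ensuring that one can pick $\dot{\theta}$ so that both the $\rM'$-side non-vanishing for $\pi'$ and the $\rL'$-side uniqueness for $\tau_0'$ occur simultaneously, which requires extending the Whittaker-model discussion of Proposition \ref{prop 23} to arbitrary Levi subgroups of $\rG'$ and checking that the restriction $\dot{\theta}|_{\rU_{\rL}}$ remains non-degenerate in the sense needed to invoke Proposition \ref{Uprop 1}.
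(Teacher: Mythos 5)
Your proposal follows essentially the same route as the paper: reduce to the cuspidal case, invoke Proposition \ref{prop 6} and Proposition \ref{Lproposition 10} to confine the supercuspidal support to the components of $\res_{\rL'}^{\rL}\tau$, pick a non-degenerate character (after a diagonal conjugation) whose highest derivative detects $\pi'$, and then combine Proposition \ref{Uprop 1}, Proposition \ref{Uprop 2} and Corollary \ref{dcor 6}(4) to show that exactly one component $\tau_0'$ has non-vanishing top derivative, forcing $\tau'\cong\tau_0'$. This matches the paper's argument, and the technical points you flag (non-degeneracy of $\dot{\theta}\vert_{\rU_{\rL}}$ and the iterated derivative identification) are exactly what Section \ref{subsubsection 5.1.1} supplies.
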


\begin{proof}
Since the cuspidal support of irreducible $k$-representation is unique, to prove the uniqueness of supercuspidal support, it is sufficient to assume that $\rho$ is cuspidal. Let $\pi$ be an irreducible cuspidal $k$-representation of $\rM$, such that $\rho$ is a sub-representation of $\res_{\rM'}^{\rM}\pi$. Let $(\rL,\tau)$ be a supercuspidal pair of $\rM$, and $[\rL,\tau]$ consists the supercuspidal support of $\pi$.  By \ref{prop 6}, we have $\res_{\rL'}^{\rL}\tau\cong\oplus_{i\in I}\tau_i$, where $I$ is a finite index set. According to \ref{Lproposition 10}, the supercuspidal support of $(\rM',\pi')$ is contained in the union of $\rM'$-conjugacy class of $(\rL',\tau_i)$, for every $i\in I$. To finish the proof of our theorem, it remains to prove that there exists one unique $i_0\in I$ such that $(\rL',\tau_{i_0})$ is contained in the supercuspidal support of $(\rM',\rho)$.

After conjugation by $\rG'$, we could assume that $\rM'=\rM_{n_1,\ldots,n_m}'$ and $\rL'=\rM'_{k_1,\ldots,k_l}$ for a familly of integers $m,l, n_1,\ldots,n_m,k_1,\ldots,k_l\in\mathbb{N}^{\ast}$. There exists a non-degenerate character $\theta$ of $\rU=\rU_{n_1,\ldots,n_m}$, such that $\rho_{\theta,m}^{(n_1+\cdots+n_m)}\neq 0$. In fact, let $\theta$ be any non-degenerate character of $\rU$ and we write $\res_{\rM'}^{\rM}\pi\cong\oplus_{s\in S}\pi_s$, where $S$ is a finite index set. We have:
$$\pi^{(n_1+...+n_m)}\cong(\pi\vert_{\rM'})_{\theta,m}^{(n_1+...+n_m)}\cong\oplus_{s\in S}(\pi_s)_{\theta,m}^{(n_1+...+n_m)},$$
where $\pi^{(n_1+...+n_m)}$ indicates the $(n_1+...+n_m)$-th derivative of $\pi$. As in Section [\S III, 5.10, 3)] of \cite{V1}, $\mathrm{dim}(\pi^{(n_1+...+n_m)})=1$, hence there exists one element $s_0\in S$ such that $(\pi_{s_0})_{\theta,m}^{(n_1+...+n_m)}\neq 0$. Notice that $\tau$ are isomorphic to some $\pi_s$, hence there exists a diagonal element $t\in\rM$, such that the $t$-conjugation $t(\pi_{s_0})\cong\tau$. The character $t(\theta)$ is also non-degenerate of $\rU$, and we have $(t(\pi_{s_0}))_{t(\theta),m}^{(n_1+...+n_m)}\cong (\pi_{s_0})_{\theta,m}^{(n_1+...+n_m)}$ as $k$-vector spaces. We conclude that $\mathrm{dim}\tau_{t(\theta),m}^{(n_1+...+n_m)}=1$. To simplify the notations, we assume $t=1$.

If $\rho$ is a subquotient of $i_{\rL'}^{\rM'}\tau_i$ for some $i\in I$. By (4) of \ref{dcor 6} and \ref{Uprop 2}, the derivative $\tau_{i_{\theta,l}}^{(k_1+\cdots+k_l)}\neq 0$. Note $\rU\cap\rL'$ as $\rU_{\rL'}$. By section [\S III, 5.10, 3)] of \cite{V1}, the derivative $\tau_{\theta,l}^{(k_1+\cdots+k_l)}=1$, which means the dimension of $(\rU_{\rL'},\theta)$-coinvariants of $\tau$ is $1$ (by \ref{Uprop 1}). Notice that the $(\rU_{\rL'},\theta)$-coinvariants of $\tau$ is the direct sum of $(\rU_{\rL'},\theta)$-coinvariants of $\tau_i$ for every $i\in I$. This implies that there exists one unique $i_0\in I$ whose $(\rU_{\rL'},\theta)$-coinvariants is non-zero with dimension $1$. By \ref{Uprop 1} and \ref{Uprop 2}, this is equivalent to say that there exists one unique $i_0\in I$, such that the derivative $\tau_{i_{\theta,l}}^{(k_1+\cdots+k_l)}\neq 0$.
\end{proof}

\appendix
\section{Theorem 5.2 of Bernstein and Zelevinsky}
We need the results of Theorem 5.2 of \cite{BeZe} in the case of $k$-representations. In fact, the proof in \cite{BeZe} is in the language of $\ell$-sheaves, which can be translated to a representation theoretical proof and be applied to our case. In the reason for being self-contained, I rewrite the proof following the method in \cite{BeZe}.

Let $\rG$ be a locally compact totally disconnected group, $\rP$, $\rM$, $\rU$, $\rQ$, $\rN$, $\rV$ are closed subgroups of $\rG$, and $\theta$, $\psi$ be $k$-characters of $\rU$ and $\rV$ respectively. Suppose that they verify conditions $(1)-(4)$ in \S $5.1$ of \cite{BeZe}, and denote $\rX=\rP\backslash\rG$. The numbering we choose in condition $(3)$ is $\rZ_1,...,\rZ_k$ of $\rQ$-orbits on $X$, and for any orbit $\rZ\subset X$, we choose $\overline{\omega}\in\rG$ and $\omega$ as in condition $(3)$ of \cite{BeZe}.

We introduce condition $(\ast)$:

$(\ast)$ The characters $\omega(\theta)$ and $\psi$ coincide when restricted to the subgroups $\omega(\rU)\cap\rV$.

We define $\Phi_{\rZ}$ equals $0$ if $(\ast)$ does not hold, and define $\Phi_{\rZ}$ as in \S $5.1$ \cite{BeZe} if $(\ast)$ holds.

\begin{defn}
Let $\rM,\rU$ be closed subgroups of $\rG$, and $\rM\cap\rU=\{e\}$, and the subgroup $\rP=\rM\rU$ is closed in $\rG$. Let $\theta$ be a $k$-character of $\rU$ normalized by $\rM$. 
\begin{itemize}
\item Define functor $i_{\rU,\theta}\mathrm{Rep}_{k}(\rM)\rightarrow\mathrm{Rep}_k(\rG)$. Let $\rho\in\mathrm{Rep}_k(\rM)$, then $i_{\rU,\theta}(\rho)$ equals $\ind_{\rP}^{\rG}\rho_{\rU,\theta}$, where $\rho_{\rU,\theta}\in\mathrm{Rep}_k(\rP)$, such that
$$\rho_{\rU,\theta}(mu)=\theta(u)\mathrm{mod}_{\rU}^{\frac{1}{2}}(m)\rho(m)$$.
\item Define functor $r_{\rU,\theta}\mathrm{Rep}_k(\rG)\rightarrow\mathrm{Rep}_k(\rM)$. Let $\pi\in\mathrm{Rep}_k(\rG)$, then $r_{\rU,\theta}(\pi)$ equals $\mathrm{mod}_{\rU}^{-\frac{1}{2}}(\res_{\rP}^{\rG}\pi)\slash(\res_{\rP}^{\rG}\pi)(\rU,\theta)$, where $(\res_{\rP}^{\rG}\pi)(\rU,\theta)\subset\res_{\rP}^{\rG}\pi$, generated by elements $\pi(u)w-\theta(u)w$, for any $w\in W$, where $W$ is the space of $k$-representation $\pi$.
\end{itemize}
\end{defn}

\begin{rem}
\label{rem 5.2.3}
By replacing $\ind$ to $\mathrm{Ind}$, we could define $I_{\rU,\theta}$. Notice that $r_{\rU,\theta}$ is left adjoint to $I_{\rU,\theta}$.
\end{rem}

\begin{prop}
\label{prop 5.2.2}
The functors $i_{\rU,\theta}$ and $r_{\rU,\theta}$ commute with inductive limits.
\end{prop}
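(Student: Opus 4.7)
My plan is to treat the two functors separately. The functor $r_{\rU,\theta}$ is handled by an adjunction argument, while the functor $i_{\rU,\theta}$ reduces to a statement about compact induction that requires a short direct verification.

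For $r_{\rU,\theta}$, I would invoke Remark \ref{rem 5.2.3}: since $r_{\rU,\theta}$ is the left adjoint of $I_{\rU,\theta}$, it preserves all colimits, and in particular inductive limits. As an alternative one can note directly that $r_{\rU,\theta}$ is the composition of the restriction $\res_{\rP}^{\rG}$, the cokernel functor quotienting by the subspace spanned by all $\pi(u)w-\theta(u)w$, and the twist by the character $\mathrm{mod}_{\rU}^{-1/2}$. Each of these three elementary operations commutes with inductive limits on the underlying $k$-vector spaces (restriction is the identity on vectors; cokernels are themselves colimits; a character twist is exact and preserves colimits), so the composite does as well.

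For $i_{\rU,\theta}$ I would decompose it as $\ind_{\rP}^{\rG}\circ(\rho\mapsto\rho_{\rU,\theta})$. The extension functor $\rho\mapsto\rho_{\rU,\theta}$ from $\rM$ to $\rP=\rM\rU$ alters only the $\rP$-action via the character $\theta\cdot\mathrm{mod}_{\rU}^{1/2}$, is the identity on underlying spaces, and therefore trivially commutes with inductive limits. The nontrivial point is that the compact induction $\ind_{\rP}^{\rG}$ itself commutes with inductive limits, and this is where I expect to spend a few lines.

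Given a filtered inductive system $\{V_j\}_{j\in J}$ with colimit $V$ in $\mathrm{Rep}_k(\rP)$, there is a canonical morphism $\mathrm{colim}_j\ind_{\rP}^{\rG}V_j\to\ind_{\rP}^{\rG}V$, and I want to show it is an isomorphism. The key observation is that any $f\in\ind_{\rP}^{\rG}V$, being smooth, is right-invariant under some compact open subgroup $K\subset\rG$, so $f$ factors through the discrete double coset space $\rP\backslash\rG/K$; combined with compact support of $f$ modulo $\rP$, this forces $f$ to take only finitely many values in $V$. Since $V=\mathrm{colim}_j V_j$ is filtered, these finitely many values all lift simultaneously to some $V_{j_0}$, producing a preimage of $f$ in $\ind_{\rP}^{\rG}V_{j_0}$; this gives surjectivity. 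For injectivity, if $f_j\in\ind_{\rP}^{\rG}V_j$ maps to $0$, each of its finitely many values vanishes in $V$ and hence in some $V_{j'}$ with $j'\geq j$, so $f_j$ already vanishes in $\ind_{\rP}^{\rG}V_{j'}$. The only step that needs care is the finite-valuedness of elements of compact induction; no deeper input is required.
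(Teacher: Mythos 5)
Your handling of $r_{\rU,\theta}$ is exactly the paper's argument (left adjoint of $I_{\rU,\theta}$, hence preserves colimits), and your route for $i_{\rU,\theta}$ is a reasonable variant: you prove directly that $\ind_{\rP}^{\rG}$ commutes with filtered colimits, whereas the paper first proves commutation with arbitrary direct sums (by the same smoothness/compact-support finiteness) and then writes $\underrightarrow{\lim}\,\pi_{\alpha}$ as a quotient of $\oplus_{\alpha}\pi_{\alpha}$ and uses exactness of $i_{\rU,\theta}$. However, your surjectivity step has a genuine gap. An $f\in\ind_{\rP}^{\rG}V$ is not ``finitely valued''; it is determined by its values $v_i=f(g_i)$ at finitely many representatives $g_i$ of the $K$-cosets in its support, and the datum $(v_i)$ defines an element of $\ind_{\rP}^{\rG}V$ supported on $\rP g_iK$ only because each $v_i$ is invariant under the compact open subgroup $\rP\cap g_iKg_i^{-1}$ acting through the $\theta\cdot\mathrm{mod}_{\rU}^{1/2}$-twisted action: this is what makes $\tilde f(pg_ik)=\rho_{\rU,\theta}(p)v_i$ well defined. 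When you lift $v_i$ to $\tilde v_i\in V_{j_0}$, there is no reason for $\tilde v_i$ to retain this invariance, so your lifted data do not yet give a preimage in $\ind_{\rP}^{\rG}V_{j_0}$; the sentence ``producing a preimage of $f$ in $\ind_{\rP}^{\rG}V_{j_0}$'' is precisely where the missing argument sits, and the closing claim that only finite-valuedness needs care is not right.

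The gap is routine to close, but it must be closed: since $\tilde v_i$ is smooth, it is fixed by an open subgroup of finite index in $\rP\cap g_iKg_i^{-1}$, so the defect vectors $\rho_{j_0,\rU,\theta}(h)\tilde v_i-\tilde v_i$, for $h$ running over a finite set of coset representatives, are finitely many vectors mapping to $0$ in $V$; by filteredness they already vanish in some $V_{j_1}$ with $j_1\geq j_0$, and the images of the $\tilde v_i$ in $V_{j_1}$ have the required invariance (simultaneously for all $i$, again by filteredness), hence define the desired preimage of $f$. Note that the paper's reduction to direct sums avoids this issue altogether: the decomposition $\oplus_{\alpha}\pi_{\alpha}$ is a decomposition of representations, so each component of an invariant value is automatically invariant and $f$ literally lies in $i_{\rU,\theta}$ of a finite sub-sum, with no lifting needed. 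Your injectivity argument is fine as stated. With the invariance step added, your proof is correct and covers the same (filtered) generality as the paper's.
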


\begin{proof}
The functor $r_{\rU,\theta}$ commutes with inductive limits since it has a right adjoint as in \ref{rem 5.2.3}.

For $i_{\rU,\theta}$. Let $(\pi_{\alpha},\alpha\in\mathcal{C})$ be a inductive system, where $\mathcal{C}$ is a directed pre-ordered set. We want to prove that $i_{\rU,\theta}(\underrightarrow{\mathrm{lim}}\pi_{\alpha})\cong\underrightarrow{\mathrm{lim}}(i_{\rU,\theta}\pi_{\alpha}).$ The inductive limit $\underrightarrow{\mathrm{lim}}\pi_{\alpha}$ is defined as $\oplus_{\alpha\in\mathcal{C}}\pi_{\alpha}\slash\sim$, where $\sim$ denotes an equivalent relation: When $\alpha\prec\beta,x\in W_{\alpha},y\in W_{\beta}$, $x\sim y$ if $\phi_{\alpha,\beta}(x)=y$, where $W_{\alpha}$ denotes the space of $k$-representation $\pi_{\alpha}$, and $\phi_{\alpha,\beta}$ denotes the morphism from $\pi_{\alpha}$ to $\pi_{\beta}$ defined in the inductive system.

First, we prove that $i_{\rU,\theta}$ commutes with direct sum. By definition, $\oplus_{\alpha\in\mathcal{C}}i_{\rU,\theta}\pi_{\alpha}$ is a subrepresentation of $i_{\rU,\theta}\oplus_{\alpha\in\mathcal{C}}\pi_{\alpha}$, and the natural embedding is a morphism of $k$-representations of $\rG$. We will prove that the natural embedding is actually surjective. For any $f\in \pi:=i_{\rU,\theta}\oplus_{\alpha\in\mathcal{C}}\pi_{\alpha}$, there exists an open compact subgroup $K$ of $\rG$ such that $f$ is constant on each right $K$ coset of $\rM\rU\backslash\rG$. Furthermore, the function $f$ is non-trivial on finitely many right $K$ cosets. Hence there exists a finite index subset $J\subset\mathcal{C}$, such that $f(g)\in\oplus_{j\in J} W_{j}$, which means $f\in i_{\rU,\theta}\oplus_{j\in J}\pi_{j}$. Since $i_{\rU,\theta}$ commutes with finite direct sum, we finish this case.

The functor $i_{\rU,\theta}$ is exact, we have:
$$i_{\rU,\theta}(\underrightarrow{\mathrm{lim}}\pi_{\alpha})\cong i_{\rU,\theta}(\oplus_{\alpha\in\mathcal{C}}\pi_{\alpha})\slash i_{\rU,\theta}\langle x-y\rangle_{x\sim y}.$$
Notice that $\underrightarrow{\mathrm{lim}} i_{\rU,\theta}\pi_{\alpha}\cong\oplus i_{\rU,\theta}\pi_{\alpha}\slash\thicksim$, where $\thicksim$ denotes the equivalent relation: When $\alpha\prec\beta,f_{\alpha}\in V_{\alpha},f_{\beta}\in V_{\beta}$, where $V_{\alpha}$ is the space of $k$-representation $i_{\rU,\theta}\pi_{\alpha}$, then $f_{\alpha}\thicksim f_{\beta}$ if $i_{\rU,\theta}(\phi_{\alpha,\beta})(f_{\alpha})=f_{\beta}$, which is equivalent to say that $\phi_{\alpha,\beta}(f_{\alpha}(g))=f_{\beta}(g)$ for any $g\in\rG$. In left to prove that the natural isomorphism from $\oplus_{\alpha\in\mathcal{C}}(i_{\rU,\theta}\pi_{\alpha})$ to $i_{\rU,\theta}(\oplus_{\alpha\in\mathcal{C}}\pi_{\alpha})$, induces an isomorphism from $\langle f_{\alpha}-f_{\beta}\rangle_{f_{\alpha}\thicksim f_{\beta}}$ to $i_{\rU,\theta}\langle x-y\rangle_{x\sim y}$. This can be checked directly through definition as in the case of direct sum above.
\end{proof}

\begin{thm}[Bernstein, Zelevinsky]
\label{thm 5.2}
Under the conditions above, the functor $\rF=r_{\rV,\psi}\circ i_{\rU,\theta}:\mathrm{Rep}_{k}(\rM)\rightarrow\mathrm{Rep}_{k}(\rN)$ is glued from the functor $\rZ$ runs through all $\rQ$-orbits on $\rX$. More precisely, if orbits are numerated so that all sets $\rY_i=\rZ_1\cup...\cup\rZ_i$ $(i=1,...,k)$ are open in $\rX$, then there exists a filtration $0=\rF_0\subset\rF_1\subset...\subset\rF_k=\rF$ such that $\rF_i\slash\rF_{i-1}\cong\Phi_{\rZ_i}$.
\end{thm}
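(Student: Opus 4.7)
The plan is to construct the filtration geometrically, indexed by the open $\rQ$-stable subsets of $\rX$, and then apply $r_{\rV,\psi}$ term by term.

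First I would associate to each open $\rQ$-stable subset $\rY \subseteq \rX$ the subspace $i_{\rU,\theta,\rY}(\rho) \subseteq i_{\rU,\theta}(\rho) = \ind_{\rP}^{\rG}\rho_{\rU,\theta}$ consisting of those functions whose support in $\rG$ maps into $\rY$ under the quotient $\rG\to\rX$. Since $\rY$ is open this is well-defined (extension by zero gives a $\rP$-equivariant subspace of smooth functions), and since $\rY$ is $\rQ$-stable the subspace is preserved by right translation by $\rQ$. The assignment $\rY \mapsto i_{\rU,\theta,\rY}$ is additive and exact in $\rho$, sends inclusions of open $\rQ$-stable subsets to inclusions, and commutes with inductive limits (the argument in Proposition \ref{prop 5.2.2} adapts verbatim, as compact support modulo $\rP$ involves only finitely many right cosets). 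Applying this to the chain $\emptyset = \rY_0 \subset \rY_1 \subset\cdots\subset \rY_k = \rX$ yields a filtration of $i_{\rU,\theta}(\rho)$ by $\rQ$-subrepresentations.

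Next I would apply $r_{\rV,\psi}$ and use that it is an exact functor on smooth $\rV$-representations. Indeed $\rV$ is a union of open compact subgroups and $r_{\rV,\psi}$ is computed as the inductive limit of the $(\rV_0,\psi)$-coinvariant functors for $\rV_0$ running over compact open subgroups of $\rV$; each such functor is exact (finite quotient construction on smooth representations), and $r_{\rV,\psi}$ commutes with inductive limits by the right-adjoint argument of Remark \ref{rem 5.2.3}. This gives the desired filtration $0=\rF_0\subset\rF_1\subset\cdots\subset\rF_k=\rF$ with $\rF_i = r_{\rV,\psi}(i_{\rU,\theta,\rY_i}(\rho))$.

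The main step is to identify $\rF_i/\rF_{i-1}$ with $\Phi_{\rZ_i}$. By the exactness just discussed this quotient equals $r_{\rV,\psi}$ applied to $i_{\rU,\theta,\rY_i}(\rho)/i_{\rU,\theta,\rY_{i-1}}(\rho)$, which is the space of smooth functions supported on $\rP \rZ_i \subset \rG$ (modulo $\rP$-equivariance on the left). Choosing $\omega\in\rG$ as in condition $(3)$ representing $\rZ_i$, the change of variable $g\mapsto \omega g$ together with the identification $\rZ_i \cong (\omega^{-1}(\rP)\cap\rQ)\backslash \rQ$ identifies this quotient with the compact induction $\ind_{\omega^{-1}(\rP)\cap\rQ}^{\rQ} \bigl(\omega^{-1}(\rho_{\rU,\theta})\bigr|_{\omega^{-1}(\rP)\cap\rQ}\bigr)$. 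Then applying $r_{\rV,\psi}$ to this induced representation and using the decomposability of $\omega^{-1}(\rP),\omega^{-1}(\rM),\omega^{-1}(\rU)$ with respect to $(\rN,\rV)$ supplied by condition $(4)$, one rewrites the $(\rV,\psi)$-coinvariants in stages: first take $(\omega^{-1}(\rV)\cap\omega^{-1}(\rU),\psi)$-coinvariants of $\rho_{\rU,\theta}$, which vanishes unless $\omega(\theta)$ and $\psi$ agree on $\omega(\rU)\cap\rV$ (condition $(\ast)$), and then compute the remaining $\rN$-induction. Collecting these identifications recovers exactly the functor $\Phi_{\rZ_i}$ of \S 5.1 of \cite{BeZe}.

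The main obstacle, as in the sheaf-theoretic original, is the bookkeeping in the final step: verifying that the decomposability conditions in $(4)$ are strong enough so that the twisted Jacquet functor $r_{\rV,\psi}$ factors compatibly through the compact induction over the orbit, and that condition $(\ast)$ precisely controls whether the resulting coinvariant is zero or the expected induced-coinvariant representation. Care is also required in justifying exactness of $r_{\rV,\psi}$ uniformly on the (non-admissible) compactly induced representations that appear, which is why reducing to $(\rV_0,\psi)$-coinvariants for open compact $\rV_0 \subseteq \rV$ before passing to the limit is essential.
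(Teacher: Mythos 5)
Your filtration setup coincides with the paper's: for each $\rQ$-stable open $\rY\subseteq\rX$ one takes the functions in $i_{\rU,\theta}(\rho)$ supported over $\rY$, uses exactness of $r_{\rV,\psi}$, and reduces to identifying the graded piece attached to a single orbit $\rZ$ with $\Phi_{\rZ}$. The gap is that this identification is precisely the content of the theorem, and your proposal disposes of it with ``one rewrites the $(\rV,\psi)$-coinvariants in stages \dots\ collecting these identifications recovers exactly the functor $\Phi_{\rZ_i}$.'' Two concrete steps are missing. First, the passage from the graded piece to the compact induction $\ind_{\omega^{-1}(\rP)\cap\rQ}^{\rQ}$ over the orbit is not formal: $\rP\rZ$ is only locally closed in $\rG$, so one must work with the two spaces of functions vanishing off $\overline{\rP\rQ}$ resp.\ on $\overline{\rP\rQ}$ and prove that restriction of functions to $\rQ$ is a bijection on the quotient (the paper's Case $\mathrm{III}$, where surjectivity requires choosing the level $K$ so that the translate of the support avoids the boundary $\overline{\rP\rQ}\setminus\rP\rQ$).

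Second, and more seriously, the claim that $r_{\rV,\psi}$ ``factors compatibly through the compact induction'' is exactly the commutation $r_{\rV,\psi}\circ i_{\rV\cap\rM,\,\cdot}\cong\varepsilon_2\cdot r_{\rV\cap\rM,\psi}$ of the paper's Case $\mathrm{IV}_2$, which you name as the main obstacle but do not prove. In the modular setting one cannot quote \cite{BeZe}, whose argument is sheaf-theoretic over $\bC$; the paper instead constructs an explicit intertwiner $A$ by integrating $\psi^{-1}(v)p(f(v))$ over $\rV'\backslash\rV$ against a $k$-valued Haar measure (which exists because $p\neq\ell$), checks equivariance up to the modulus twist $\varepsilon_2$, proves $A$ is an isomorphism on the regular representation of $\rV'$ via uniqueness of Haar measure (one-dimensionality of the relevant coinvariants), and then extends to arbitrary $\rho$ using commutation of the functors with direct sums and inductive limits (Proposition \ref{prop 5.2.2}) together with free resolutions and exactness. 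None of these ingredients appear in your outline, and the staging of coinvariants you invoke does not by itself yield the isomorphism (only the vanishing when $(\ast)$ fails, which is the easy Case $\mathrm{II}$). As written, the proposal reduces the theorem to an unproved assertion at its crux.
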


The quotient space $\rX=\rP\backslash\rG$ is locally compact totally disconnected. Let $\rY$ be a $\rQ$-invariant open subset of $\rX$. We define the subfunctor $\rF_{\rY}\subset\rF$. Let $\rho$ be a $k$-representation of $\rM$, and $W$ be its representation space. We denote $i(W)$ the representation $k$-space of $i_{\rU,\theta}(\rho)$. Let $i_{\rY}(W)\subset i(W)$ the subspace consisting of functions which are equal to $0$ outside the set $\rP\rY$, and $\tau$, $\tau_{\rY}$ be the $k$-representations of $\rQ$ on $i(W)$ and $i_{\rY}(W)$. Put $\rF_{\rY}(\rho)=r_{\rV,\psi}(\tau_{\rY})$, which is a $k$-representation of $\rN$. The functor $\rF_{\rY}$ is a subfunctor of $\rF$ due to the exactness of $r_{\rV,\psi}$.

\begin{prop}
\label{prop 5.2.1}
Let $\rY$, $\rY'$ be two $\rQ$-invariant open subset in $\rX$, we have:
$$\rF_{\rY\cap\rY'}=\rF_{\rY}\cap\rF_{\rY'},\quad \rF_{\rY\cup\rY'}=\rF_{\rY}+\rF_{\rY'},\quad \rF_{\emptyset}=0,\quad \rF_{\rX}=\rF.$$
\end{prop}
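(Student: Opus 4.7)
The plan is to verify each of the four identities first at the level of the subspaces $i_{\rY}(W) \subset i(W)$, and then transfer them through the functor $r_{\rV,\psi}$. The trivial cases are immediate: $i_{\emptyset}(W) = 0$ forces $\rF_{\emptyset} = 0$, while $i_{\rX}(W) = i(W)$ gives $\rF_{\rX} = \rF$.

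For the two non-trivial identities, let $\rY, \rY'$ be $\rQ$-invariant open subsets of $\rX$. Writing $\rP\rY$ for the preimage of $\rY$ in $\rG$, we have $\rP(\rY \cap \rY') = \rP\rY \cap \rP\rY'$ and $\rP(\rY \cup \rY') = \rP\rY \cup \rP\rY'$. The first equality immediately yields $i_{\rY \cap \rY'}(W) = i_{\rY}(W) \cap i_{\rY'}(W)$ as subspaces of $i(W)$. For the sum, the inclusion $i_{\rY}(W) + i_{\rY'}(W) \subset i_{\rY \cup \rY'}(W)$ is clear; for the reverse, I would take $f \in i(W)$ whose support projects to a compact subset $S \subset \rY \cup \rY'$. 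Because $\rX$ is locally compact totally disconnected, the open cover $\{\rY, \rY'\}$ of $S$ admits a refinement into a disjoint union $S = K_1 \sqcup K_2$ by compact open sets with $K_1 \subset \rY$ and $K_2 \subset \rY'$. Each $K_i$ is clopen in $\rX$, so multiplying $f$ by the (locally constant) pullback of the indicator function of $K_i$ produces smooth functions $f_1, f_2 \in i(W)$ with $f_i$ supported in $\rP\rY$ (respectively $\rP\rY'$) and $f = f_1 + f_2$.

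Because $\rY$ and $\rY'$ are $\rQ$-invariant, these become identities of $\rQ$-subrepresentations $\tau_{\rY \cap \rY'} = \tau_{\rY} \cap \tau_{\rY'}$ and $\tau_{\rY \cup \rY'} = \tau_{\rY} + \tau_{\rY'}$ inside $\tau$. Equivalently, the sequence of $\rQ$-modules
$$0 \to \tau_{\rY \cap \rY'} \to \tau_{\rY} \oplus \tau_{\rY'} \to \tau_{\rY \cup \rY'} \to 0,$$
with first map the diagonal and second map $(a, a') \mapsto a - a'$, is exact. Applying the exact functor $r_{\rV,\psi}$ preserves both this exactness and the injectivity of the various inclusions, so we obtain an exact sequence of subobjects of $\rF(\rho) = r_{\rV,\psi}(\tau)$. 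Reading off the image of the surjection gives $\rF_{\rY \cup \rY'} = \rF_{\rY} + \rF_{\rY'}$, while identifying the kernel of the subtraction map with the diagonal image of $\rF_{\rY} \cap \rF_{\rY'}$ inside $\rF(\rho)$ gives $\rF_{\rY \cap \rY'} = \rF_{\rY} \cap \rF_{\rY'}$. The main obstacle is ensuring $r_{\rV,\psi}$ is genuinely exact (not merely right-exact) in the general setup of \S 5.1 of \cite{BeZe}; once that is granted, every remaining step is either the compact-open partition argument, which uses only that $\rX$ is totally disconnected, or an elementary diagram chase.
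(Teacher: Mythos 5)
Your proof is correct and follows essentially the same route as the paper: reduce via the exactness of $r_{\rV,\psi}$ to the corresponding identities for the subspaces $i_{\rY}(W)\subset i(W)$, where only the union case needs an argument, which you settle by partitioning the compact support into clopen pieces inside $\rY$ and $\rY'$ (the paper phrases this as the existence of idempotent functions $\varphi,\varphi'$ with $(\varphi+\varphi')\vert_{K}=1$, citing \S 1.3 of \cite{BeZe}). The exactness of $r_{\rV,\psi}$ that you flag as the remaining obstacle is precisely what the paper invokes, and it holds here because $\rV$ is a union of compact open pro-$p$ subgroups and $\ell\neq p$.
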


\begin{proof}
Since $r_{\rV,\psi}$ is exact, it is sufficient to prove similar formulae for $\tau_{\rY}$. The only non-trivial one is the equality $\tau_{\rY\cup\rY'}=\rF_{\rY}+\rF_{\rY'}$. As in \S $1.3$ \cite{BeZe}, let $K$ be a compact open subgroup of $\rY\cup\rY'$, there exists $\varphi$ and $\varphi'$, which are idempotent $k$-function on $\rY$ and $\rY'$, such that $(\varphi+\varphi')\vert_{K}=1$. We deduce the result from this fact.
\end{proof}

Let $\rZ$ be any $\rQ$-invariant locally closed set in $\rX$, we define the functor 
$$\Phi_{\rZ}:\mathrm{Rep}_{k}(\rM)\rightarrow\mathrm{Rep}_{k}(\rN)$$
to be the functor $\rF_{\rY\cup\rZ}\slash\rF_{\rY}$, where $\rY$ can be any $\rQ$-invariant open set in $\rX$ such that $\rY\cup\rZ$ is open and $\rY\cap\rZ=\emptyset$. Let $\rZ_1,...,\rZ_k$ be the numeration of $\rQ$-orbits on $\rX$ as in \ref{thm 5.2}, and let $\rF_i=\rF_{\rY_i}$ $(i=1,...,k)$, which is a filtration of the functor $\rF$ be the definition. To prove Theorem \ref{thm 5.2}, it is sufficient to prove that $\rF_{\rZ_i}\cong\Phi_{\rZ_i}$. 

By replace $\rP$ to $\omega(\rP)$, we could assume that $\omega=\mathds{1}$. Now we consider the diagram in figure $\mathrm{BZ}$.
\begin{figure}
\centering
\begin{tikzpicture}
\node at (0,0){$\rP\cap\rG$};
\node at (-6,0) {$\rM$};
\node at (6,0){$\rN$};
\node at (-3,1) {$\rP$};
\node at (3,1) {$\rQ$};
\node at (0,2) {$\rG$};
\node at (-2,-1) {$\rM\cap\rQ$};
\node at (-4,-1) {$\rM\cap\rQ$};
\node at (2,-1) {$\rN\cap\rP$};
\node at (4,-1) {$\rN\cap\rP$};
\node at (0,-2) {$\rM\cap\rN$};
\node at (-3,0) {$\mathrm{I}$};
\node at (0,-1) {$\mathrm{II}$};
\node at (0,1) {$\mathrm{III}$};
\node at (3,0) {$\mathrm{IV}$};

\draw[->](-1.6,-0.8)--(-0.45,-0.225);
\draw[->](0.45,-0.225)--(1.6,-0.8);
\draw[->](-1.6,-1.2)--(-0.45,-1.775);
\draw[->](0.45,-1.775)--(1.6,-1.2);
\node at (-1.65,-0.4) {$\rU\cap\rQ$};
\node at (1.65,-0.4) {$\rV\cap\rP$};
\node at (-1.65,-1.6) {$\rV\cap\rM$};
\node at (1.65,-1.6) {$\rU\cap\rN$};

\draw[->](-5.75,0.089)--(-3.25,0.95);
\draw[->](-2.75,1.09)--(-0.25,1.92);
\draw[dashed,->](0.25,1.92)--(2.75,1.09);
\draw[->](3.25,0.91)--(5.75,0.08);
\draw[->](-2.75,0.91)--(-0.5,0.16);
\draw[->](0.5,0.16)--(2.75,0.91);
\node at (-4.75,0.7) {$\rU$};
\node at (-1.75,1.7) {$e$};
\node at (-1.5,0.7) {$e$};
\node at (1.5,0.7) {$e$};
\node at (4.75,0.7) {$\rV$};

\draw[->](-5.75,-0.2)--(-4.6,-0.85);
\draw[->](5.75,-0.2)--(4.6,-0.85);
\node at (-5.3,-0.6) {$e$};
\node at (5.3,-0.6) {$e$};

\draw[-stealth,decorate,decoration={snake,amplitude=2pt,pre length=1pt,post length=1pt}] (-3.4,-1) -- (-2.6,-1);
\draw[-stealth,decorate,decoration={snake,amplitude=2pt,pre length=1pt,post length=1pt}] (2.6,-1) -- (3.4,-1);
\node at (-3,-0.75) {$\varepsilon_1$};
\node at (3,-0.75) {$\varepsilon_2$};

\end{tikzpicture}
\caption{BZ}
\end{figure}

This is the same diagram as in \S $5.7$ \cite{BeZe}, in which a group a point $\rH$ means $\mathrm{Rep}_k(\rH)$, an arrow $\stackrel{\rH}{\nearrow}$ means the functor $i_{\rH,\theta}$, an arrow $\underset{\rH}{\searrow}$ means the functor $i_{\rH,\psi}$, and an arrow $\overset{\varepsilon}{\leadsto}$ means the functor $\varepsilon$ (consult \S $5.1$ \cite{BeZe} for the definition of $\varepsilon$). Notice that $\rG\dashrightarrow\rQ$ does not mean any functor, but the functor $\rP\rightarrow\rG\dashrightarrow\rQ$ is well-defined as explained above \ref{prop 5.2.1}. The composition functors along the highest path is of this diagram is $\rF_{\rZ}$, and if the condition $(\ast)$ holds, the composition functors along the lowest path is $\Phi_{\rZ}$. We prove Theorem \ref{thm 5.2} by showing that this diagram is commutative if condition $(\ast)$ holds, and $\rF_{\rZ}$ equals $0$ otherwise. Notice that parts $\mathrm{I},\mathrm{II},\mathrm{III},\mathrm{IV}$ are four cases of \ref{thm 5.2}, and we prove the statements through verifying them under the four cases respectively. 

Let $\rho$ be any $k$-representation of $\rM$, and $W$ is its representation space. We use $\pi$ to denote $\rF_{\rZ}(\rho)$, and $\tau$ to denote $\Phi_{\rZ}(\rho)$.

Case $\mathrm{I}$: $\rP=\rG,\rV=\{ e \}$. The $k$-representations $\pi$ and $\tau$ act on the same space $W$, and the quotient group $\rM\backslash(\rP\cap\rQ)$ is isomorphic to $(\rM\cap\rQ)\backslash(\rP\cap\rQ)$. We verify directly by definition that $\pi\cong\tau$.

Case $\mathrm{II}$: $\rP=\rG=\rQ$. The representation space of $\pi$ is still $W$. We have the equation:
$$r_{\rV,\psi}(W)\cong r_{\rV\cap\rM,\psi}(r_{\rV\cap\rU,\psi}(W)).$$
If $\theta\vert_{\rU\cap\rV}\neq\psi\vert_{\rU\cap\rV}$, then $\pi=0$ since $\rU\cap\rV=\rU\cap\rQ\cap\rV\cap\rP$ and $r_{\rV\cap\rU,\psi}(W)=0$. This means that after proving the diagrams of cases $\mathrm{I},\mathrm{III},\mathrm{IV}$ are commutative, the functor $\rF_{\rZ}$ equals $0$ if condition $(\ast)$ does not hold.

Now we assume that $(\ast)$ holds. The $k$-representations $\pi$ and $\tau$ act on the same space $W\slash W(\rV\cap\rM,\psi)$, because the fact $r_{\rV\cap\rU,\psi}(i_{\rV\cap\rU,\theta}(W))=W$ and the equation above. Notice that we have equations for $k$-character $\mathrm{mod}$:
$$\mathrm{mod}_{\rU}=\mathrm{mod}_{\rU\cap\rM}\cdot\mathrm{mod}_{\rU\cap\rV},\quad \mathrm{mod}_{\rV}=\mathrm{mod}_{\rV\cap\rN}\cdot\mathrm{mod}_{\rV\cap\rU},$$
from which we deduce that $\pi\cong\tau$ when condition $(\ast)$ holds.

Case $\mathrm{III}$: $\rU=\rV=\{ e \}$. Let $i(W)$ be the representation space of $i_{\rP}^{\rG}\rho$, then $\pi$ acts on a quotient space $W_1$ of $i(W)$. Let:
$$E=\{ f\in i(W)\vert f(\overline{\rP\rQ}\backslash \rP\rQ)=0 \},$$
$$E'=\{ f\in i(W)\vert f(\overline{\rP\rQ})=0 \},$$
then $W_1\cong E\slash E'$. The $k$-representation $\tau$ acts on $i(W)'$, which is the representation space of $i_{\rP\cap\rQ}^{\rQ}\rho$. By definition,
$$i(W)'=\{ h:\rQ\rightarrow W \vert h(pq)=\rho(p)h(q),p\in\rP\cap\rQ,q\in\rQ   \}.$$
We define a morphism $\gamma$ from $W_1$ to $i(W)'$, by sending $f$ to $f\vert_{\rQ}$, which respects $\rQ$-actions and is actually a bijection. For injectivity, let $f_1,f_2\in W_1$ and $f_1\vert_{\rQ}=f_2\vert_{\rQ}$, then $f_1-f_2$ is trivial on $\rP\rQ$, hence $f_1-f_2$ is trivial on $\overline{\rP\rQ}$ by the definition of $E$. This means $f_1-f_2=0$ in $W_1$. Now we prove $\gamma$ is surjective. Let $h\in i(W)'$, there exists an open compact subgroup $K'$ of $(\rP\cap\rQ)\backslash\rQ$ such that $h$ is constant on the right $K'$ cosets of $(\rP\cap\rQ)\backslash\rQ$, and denote $S$ the compact support of $h$. Let $K$ be an open compact subgroup of $\rP\backslash\rG$ such that $(\rP\cap\rQ)\backslash(\rQ\cap K)\subset K'$, and $S\cdot K\cap (\overline{\rP\rQ}\slash \rP\rQ)=\emptyset$. We define $f$ such that $f$ is constant on the right $K$ cosets of $\rP\backslash\rG$, and $f\vert_{(\rP\cap\rQ)\backslash\rQ}=h$. The function $f$ is smooth with compact support on the complement of $\overline{\rP\rQ}\slash \rP\rQ$, hence belongs to $E$, and $\gamma(f)=h$ as desired.

Case $\mathrm{IV}$: $\rU=\{ e\},\rQ=\rG$. We divide this case into two cases $\mathrm{IV}_1$ and $\mathrm{IV}_2$ as in the diagram of figure $\mathrm{Case} \mathrm{IV}$.

\begin{figure}
\begin{tikzpicture}
\node at (0,0) {$\rM$};
\node at (2,1) {$(\rM\cap\rN)\rV$};
\node at (2,-1) {$\rM\cap\rN$};
\node at (5,-1) {$\rM\cap\rN$};
\node at (8,0.5) {$\rN$};
\node at (5,2) {$\rQ=\rN\rV$};

\node at (2,0) {$\mathrm{IV}_2$};
\node at (5,0.5) {$\mathrm{IV}_1$};

\draw[->](0.17,0.15)--(1.25,0.75);
\draw[->](0.17,-0.15)--(1.45,-0.8);

\node at (0.5,0.5) {$e$};
\node at (0.4,-0.7) {$\rV\cap\rM$};

\draw[->](2.75,1.25)--(4.35,1.85);
\draw[->](5.65,1.85)--(7.85,0.66);
\draw[->](2.75,0.85)--(4.47,-0.84);
\draw[->](5.53,-0.84)--(7.85,0.34);
\node at (3.5,1.7) {$e$};
\node at (3.75,0.2) {$\rV$};
\node at (6.7,1.53) {$\rV$};
\node at (6.5,-0.1) {$e$};

\draw[-stealth,decorate,decoration={snake,amplitude=2pt,pre length=1pt,post length=1pt}] (2.6,-1) -- (4.4,-1);
\node at (3.5,-0.75) {$\varepsilon_2$};

\end{tikzpicture}
\caption{Case $\mathrm{IV}$}
\end{figure}

Case $\mathrm{IV}_1$: $\rU=\{ e\},\rQ=\rG,\rV\subset\rM=\rP$. The $k$-representation $\pi$ acts on $i(W)^{+}=i(W)\slash i(W)(\rV,\psi)$, where
$$i(W)_{\rV,\psi}=\langle vf-\psi(v)f, \forall f\in i(W),v\in\rV \rangle.$$
The $k$-representations $\tau$ acts on $i(W^{+})$, which is the smooth functions with compact support on $(\rM\cap\rN)\backslash\rN$ defined as below:
$$\{ h:\rN\rightarrow W\slash W(\rV,\psi)\vert f(mn)=\rho(m)f(n), \forall m\in\rM\cap\rN,n\in\rN \}.$$
There is a surjective projection from $i(W)$ to $i(W^{+})$, which projects $f(n)$ in $W^+=W\slash W_{\rV,\psi}$, for any $f\in i(W)$. In fact, let $h\in i(W^+)$, there exists an open compact subgroup $K$ of $\rP\backslash\rG\cong(\rM\cap\rN)\backslash\rN$, such that $f=\sum_{i=1}^m h_i$, $m\in\mathbb{N}$, where $h_i\in i(W^+)$ is nontrivial on one right $K$ coset $a_i K$ of $\rP\backslash\rG$. We have $h_i\equiv \overline{w_i}$ on $a_i K$, where $w_i\in W$ and $\overline{w_i}\in W^+$. Define $f=\sum_{i=1}^m f_i$, where $f_i\equiv w_i$ on $a_i K$, and equals $0$ otherwise. The function $f\in i(W)$, and the projection image is $h$.

It is clear that this projection induces a morphism from $i(W)^+$ to $i(W^+)$, and we prove this morphism is injective. Let $f,f'\in i(W)^+$, and $f=f'$ in $i(W^+)$. As in the proof above, there exists an open compact subgroup $K_0$ of $\rP\backslash\rG$, and $f_j\in i(W)^+$ such that $f_j$ is non-trivial on one right $K_0$ coset of $\rP\backslash\rG$ and $f-f'=\sum_{j=1}^sf_j$. Furthermore, the supports of $f_j$'s are two-two disjoint. Hence the image of $f_j$ on its support is contained in $W^+$, since $f_j$ is constant on its support, it equals $0$ in $i(W)^+$, whence $f-f'$ equals $0$ in $i(W)^+$. We conclude that this morphism is bijection, and the diagram case $\mathrm{IV}_1$ is commutative. 

Case $\mathrm{IV}_2$: $\rU=\{ e\},\rG=\rQ,\rN\subset\rM$. In this case:
$$\rX=\rN\rV'\backslash\rN\rV\cong\rV'\rV,$$
where $\rV'=\rV\cap\rM$. We choose one Haar measures $\mu$ of $\rX$ (the existence see \S$\mathrm{I}$, 2.8, \cite{V1}). Let $W^+$ denote the quotient $W\slash W(\rV',\psi)$ and $p$ the canonical projection $p: W\rightarrow W^+$. Let $i(W)$ be the space of $k-$representation $\tau= i_{\{e\},1}(\rho)$.

Define $\overline{A}$ a morphism of $k$-vector spaces from $i(W)$ to $W^+$ by:
$$\overline{A}f=\int_{\rV'\backslash\rV}\psi^{-1}(v)p(f(v))\mathrm{d}\mu(v).$$
This is well defined since the function $\psi^{-1}f$ is locally constant with compact support of $\rV'\slash\rV$, and the integral is in fact a finite sum. Since $\mu$ is stable by right translation, we have for any $v\in\rV$:
$$\overline{A}(\tau(v,f))=\psi(v)\overline(A)(f).$$
Hence $\overline{A}$ induces a morphism of $k$-vector spaces:
$$A:i(W)\slash i(W)(V,\psi)\rightarrow W^+.$$
Now we justify that $A\in\mathrm{Hom}_{k[\rN]}(\pi,\tau)$, where $k$-representations $\pi=r_{\rV,\psi}(\tau)$ equals $\rF(\rho)$, and $\tau=\varepsilon_2\cdot r_{\rV',\psi}(\rho)$ equals $\Phi(\rho)$. For any $n\in \rN$:
\begin{eqnarray}
A(\pi(n)f) &=&\mathrm{mod}_{\rV}^{-\frac{1}{2}}(n)\int_{\rV'\slash\rV}\psi^{-1}(v)p(f(vn))\mathrm{d}\mu(v)\\
&=&  \mathrm{mod}_{\rV}^{-\frac{1}{2}}(n)\sigma(n)\mathrm{mod}_{\rV'}^{\frac{1}{2}}(n)\varepsilon_2^{-1}\cdot\int_{\rV'\slash\rV}\psi^{-1}(v)p(f(n^{-1}vn))\mathrm{d}\mu(v)
\end{eqnarray}
By replacing $v'=n^{-1}vn$, the equation above equals to:
$$\sigma(n)\int_{\rV'\slash\rV}\psi^{-1}(v')p(f(v'))\mathrm{d}\mu(v')=\sigma(n)A(f).$$
Therefore $A$ belongs to $\Hom_{k[N]}(\pi,\tau)$, and hence a morphism from functor $\rF$ to $\Phi$. Now we prove that $A$ is an isomorphism.

Let $\rho'$ be the trivial representations of $\{e\}$ on $W$, then $i(W)'$ the space of $k$-representation $\ind_{\rV'}^{\rV}\rho'$ is isomorphic to $i(W)$ the space of $k$-representation $i_{\{e\},1}\rho$. And the diagram \ref{fig 6} $\mathrm{IV}_2$ is commutative, where $A$ indicates the morphism of $k$-vector spaces associated to the functor $A$. Hence it is sufficient to suppose that $\rN=\{e\},\rM=\rV'$. Replacing $\rho$ by $\psi^{-1}\rho$, we can suppose that $\psi=1$.

\begin{figure}
\label{fig 6}
\centering
\begin{tikzpicture}
\node at (-2,0) {$i(W)'\slash i(W)'(\rV,\psi)$};
\node at (1,0) {$W^+$};
\node at (-2,-1.5) {$i(W)\slash i(W)(\rV,\psi)$};
\node at (1,-1.5) {$W^+$};

\draw[->](-0.4,0)--(0.5,0);
\draw[->](-2,-0.2)--(-2,-1.2);
\draw[->](1,-0.2)--(1,-1.2);
\draw[->](-0.4,-1.5)--(0.5,-1.5);

\node at (-2.3,-0.7) {$\cong$};
\node at (1.3,-0.7) {$\cong$};
\node at (0.05,0.2) {$A$};
\node at (0.05,-1.3) {$A$};

\end{tikzpicture}
\caption{$\mathrm{IV}_2$}
\end{figure}

First of all, we consider $\rho=i_{\{e\},1}\mathds{1}=\ind_{e}^{\rV'}\mathds{1}$ the regular $k$-representation on $S(\rV')$, which is the space of locally constant functions with compact support on $\rV'$. Then $\tau=i_{\{e\},1}\rho$ is the regular $k$-representation of $\rV$ on $S(\rV)$ by the transitivity of induction functor. Any $k$-linear form on $r_{\rV',1}(S(\rV'))$ gives a Haar measure on $\rV'$, and conversely any Haar measure gives a $k$-linear form on $S(\rV')$, whose kernel is $S(\rV')(\rV',1)$, hence the two spaces is isomorphic, and the uniqueness of Haar measures implies that the dimension of $r_{\rV',1}(S(\rV'))$ equals one. We obtain the same result to $r_{\rV,1}(S(\rV'))$. Since in this case the morphism $A$ is non-trivial, then it is an isomorphism. The functors $i_{\{e\},1},r_{\rV,\psi},r_{\rV',\psi}$ commute with direct sum (as in \ref{prop 5.2.2}), and the morphism $A$ between $k$-vector spaces also commutes with direct sum, hence $A:\pi\rightarrow\tau$ is an isomorphism when $\rho$ is free, which means $\rho$ is a direct sum of regular $k$-representations of $\rV'$. Notice that any $\rho$ can be viewed as a module over Heck algebra, then $\rho$ is a quotient of some free $k$-representation. Hence $\rho$ has a free resolution. The exactness of $\rF$ and $\Phi$ implies that $A:\rF(\rho)\rightarrow\Phi(\rho)$ is an isomorphism for any $\rho$.


\pagestyle{empty}

\includepdf[pages={last-}, pagecommand={}, offset=0cm -1cm]{couverture.pdf}

\end{document}